\newif\ifdebug                                                      %
\newif\ifinfootnote
\let\footnoteasusual\footnote
\renewcommand{\footnote}[1]
{\infootnotetrue\footnoteasusual{#1}\infootnotefalse}
\newcommand{\printname}[1]
{\ifmmode{ \smash{ \raisebox{5pt}{\text{\tiny{#1}}} } }
 \else   {\ifinfootnote \smash{\raisebox{0pt}{\tiny{#1}}}
             \else { \marginpar{
                     \smash{ \makebox[0pt]{\raisebox{-12pt}{\tiny{#1}}} }
                               } } \fi} \fi}
\numberwithin{equation}{section}
\newtheorem {Theorem}[equation]         {Theorem}
\newtheorem {Lemma}[equation]           {Lemma}
\newtheorem {Claim*}                    {Claim}
\newtheorem {Corollary} [equation]      {Corollary}
\newtheorem {Proposition}  [equation]   {Proposition}
\theoremstyle{definition}
\newtheorem{Definition}[equation]{Definition}
\theoremstyle{remark}
\newtheorem{Remark}[equation]{Remark}
\newtheorem*{Remark*}{Remark}
\newtheorem{Example}[equation]{Example}
\setlist{topsep=0pt,itemsep=6pt}
\def \Z {{\mathbb Z}}
\def \R {{\mathbb R}}
\def \C {{\mathbb C}}
\def \Q {{\mathbb Q}}
\def\fln{\mbox{Fl}(n)}
\def\fl{\mbox{Fl}}
\newcommand{\Flags}{Flag}
\def\Tn{T}
\def\Y{Pet_n} 
\def\HT{H_{\Tn}^{*}} 
\def\HS{H_{\S1}^{*}} 
\def\S1{{\mathsf{S}}}
\def\f{\theta}
\def\xi{p}
\def\bxi{\check{p}}
\newcommand{\diag}{\mathrm{diag}}
\newcommand{\gl}{\mathfrak{gl}}
\DeclareMathOperator{\Hess}{Hess}
\def\cf{\check f}
\def\Ih{I_h}
\DeclareMathOperator{\pt}{\text{pt}}
\newcommand{\TChFlag}{\tau^T}
\newcommand{\ChFlag}{\tau}
\newcommand{\SChNil}{\bar\tau^\mathsf{S}}
\newcommand{\into}{{\hookrightarrow}}
\def\varphih{\varphi_h}
\DeclareMathOperator{\Poin}{Poin}
\DeclareMathOperator{\Hilb}{Hilb}
\title[Title]
{ The cohomology rings of regular nilpotent Hessenberg varieties }
\author{Megumi Harada}
\address{Department of Mathematics and Statistics, McMaster University, 
1280 Main Street West, 
Hamilton, Ontario L8S 4K1, Canada}
\email{haradam@mcmaster.ca}
\author{Tatsuya Horiguchi} 
\address{National Institute of Technology, Ube College, 2-14-1, Tokiwadai, Ube, Yamaguchi, Japan 755-8555} 
\email{tatsuya.horiguchi@gmail.com} 
\date{\today}
\begin{document}

\maketitle

\begin{abstract} 
This manuscript is a contributed chapter in the forthcoming CRC Press volume, titled the \emph{Handbook of Combinatorial Algebraic Geometry: Subvarieties of the Flag Variety}.  The book, as a whole, is aimed at a diverse audience of researchers and graduate students seeking an expository introduction to the area. In our chapter, we give an overview of some of the past research on the cohomology rings of regular nilpotent Hessenberg varieties, with no claim to being exhaustive. For the purposes of this manuscript, we focus mainly on the case of Lie type A, with some brief remarks on the general Lie types. We end the chapter with a selection of topics currently active in this area.  
\end{abstract}

List of some notation in this manuscript:
\begin{itemize}
\item $[n]=\{1,2,\ldots,n \}$
\item $\mathsf{N}$: the regular nilpotent element in Jordan canonical form
\item $\Hess(\mathsf{N},h)$: the regular nilpotent Hessenberg variety
\item $\Y$: the Peterson variety in type $A_{n-1}$
\item $\S1$: the one-dimensional torus acting on $\Hess(\mathsf{N},h)$ 
\item $\Hilb(R,q)$: the Hilbert series of a graded commutative algebra $R$ in the variable $q$
\item $\Poin(X,q)$: the Poincar\'e polynomial of a topological space $X$ in the variable $q$
\end{itemize}

\maketitle

\setcounter{tocdepth}{1}

\tableofcontents

\section{Introduction  }

\label{section: intro}

This manuscript gives an account of the study of the cohomology rings\footnote{In this manuscript, all cohomology rings are with coefficients in $\Q$ unless otherwise specified.}  of Hessenberg varieties. Several comments are in order before we begin the exposition in earnest. 

Firstly, in analogy to the situation of the flag variety $\fln \cong GL_n(\C)/B$ and the torus action on $\fln$ coming from the maximal torus $T$ of $GL_n(\C)$, one of the crucial geometric features of Hessenberg varieties is that they admit torus actions.  (Warning: the dimension of the torus that acts can vary, depending on the choice of parameters that define the Hessenberg variety.) These actions allow us to bring to bear all the techniques of \emph{(torus)-equivariant} topology and \emph{(torus)-equivariant} cohomology to the study of these varieties.  Moreover, since these torus actions arise as restrictions of the maximal torus $T$-action on $GL_n(\C)/B$, the extensive knowledge we have about the $T$-equivariant geometry, and associated combinatorics, of the flag variety can (and does) directly inform our understanding of the equivariant geometry of Hessenberg varieties. 

Secondly, a number of the arguments currently in the literature -- those which yield an explicit presentation of the (equivariant and ordinary) cohomology rings of Hessenberg varieties -- have a similar underlying structure, and a key component of these arguments is the commutative-algebraic theory of Hilbert series and regular sequences. Therefore, in this manuscript we have attempted to give (1) a broad overview of our topic, (2) a brief but sufficiently complete pedagogical account of the theories of equivariant cohomology and of regular sequences and Hilbert series, which would allow a reader to appreciate (3) an exposition of several concrete results concerning both the equivariant and ordinary cohomology rings of special classes of Hessenberg varieties.   

Some of the results in item (3) above concern presentations, via generators and relations, of these cohomology rings -- thus exhibiting them as quotients of polynomial rings.  These results should be understood, roughly speaking, as Hessenberg analogues the well-known Borel presentation of the cohomology rings of flag varieties. In another direction, some of the results in (3) are in the spirit of Schubert geometry and Schubert calculus. The idea for these results is that we wish to find explicit module (or $\Q$-vector space) bases of the cohomology rings of Hessenberg varieties, and to find explicit formulas for their multiplicative structure constants which have rich geometric and/or combinatorial interpretations.

 We now describe the content of the manuscript in some more detail. Section~\ref{sec: background} is a quick primer on the two essential theories for this paper, namely, Hilbert series and regular sequences (Section~\ref{subsec: background on Hilbert series and regular sequences}) and equivariant cohomology (Section~\ref{subsec: background equivariant cohomology}). While we have attempted to be complete enough that a reader can get the essential prerequisites for appreciating what follows, we did not aim to replace the full, formal treatments of these topics in subject-specific graduate-level textbooks. We have provided references for the reader to consult for more in-depth information.  Next, in Section~\ref{section: flag}, we illustrate some of the basic techniques and terminology of Section~\ref{subsec: background equivariant cohomology} by giving a detailed explanation of the $T$-equivariant cohomology of the (full) flag variety $\fln \cong GL_n(\C)/B$. Due to the fact, mentioned above, that much of the analysis of the equivariant topology of Hessenberg varieties comes from, and is motivated by, our knowledge of the special case $GL_n(\C)/B$, the exposition in Section~\ref{section: flag} serves more than just a pedagogical purpose.   In fact, this section sets up the necessary terminology and notation for much of our later mathematical arguments.   
 
 Equipped with the background material of Sections~\ref{sec: background} and~\ref{section: flag}, we shift our focus in Section~\ref{section: A variation on the theme: the equivariant cohomology rings of Hessenberg varieties} to explain how the computation of $H^*_T(GL_n(\C)/B)$, using the techniques of Section~\ref{subsec: background equivariant cohomology}, can be modified 
  to also account for the more general cases of Hessenberg varieties.   We attempt to highlight both the similarities and the distinctions between the flag variety case and the Hessenberg case, so as to build the reader's intuition on these topics.

  It is in Section~\ref{section: peterson in type A} that we get started in earnest.  In this section, we describe the cohomology rings of Peterson varieties in Lie type A (i.e. for the group $G = GL_n(\C)$) in different ways.  The Peterson variety is the ``smallest'' case of Hessenberg variety in a sense that can be made precise (cf. Section~\ref{section: peterson in type A}), and we present this case in some detail because it is simpler than the general case, and yet the arguments already richly illustrate the flavor of the arguments (both the Hilbert series techniques and the equivariant cohomology techniques).  More specifically, in Section~\ref{subsection:basis_Peterson_typeA}, we use techniques inspired from both the general theory of equivariant cohomology (specifically, the localization theorem in equivariant cohomology) and the theory of equivariant Schubert classes in $H^*_T(\fln)$ to derive a module basis for the equivariant cohomology rings of Peterson varieties, and we derive explicit formulas for some of the multiplicative structure constants in the spirit of classical Schubert calculus.  Then in Section~\ref{subsec: presentation of Peterson cohomology type A}   we switch gears and give an explicit presentation, by generators and relations, of the equivariant and ordinary cohomology rings of the Peterson variety, thus presenting it efficiently as a quotient of a polynomial ring by a concretely described ideal. It is in this section that we first see the effectiveness of the theory of Hilbert series and regular sequences.

  At this point we remind the reader that the theory of Hessenberg varieties is not limited to $G=GL_n(\C)$, i.e., to Lie type A. Hessenberg varieties can be readily defined and studied in all Lie types.  We have chosen to focus on $G=GL_n(\C)$ for expositional purposes, but to illustrate the general principle, in Section~\ref{section: peterson general Lie type} we describe the  generalizations -- to arbitrary Lie type -- of the results in Section~\ref{section: peterson in type A}.

  Finally, we present in Section~\ref{section: reg nilp Hess} the result that computes, for all regular nilpotent Hessenberg varieties in Lie type A, an explicit presentation via generators and relations of their equivariant and ordinary cohomology rings $H^*_{\S1}(\Hess(\mathsf{N},h))$ and $H^*(\Hess(\mathsf{N},h))$ \cite{AHHM2019}.  In particular, this is a generalization of the theorem explained in Section~\ref{subsec: presentation of Peterson cohomology type A}.  As we already intimated, the outline of the argument that proves the generalization is, at least in broad strokes, very similar to that given in the special case of the Peterson variety.  Therefore, in our exposition in Section~\ref{section: reg nilp Hess}, we have chosen to focus mostly on explaining what is \emph{different} about the general case.  In particular, what turns out to be one of the most conceptually important contributions of \cite{AHHM2019} is the derivation of the appropriate set of relations that are satisfied by the generators. It turns out that these relations are not so straightforward a generalization of the quadratic relations that are seen in the Peterson case.  Consequently, in Section~\ref{section: reg nilp Hess} we spend the bulk of the section giving a leisurely and hopefully fun explanation of how we were inspired to guess the appropriate relations for our chosen generators of $H^*_{\S1}(\Hess(\mathsf{N},h))$.  
  
  Our last Section~\ref{section.further.developments} is a biased and non-exhaustive list of some of the most recent developments in this theory. We provide many references to point the reader to the current literature.

\section{Background: the essential techniques}\label{sec: background}

In this section, we provide a crash course in the two main techniques that we will use throughout this exposition: the theory of Hilbert series and regular sequences,  and the theory of equivariant cohomology.  While we have kept the exposition tight, we have endeavored to give sufficient examples to illustrate the varied uses of these ideas, to prepare the reader for what follows in later sections.

\subsection{Background on Hilbert series and regular sequences} 
\label{subsec: background on Hilbert series and regular sequences}

In this section, we give a brief account of the commutative algebra tools we use throughout the manuscript: namely, Hilbert series and regular sequences. Our point of view on the subject will be biased by our intended use of the techniques to our study of Hessenberg varieties, and in particular, we do not claim to be an authoritative text on the subject.

We begin with the definition of \textbf{Hilbert series}. Let $k$ be a field and $R=\bigoplus_{i=0}^\infty
R_i$ an $\mathbb{N}$-graded\footnote{We take the convention that $\mathbb{N} = \{0,1,2,3,\cdots\}$, i.e., $\mathbb{N}$ includes $0$.} commutative $k$-algebra where each $R_i$ is a finite-dimensional $k$-vector space.
We assume $R_0 \cong k$ and that $R$ is finitely generated (the cohomology rings appearing in this paper all satisfy these conditions).
Then we define the \textbf{Hilbert series of $R$} to be 
\[
\Hilb(R,q):=\sum_{\ell=0}^\infty (\dim_{k}R_{\ell}) q^\ell \in \mathbb{N}[[q]]
\]
where $q$ is a formal parameter. 
We may define the Hilbert series for any graded $k$-vector space in a similar fashion. 
(Since we deal with rational cohomology rings in this manuscript, for simplicity, the reader may wish to take $\Q$ as the field $k$, though 
this is not necessary.) 
We equip the polynomial ring $k[x_1, \ldots, x_n]$ with the $\mathbb{N}$-grading defined by 
\begin{align*}
\deg x_i=2 \textup{ for all } i \in [n]. 
\end{align*} 
This slightly non-standard grading on the variables is motivated by the fact that, in this paper, the graded rings we consider are 
cohomology rings of algebraic varieties whose degrees are concentrated in even degrees, and the generators are degree-$2$ elements.

We begin with the following fundamental computation which forms the basis of many subsequent arguments. 

\begin{Example}\label{example.Hilbert.polynomial.ring}
Let $R$ be the polynomial ring $k[x_1, \ldots, x_n]$ with $\deg x_i=2$ for all $i \in [n]$.
Then the set of monomials  $$\{x_1^{\ell_1} x_2^{\ell_2} \cdots x_n^{\ell_n} \mid \ell_1 \geq 0, \ell_2 \geq 0, \ldots, \ell_n \geq 0, \ell_1+\ell_2+\cdots+\ell_n = \ell \}$$ 
form a basis for the degree-$2\ell$ subspace $R_{2\ell}$ of $R$, 
and $R$ is $0$ in all odd degrees. Hence the Hilbert series of $k[x_1, \ldots, x_n]$ is equal to 
\[
\Hilb(k[x_1, \ldots, x_n],q) = (1+q^2+q^4+\cdots)^n = \frac{1}{(1-q^2)^n}.
\]
\end{Example}

Next, we introduce the second of the two main notions we use from commutative algebra, that of \textbf{regular sequences}. 
For more details we refer the reader to \cite{stan96}.

We begin with a definition of non-zero-divisors. We warn the reader that the convention here -- in particular, whether or not a non-zero-divisor is itself non-zero \emph{by definition} -- depends on the textbook (e.g. \cite{stan96} takes one convention whereas \cite{DummitFoote} takes the other).  Below, we take the convention of \cite{stan96}, which requires a non-zero-divisor to be non-zero. 

\begin{Definition} 
An element $x \in R$ is a \textbf{non-zero-divisor} on $R$ if $x$ is non-zero, and moreover, whenever $y \in R$ and $xy=0$, then $y=0$. In other words, $x \in R$ is a non-zero-divisor in $R$ if $x$ is non-zero, and the map $R \xrightarrow{ \times x} R$ given by multiplication by $x$ is injective.
\end{Definition}

\begin{Definition}\label{def:regular}
Let $R$ be a non-zero $\mathbb{N}$-graded commutative algebra over $k$ and let $R_+$
denote the sum of the positively graded components of $R$.  We say that a 
sequence $\f_1,\dots,\f_r \in R_+$ of homogeneous elements is a \textbf{regular sequence} if 
for every $1 \leq j \leq r$, the equivalence class of 
$\f_j$ is a non-zero-divisor in the quotient ring $R/(\f_1,\dots,\f_{j-1})$.  
\end{Definition}

\begin{Remark} 
Notice that since the $\f_j$ in Definition~\ref{def:regular} are 
in $R_+$ for all $j$, 
the degree-$0$ piece $R_0 \not \cong 0$ survives in all the quotient rings $R/(\f_1, \cdots, \f_{j-1})$ as well as in the last quotient $R/(\f_1, \cdots, \f_r)$; therefore, all the quotients are non-zero rings. 
\end{Remark}

As in the case of Hilbert series above, the case of the polynomial ring $R=k[x_1,\ldots, x_n]$ provides the most basic example. 

\begin{Example}\label{ex: reg sequence poly ring} 
Let $R = k[x_1,\cdots,x_n]$ with $\deg x_i = 2$ for all $1 \leq i \leq n$. Then each $x_i \in R_+$ is homogeneous, and it is easy to check that the
sequence $x_1,\ldots,x_r \ (r \leq n)$ in the polynomial ring $k[x_1, \ldots, x_n]$ is a regular sequence.
More generally, for a choice of distinct indices $i_1, \ldots, i_r$ in $[n]$, the homogeneous sequence $x_{i_1},\ldots,x_{i_r}$
 is also a regular sequence. 
\end{Example}

Generalizing the second claim in Example~\ref{ex: reg sequence poly ring}, we have the following simple lemma. 

\begin{Lemma}
Let $R$ be a graded $k$-algebra.
Suppose that $\f_1,\dots,\f_r \in R_+$ is a regular sequence.  
Then any permutation of $\f_1,\dots,\f_r$ is also a regular sequence. 
\end{Lemma}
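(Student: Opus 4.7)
The plan is to reduce the claim about an arbitrary permutation to the case of an adjacent transposition, and then to prove the adjacent-transposition case by a direct manipulation that uses the graded structure of $R$ in an essential way.

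First I would observe that the symmetric group $S_r$ is generated by adjacent transpositions $(i, i+1)$. Moreover, if $\f_1, \ldots, \f_r$ is a regular sequence and we swap $\f_i$ and $\f_{i+1}$, the elements $\f_1, \ldots, \f_{i-1}$ (on which we quotient first) are unchanged, and the elements $\f_{i+2}, \ldots, \f_r$ (which come after) see the same ideal $(\f_1, \ldots, \f_{i-1}, \f_i, \f_{i+1})$ regardless of the order of $\f_i, \f_{i+1}$. Thus, after passing to the quotient $\overline{R} := R/(\f_1, \ldots, \f_{i-1})$, the question reduces to showing: if $\f_i, \f_{i+1}$ is a regular sequence in $\overline{R}$, then so is $\f_{i+1}, \f_i$. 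So it suffices to prove the $r = 2$ case in a general graded $k$-algebra.

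Now suppose $\f_1, \f_2 \in R_+$ is a regular sequence of homogeneous elements. There are two things to check: that $\f_2$ is a non-zero-divisor on $R$, and that $\f_1$ is a non-zero-divisor on $R/(\f_2)$. For the second, I would argue directly: if $\f_1 y \in (\f_2)$, write $\f_1 y = \f_2 z$. Then $\f_2 z \equiv 0$ in $R/(\f_1)$, so by hypothesis $z = \f_1 w$ for some $w$, and then $\f_1(y - \f_2 w) = 0$; since $\f_1$ is a non-zero-divisor on $R$, we conclude $y = \f_2 w \in (\f_2)$, as required.

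The main obstacle is the first claim, that $\f_2$ is a non-zero-divisor on $R$. The natural attempt is: if $\f_2 y = 0$, then in $R/(\f_1)$ we have $\overline{\f_2} \cdot \overline{y} = 0$, so $y \in (\f_1)$, say $y = \f_1 y_1$. Then $\f_1 (\f_2 y_1) = 0$, so by injectivity of multiplication by $\f_1$ we get $\f_2 y_1 = 0$, and iterating forces $y \in \bigcap_{n \geq 1}(\f_1^n)$. The issue is that this intersection need not vanish in a general ring, and this is exactly where the graded hypothesis becomes crucial. I would exploit the homogeneity of $\f_2$ to reduce to the case where $y$ is homogeneous (of some degree $d$), since multiplication by a homogeneous element preserves the grading component-wise. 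Then each iteration $y = \f_1^n y_n$ would force $y_n$ to be a homogeneous element of degree $d - n \deg(\f_1)$. Since $\deg(\f_1) > 0$ (because $\f_1 \in R_+$), for $n$ sufficiently large this degree is negative, so $y_n = 0$ and hence $y = 0$. This completes the proof for $r=2$, and together with the first paragraph's reduction, yields the lemma.
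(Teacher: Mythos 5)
Your proof is correct and follows essentially the same strategy as the paper's: reduce to adjacent transpositions, pass to the quotient $R/(\f_1,\dots,\f_{i-1})$ to isolate the swap of the first two remaining elements, verify the second non-zero-divisor claim by the direct algebraic manipulation $\f_1 y = \f_2 z \Rightarrow z \in (\f_1) \Rightarrow y \in (\f_2)$, and verify the first by a degree-descent argument that exploits the $\mathbb{N}$-grading and the fact that $\f_1 \in R_+$ has strictly positive degree. The only cosmetic difference is that the paper iterates until it isolates a remainder of degree strictly less than $\deg(\f_1)$ and then derives a contradiction, whereas you iterate until the degree of $y_n$ would be negative; both are the same descent and both are valid.
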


\begin{proof}
Since every permutation can be expressed as a product of simple transpositions, 
it is clear that it suffices to check the claim for a simple transposition. 
Fix an $i$, $1 \leq i \leq r-1$. We wish to show that 
 $\f_1,\dots,\f_{i+1},\f_i,\dots,\f_r$ is a regular sequence.  
In fact, it is enough to prove the claim for $i=1$, which can be seen as follows. 
Notice first that, from the definition of regular sequences, it is immediate that $\f_{i},\f_{i+1},\f_{i+2},\dots,\f_r$ is a regular sequence in $\bar{R}:=R/(\theta_1,\ldots,\theta_{i-1})$. Now suppose we know the claim for $i=1$;  then it follows that $\f_{i+1},\f_i,\f_{i+2},\dots,\f_r$ is also a regular sequence in $\bar{R}$. 
This observation, together with the original assumption that $\f_1,\dots,\f_r$ is a regular sequence in $R$, implies that  $\f_1,\dots,\f_{i+1},\f_i,\dots,\f_r$ is a regular sequence in $R$. Thus, it remains to prove the claim for $i=1$. 

For this we need to show that $\f_2,\f_1,\f_3,\dots,\f_r$ is a regular sequence in $R$, under the assumption that $\f_1, \f_2, \cdots, \f_r$ is a regular sequence. It would suffice to show the following: 
\begin{enumerate}
\item The kernel of the multiplication map $R \xrightarrow{\times \theta_2} R$ is trivial. 
\item The kernel of the multiplication map $R/(\theta_2) \xrightarrow{\times \theta_1} R/(\theta_2)$ is trivial. 
\end{enumerate}
We begin with $(1)$. Let $I$ be the kernel of the multiplication map $R \xrightarrow{\times \theta_2} R$. Since $\theta_2$ is homogeneous, we know that $I$ is a homogeneous ideal. Thus it is enough to show that any homogeneous element $x \in I$ is equal to $0$. 
To see this, let $x \in I$ be homogeneous and consider the commutative diagram
\[
  \begin{CD}
     R @>{\times \theta_2}>> R \\
  @VVV    @VVV \\
     R/(\theta_1)   @>{\times \theta_2}>>  R/(\theta_1)
  \end{CD}
\]
where we know that the bottom horizontal map is injective by assumption (and we slightly abuse notation and denote by $\theta_2$ its equivalence class in $R/(\theta_1)$). Since $x$ is in the kernel of the top horizontal arrow, it is also in the kernel of the composition $R \to R/(\theta_1) \stackrel{\times \theta_2}{\rightarrow} R/(\theta_1)$, and by the injectivity of the bottom horizontal map it follows that 
$x=\theta_1 x'$ in $R$ for some homogeneous element $x' \in R$.
Since $x \in I$, we also know that $\theta_1(\theta_2x')=\theta_2(\theta_1x')=\theta_2x=0$ in $R$. 
Since $\theta_1$ is a non-zero-divisor in $R$ by assumption, we conclude that $\theta_2x'=0$ in $R$, which in turn implies $x' \in I$.
Moreover, note that $\deg(x') = \deg(x) - \deg(\theta_1) < \deg(x)$, since $\theta_1$ is homogeneous of strictly positive degree. Continuing the same argument for $x'$ and so on, we see that $x = \theta_1^k y$ for some $k$ where $0 \leq \deg(y) < \deg(\theta_1)$ and $y \in I$. 
Applying the argument again to $y$ we obtain $y = \theta_1 z$ for some homogeneous $z$, but  since $\deg(y) < \deg(\theta_1)$, this is impossible unless $z=0$. Hence $z=0$ and therefore $y=0$ and therefore $x=0$, as was to be shown.

We now prove (2).  Let $J$ be the kernel of the multiplication map $R/(\theta_2) \xrightarrow{\times \theta_1} R/(\theta_2)$. Since 
$\theta_1, \theta_2$ are both homogeneous, $R/(\theta_2)$ is a graded ring and $J$ is a homogeneous ideal. 
Let $y$ be a homogeneous element in $J$. It suffices to show that $y=0$ in $R/(\theta_2)$.
Since $y \in J$, we know there exists $z \in R$ such that  $\theta_1y=\theta_2z$ in $R$.
This means that $\theta_2z=0$ in $R/(\theta_1)$, and by the original assumption that $\theta_1, \theta_2$ form a regular sequence, we know this implies $z = 0$ in $R/(\theta_1)$. Thus there exists $y' \in R$ such that $z=\theta_1 y'$ in $R$
and we conclude $\theta_1 y=\theta_1 \theta_2 y'$ in $R$. 
Again by the original assumption, we know $\theta_1$ is a non-zero-divisor in $R$, so $\theta_1 y = \theta_1 \theta_2 y'$ implies that $y=\theta_2 y'$ in $R$, which in turn implies $y=0$ in $R/(\theta_2)$. This is what was to be shown, and we conclude that $J$ is trivial, as desired. 

Therefore, $\f_2,\f_1,\f_3,\dots,\f_r$ is a regular sequence in $R$.
This completes the proof.

\end{proof}

Next, we introduce some notation that will be useful below. Suppose that $\f_1,\dots,\f_r \in R_+$ is a homogeneous sequence. We define
\begin{equation}\label{eq: def Ri} 
R^{(j)}:=R/(\f_1,\dots,\f_j)
\end{equation} 
for $1\le j\le r$ and $R^{(0)}:=R$.
By definition, $\f_1,\dots,\f_r$ is a regular sequence in $R$ if and only if the multiplication map 
$R^{(j-1)}\stackrel{\times \f_j}\longrightarrow R^{(j-1)}$ is injective for all $1\le j\le r$.
In other words, for each $1 \leq j \leq r$ we have the following exact sequence of $R$-modules: 
\begin{equation} \label{eq:exact_regular_sequence}
0 \to R^{(j-1)}\stackrel{\times \f_j}\longrightarrow R^{(j-1)}\to R^{(j)}\to 0. 
\end{equation}
Using the exact sequences above, we can identify different criteria for being a regular sequence.  The next lemma is an example.

\begin{Lemma}\label{lemma: reg seq alg-ind} 
Let $R$ be a graded $k$-algebra and suppose that $\f_1, \dots, \f_r \in R_+$ are homogeneous elements.
Then $\f_1, \dots, \f_r$ is a regular sequence if and only if the set $\f_1,\dots,\f_r$ is algebraically independent over $k$ and $R$ is a free $k[\f_1,\dots,\f_r]$-module.
\end{Lemma}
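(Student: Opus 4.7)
The plan is to tackle the forward direction via a Hilbert series comparison and the backward direction by a direct decomposition argument.

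For the forward direction, assume that $\theta_1,\dots,\theta_r$ is a regular sequence. I would pick a homogeneous $k$-basis $\{\bar m_\alpha\}$ of $R^{(r)} = R/(\theta_1,\dots,\theta_r)$ (arranged so that $\bar 1$ is one of them, which is possible since each $\theta_j$ has strictly positive degree and hence $R_0 \cong k$ survives in $R^{(r)}$) and lift to homogeneous elements $\{m_\alpha\} \subset R$ with $m_{\alpha_0} = 1$. Form the graded polynomial ring $T = k[T_1,\dots,T_r]$ with $\deg T_j = \deg \theta_j$, viewing $R$ as a graded $T$-module via $T_j \cdot x = \theta_j x$, and consider the free graded $T$-module $F = \bigoplus_\alpha T \cdot e_\alpha$ (where $\deg e_\alpha = \deg m_\alpha$) together with the degree-preserving $T$-linear map $\phi \colon F \to R$ sending $e_\alpha \mapsto m_\alpha$. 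A straightforward induction on degree shows $\phi$ is surjective: for a homogeneous $x \in R$, its image in $R^{(r)}$ is a finite $k$-combination $\sum c_\alpha \bar m_\alpha$, so $x - \sum c_\alpha m_\alpha \in (\theta_1,\dots,\theta_r) R$ can be written as $\sum_j \theta_j y_j$ with the $y_j$ homogeneous of strictly smaller degree.

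The key step is to show $\phi$ is injective, which I would do by a Hilbert series comparison. Iterating the exact sequence \eqref{eq:exact_regular_sequence} gives
\[
\Hilb(R^{(r)}, q) \;=\; \Hilb(R,q) \prod_{j=1}^r (1 - q^{\deg \theta_j}),
\]
while Example~\ref{example.Hilbert.polynomial.ring} (suitably adapted to arbitrary positive degrees) shows $\Hilb(T, q) = \prod_{j=1}^r (1 - q^{\deg \theta_j})^{-1}$. Since $F$ is $T$-free on a basis with degrees matching the $m_\alpha$, its Hilbert series is $\Hilb(T,q) \cdot \Hilb(R^{(r)}, q) = \Hilb(R, q)$. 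Thus $\phi$ is a degree-preserving surjection of graded $k$-vector spaces whose graded components are finite-dimensional and of equal dimension, and so $\phi$ is an isomorphism. To extract algebraic independence, note that if a nonzero $P \in T$ satisfied $P(\theta_1,\dots,\theta_r) = 0$ in $R$, then $P \cdot e_{\alpha_0} \in F$ would be nonzero but map to $P(\theta) \cdot 1 = 0$, contradicting injectivity. Hence $k[\theta_1,\dots,\theta_r] \subset R$ is canonically isomorphic to $T$ as a graded algebra, and $\phi$ exhibits $R$ as a free $k[\theta_1,\dots,\theta_r]$-module with basis $\{m_\alpha\}$.

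For the backward direction, suppose $\theta_1,\dots,\theta_r$ are algebraically independent so that $S := k[\theta_1,\dots,\theta_r]$ is a polynomial ring, and $R \cong \bigoplus_\alpha S \cdot m_\alpha$ as $S$-modules. For each $1 \le j \le r$, tensoring with $S/(\theta_1,\dots,\theta_{j-1})$ gives
\[
R^{(j-1)} \;\cong\; \bigoplus_\alpha k[\theta_j,\dots,\theta_r] \cdot m_\alpha,
\]
a free module over the integral domain $k[\theta_j,\dots,\theta_r]$. Multiplication by $\theta_j$ is injective on each summand and hence on $R^{(j-1)}$, verifying that $\theta_1,\dots,\theta_r$ is a regular sequence.

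The main obstacle is the injectivity of $\phi$ in the forward direction; the Hilbert series comparison elegantly sidesteps a more intricate graded-Nakayama-style argument by reducing everything to a dimension count in each graded degree, providing a first illustration of the Hilbert series technique that recurs throughout this manuscript.
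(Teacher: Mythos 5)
Your proof is correct, but it follows a genuinely different route from the paper's. In the forward direction, the paper proves the two conclusions separately and in that order: algebraic independence is established by an induction on $r$ (reducing modulo $\theta_1$ and applying the inductive hypothesis in $R/(\theta_1)$), and freeness is then established by a descending induction on $j$ via explicit direct-sum decompositions of the graded pieces of $R^{(j-1)}$. You instead construct the map $\phi \colon F = \bigoplus_\alpha T\cdot e_\alpha \to R$ from a free graded $T$-module (with $T = k[T_1,\dots,T_r]$) built on a lifted $k$-basis of $R^{(r)}$, prove surjectivity by a degree induction, and then read off injectivity from the equality of Hilbert series coming from iterating~\eqref{eq:exact_regular_sequence}. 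This kills both birds with one stone: algebraic independence and freeness both drop out of the isomorphism $\phi$. In the converse direction, the paper again argues explicitly, verifying linear independence of the basis $\{v_\lambda\}$ over $k[\theta_j,\dots,\theta_r]$ by a computation in the polynomial ring $k[\theta_1,\dots,\theta_r]$; you instead apply $(-)\otimes_S S/(\theta_1,\dots,\theta_{j-1})$ to the decomposition $R \cong \bigoplus_\alpha S\cdot m_\alpha$ and observe that multiplication by $\theta_j$ is injective on a free module over the integral domain $k[\theta_j,\dots,\theta_r]$.

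The trade-offs: your argument is shorter and more conceptual, and it nicely previews the Hilbert series technique that dominates the rest of the manuscript (indeed the identity $\Hilb(R^{(r)},q)=\Hilb(R,q)\prod_j(1-q^{\deg\theta_j})$ is exactly the ``only if'' half of Lemma~\ref{lemma: hilbert regular criterion}, proved later; you are essentially reordering the logic so that this computation is used here rather than afterward). The paper's proof is longer but more elementary --- it never invokes the finite-dimensionality of the graded pieces for a dimension count, nor tensor products --- and it makes the content of Remark~\ref{remark:regular sequence basis} (a $k$-basis of $R/(\theta_1,\dots,\theta_r)$ lifts to a $k[\theta_1,\dots,\theta_r]$-basis of $R$) fall out explicitly along the way, which you recover as a byproduct of identifying $F\cong R$. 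One small point worth making explicit in a writeup: the dimension count requires each graded piece $F_\ell$ to be finite-dimensional, which holds because only finitely many $m_\alpha$ have degree $\le \ell$, itself a consequence of the standing hypothesis that the graded pieces of $R$ (hence of $R^{(r)}$) are finite-dimensional.
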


\begin{proof}
We will first assume that $\f_1,\cdots, \f_r$ is a regular sequence and then prove that $\{\f_1, \cdots, \f_r\}$ is algebraically independent over $k$ and that $R$ is a free $k[\f_1, \cdots, \f_r]$-module. We begin by showing the algebraic independence, using induction on $r$.

Let us consider the base case when $r=1$. An algebraic relation involving the single element $\f_1$ is of the form $\alpha \theta_1^\ell=0$ for some $\alpha \in k$ and $\ell>0$ a positive integer. This implies $\alpha=0$ because $\theta_1$ is a non-zero-divisor on $R$. Thus $\f_1$ is algebraically independent, as claimed. 
Suppose now that $r > 1$ and that the claim of the lemma holds for $r-1$. 
Note that since the $\f_i$ are assumed homogeneous, we may without loss of generality assume that an algebraic relation among the $\f_1, \cdots, \f_r$ is also a homogeneous relation.  So suppose now that 
\begin{equation} \label{eq:proof_algebraically_independent}
\sum_{\ell_1 \geq 0, \dots, \ell_r \geq 0} \alpha_{\ell_1,\ldots,\ell_r} \theta_1^{\ell_1} \cdots \theta_r^{\ell_r}=0 \ \ \ {\rm in} \ R
\end{equation}
is such a homogeneous relation, where the $\ell_i \in \Z$ are non-negative integers and 
$\alpha_{\ell_1,\ldots,\ell_r} \in k$. We wish to show that all the coefficients $\alpha_{\ell_1,\ldots,\ell_r}$ are $0$. 
If we consider the equation \eqref{eq:proof_algebraically_independent} in the quotient ring $\bar{R}:=R/(\theta_1)$, then all terms containing a $\theta_1$ are equivalent to $0$ in $R/(\theta_1)$ and hence we obtain the relation 
\begin{equation*} 
\sum_{\ell_1 = 0, \ell_2 \geq 0, \dots, \ell_r \geq 0} \alpha_{\ell_1,\ldots,\ell_r} \theta_2^{\ell_2} \cdots \theta_r^{\ell_r}=0 \ \ \ {\rm in} \ \bar{R}.
\end{equation*}
Recall that, by assumption, $\f_2,\dots,\f_r$ is regular sequence in $\bar{R}=R/(\theta_1)$ of length $r-1$. Thus, by the inductive hypothesis applied to the graded ring $\bar{R} := R/(\theta_1)$ and the sequence $\f_2, \cdots, \f_r$, we conclude that $\alpha_{\ell_1,\ldots,\ell_r}=0$ 
when $\ell_1=0$. 
Returning to the original algebraic relation~\eqref{eq:proof_algebraically_independent} in $R$, the fact that many coefficients are equal to $0$ simplifies that equation to yield the following relation 
\begin{equation*} 
\sum_{\ell_1 > 0, \ell_2 \geq 0, \dots, \ell_r \geq 0} \alpha_{\ell_1,\ldots,\ell_r} \theta_1^{\ell_1} \cdots \theta_r^{\ell_r}=0  \ \ \ {\rm in} \ R.
\end{equation*}
Notice that since $\ell_1>0$ for all the terms that appear on the LHS of the above relation, and since $\theta_1$ is by assumption a non-zero-divisor in $R$, we can factor out a $\theta_1$ and by setting $\ell'_1 := \ell_1-1$ we obtain  
\begin{equation*} 
\sum_{\ell'_1 \geq 0, \dots, \ell_r \geq 0} \alpha_{\ell'_1+1,\ldots,\ell_r} \theta_1^{\ell'_1} \theta_2^{\ell_2} \cdots \theta_r^{\ell_r}=0  \ \ \ {\rm in} \ R.
\end{equation*}
This is again a homogeneous algebraic relation in $R$ among the $\f_1, \cdots, \f_r$.  
Applying the same argument as above, we can conclude that $\alpha_{\ell_1,\ldots,\ell_r}=0$ if $\ell'_1=0$, i.e. $\ell_1=1$, 
and again as above we obtain the relation 
$\sum_{\ell''_1 \geq 0, \dots, \ell_r \geq 0} \alpha_{\ell''_1+2,\ldots,\ell_r} \theta_1^{\ell''_1} \cdots \theta_r^{\ell_r}=0 $ in $R$.
Continuing this process, 
we can conclude that all coefficients $\alpha_{\ell_1,\ldots,\ell_r}$ are equal to $0$ and hence that $\{\f_1,\dots,\f_r\}$ is algebraically independent over $k$, as desired.

Next, we need to show that if $\f_1, \dots, \f_r$ is a regular sequence, then $R$ is a free $k[\f_1,\dots,\f_r]$-module.
To accomplish this, we prove that $R^{(j-1)}$ is a free $k[\f_j,\dots,\f_r]$-module for all $1 \leq j \leq r$, by a descending induction argument on $j$.
First, let us consider the base case when $j=r$. 
 From the exact sequence in \eqref{eq:exact_regular_sequence} for $j=r$ it follows that 
for each $\ell$-th graded piece $R^{(r-1)}_\ell$ of $R^{(r-1)}$ we can decompose 
 \begin{equation}\label{eq: decomp R r-1}
 R^{(r-1)}_\ell \cong (\theta_r \cdot R^{(r-1)})_\ell \oplus R^{(r)}_\ell. 
 \end{equation} 
 (Note that  $(\theta_r \cdot R^{(r-1)})_\ell = 0$ if $\ell < \deg(\theta_r)$.) Moreover, the factor $(\theta_r \cdot R^{(r-1)})_\ell$ can be decomposed further, using the homogeneity of $\theta_r$, namely, for $\ell = \deg(\theta_r) \cdot q + s$ for $q,s \in \Z$ and $0 \leq s < \deg(\theta_r)$ we have 
\begin{equation}\label{eq: decomp theta r R r-1}
\begin{split} 
 (\theta_r \cdot R^{(r-1)})_{\ell}  & =\theta_r \cdot R^{(r-1)}_{\ell-\deg(\theta_r)} \cong  \theta_r \cdot \big( (\theta_r \cdot R^{(r-1)})_{\ell-\deg(\theta_r)} \oplus R^{(r)}_{\ell-\deg(\theta_r)} \big) \\
  & =\theta_r^2 \cdot (R^{(r-1)})_{\ell - 2 \deg(\theta_r)} \oplus \theta_r \cdot (R^{(r)})_{\ell - \deg(\theta_r)}  \\
  & \cong \cdots \\ 
  & \cong \theta_r^q \cdot (R^{(r-1)})_{\ell - q \deg(\theta_r)} \oplus \theta_r^{q-1} \cdot (R^{(r)})_{\ell - (q-1) \deg(\theta_r)} \oplus \cdots \oplus \theta_r \cdot (R^{(r)})_{\ell - \deg(\theta_r)} \\
  & \cong \theta_r^q \cdot (R^{(r)})_{\ell - q \deg(\theta_r)} \oplus \theta_r^{q-1} \cdot (R^{(r)})_{\ell - (q-1) \deg(\theta_r)} \oplus \cdots \oplus \theta_r \cdot (R^{(r)})_{\ell - \deg(\theta_r)}. \\ 
 \end{split} 
 \end{equation}
 
Applying~\eqref{eq: decomp R r-1} and~\eqref{eq: decomp theta r R r-1} 
we then see that 
 \begin{equation} 
 (R^{(r-1)})_{\ell} \cong \bigoplus_{t=0}^q \theta_r^t \cdot (R^{(r)})_{\ell - t \deg(\theta_r)}. 
 \end{equation} 

 Since this is true of any $\ell$,  
this implies that a $k$-basis for $R^{(r)}$ forms a $k[\theta_r]$-basis of $R^{(r-1)}$, and hence 
$R^{(r-1)}$ is a free $k[\theta_r]$-module. This concludes the base case. 
Now suppose that $j < r$ and assume by induction that the claim holds for $j+1$.
The argument for this inductive step is very similar to the base case, so we will be brief.
By our original hypothesis, we know we have an exact sequence~\eqref{eq:exact_regular_sequence}. 
Moreover, by the inductive assumption, $R^{(j)}$ is a free $k[\theta_{j+1},\ldots,\theta_r]$-module.
An argument similar to the one for the base case shows that 
a $k[\theta_{j+1},\ldots,\theta_r]$-basis $\{v_{\lambda} \}_\lambda$ of $R^{(j)}$ spans $R^{(j-1)}$ with $k[\theta_j,\ldots,\theta_r]$-coefficients. 
To see the linear independence of $\{v_{\lambda}\}_\lambda$ over $k[\theta_j,\ldots,\theta_r]$, suppose that 
\begin{align} \label{eq:linearly_independence}
\sum_{\lambda} g_\lambda v_\lambda = 0 \ \ \ {\rm in} \ R^{(j-1)} 
\end{align}
for $g_\lambda \in k[\theta_j,\ldots,\theta_r]$. We wish to show $g_\lambda=0$ for all $\lambda$. 
For each $\lambda$, one can write $g_\lambda = \sum_{s \geq 0} \theta_j^s f_s^{(\lambda)}$ for some $f_s^{(\lambda)} \in k[\theta_{j+1},\ldots,\theta_r]$ (only finitely many $f_s^{(\lambda)}$ are non-zero). Considering the image of this relation in $R^{(j)}$ we obtain 
$$
\sum_{\lambda} f_0^{(\lambda)} v_\lambda =0 \ \ \ {\rm in} \ R^{(j)}.
$$
Since $\{v_\lambda \}$ is linearly independent in $R^{(j)}$ over $k[\theta_{j+1},\ldots,\theta_r]$, we have $f_0^{(\lambda)}=0$ for all $\lambda$.
Thus, we can write \eqref{eq:linearly_independence} as $\sum_{\lambda} \big(\sum_{s \geq 1} \theta_j^s f_s^{(\lambda)} \big) v_\lambda = 0$ in $R^{(j-1)}$. 
However, since $\theta_j$ is a non-zero-divisor in $R^{(j-1)}$, we have 
\begin{align*}
\sum_{\lambda} \big(\sum_{s \geq 0} \theta_j^s f_{s+1}^{(\lambda)} \big) v_\lambda = 0 \ \ \ {\rm in} \ R^{(j-1)}. 
\end{align*}
Considering the image of this relation in $R^{(j)}$ and proceeding as above (and as in the argument earlier in this proof for the algebraic independence) it is not hard to conclude that $f_s^{(\lambda)}=0$ for all $\lambda$ and $s$, hence $g_\lambda = 0$ for all $\lambda$, as desired. 
Therefore, a $k[\theta_{j+1},\ldots,\theta_r]$-basis $\{v_{\lambda} \}_\lambda$ of $R^{(j)}$ forms a $k[\theta_j,\ldots,\theta_r]$-basis for $R^{(j-1)}$, and $R^{(j-1)}$ is a free module over $k[\theta_j,\ldots,\theta_r]$, as desired. This completes the induction step, and 
we conclude that $R=R^{(0)}$ is a free $k[\f_1,\dots,\f_r]$-module, as was to be shown.

We now prove the converse direction. To do this, we suppose that $\f_1,\dots,\f_r$ is algebraically independent over $k$ and that $R$ is a free $k[\f_1,\dots,\f_r]$-module.  We then need to prove that $\f_1, \dots, \f_r$ is a regular sequence. 
We note first that it suffices to show that $R^{(j-1)}$ is a free $k[\theta_j,\ldots,\theta_r]$-module, since this would imply that 
$\theta_j$ is a non-zero-divisor in $R^{(j-1)}$. 
In what follows, it is useful also to note that since the $\f_1, \cdots, \f_r$ are algebraically independent over $k$ by assumption, the subring 
$k[\theta_1, \cdots, \theta_r]$ generated by $\theta_1, \cdots, \theta_r$ is isomorphic to a polynomial ring.
Now let $\{v_\lambda\}_\lambda$ be a $k[\theta_1,\ldots,\theta_r]$-basis of $R$.
Then it follows that $\{v_\lambda\}_\lambda$ generates $R^{(j-1)}$ over $k[\theta_j,\ldots,\theta_r]$.
It suffices now to show that $\{v_\lambda\}_\lambda$ is linearly independent in $R^{(j-1)}$ over $k[\theta_j,\ldots,\theta_r]$.
Suppose we have a relation $\sum_{\lambda} g_{\lambda} v_\lambda =0$ in $R^{(j-1)}$ for some $g_{\lambda} \in k[\theta_j,\ldots,\theta_r]$. We wish to show $g_\lambda=0$ for all $\lambda$. 
Since $R^{(j-1)} = R/(\theta_1, \cdots, \theta_{j-1})$ by definition, the relation above implies that we have $\sum_{\lambda} g_{\lambda} v_\lambda = \theta_1 y_1 + \dots + \theta_{j-1} y_{j-1}$ in $R$ for some $y_1,\ldots,y_{j-1} \in R$.
On the other hand, since the $v_\lambda$ generate, we know we can write $y_s=\sum_{\lambda} f_{\lambda}^{(s)} v_\lambda$ for some $f_{\lambda}^{(s)} \in k[\theta_1,\ldots,\theta_r]$, which in turn implies 
$$
\sum_{\lambda} (g_{\lambda} -\theta_1 f_{\lambda}^{(1)} - \dots -\theta_{j-1} f_{\lambda}^{(j-1)}) v_\lambda=0 \ \ \ {\rm in} \ R.
$$
By linear independence of the $\{v_\lambda\}$ over $k[\theta_1,\cdots,\theta_r]$, we conclude 
\begin{equation}\label{eq: g lambda f lambda} 
g_{\lambda} -\theta_1 f_{\lambda}^{(1)} - \dots -\theta_{j-1} f_{\lambda}^{(j-1)}=0 \Leftrightarrow 
g_{\lambda} = \theta_1 f_{\lambda}^{(1)} + \cdots + \theta_{j-1} f_{\lambda}^{(j-1)} 
\end{equation}
 for all $\lambda$.
However, by construction, the $g_\lambda$ are elements of $k[\theta_j,\ldots,\theta_r]$. Since equation~\eqref{eq: g lambda f lambda} holds in $k[\theta_1,\cdots,\theta_r]$, which is a polynomial ring in the variables $\theta_j$'s, if the $g_\lambda$ are equal to a polynomial involving $\theta_1, \cdots, \theta_{j-1}$, then both sides must be equal to $0$. Thus~\eqref{eq: g lambda f lambda} implies $g_\lambda=0$ for all $\lambda$, as desired. 
Hence, $\{v_\lambda\}_\lambda$ is a linearly independent in $R^{(j-1)}$ over $k[\theta_j,\ldots,\theta_r]$ and hence they form a basis. 
Therefore, $R^{(j-1)}$ is a free $k[\theta_j,\ldots,\theta_r]$-module, as was to be shown. 
\end{proof}

\begin{Remark} \label{remark:regular sequence basis}
The proof given for Lemma~\ref{lemma: reg seq alg-ind} shows that if $\f_1, \dots, \f_r$ is a regular sequence in $R$, then a $k$-basis $\{v_{\lambda} \}_\lambda$ of $R/(\f_1, \dots, \f_r)$ forms a $k[\theta_1,\ldots,\theta_r]$-basis for $R$.
\end{Remark}

Here is another well-known characterization of regular sequences, given in terms of Hilbert series. 

\begin{Lemma} \label{lemma: hilbert regular criterion}
A homogeneous sequence $\f_1,\dots,\f_r \in R_+$ 
is a regular sequence if and only if 
\begin{equation} \label{eq:4.6}
\Hilb(R/(\f_1,\dots,\f_r),q)=\Hilb(R,q)\prod_{j=1}^r(1-q^{\deg{\f_j}}). 
\end{equation}
\end{Lemma}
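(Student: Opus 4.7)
The plan is to base everything on a refined version of the exact sequence \eqref{eq:exact_regular_sequence}, one that remains valid even when $\theta_j$ fails to be a non-zero-divisor. For each $1 \le j \le r$, let $K_j$ denote the kernel of the multiplication map $R^{(j-1)} \xrightarrow{\times \theta_j} R^{(j-1)}$; this is a graded submodule which is zero precisely when $\theta_j$ is a non-zero-divisor in $R^{(j-1)}$. Writing $d_j := \deg \theta_j$, the (always exact) four-term sequence
\[
0 \to K_j \to R^{(j-1)} \xrightarrow{\times \theta_j} R^{(j-1)} \to R^{(j)} \to 0,
\]
taking into account that the middle map shifts degree by $d_j$, yields by a degree-by-degree dimension count the fundamental identity
\[
\Hilb(R^{(j)}, q) \;=\; (1 - q^{d_j})\,\Hilb(R^{(j-1)}, q) \;+\; q^{d_j}\,\Hilb(K_j, q).
\]

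The forward direction of \eqref{eq:4.6} is then immediate: regularity forces $K_j = 0$ for every $j$, so iterating the fundamental identity from $j=1$ up to $j=r$ produces the claimed product formula.

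The converse is the more delicate direction. The natural temptation is to iterate by multiplying successively by $(1 - q^{d_j})$, but since that factor has a negative coefficient, the resulting error expression has terms of mixed sign, and one cannot then conclude that any individual $\Hilb(K_j, q)$ must vanish. The right move is to \emph{divide}. Rewriting the fundamental identity as
\[
\Hilb(R^{(j-1)}, q) \;=\; \frac{\Hilb(R^{(j)}, q)}{1 - q^{d_j}} \;-\; \frac{q^{d_j}\,\Hilb(K_j, q)}{1 - q^{d_j}},
\]
one observes that the subtracted term is a formal power series with \emph{non-negative} coefficients, since $\tfrac{1}{1-q^{d_j}} = 1 + q^{d_j} + q^{2d_j} + \cdots$ has non-negative coefficients and so does $\Hilb(K_j, q)$. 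Iterating this relation yields the telescoping identity
\[
\Hilb(R, q) \;=\; \frac{\Hilb(R^{(r)}, q)}{\prod_{j=1}^r (1 - q^{d_j})} \;-\; \sum_{j=1}^r \frac{q^{d_j}\,\Hilb(K_j, q)}{\prod_{i=1}^j (1 - q^{d_i})},
\]
in which every summand of the subtracted sum is manifestly a non-negative power series.

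Assuming \eqref{eq:4.6}, the first fraction on the right collapses to $\Hilb(R, q)$, so the entire sum of non-negative power series must vanish; this forces each $\Hilb(K_j, q) = 0$, hence each $K_j = 0$, and therefore $\theta_1, \ldots, \theta_r$ is a regular sequence. The main obstacle I anticipate is precisely the sign issue just described: recognizing that one must divide, rather than multiply, when iterating the fundamental identity, so that the geometric-series expansion of $\tfrac{1}{1 - q^{d_j}}$ supplies the positivity needed to split the vanishing of a sum of non-negative series into the vanishing of each individual summand.
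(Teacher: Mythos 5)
Your proof is correct, and both directions are sound.  Your forward direction is the same as the paper's; the converse is where you diverge, and it is worth comparing.  The paper \emph{does} in fact iterate by multiplying by $(1 - q^{d_j})$, arriving at
\[
\Hilb(R^{(r)},q)=\Hilb(R,q)\prod_{j=1}^r(1-q^{d_j})
+ \sum_{j=1}^r (1-q^{d_r})\cdots(1-q^{d_{j+1}})\,q^{d_j}\,\Hilb(K_j,q),
\]
and then observes that although the factors $(1-q^{d_i})$ have mixed sign, the \emph{lowest-order} coefficient of each summand is still non-negative (it is just the leading coefficient of $q^{d_j}\Hilb(K_j,q)$, since the $(1-q^{d_i})$ factors contribute $1$ in lowest order).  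Comparing lowest-order terms on both sides then forces all $\Hilb(K_j,q)$ to vanish.  So your remark that ``one cannot then conclude'' from the multiplied identity is not quite right: one can, by extracting the minimal-degree term.  That said, your version---dividing instead of multiplying, so that $\tfrac{1}{1-q^{d_i}}$ expands as a geometric series with non-negative coefficients and the whole error term is manifestly a non-negative power series---is a clean alternative: positivity is visible in every degree at once, and the conclusion follows without singling out the lowest-order coefficient.  Both arguments are valid; yours trades the paper's minimal-degree bookkeeping for the slightly heavier formal-power-series manipulation of clearing denominators, and is arguably more transparent.
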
 

\begin{proof}
We first prove the ``only if'' direction. Suppose that $\f_1,\dots,\f_r \in R_+$ is a regular sequence. Then the exact sequence \eqref{eq:exact_regular_sequence} yields 
\begin{equation*} 
\Hilb(R^{(j)},q)=\Hilb(R^{(j-1)},q)(1-q^{\deg\f_j}) \quad\text{for any $1\le j\le r$}. 
\end{equation*} 
From this, the equality \eqref{eq:4.6} immediately follows.

We now prove the converse, i.e., we show that if \eqref{eq:4.6} holds, then $\f_1,\dots,\f_r$ is a regular sequence.
For this purpose, let $K_j$ denote the kernel of the multiplication map $R^{(j-1)} \xrightarrow{\times \f_j} R^{(j-1)}$. 
Then we have an exact sequence 
\begin{equation*} 
0 \to R^{(j-1)}/K_j \stackrel{\times \f_j}\longrightarrow R^{(j-1)}\to R^{(j)}\to 0 \quad \textup{for $1\le j\le r$},
\end{equation*}
from which we can conclude that 
\begin{equation*} 
\Hilb(R^{(j)},q)=\Hilb(R^{(j-1)},q)(1-q^{\deg\f_j})+q^{\deg\f_j}\Hilb(K_j,q) \quad\textup{for any $1\le j\le r$}.
\end{equation*} 
A straightforward inductive computation yields the formula
\begin{align*} 
\Hilb(R^{(r)},q)=&\Hilb(R,q)\prod_{j=1}^r(1-q^{\deg{\f_j}}) \\
&+ \sum_{j=1}^r (1-q^{\deg{\f_r}})(1-q^{\deg{\f_{r-1}}})\cdots(1-q^{\deg{\f_{j+1}}})q^{\deg{\f_j}} \Hilb(K_j,q).
\end{align*}
Since we assume that \eqref{eq:4.6} holds, we have 
\begin{align} \label{eq:proof_regular_Hilbert} 
\sum_{j=1}^r (1-q^{\deg{\f_r}})(1-q^{\deg{\f_{r-1}}})\cdots(1-q^{\deg{\f_{j+1}}})q^{\deg{\f_j}} \Hilb(K_j,q)=0.
\end{align} 
To prove that $\f_1,\cdots, \f_r$ is a regular sequence, it would suffice to show that $K_j=0$ for all $j$, 
for which it in turn suffices to show that $\Hilb(K_j,q)=0$ for all $1 \leq j \leq r$. 
Suppose, in order to obtain a contradiction, that there exist 
$s$ such that $\Hilb(K_s,q) \neq 0$.
For any such $s$, let us write $\Hilb(K_s,q) = a_{m_s} q^{m_s} + a_{m_s+1} q^{m_s+1}+\cdots$ where $a_{m_s} > 0$, 
i.e., $a_{m_s} q^{m_s}$ is the lowest term of $\Hilb(K_s,q)$. 
Then, the coefficient of the lowest term for the left hand side of \eqref{eq:proof_regular_Hilbert} is of the form $\sum a_{m_s}$ where the sum runs over $s$ such that $\deg{\f_s}+m_s$ is the minimum of $\{\deg{\f_\ell}+m_\ell \mid \Hilb(K_\ell,q) \neq 0 \}$. 
Comparing to the corresponding coefficient on the right hand side of \eqref{eq:proof_regular_Hilbert}, we see that 
the sum $\sum a_{m_s}$ must be zero. Since the $a_{m_s}$ are all positive, this gives the desired contradiction.
Hence, we have $\Hilb(K_j,q)=0$ for all $1 \leq j \leq r$, which means the the kernel $K_j$ of the multiplication map $R^{(j-1)} \xrightarrow{\times \f_j} R^{(j-1)}$ is trivial for all $1 \leq j \leq r$.
Therefore, $\f_1,\dots,\f_r$ is a regular sequence, as was to be shown. 
\end{proof} 

\begin{Remark} \label{remark:regular_sequence}
Suppose that $\f_1,\cdots, \f_r$ are a sequence of positive-degree homogeneous elements in $\Q[x_1,\cdots,x_n]$. 
Since the Hilbert series of $\Q[x_1,\cdots,x_n]/(\f_1,\cdots,\f_r)$ (where each graded piece is viewed as a $\Q$-vector space) is equal to the Hilbert series of $\C[x_1,\cdots,x_n]/(\f_1,\cdots,\f_r)$ (where each graded piece is viewed as a $\C$-vector space), 
and the right hand side of~\eqref{eq:4.6} depends only on the degrees of the $\f_i$, it follows that 
$\{\f_1, \cdots, \f_r\}$ are a regular sequence in $\Q[x_1,\cdots,x_n]$ if and only if $\f_1, \cdots, \f_r$ form a regular sequence in 
$\C[x_1, \cdots, x_n]$. 
\end{Remark} 

For the remainder of this section, we focus on the case when $R=\Q[x_1,\dots,x_n]$ is the polynomial ring with rational coefficients and $n$ variables. 
In this situation, there is a useful criterion, stated in Lemma~\ref{lemma: solution regular criterion}, for 
determining that a sequence of positive-degree homogeneous polynomials $\f_1,\dots,\f_n \in \Q[x_1,\dots,x_n]$ is a 
regular sequence. 
Here, it is important that the number of the homogeneous polynomials in the sequence is equal to the number of the variables  
$x_1, \cdots, x_n$ generating the ring $\Q[x_1,\dots,x_n]$.
We have the following.

\begin{Lemma}\label{lemma: solution regular criterion}
A sequence of positive-degree
  homogeneous elements $\f_1,\dots,\f_n$ in  $\Q[x_1,\dots,x_n]$ is a regular sequence if and only if the
  solution set in $\C^n$ of the equations $\f_1=0,\dots,\f_n=0$
  consists only of the origin $\{0\}$. 
\end{Lemma}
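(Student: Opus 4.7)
The plan is to prove the two directions separately, working over $\C$ throughout (which is justified by Remark~\ref{remark:regular_sequence}).

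For the forward direction, I combine Lemma~\ref{lemma: hilbert regular criterion} and Example~\ref{example.Hilbert.polynomial.ring}: assuming $\f_1, \ldots, \f_n$ is regular, the Hilbert series of $Q := \C[x_1,\ldots,x_n]/(\f_1, \ldots, \f_n)$ is
\[
\Hilb(Q, q) = \prod_{i=1}^n \frac{1 - q^{\deg \f_i}}{1-q^2}.
\]
Since each $\deg\f_i$ is a positive even integer (as the $x_i$ have degree $2$), each factor on the right is a polynomial in $q$, so $Q$ is finite-dimensional over $\C$. Consequently the coordinate ring of the affine variety $V := V(\f_1, \ldots, \f_n) \subseteq \C^n$, being a quotient of $Q$, is also finite-dimensional, so $V$ is a finite set of points. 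But the $\f_i$ are homogeneous, so $V$ is stable under the $\C^*$-scaling action on $\C^n$, and the only finite $\C^*$-stable subset of $\C^n$ is $\{0\}$.

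For the converse direction, assume $V(\f_1, \ldots, \f_n) = \{0\}$ in $\C^n$. By Remark~\ref{remark:regular_sequence}, it suffices to prove regularity of $\f_1, \ldots, \f_n$ in $\C[x_1, \ldots, x_n]$. The Nullstellensatz gives $\sqrt{(\f_1, \ldots, \f_n)} = (x_1, \ldots, x_n)$, so the quotient $\C[x_1, \ldots, x_n]/(\f_1, \ldots, \f_n)$ has Krull dimension zero; hence $\f_1, \ldots, \f_n$ is a homogeneous system of parameters for $\C[x_1, \ldots, x_n]$. My plan is then to invoke two standard commutative-algebra facts: (i) every polynomial ring over a field is Cohen--Macaulay, and (ii) in any Cohen--Macaulay ring every system of parameters is a regular sequence (see e.g.\ Matsumura or Bruns--Herzog for both). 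Together these conclude that $\f_1, \ldots, \f_n$ is a regular sequence in $\C[x_1, \ldots, x_n]$, as desired.

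The main obstacle is the converse direction, which genuinely requires the Cohen--Macaulay input beyond what was already developed in the paper. An alternative route closer to the manuscript's framework is to verify the hypotheses of Lemma~\ref{lemma: reg seq alg-ind} directly: the graded Nakayama lemma, applied to a homogeneous $\C$-basis of the finite-dimensional quotient, shows that $\C[x_1,\ldots,x_n]$ is a finitely generated module over $A := \C[\f_1, \ldots, \f_n]$; a Krull-dimension count on the finite ring extension $A \hookrightarrow \C[x_1,\ldots,x_n]$ forces $\f_1, \ldots, \f_n$ to be algebraically independent; and the freeness of $\C[x_1,\ldots,x_n]$ over $A$ then follows from the graded Auslander--Buchsbaum formula, which once again rests on the Cohen--Macaulay property of $\C[x_1,\ldots,x_n]$.
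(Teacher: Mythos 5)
Your forward direction is a genuinely different argument from the paper's, and it is a nice one. The paper's proof of "regular $\Rightarrow V = \{0\}$" goes through Krull dimension: a regular sequence is a partial homogeneous system of parameters, and since its length equals the number of variables, the quotient ring has Krull dimension zero (citing Stanley). You instead stay entirely inside the tools already built in this section: Lemma~\ref{lemma: hilbert regular criterion} gives that the Hilbert series of $Q=\C[x_1,\ldots,x_n]/(\f_1,\ldots,\f_n)$ is $\prod_i (1-q^{\deg\f_i})/(1-q^2)$, which is a polynomial because every $\deg\f_i$ is a positive even integer under the paper's grading; hence $Q$ is finite-dimensional, the variety $V$ is finite, and a finite nonempty $\C^*$-stable subset of $\C^n$ can only be the origin. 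This avoids Krull dimension theory altogether for that direction and is arguably more in keeping with the Hilbert-series flavor of the section. For the converse direction, both you and the paper use the same route: Nullstellensatz plus the Cohen--Macaulay property of the polynomial ring to promote a homogeneous system of parameters to a regular sequence; your alternative via graded Nakayama, dimension counting, and Auslander--Buchsbaum is also valid but, as you note, still rests on Cohen--Macaulayness, so it does not buy a truly more elementary proof of that implication. The argument is correct.
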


Before discussing the proof of Lemma~\ref{lemma: solution regular criterion}, we pause to give an example of an application.

\begin{Example} \label{example:flag regular sequences1}
We can use Lemma~\ref{lemma: solution regular criterion} to prove that 
the elementary symmetric polynomials $$\mathfrak{e}_1(x_1,\dots ,x_n),\ldots,\mathfrak{e}_n(x_1,\dots ,x_n)$$ form a regular sequence in $\Q[x_1,\ldots,x_n]$.
Indeed, by the lemma, it suffices to show that any point $x  \in \C^n$ satisfying $\mathfrak{e}_j(x_1,\dots ,x_n)=0$ for all $j \in [n]$ 
must satisfy $x=0$. To see this, first observe that $\mathfrak{e}_n(x_1,\dots ,x_n)=x_1 x_2 \cdots x_n = 0$ implies that some $x_i$ must be equal to $0$, and 
without loss of generality, let us assume that $x_n=0$. 
Since $x_n=0$, the original $n$ equations then imply that $\mathfrak{e}_j(x_1,\dots ,x_{n-1})=0$ for all $j \in [n-1]$. 
Now $\mathfrak{e}_{n-1}(x_1, \cdots, x_{n-1}) = x_1 x_2 \cdots x_{n-1} =0$ implies that some $x_i$ with $i \in [n-1]$ must equal $0$. 
Proceeding in this manner, we conclude that $x_1=\cdots=x_n=0$, as desired.
Applying Lemma~\ref{lemma: solution regular criterion}, we conclude that
 $\mathfrak{e}_1(x_1,\dots ,x_n),\ldots,\mathfrak{e}_n(x_1,\dots ,x_n)$ form a regular sequence in $\Q[x_1,\ldots,x_n]$.

From this we can also compute, using Lemma~\ref{lemma: hilbert regular criterion},
the Hilbert series of the quotient ring $$\Q[x_1,\dots,x_n]/(\mathfrak{e}_1(x_1,\dots ,x_n),\dots,\mathfrak{e}_n(x_1,\dots ,x_n))$$ as
\begin{equation*}
\begin{split} 
&\Hilb(\Q[x_1,\dots,x_n]/(\mathfrak{e}_1(x_1,\dots ,x_n),\dots,\mathfrak{e}_n(x_1,\dots ,x_n)),q) \\
= &\Hilb(\Q[x_1,\cdots,x_n], q) \cdot \prod_{j=1}^n(1- q^{\deg(\mathfrak{e}_j(x_1,\dots ,x_n))}) \\
= &\frac{1}{(1-q^2)^n} \cdot \prod_{j=1}^n (1-q^{2j}) \\ 
= &\prod_{j=1}^n \frac{1-q^{2j}}{1-q^2} \\ 
= &\prod_{j=1}^{n-1} (1+q^2+q^4+\cdots+q^{2j}),
\end{split} 
\end{equation*} 
where we are taking the grading to be $\deg x_i=2$ for all $i \in [n]$ and we have used that the Hilbert series of a polynomial ring in $n$ variables is $\frac{1}{(1-q^2)^n}$ (where each variable has degree $2$). 
By a classical result in the study of the topology of flag varieties (as we will recount briefly in Section~\ref{section: flag}), it is known that the cohomology ring of $\fln$ (with $\Q$ coefficients) is precisely $\Q[x_1,\dots,x_n]/(\mathfrak{e}_1(x_1,\dots ,x_n),\dots,\mathfrak{e}_n(x_1,\dots ,x_n))$. Thus the above Hilbert series is, in fact, exactly the Poincar\'e polynomial of cohomology ring (over $\Q$) of the flag variety $\fln$. 
\end{Example}

We also give a second example which we will see again when we discuss the cohomology rings of Peterson varieties (cf. Section~\ref{section: peterson in type A}).

\begin{Example}
We define polynomials $f_1$ and $f_2$ in $\Q[z_1,z_2]$ as 
\begin{align*}
f_1=z_1\left(z_1-\frac{1}{2}z_2\right), \ f_2=z_2\left(z_2-\frac{1}{2}z_1\right).
\end{align*}
One can easily check from Lemma~\ref{lemma: solution regular criterion} that they form a regular sequence in $\Q[z_1,z_2]$. 
We will encounter these polynomials $f_1$ and $f_2$ again in Section~\ref{section: peterson in type A}, where we will see that
these polynomials are the relations defining the cohomology ring (over $\Q$) of the Peterson variety in $\fl(3)$, the variety of flags in $\C^3$. 
\end{Example}

We now proceed to the proof of Lemma~\ref{lemma: solution regular criterion}. However, before launching into the proof, we take a moment to warn the reader that, for the purposes of this proof, we have chosen a different style of exposition than what the reader has hitherto experienced in this manuscript. Our rationale is that the proof requires some prior knowledge of commutative algebra, in particular the topic of Krull dimension, as well as concepts such as `homogeneous systems of parameters' and the `Cohen-Macaulay property'.  Although these are standard tools in the toolkit of commutative algebra, it would take us too far afield to provide detailed definitions and a thorough treatment of these subjects. As such, we have treated these commutative algebra facts as ``black boxes'' and have provided the reader, to the best of our ability, with specific references where the relevant details may be found. We rely mainly on the text \cite{stan96}. As supplemental references, the reader may also consult the well-known texts of Bruns-Herzog, Eisenbud, and Matsumura \cite{BrunsHerzog, Eisenbud, Matsumura}.

With the above caveat in mind, we embark on the proof.

\begin{proof}[Proof of Lemma~\ref{lemma: solution regular criterion}]
First, we note that by the observation in Remark~\ref{remark:regular_sequence}, in order to prove the claim of the lemma, 
it suffices to show that 
the sequence $\f_1, \cdots, \f_n$ in $\C[x_1,\cdots, x_n]$ is a regular sequence if and only if the solution set in $\C^n$ of the equations $\f_1=0,\dots,\f_n=0$ consists only of the origin $\{0\}$. Thus we will focus on the case when the coefficient field is $\C$.

Second, it is a general fact about Krull dimension that a homogeneous sequence $\f_1, \cdots, \f_n$ in $k[x_1,\cdots,x_n]$ is a regular sequence if and only if the Krull dimension of the quotient ring $k[x_1,\cdots, x_n]/(\f_1,\cdots,\f_n)$ is zero. A very brief sketch of the proof goes as follows: 
if $\f_1,\cdots,\f_n$ is a regular sequence, then the $\f_1,\cdots,\f_n$ is a partial homogeneous system of parameters (see \cite[Definition~5.6 and surrounding discussion]{stan96}). Noting that the cardinality of this sequence is the same as that of the set of variables in the ring, we may conclude that the Krull dimension of the quotient is zero. See \cite[Definition~5.1]{stan96}. 
For the other direction, suppose that $k[x_1,\cdots,x_n]/(\f_1, \cdots, \f_n)$ has Krull dimension zero. Then the $\f_1, \dots,
\f_n$ are a homogeneous system of parameters for
$k[x_1,\dots,x_n]$ \cite[Definition 5.1]{stan96}. Moreover, since
the polynomial ring 
$k[x_1,\dots,x_n]$ is Cohen-Macaulay, by \cite[Theorem
5.9]{stan96} we may conclude that the homogeneous system of parameters
$\f_1, \dots, \f_n$ is a regular sequence. 

From the above facts, we see that it suffices to show that the Krull dimension of $\C[x_1,\dots,x_n]/(\f_1,\dots,\f_n)$ is zero if and only if the solution set of $\{\f_1=\cdots=\f_n=0\}$ consists only of the origin.
Since we are now working over $\C$, we may apply Hilbert's Nullstellensatz (see e.g. \cite[Theorem 1.6]{Eisenbud}) and the quotient ring
\[
\C[x_1,\dots,x_n]/(\f_1,\dots,\f_n)
\]
has Krull dimension $0$ if
and only if the algebraic set in $\C^n$ defined by the equations
$\f_1=0,\dots,\f_n=0$ is zero-dimensional. Since the polynomials
$\f_1,\dots,\f_n$ are assumed to be homogeneous, this corresponding
zero-dimensional algebraic set in $\C^n$ can only be $\{0\}$. 
This proves the lemma. 
\end{proof}

\subsection{Background on equivariant cohomology} 
\label{subsec: background equivariant cohomology}

Now that we have introduced the tools we need from commutative algebra in Section~\ref{subsec: background on Hilbert series and regular sequences}, we turn 
to the other toolkit we need. Specifically, in this section we briefly
recall some basic background on equivariant topology, or more specifically, the theory of equivariant cohomology. 
Standard treatments of the subject include \cite{AlldayPuppe, Bredon, Brion, Hsi}, to which the reader may refer for more leisurely expositions.

There are two remarks worth making at the beginning. First, the theory of equivariant cohomology may be formulated for general Lie groups. We note first that, in this exposition, we focus exclusively on the abelian case, i.e., the case of the torus. We next note that it is customary in the standard treatments (such as those mentioned above) to discuss mainly the case of the \emph{compact} torus, isomorphic to $(S^1)^n)$ for some $n$. However, in recent years, equivariant cohomology has become more well-known and standard among algebraic geometers --- as indicated (for instance) by the appearance of a text on the subject by Anderson and Fulton \cite{AndersonFulton} -- for whom the most natural setting is that of the \emph{complex (algebraic)} torus, isomorphic to $(\C^*)^n$ for some $n$. It turns out that the distinction between the algebraic and compact torus is not very important, because -- as is carefully explained in \cite[Example 1.1 of Chapter 1]{AndersonFulton}  for the case of tori -- the cohomology of a complex group is the same as that of its maximal compact subgroup. Therefore, despite the fact that many of standard topology references (such as those listed above) use the \emph{compact} torus, the reader may be assured that when we speak of the \emph{complex (or algebraic)} torus in this section, the referenced statements do still hold.

Secondly, there is a potential for confusion regarding the notation which appears in Section~\ref{subsec: background on Hilbert series and regular sequences} as compared to the notation in the current section, which we aim to clarify before we begin. In Section~\ref{subsec: background on Hilbert series and regular sequences} we used the variables $x_i$ exclusively to denote the indeterminates in the polynomial rings under discussion. In the current section, we will use two sets of variables $\{t_1, t_2, \cdots, t_n\}$ and $\{x_1, x_2, \cdots, x_m\}$, and occasionally both at once, i.e., the set of variables $\{t_1, t_2,\cdots, t_n, x_1,x_2, \cdots, x_m\}$. As we explain below, the reader should think of the variables $t_i$ as being associated to the torus $T = (\C^*)^n$ that acts, and the $x_1, \cdots, x_m$ as representing classes in the ordinary cohomology of the space $X$. Thus the notation of the variables serves as a reminder of the manner in which the associated cohomology classes arise.

With the above remarks in mind, we let $T$ denote an algebraic torus of rank $n$, so $T \cong (\C^*)^n$. Let $ET \rightarrow BT$ be a universal principal $T$-bundle;  in particular, 
$ET$ is a contractible topological space with free (right) $T$-action and $BT:=ET/T$. 
In fact, we may think of $BT$ as $(\C P^{\infty})^n$. 
Hence
$$
H^*(BT) \cong \bigotimes_{n} H^*(\C P^{\infty}) \cong \Q[t_1, t_2, \cdots, t_n]
$$
 is a polynomial ring in $n$ variables $t_1, t_2, \cdots, t_n$, each of (cohomology) degree 2. 
Next, let $X$ be a topological space, equipped with a continuous (left) action $T \times X \to X$ by the torus $T$. 
For simplicity, henceforth we will always assume that $X$ is locally contractible, compact, and Hausdorff. (These assumptions are satisfied by all the Hessenberg varieties considered in this manuscript.) 
The $T$-equivariant cohomology $H^\ast_T(X)$ is defined to be the ordinary cohomology of the space $ET \times_T X$, i.e., 
$$
H^\ast_T(X):=H^\ast(ET \times_T X)
$$
where $ET \times_T X$ denotes the orbit (quotient) space of $ET \times X$ by the (left) $T$-action defined by $g \cdot (u, x):=(u \cdot g^{-1},g \cdot x)$ for $(u,x) \in ET \times X$ and $g \in T$.
In particular, if $X=\{\pt \}$ is a point, then $ET \times_T \{\pt \} \cong BT \times \{\pt \}$ and hence we have 
$$
H^\ast_T(\pt) \cong H^\ast(BT) \cong \Q[t_1, t_2, \cdots, t_n]. 
$$ 
More generally, if the $T$-action on $X$ is trivial, then $H^\ast_T(X) = H^\ast(BT) \otimes H^*(X)$.

We note that the $T$-action on $X$ preserves each path-connected component of $X$ since $T$ is connected. This implies that if $X = \sqcup_i X_i$ is the decomposition of $X$ into path-connected components then the $T$-equivariant cohomology $H^*_T(X)\cong \bigoplus_i H^*_T(X_i)$.  In what follows, we assume for simplicity that $X$ is path-connected.
As noted above, the $T$-action on the total space $ET$ of the universal $T$-bundle is free. Thus, the natural projection $ET \times_T X \to ET/T = BT$ to the left factor yields a fibration, and the fibers are isomorphic to $X$. This projection is called the \emph{Borel fibration} associated to $X$ and its $T$-action.  Using this Borel fibration and the inclusion $X \into ET \times_T X$ of a fiber into the total space, 
we obtain the following ring homomorphisms:
$$
H^*(BT) \rightarrow H^*_T(X) \rightarrow H^*(X).
$$
In particular, the ring homomorphism $H^\ast(BT) \rightarrow H^\ast_T(X)$ equips the equivariant cohomology ring $H^*_T(X)$ 
with the structure of a $H^*(BT)$-algebra.

It turns out that, in certain situations, the structure of $H^*_T(X)$ with respect to the homomorphisms $H^*(BT) \to H^*_T(X)$ and $H^*_T(X) \to H^*(X)$ is particularly simple. 
One such situation is when the ordinary cohomology groups of $X$ vanish in the odd degrees. 
This is a reasonable case for us to consider, since by \cite{ty} it is known that type-A Hessenberg varieties are 
paved by complex affine spaces, which in particular implies that the odd-degree cohomology is zero. 
In general when $X$ is a $T$-space satisfying $H^{\mathrm{odd}}(X) = 0$, it is known that the Serre spectral sequence of the fibration
$ET \times_T X \to BT$ collapses, and this in turn implies that  
\begin{equation} \label{eq:free module over H(BT)}
H^\ast_T(X) \cong H^\ast(BT) \otimes_{\Q} H^*(X) \ \ \ \text{ as } H^*(BT) \text{-modules.}
\end{equation}
(It is worth emphasizing that the isomorphism ~\eqref{eq:free module over H(BT)} holds only as $H^*(BT)$-modules, and \emph{not} as rings.) 
In fact, even more is true: in this situation, 
the restriction map $H^*_T(X) \rightarrow H^*(X)$ induced from the Borel fibration is surjective, 
and its kernel is the ideal generated by $H^{>0}(BT)$ (see e.g. \cite[Chapter~III, Theorem~4.2]{MiTo}).
Hence, we can conclude that there is an isomorphism
\begin{equation} \label{eq:from equivariant to ordinary}
H^*(X) \cong H^*_T(X)/(H^{>0}(BT))
\end{equation}
of rings. (If $X$ is not path-connected, then as noted above, $H^*_T(X)$ is a direct sum of $H^*_T(X_i)$ for its (finitely many) path-connected components, and the equation~\eqref{eq:from equivariant to ordinary} holds for each individual summand in the direct sum.) 

Next, we turn to another fundamental and useful technique in torus-equivariant topology: namely,  the restriction map to the fixed point set of the torus action. 
Let $X^T$ denote the set of $T$-fixed points of $X$. Then the inclusion $X^T \into X$ induces the restriction map to the fixed points,
namely, 
\begin{equation} \label{eq:localization map}
\iota: H^*_T(X) \rightarrow H^*_T(X^T). 
\end{equation}
The following major result is called the \textbf{localization theorem} (see e.g. \cite[Chapter III, Section 2]{Hsi}, \cite[Section 1]{ChangSkjelbred}). 
Before saying anything further, we emphasize one point which can be quite confusing for a beginner. Namely, there are two notions of localization which appear in the theorem (and consequent discussion) below, and they are \emph{different}. One of them is the so-called ``(equivariant) localization to fixed points'' which has already appeared numerous times above, namely, the restriction homomorphism~\eqref{eq:localization map} which is injective under suitable circumstances. The other notion of localization which appears below is the purely ring-theoretic concept of ``localization with respect to a multiplicative subset (of an ambient ring)'', which we denote below by $\mathcal{S} \subset R$ (here $\mathcal{S}$ is the multiplicative subset, $R$ is the ambient ring). For the novice, it does not help matters that \emph{both} concepts are used in one of the most famous theorems of equivariant topology, namely Theorem~\ref{theorem:localization_theorem}, below. We content ourselves here to simply warn the reader of this clash in nomenclature.

\begin{Theorem}[``The localization theorem''] \label{theorem:localization_theorem}
Let $T$ be a torus and let $X$ be a locally contractible compact Hausdorff $T$-space. Let $\mathcal{S}:=H^*(BT) \setminus \{0 \}$, a multiplicatively closed set. 
Then the localization of the homomorphism in \eqref{eq:localization map} with respect to $\mathcal{S}$ yields the isomorphism
\begin{equation*} 
\mathcal{S}^{-1}\iota: \mathcal{S}^{-1}H^*_T(X) \stackrel{\cong}{\rightarrow} \mathcal{S}^{-1}H^*_T(X^T).
\end{equation*}
\end{Theorem}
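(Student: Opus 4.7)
The plan is to reduce the theorem to a vanishing statement and then handle that vanishing orbit-by-orbit. Specifically, I would first apply the long exact sequence in $T$-equivariant cohomology of the pair $(X, X^T)$. Since $X^T$ is closed in $X$ (using that $X$ is Hausdorff) and a $T$-invariant neighborhood of $X^T$ equivariantly deformation retracts onto $X^T$, the relative equivariant cohomology $H^*_T(X, X^T)$ is computed from the fixed-point-free complement. Localization at $\mathcal{S}$ is an exact functor on the category of $H^*(BT)$-modules, so after applying $\mathcal{S}^{-1}(-)$ to the long exact sequence of the pair, the conclusion $\mathcal{S}^{-1}\iota$ is an isomorphism will follow at once from the key vanishing claim: \emph{if $Y$ is a compact $T$-space with $Y^T = \emptyset$, then $\mathcal{S}^{-1} H^*_T(Y) = 0$.}

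To prove this key vanishing, I would first handle a single orbit. If $H \subsetneq T$ is a proper closed subgroup, then $H^*_T(T/H) \cong H^*(BH)$, with the $H^*(BT)$-module structure coming from the restriction map $H^*(BT) \to H^*(BH)$ induced by the inclusion $H \into T$. Because $H$ is proper, there is a nontrivial character $\chi: T \to \C^*$ whose restriction to $H$ is trivial; the associated first Chern class $c_1(\chi) \in H^2(BT)$ is a nonzero element of $\mathcal{S}$ that maps to $0$ in $H^*(BH)$. Hence $c_1(\chi)$ annihilates $H^*_T(T/H)$, and inverting it kills the entire module, yielding $\mathcal{S}^{-1} H^*_T(T/H) = 0$.

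Next I would promote this from a single orbit to the whole space $Y$. Using compactness of $Y$ and the slice theorem (an equivariant tubular neighborhood theorem), one covers $Y$ by finitely many $T$-invariant open sets, each of which $T$-equivariantly deformation retracts onto a single orbit $T/H_i$ with $H_i \subsetneq T$ a proper closed subgroup. Applying $\mathcal{S}^{-1}(-)$ to the equivariant Mayer--Vietoris sequences of these covers and inducting on the number of pieces, the single-orbit vanishing propagates to $\mathcal{S}^{-1}H^*_T(Y) = 0$, as desired. Substituting back into the localized long exact sequence of $(X, X^T)$ completes the proof.

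The main obstacle I anticipate is the bookkeeping in the last step: one needs either an equivariant CW (or stratified) structure on the complement of $X^T$, or else a repeated Mayer--Vietoris argument, and both require geometric inputs (slice theorem, finiteness of orbit types on compact $T$-spaces) that are standard but nontrivial. I would be content to quote these facts from \cite{AlldayPuppe, Bredon, Hsi} rather than develop them from scratch, since for the Hessenberg varieties that are the focus of later sections such technicalities are harmless: those spaces are smooth projective varieties with particularly well-behaved $T$-actions, for which all the geometric prerequisites of the localization theorem are manifestly satisfied.
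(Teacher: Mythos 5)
The paper does not prove Theorem~\ref{theorem:localization_theorem} at all; it is stated as a quoted result with pointers to \cite[Chapter III, Section 2]{Hsi} and \cite[Section 1]{ChangSkjelbred}. So there is no ``paper proof'' to compare against, but your sketch is indeed the standard argument found in those references (and in \cite{AlldayPuppe}), so the approach is appropriate.

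Two places where the phrasing is slightly looser than a careful proof would allow. First, ``$H^*_T(X, X^T)$ is computed from the fixed-point-free complement'' is not quite right as stated: the complement $X \setminus X^T$ is open and typically non-compact, so you cannot directly apply the single-orbit/Mayer--Vietoris argument to it. The usual fix is to take a $T$-invariant open neighborhood $U \supset X^T$ deformation retracting to $X^T$, then by excision identify $H^*_T(X,U)$ with something supported on the compact, fixed-point-free closed set $X\setminus U$, and prove the vanishing there. Second, in the orbit step it is worth saying explicitly why the proper closed subgroup $H \subsetneq T$ is killed by a nontrivial character: since $T$ is connected, a proper \emph{closed} subgroup necessarily has strictly smaller dimension, so $T/H$ is a compact abelian Lie group of positive dimension and therefore admits a nontrivial character; the composite $T \to T/H \to \C^*$ does the job. (If $T$ were disconnected, a proper closed subgroup could have full rank and the argument would require modification.) With those clarifications, and given that you are willing to cite the slice theorem, finiteness of orbit types for compact group actions, and the existence of equivariant neighborhood retracts from \cite{Bredon, Hsi, AlldayPuppe} --- exactly as the paper itself does when it states the theorem without proof --- the sketch is sound.
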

In other words, up to localization by $\mathcal{S}$, the $T$-fixed point set captures the $T$-equivariant cohomology of $X$. 
The following is a well-known and often useful consequence of the localization theorem.

\begin{Corollary} \label{corollary:localization_theorem}
If the odd degree (ordinary) cohomology groups of $X$ vanish, then the restriction map in \eqref{eq:localization map} is injective.
\end{Corollary}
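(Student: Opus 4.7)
The plan is to combine the hypothesis on odd cohomology, which forces a freeness statement via equation~\eqref{eq:free module over H(BT)}, with the localization isomorphism of Theorem~\ref{theorem:localization_theorem}. The key observation is that freeness over an integral domain implies that the natural map from a module to its localization is injective; we can then chase a commutative diagram to transfer this injectivity to $\iota$.

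More concretely, I would first invoke \eqref{eq:free module over H(BT)} to conclude that, under the assumption $H^{\mathrm{odd}}(X)=0$, the equivariant cohomology ring $H^*_T(X)$ is a free module over $H^*(BT) \cong \Q[t_1,\ldots,t_n]$. Since $\Q[t_1,\ldots,t_n]$ is an integral domain and $\mathcal{S} = H^*(BT) \setminus \{0\}$ consists precisely of the nonzero elements of this domain, no element of a free $H^*(BT)$-module is annihilated by any element of $\mathcal{S}$. Therefore the canonical localization homomorphism
\[
\lambda_X : H^*_T(X) \longrightarrow \mathcal{S}^{-1} H^*_T(X), \qquad \alpha \longmapsto \alpha/1,
\]
is injective.

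Next, I would use naturality of localization to build the commutative square
\[
\begin{CD}
H^*_T(X) @>{\iota}>> H^*_T(X^T) \\
@V{\lambda_X}VV @VV{\lambda_{X^T}}V \\
\mathcal{S}^{-1} H^*_T(X) @>{\mathcal{S}^{-1}\iota}>> \mathcal{S}^{-1} H^*_T(X^T).
\end{CD}
\]
By Theorem~\ref{theorem:localization_theorem}, the bottom horizontal arrow $\mathcal{S}^{-1}\iota$ is an isomorphism, and by the previous paragraph the left vertical arrow $\lambda_X$ is injective. Hence the composition $\mathcal{S}^{-1}\iota \circ \lambda_X = \lambda_{X^T} \circ \iota$ is injective, which forces $\iota$ itself to be injective, as desired.

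The main conceptual point, and the only place where the odd-vanishing hypothesis enters, is the freeness statement~\eqref{eq:free module over H(BT)}; everything else is formal commutative algebra combined with Theorem~\ref{theorem:localization_theorem}. I do not anticipate a serious obstacle, but the one subtlety worth flagging is the clash of terminology mentioned in the excerpt: one must carefully distinguish the geometric ``restriction to fixed points'' $\iota$ from the purely algebraic localization $\lambda_X$ at $\mathcal{S}$, and verify that the square above indeed commutes (which follows from functoriality of $\mathcal{S}^{-1}(-)$ applied to the $H^*(BT)$-module map $\iota$).
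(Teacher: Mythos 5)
Your proof is correct and follows essentially the same route as the paper's: both derive freeness of $H^*_T(X)$ over $H^*(BT)$ from the odd-vanishing hypothesis via \eqref{eq:free module over H(BT)}, observe that this makes the canonical localization map $H^*_T(X) \to \mathcal{S}^{-1}H^*_T(X)$ injective, and then chase the same commutative square together with Theorem~\ref{theorem:localization_theorem}. Your write-up merely spells out a bit more explicitly why freeness over the integral domain $\Q[t_1,\ldots,t_n]$ gives injectivity of the localization map.
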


\begin{proof} 
First observe that we have the following commutative diagram relating the 
different homomorphisms under consideration: 
\begin{equation*}
\begin{CD}
\mathcal{S}^{-1}H^*_T(X)@>{\mathcal{S}^{-1}\iota}>{\cong}> \mathcal{S}^{-1}H^*_T(X^T)\\
@A{}AA @A{}AA\\
H^*_T(X)@>{\iota}>> H^*_T(X^T)
\end{CD}
\end{equation*}
where the vertical maps are the usual ones taking an algebra to its localization, and the horizontal maps are the ones appearing in~\eqref{eq:localization map} and Theorem~\ref{theorem:localization_theorem}. 
By assumption, the odd-degree ordinary cohomology of $X$ vanishes, so we have~\eqref{eq:free module over H(BT)} and 
in particular, $H^*_T(X)$ is a free $H^*(BT)$-module. 
This implies that the left vertical map in the commutative diagram above is injective. 
This fact, together with the commutative diagram above, then imply that
 the restriction map $\iota :H^*_T(X)\rightarrow H^*_T(X^T)$ must also be injective. 
\end{proof}

Next, we introduce concepts involving $T$-equivariant vector bundles over $T$-spaces.  
Let $E \rightarrow X$ denote a complex $T$-equivariant vector bundle. 
Since the total space $E$ is a $T$-space, we may apply the Borel construction to both $E$ and $X$ and obtain 
the projection 
\begin{equation}\label{eq: assoc vec bdle}
ET \times_T E \rightarrow ET \times_T X.
\end{equation}
It is not hard to see that this is itself a vector bundle.

The Chern classes of the vector bundle~\eqref{eq: assoc vec bdle}, which by definition 
are elements of $H^*(ET \times_T X) = H^*_T(X)$ (so in particular, they are equivariant cohomology classes), 
are called the $T$-\textbf{equivariant Chern classes} of $E$. They should be thought of as the equivariant analogues 
of the ordinary Chern classes of the vector bundle $E \to X$, which lie in the ordinary cohomology of $X$. 
We denote by $c_i^T(E) \in H^{2k}_T(X)$ the $i$-th equivariant Chern class, i.e., the $i$-th ordinary Chern class of~\eqref{eq: assoc vec bdle}. 
The restriction map $H^*_T(X) \rightarrow H^*(X)$ sends the equivariant Chern class of $E$ to the ordinary Chern class of $E$.

Certain $T$-equivariant Chern classes will be useful for the later discussion, for which we take a moment to set some notation. Let $\alpha: T \rightarrow \C^*$ be an algebraic group homomorphism, which naturally 
defines a one-dimensional representation of $T$, denoted $\C_{\alpha}$.  Any $T$-representation can be regarded as a $T$-equivariant bundle over a point. Thus we can consider the equivariant first Chern class $c_1^T(\C_\alpha) \in H^*(BT)$ of $\C_{\alpha} \to \mathrm{pt}$. It is known that the correspondence $\alpha \mapsto c_1^T(\C_\alpha)$ yields an isomorphism of groups from the lattice
$\text{Hom}(T,\C^*)$ to $H^2(BT;\Z)$. 
Moreover, it is also well-known that there is a $\Z$-module embedding $\text{Hom}(T,\C^*) \hookrightarrow \mathfrak{t}^*$ in its dual Lie algebra given by $\alpha \mapsto d \alpha_e$, where $d$ denotes the exterior derivative. We define the lattice $\mathfrak{t}^*_\Z$ as the image of the embedding $\text{Hom}(T,\C^*) \hookrightarrow \mathfrak{t}^*$. 
With the previous discussion in place, throughout this paper we use the association $\alpha \mapsto c_1^T(\C_{\alpha})$ to identify these three lattices:  
\begin{equation} \label{eq:equiv coh of pt degree two}
\text{Hom}(T,\C^*) \cong \mathfrak{t}^*_\Z \cong H^2(BT;\Z), \quad \alpha \mapsto c_1^T(\C_\alpha). 
\end{equation}
This in turn yields the following identification
\begin{align}\label{eq:equiv coh of pt}
 H^*(BT) \cong \text{Sym}_{\Q}(\mathfrak{t}^*_\Z \otimes_{\Z}\Q) 
\end{align}
where the right hand side denotes the symmetric algebra, over $\Q$, of the $n$-dimensional $\Q$-vector space $\mathfrak{t}^*_\Z \otimes_{\Z}\Q$.

Finally, we wrap up this introductory section with the following simple but useful fact. Note that we continue to assume that 
the odd-degree ordinary cohomology of $X$ vanishes.

\begin{Proposition} \label{proposition:generators equivariant cohomology}
Let $X$ be a path-connected $T$-space. 
Assume that the odd-degree (ordinary) cohomology groups of $X$ vanish. 
Let $m$ be a positive integer and let $S=\{x_1,\ldots, x_m \}$ in $H^*(X)$ be a set of $m$ homogeneous-degree elements
 and let $\widetilde{S}=\{\widetilde{x_1},\ldots, \widetilde{x_m} \}$ denote a choice of lift of $S$ to $H^*_T(X)$ consisting of homogeneous-degree elements, i.e., for all $k$, $1 \leq k \leq m$, the image of $\widetilde{x_k}$ is $x_k$ under the surjection $H^*_T(X) \to H^*(X)$ induced from the Borel fibration, and each $\widetilde{x_k} \in H^*_T(X)$ is a homogeneous-degree element. 
\begin{enumerate}
\item Let $t_1, \cdots, t_n$ be a $\Q$-vector space basis of $H^2(BT) \cong \mathfrak{t}^*_{\Z} \otimes_{\Z} \Q$. If $S$ generates the cohomology ring $H^*(X)$ as a $\Q$-algebra, then $\widetilde{S} \cup \{t_1,\ldots,t_n \}$ generates $H^*_T(X)$ as a $\Q$-algebra. 
\item If $S$ is an additive $\Q$-vector space basis of the cohomology $H^*(X)$, then $\widetilde{S}$ is a basis of $H^*_T(X)$ as an $H^*(BT)$-module.
\end{enumerate}
\end{Proposition}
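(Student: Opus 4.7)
The plan is to exploit the two structural facts established just before the proposition: the surjection $H^*_T(X) \twoheadrightarrow H^*(X)$ with kernel $(H^{>0}(BT))$ in~\eqref{eq:from equivariant to ordinary}, and the $H^*(BT)$-module isomorphism $H^*_T(X) \cong H^*(BT) \otimes_\Q H^*(X)$ in~\eqref{eq:free module over H(BT)}. Both parts are proved by induction on cohomological degree, using the fact that the kernel $(H^{>0}(BT))$ is generated by the degree-$2$ elements $t_1, \ldots, t_n$, so that subtracting a ``correction'' coming from the known image in $H^*(X)$ always drops the degree strictly.

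For part (1), let $A \subseteq H^*_T(X)$ denote the $\Q$-subalgebra generated by $\widetilde S \cup \{t_1,\ldots,t_n\}$; I would show $H^d_T(X) \subseteq A$ for every degree $d$ by induction on $d$. The base case $d=0$ reduces to $H^0_T(X) = \Q$ since $X$ is path-connected. For the inductive step, given $y \in H^d_T(X)$, surjectivity of $\pi\colon H^*_T(X)\to H^*(X)$ together with the assumption that $S$ generates $H^*(X)$ as a $\Q$-algebra allows one to choose a polynomial $P$ with $\pi(y) = P(x_1,\ldots,x_m)$. Then $y - P(\widetilde x_1,\ldots,\widetilde x_m)$ lies in $\ker \pi = (t_1,\ldots,t_n)H^*_T(X)$, so it can be written as $\sum_j t_j z_j$ with $\deg z_j = d - 2 < d$. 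By induction $z_j \in A$, and since $P(\widetilde x_1,\ldots,\widetilde x_m)$ and each $t_j$ lie in $A$, it follows that $y \in A$.

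For part (2), spanning follows by the same inductive template but tracking $H^*(BT)$-linear combinations instead of polynomial expressions: writing $\pi(y) = \sum_k c_k x_k$ with $c_k \in \Q$ using that $S$ is a basis, the element $y - \sum_k c_k \widetilde x_k$ lies in $\ker\pi$ and is a sum $\sum_j t_j z_j$ with $z_j$ of strictly smaller degree, which by induction is an $H^*(BT)$-linear combination of $\widetilde S$; reassembling gives $y$ as an $H^*(BT)$-linear combination of $\widetilde S$. For linear independence, I would invoke the free-module isomorphism~\eqref{eq:free module over H(BT)}: it says $H^*_T(X)$ is free of rank $m = \dim_\Q H^*(X)$ over $H^*(BT)$. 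A graded Nakayama argument then applies, since the images of $\widetilde S$ in $H^*_T(X)/(H^{>0}(BT)) \cong H^*(X)$ are precisely $S$, which is a $\Q$-basis; alternatively, suppose $\sum_k h_k \widetilde x_k = 0$ with $h_k \in H^*(BT)$, apply $\pi$ to see that each constant term of $h_k$ vanishes (so $h_k \in (t_1,\ldots,t_n)$), and iterate degree-by-degree until all $h_k = 0$.

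The main thing to handle with care is the bookkeeping in the two inductions: one must verify that the degree strictly drops at each step (which works because $\deg t_j = 2 > 0$), and one must keep straight that the isomorphism in~\eqref{eq:free module over H(BT)} is only one of $H^*(BT)$-modules, not of rings, so it cannot be used to deduce algebra generation directly — only module spanning and rank. The finiteness of $m$ in (2) is what makes the rank-counting and Nakayama arguments clean; in (1), no finiteness is needed because the induction only ever invokes the algebra structure, not a basis.
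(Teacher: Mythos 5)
Your proof is correct, and it is worth noting where it tracks the paper and where it diverges. For part (1), the underlying mechanism is the same as the paper's: induct on cohomological degree, exploit that $\ker\pi = (t_1,\ldots,t_n)H^*_T(X)$, and observe that peeling off a $t_j$ drops the degree by $2$. The paper, however, routes this through the Leray--Hirsch isomorphism and an explicit choice of section $s$ of $\pi$, writing $f = \sum a_{K,L}\, t^L\, s(x^K)$ and then replacing each $s(x^K)$ by $\widetilde{x}^K$ modulo the ideal; your version skips the section entirely and simply lifts a polynomial expression for $\pi(y)$, which is a cleaner way to say the same thing. For part (2), your argument is genuinely different. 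The paper feeds the Hilbert-series identity from~\eqref{eq:free module over H(BT)} into Lemma~\ref{lemma: hilbert regular criterion} to conclude that $t_1,\ldots,t_n$ is a regular sequence in $H^*_T(X)$, then invokes Remark~\ref{remark:regular sequence basis} — this deliberately reuses the regular-sequence machinery that Section~\ref{subsec: background on Hilbert series and regular sequences} builds. You instead argue directly with the free-module structure: the images of $\widetilde S$ modulo $(t_1,\ldots,t_n)$ form a $\Q$-basis of $H^*_T(X)/(H^{>0}(BT)) \cong H^*(X)$, graded Nakayama gives that $\widetilde S$ generates $H^*_T(X)$ over $H^*(BT)$, and $m$ generators of a free module of rank $m$ over a commutative ring are automatically a basis. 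Both arguments are sound; yours is shorter but relies on a standard free-module fact not developed in the paper, while the paper's is longer but stays inside its own toolkit. One caution on your alternative "iterate degree-by-degree" suggestion for linear independence: after reducing to $h_k \in (t_1,\ldots,t_n)$, you cannot simply split $\sum_j t_j\bigl(\sum_k h_{kj}\widetilde{x}_k\bigr)=0$ into separate relations for each $j$ without already knowing something like regularity of the $t_j$'s, so the Nakayama/rank route (or, equivalently, observing that the change-of-basis matrix from $\widetilde{S}$ to any free $H^*(BT)$-basis is graded with identity constant term, hence unit determinant) is the one to trust.
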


\begin{proof}
We begin with (1). 
Since we assume the odd-degree ordinary cohomology of $X$ vanishes, and because $H^*_T(\mathrm{pt})$ is generated in even degrees, we know that $H^*_T(X)$ also vanishes in odd degree.  To show that $\widetilde{S} \cup \{t_1,\cdots, t_n\}$ generate $H^*_T(X)$ as a $\Q$-algebra, it would suffice to show that any homogeneous element can be written as a polynomial in these elements with coefficients in $\Q$. 
We will argue by induction on the degree of the homogeneous element. 

Suppose $f$ is homogeneous and suppose $\mathrm{deg}(f) = 2p$ for some $p \geq 0$. 
We begin with the base case, which is $p=0$. 
By \eqref{eq:free module over H(BT)} we have $H^0_T(X) \cong H^0(BT) \otimes_{\Q} H^0(X) \cong H^0(X)$ since $H^0(BT) =\Q$.
Thus, we see that the vector space $H^0_T(X)$ is $1$-dimensional and consists of constant multiples of the identity element $1 \in H^0_T(X)$. 
Now a constant multiple of $1$ can certainly be written as a polynomial in the $\tilde{S} \cup \{t_1,\cdots, t_n\}$ by taking all coefficients of all non-constant monomials in the $\tilde{S} \cup \{t_1,\cdots, t_n\}$ to be $0$ and by taking the appropriate constant multiple of $1$ as the constant term in the polynomial.  This completes the base case. 

Now suppose by induction that $\tilde{S} \cup \{t_1,\cdots, t_n\}$ generate the elements in $H^*_T(X)$ of degree $<2p$ for some integer $p$. We need to show that they also generate $H^{2p}_T(X)$.  
We know that, since the odd degree cohomology of $X$ vanishes, the induced map $i^*: H^*_T(X) \to H^*(X)$ from the Borel fibration $X \xrightarrow{i} ET \times _T X \xrightarrow{\pi} BT$ is surjective, so there exists a section 
$s: H^*(X) \to H^*_T(X)$ of the surjection $i^*$, i.e., the map $s$ satisfies $i^* \circ s = id$.
This means that $s(x_1^{\ell_1}x_2^{\ell_2}\cdots x_m^{\ell_m}) - \widetilde{x_1}^{\ell_1}\widetilde{x_2}^{\ell_2}\cdots\widetilde{x_m}^{\ell_m} \in \ker i^* = (t_1,\ldots, t_n) \cdot H^{\ast}_{T}(X)$.
Moreover, the Leray-Hirsch theorem for the Borel fibration yields the isomorphism 
\begin{align*}
H^*(BT) \otimes_\Q H^*(X) \cong H^*_T(X); \ \ \ \sum \alpha \otimes \beta \mapsto \sum \pi^*(\alpha) \cdot s(\beta)
\end{align*}
as $H^*(BT)$-modules. 
Using this isomorphism, we can see that any element of $H^{2p}_T(X)$ must be a linear combination of (the images under the Leray-Hirsch isomorphism of the) elements in $H^{2k}(BT) \otimes_\Q H^{2\ell}(X)$ for $i+j=p$. This means that any element $f \in H^{2p}_{T}(X)$ is of the form 
$$
f = \sum_{K=(k_1,k_2,\ldots,k_m) \atop L=(\ell_1,\ell_2,\ldots,\ell_n)} a_{K,L} \, t_1^{\ell_1}t_2^{\ell_2}\cdots t_n^{\ell_n} \cdot s \left(x_1^{k_1}x_2^{k_2}\cdots x_m^{k_m} \right)
$$ 
for some $a_{K,L} \in \Q$ where $\lvert K \rvert :=\sum k_{s}$, $\lvert L \rvert :=\sum \ell_{s'}$, and $p=\lvert K \rvert + \lvert L \rvert$.
Here we have used that $S = \{x_1, \cdots, x_m\}$ generate $H^*(X)$. 
Since we have $s(x_1^{k_1}x_2^{k_2}\cdots x_m^{k_m}) \in \widetilde{x_1}^{k_1}\widetilde{x_2}^{k_2}\cdots\widetilde{x_m}^{k_m} + (t_1,\ldots, t_n) \cdot H^{\ast}_{T}(X)$, the equality above can written as
\begin{align*}
f = \sum_{K=(k_1,k_2,\ldots,k_m) \atop L=(\ell_1,\ell_2,\ldots,\ell_n)} a_{K,L} \, t_1^{\ell_1}t_2^{\ell_2}\cdots t_n^{\ell_n} \cdot \left( \widetilde{x_1}^{k_1}\widetilde{x_2}^{k_2}\cdots\widetilde{x_m}^{k_m} + t_1 g_1^K +\cdots + t_n g_n^K \right)
\end{align*}
for some $g_1^K, \ldots, g_n^K \in H^{2(\lvert K \rvert -1)}_{T}(X)$. 
Since their degrees are less than $2p$, by the inductive assumption, each $g_1^K, \ldots, g_n^K$ can be written as a homogeneous polynomial in the elements $\widetilde{x_1},\ldots, \widetilde{x_m}$ and $t_1,\dots ,t_n$. 
Therefore, the expression of $f$ above shows that $f$ can also be written as a homogeneous polynomial in the elements $\widetilde{x_1},\ldots, \widetilde{x_m}$ and $t_1,\dots ,t_n$, as desired. This concludes the inductive step, and the claim follows. 

(2) Since the odd degree cohomology groups of $X$ vanish, \eqref{eq:free module over H(BT)} holds.
Comparing the Hilbert series of both sides of \eqref{eq:free module over H(BT)}, we obtain
\begin{align*}
\Hilb(H^*_T(X),q)&=\Hilb(H^*(BT),q) \cdot \Hilb(H^*(X),q) \\
&=\frac{1}{(1-q^2)^n} \cdot \Hilb(H^*(X),q)
\end{align*}
where we use the fact that 
 $H^*(BT)$ is the polynomial ring $\Q[t_1,\ldots,t_n]$ with $\deg t_i=2$ for all $i \in [n]$ to compute its Hilbert series. 
Noting that $H^*_T(X)/(t_1,\ldots,t_n) \cong H^*(X)$, it follows from Lemma~\ref{lemma: hilbert regular criterion} that $t_1,\ldots,t_n$ is a regular sequence in $H^*_T(X)$.
Thus, by the observation made in Remark~\ref{remark:regular sequence basis},  if $S$ is an additive basis of $H^*(X)$, then $\widetilde{S}$ forms an  $H^*(BT)$-basis for $H^*_T(X)$, as desired. 
\end{proof}

\section{ A basic example: the equivariant cohomology of flag varieties } 
\label{section: flag}

In this section, in order to illustrate some of the tools introduced in Section~\ref{sec: background},
 we briefly sketch a computation of the equivariant and ordinary cohomology rings of the classical flag variety. 
The arguments are illuminating and suggestive in part because, as it will turn out, the equivariant cohomology ring of the classical flag variety can be specified in a purely combinatorial fashion. 

In this section, we take $T$ to be the rank-$n$ torus consisting of diagonal $n \times n$ matrices in $GL_n(\C)$, so $T \cong (\C^*)^n$.  
It will be convenient to denote such a diagonal element by listing only its diagonal entries. More specifically, we let $\diag(g_1, \cdots, g_n) \in T$ denote a diagonal matrix with entries $g_1, \cdots, g_n$, where $g_i \in \C^*$ for all $i$. 
For any $i$ with $1 \leq i \leq n$, let $\chi_i$ denote the projection from $T$ to the $i$-th factor of $T$, i.e. 
$$ \chi_i: T\rightarrow \C^*, \quad \diag(g_1,\dots,g_n)\mapsto g_i.$$ 
Then $\chi_i$ is evidently a homomorphism from $T$ to $\C^*$.  We associate to it an equivariant first Chern class $c_1^T((\C_{\chi_i}^*)) \in H^*(BT)$, 
which we denote by $t_i$, i.e., 
\begin{align}\label{deg of ti in coh}
t_i := c_1^T((\C_{\chi_i}^*)) \in H^2(BT).
\end{align} 
(Note that because we take the dual representation $\C_{\chi_i}^*$ instead of $\C_{\chi_i}$, we have that $t_i$ corresponds to the group homomorphism $\chi_i^{-1}: T \to \C^*$ which takes $(g_1, g_2, \cdots, g_n)$ to  $g_i^{-1}$ for all $i$, i.e., there is a negative sign that appears in the exponent.)
Recall also that the elements $t_i$ form an additive basis of $H^2(BT)$, and we have $H^*(BT) \cong \Q[t_1,\dots,t_n]$ (compare with \eqref{eq:equiv coh of pt}).

Note that the torus $T$ of invertible diagonal matrices is a subgroup of the Borel subgroup $B$ of invertible upper-triangular matrices in $GL_n(\C)$. Recall that the flag variety $\fln$ can be identified with the set of cosets $GL_n(\C)/B$. From this it follows that $T$ acts on $\fln \cong GL_n(\C)/B$ by left multiplication of cosets. It is well-known (and not hard to compute directly) that the set $\fln^{\Tn}$ of fixed points in $\fln$ under this $T$-action is the (finite) set of permutation flags $\{w E_{\bullet}\}_{w \in S_n}$ where $E_{\bullet}$ is the standard flag $E_\bullet = \{ E_i \}$ with $E_i := \mathrm{span}\{e_1, e_2, \cdots, e_i\}$ for $1 \leq i \leq n$. Here, $\{e_1,e_2,\cdots,e_n\}$ is the standard basis of $\C^n$. 
More specifically, if we write $w E_{\bullet}=(V_i)_{1 \leq i \leq n}$, then $wE_{\bullet}$ satisfies 
\begin{equation} \label{eq:2.3}
V_i= \langle e_{w(1)}, e_{w(2)}, \cdots ,e_{w(i)}\rangle\, \,  \text{ for $i \in [n]$}. 
\end{equation}
In what follows, we sometimes denote a permutation flag $wE_{\bullet}$ by $w$ for simplicity.

Based on the above, we see that in the context of $X=\fln$, the restriction map~\eqref{eq:localization map} takes the form  
\begin{equation} \label{eq:restriction map flag}
\iota_1: H^*_T(\fln) \hookrightarrow \bigoplus_{w \in S_n} H^*_T(w) \cong \bigoplus_{w \in S_n} \Q[t_1,\dots,t_n]
\end{equation}
where the injectivity follows from Corollary~\ref{corollary:localization_theorem} because the flag variety $\fln$ is paved by affines (the cells of which are the famous \emph{Schubert cells}) and hence the odd degree ordinary cohomology of $\fln$ vanishes. 
Here and in what follows, for an element $f \in H^*_T(\fln)$, we denote the $w$-th component in $\Q[t_1,\dots,t_n]$ of the image of $f$ under the restriction map $\iota_1$ by $f|_w$ or $f(w)$. The following lemma computes the restrictions of the classes $t_i := c_1^T((\C_{\chi_i}^*))$ to the $T$-fixed points, where here we abuse notation and use $t_i$ to also denote the image in $H^2_T(\fln)$ of $t_i \in H^2(BT)$ via the pullback map $H^*(BT) \to H^*_T(\fln)$.  

\begin{Lemma} \label{lemma:flag0.5}
Let $i \in [n]$. Consider the class $t_i = c_1^T((\C_{\chi_i}^*)) \in H^*(BT)$ as a class in $H^*_T(\fln)$. 
Then we have $t_i|_w=t_{i}$ for all $w \in S_n$. 
\end{Lemma}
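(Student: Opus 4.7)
The plan is to use functoriality of equivariant cohomology together with the observation that $t_i$ originates in $H^*(BT)$. By the abuse of notation stated in the lemma, the class $t_i \in H^2_T(\fln)$ is actually the pullback of $t_i \in H^2(BT)$ under the map $\pi^*\colon H^*(BT) \to H^*_T(\fln)$ induced from the Borel fibration $\pi\colon ET \times_T \fln \to BT$. Meanwhile, the restriction $f \mapsto f|_w$ is induced by the $T$-equivariant inclusion $\{w\} \hookrightarrow \fln$ of a fixed point, which yields the map $ET \times_T \{w\} \to ET \times_T \fln$ after applying the Borel construction.

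The key step is to examine the composition
\[
ET \times_T \{w\} \longrightarrow ET \times_T \fln \stackrel{\pi}{\longrightarrow} BT.
\]
Because $\{w\}$ is a single point with trivial $T$-action, there is a canonical identification $ET \times_T \{w\} \cong ET/T = BT$, and under this identification the displayed composition is simply the identity on $BT$. Passing to cohomology, this shows that the composition
\[
H^*(BT) \xrightarrow{\pi^*} H^*_T(\fln) \xrightarrow{\iota_1} H^*_T(w) \cong H^*(BT)
\]
equals the identity of $H^*(BT)$.

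Since $t_i \in H^*_T(\fln)$ is by construction the image of $t_i \in H^*(BT)$ under $\pi^*$, the composition above immediately gives $t_i|_w = t_i$ for every $w \in S_n$, which is precisely the claim. Equivalently, one can phrase this in the language of equivariant Chern classes: the restriction of the $T$-equivariant line bundle $ET \times_T \C_{\chi_i}^* \to ET \times_T \fln$ to the fiber over $w$ recovers the bundle $ET \times_T \C_{\chi_i}^* \to BT$ whose first Chern class is $t_i$ by definition~\eqref{deg of ti in coh}. There is no genuine obstacle in this argument; the only thing requiring care is to spell out the canonical identification $ET \times_T \{w\} \cong BT$ and to verify that the resulting composition is indeed the identity, so that the proof reduces to pure functoriality.
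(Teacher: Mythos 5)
Your argument is correct and is in substance identical to the paper's: both rest on the observation that restriction to a fixed point is a map of $H^*(BT)$-algebras, which is exactly what your verification that $ET \times_T \{w\} \to ET \times_T \fln \to BT$ is the identity on $BT$ establishes. The paper phrases this by noting $\iota_1$ is an $H^*(BT)$-algebra homomorphism and computing $\iota_1(t_i) = t_i \cdot \iota_1(1)$, while you unpack the same fact at the level of spaces; either way the proof reduces to functoriality of the Borel construction.
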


\begin{proof}
Since the restriction map $\iota_1$ is induced from a $T$-equivariant inclusion map, $\{\pt \} \hookrightarrow \fln$, by standard naturality properties of cohomology, we know the homomorphism $\iota_1$ is a homomorphism as $H^*(BT)$-algebras.
Thus we may compute 
\begin{align*}
\iota_1(t_i)=t_i \cdot \iota_1(1)=t_i \cdot (1)_{w \in S_n} = (t_i)_{w \in S_n} \ \ \ \text{ in } \bigoplus_{w \in S_n} \Q[t_1,\dots,t_n].
\end{align*}
This means the $w$-th component of $\iota_1(t_i)$ is exactly $t_i$ for all $w \in S_n$.
\end{proof}

We now briefly recall some standard constructions on the ambient
space $\fln$ which will lead to 
a well-known ring
presentation for the equivariant cohomology of $\fln$. Let $\mathcal{E}_i$ denote the $i$-th tautological vector
bundle over $\fln$: more specifically, $\mathcal{E}_i$ is the sub-bundle of the
trivial vector bundle $\fln \times \C^n$ over $\fln$
whose fiber over a point $V_{\bullet}=(V_1\subset \cdots\subset V_n) \in
\fln$ is exactly $V_i$.  The quotient line bundles $\mathcal{E}_i/\mathcal{E}_{i-1}$ over $\fln$ for $1 \leq i \leq n$ are called the tautological line bundles over $\fln$. We let 
\begin{align}\label{def of T eq ch in flag}
\TChFlag_i  := c_1^T((\mathcal{E}_i/\mathcal{E}_{i-1})^*) = - c_1^T(\mathcal{E}_i/\mathcal{E}_{i-1}) \in H^2_T(\fln)
\end{align}
denote the $T$-equivariant first Chern class of the dual\footnote{For the purpose of describing an explicit presentation of the (equivariant) cohomology ring of $\fln$, it is possible to not take duals in the two definitions~\eqref{deg of ti in coh} and~\eqref{def of T eq ch in flag} (e.g. \cite{AHHM2019, FukukawaIshidaMasuda}). However, later in this paper we discuss (equivariant) Schubert classes, so in order to achieve an internal compatibility of conventions taken within this paper, here we take the dual. } of the tautological line bundle $(\mathcal{E}_i/\mathcal{E}_{i-1})^*$.

\begin{Lemma} \label{lemma:flag1}
Let $i \in [n]$.
Then $\TChFlag_i|_w=t_{w(i)}$ for any $w \in S_n$.
\end{Lemma}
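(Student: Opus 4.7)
The plan is to compute the restriction of $\TChFlag_i$ to the fixed point $w$ by identifying the fiber of the relevant line bundle at $w$ as a one-dimensional $T$-representation, and then invoking the naturality of equivariant Chern classes under pullback along the inclusion of a $T$-fixed point.

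First, I would spell out the fiber of $\mathcal{E}_i/\mathcal{E}_{i-1}$ at the fixed point $w \in \fln^T$. From~\eqref{eq:2.3}, the $i$-th subspace of the flag corresponding to $w$ is $V_i = \langle e_{w(1)}, \ldots, e_{w(i)} \rangle$, and $V_{i-1} = \langle e_{w(1)}, \ldots, e_{w(i-1)} \rangle$. Therefore the fiber of $\mathcal{E}_i/\mathcal{E}_{i-1}$ at $w$ is the one-dimensional quotient $V_i/V_{i-1}$, which is canonically identified with the line $\C \cdot e_{w(i)}$. Under the action of $\diag(g_1,\ldots,g_n) \in T$, the basis vector $e_{w(i)}$ is scaled by $g_{w(i)} = \chi_{w(i)}(\diag(g_1,\ldots,g_n))$, so this line is isomorphic as a $T$-representation to $\C_{\chi_{w(i)}}$. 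Dualizing, the fiber of $(\mathcal{E}_i/\mathcal{E}_{i-1})^*$ at $w$ is $\C_{\chi_{w(i)}}^*$ as a $T$-representation.

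Next, I would use naturality: the inclusion $\iota_w\colon \{w\} \hookrightarrow \fln$ is $T$-equivariant, and the pullback of the $T$-equivariant line bundle $(\mathcal{E}_i/\mathcal{E}_{i-1})^*$ along $\iota_w$ is precisely the fiber at $w$, viewed as a $T$-equivariant bundle over a point, i.e., $\C_{\chi_{w(i)}}^*$. Since equivariant Chern classes commute with pullback, we obtain
\[
\TChFlag_i|_w = \iota_w^*\bigl(c_1^T((\mathcal{E}_i/\mathcal{E}_{i-1})^*)\bigr) = c_1^T(\iota_w^*(\mathcal{E}_i/\mathcal{E}_{i-1})^*) = c_1^T(\C_{\chi_{w(i)}}^*).
\]
By the defining identification in~\eqref{deg of ti in coh}, the right-hand side is exactly $t_{w(i)}$, which concludes the argument.

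No serious obstacle is expected here; the only subtlety to get right is keeping track of the dualization so that the character $\chi_{w(i)}$ matches the sign convention adopted in~\eqref{deg of ti in coh} and~\eqref{def of T eq ch in flag}. Taking duals on both sides — on the line bundle when defining $\TChFlag_i$, and on the $T$-representation when defining $t_i$ — ensures internal consistency and yields $t_{w(i)}$ (rather than $-t_{w(i)}$) as the answer.
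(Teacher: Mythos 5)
Your proof is correct and follows essentially the same approach as the paper's: identify the fiber of the tautological quotient at the fixed point $w$ as the $T$-representation $\C_{\chi_{w(i)}}$, then use naturality of equivariant Chern classes under restriction to the point. The only cosmetic difference is that you dualize the line bundle before taking $c_1^T$, whereas the paper computes $c_1^T(\mathcal{E}_i/\mathcal{E}_{i-1})|_w = -t_{w(i)}$ and then negates; the two are the same calculation.
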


\begin{proof}
For $w \in S_n$, let $j_w$ be the $T$-equivariant inclusion map from $\{w \}$ to $\fln$.
Considering the restriction  $j_w^*(\mathcal{E}_i/\mathcal{E}_{i-1}) = (\mathcal{E}_i/\mathcal{E}_{i-1})|_w$ of the $i$-th tautological line bundle 
to the point $\{w\}$, we have the commutative diagram 
\begin{equation*}
\begin{CD}
(\mathcal{E}_i/\mathcal{E}_{i-1})|_w@>>> \mathcal{E}_i/\mathcal{E}_{i-1} \\
@V{}VV @V{}VV\\
\{w\}@>{j_w}>> \fln
\end{CD}
\end{equation*}
where the horizontal arrows are inclusion maps of spaces and the vertical arrows are the projection maps of vector bundles.
From the naturality of Chern classes (e.g. \cite[Lemma~14.2]{MS}) we have 
\begin{equation} \label{eq:lemma:flag1-1}
\TChFlag_i|_w=-c_1^T(\mathcal{E}_i/\mathcal{E}_{i-1})|_w=-c_1^T((\mathcal{E}_i/\mathcal{E}_{i-1})|_w).
\end{equation}
But, by the definition of the tautological bundles $\mathcal{E}_i$, the line bundle $(\mathcal{E}_i/\mathcal{E}_{i-1})|_w$ is isomorphic to the one-dimensional $T$-representaion 
$$
\langle e_{w(1)}, e_{w(2)}, \cdots ,e_{w(i)}\rangle/\langle e_{w(1)}, e_{w(2)}, \cdots ,e_{w(i)-1}\rangle.
$$ 
One can easily see that for any $z \in \langle e_{w(1)}, e_{w(2)}, \cdots ,e_{w(i)}\rangle/\langle e_{w(1)}, e_{w(2)}, \cdots ,e_{w(i)-1}\rangle$ and $g \in T$, we have 
$$
g \cdot z = g_{w(i)}z=\chi_{w(i)}(g) z
$$
implying that 
\begin{equation} \label{eq:lemma:flag1-2}
c_1^T((\mathcal{E}_i/\mathcal{E}_{i-1})|_w)=c_1^T(\C_{\chi_{w(i)}})=-c_1^T((\C_{\chi_{w(i)}})^*) = -t_{w(i)}.
\end{equation}
Therefore, we conclude from \eqref{eq:lemma:flag1-1} and \eqref{eq:lemma:flag1-2} that $\TChFlag_i|_w=t_{w(i)}$, as desired.
\end{proof}

Let $i \in [n]$.  We let $\mathfrak{e}_i$ denote the $i$-th elementary symmetric polynomial in the input variables. For example, $\mathfrak{e}_i(t_1,\dots ,t_n)$ is of degree $2i$ since $\deg t_i=2$ for all $i \in [n]$. We have the following.

\begin{Proposition} \label{proposition:flag1}
For any $i\in[n]$, we have 
$$
\mathfrak{e}_i(\TChFlag_1,\dots ,\TChFlag_n)-\mathfrak{e}_i(t_1,\dots ,t_n) = 0
$$ 
in the equivariant cohomology $H^{\ast}_{T}(\fln)$.

\end{Proposition}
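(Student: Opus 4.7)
The plan is to exploit the injectivity of the restriction to fixed points in order to reduce the desired identity to a computation in each copy of $\Q[t_1, \ldots, t_n]$ indexed by the fixed points $w \in S_n$. More precisely, since $\fln$ is paved by Schubert cells, its odd-degree (ordinary) cohomology vanishes, so Corollary~\ref{corollary:localization_theorem} applies and the restriction map
\[
\iota_1: H^*_T(\fln) \hookrightarrow \bigoplus_{w \in S_n} \Q[t_1, \ldots, t_n]
\]
is injective. Hence it is enough to prove that the $w$-component of $\iota_1\bigl(\mathfrak{e}_i(\TChFlag_1,\dots ,\TChFlag_n)-\mathfrak{e}_i(t_1,\dots ,t_n)\bigr)$ is zero for every $w \in S_n$.

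Next, I would use the fact that $\iota_1$ is a homomorphism of $H^*(BT)$-algebras (being induced from a $T$-equivariant inclusion of fixed points). This means that restriction commutes with polynomial expressions, so
\[
\mathfrak{e}_i(\TChFlag_1,\dots ,\TChFlag_n)\big|_w = \mathfrak{e}_i\bigl(\TChFlag_1|_w,\dots,\TChFlag_n|_w\bigr)
\]
and similarly for $\mathfrak{e}_i(t_1, \ldots, t_n)$. By Lemma~\ref{lemma:flag1} we have $\TChFlag_j|_w = t_{w(j)}$, while by Lemma~\ref{lemma:flag0.5} we have $t_j|_w = t_j$. Substituting these identifications yields
\[
\mathfrak{e}_i(\TChFlag_1,\dots ,\TChFlag_n)\big|_w - \mathfrak{e}_i(t_1,\dots,t_n)\big|_w = \mathfrak{e}_i(t_{w(1)}, \ldots, t_{w(n)}) - \mathfrak{e}_i(t_1, \ldots, t_n).
\]

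The final step is the observation that $\mathfrak{e}_i$ is a symmetric polynomial, so its value is invariant under any permutation of its arguments. Therefore $\mathfrak{e}_i(t_{w(1)}, \ldots, t_{w(n)}) = \mathfrak{e}_i(t_1, \ldots, t_n)$, and the above difference is zero in $\Q[t_1,\ldots, t_n]$ for every $w \in S_n$. Injectivity of $\iota_1$ then forces $\mathfrak{e}_i(\TChFlag_1,\dots ,\TChFlag_n) - \mathfrak{e}_i(t_1,\dots,t_n) = 0$ in $H^*_T(\fln)$.

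There is no serious obstacle here; the only potentially confusing point is keeping track of signs in the conventions (since $\TChFlag_i$ is defined as a first Chern class of the dual tautological line bundle, and $t_i$ is the class associated to the dual representation $\C_{\chi_i}^*$). Provided one is careful that both sets of classes have been normalized compatibly, as the statements of Lemmas~\ref{lemma:flag0.5} and~\ref{lemma:flag1} confirm, the argument is simply localization plus the symmetry of $\mathfrak{e}_i$.
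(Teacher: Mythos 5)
Your proof is correct and follows essentially the same approach as the paper: reduce to restrictions at the $T$-fixed points via injectivity of $\iota_1$, compute those restrictions using Lemmas~\ref{lemma:flag0.5} and~\ref{lemma:flag1}, and invoke the symmetry of $\mathfrak{e}_i$ to see they agree.
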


\begin{proof}
For all $w \in S_n$, we obtain
\begin{align*}
\mathfrak{e}_i(\TChFlag_1,\dots ,\TChFlag_n)|_w&=\mathfrak{e}_i(\TChFlag_1|_w,\dots ,\TChFlag_n|_w) \\ 
                                                 &=\mathfrak{e}_i(t_{w(1)},\dots ,t_{w(n)}) \ \ \ \text{by Lemma~\ref{lemma:flag1}}\\
                                                 &=\mathfrak{e}_i(t_1,\dots ,t_n) \ \ \ \text{since } \mathfrak{e}_i \text{ is symmetric} \\
                                                 &=\mathfrak{e}_i(t_1,\dots ,t_n)|_w \ \ \ \text{by Lemma~\ref{lemma:flag0.5}}.
\end{align*}
Hence, the image of $\mathfrak{e}_i(\TChFlag_1,\dots ,\TChFlag_n)-\mathfrak{e}_i(t_1,\dots ,t_n)$ under the restriction map $\iota_1$ in \eqref{eq:restriction map flag} vanishes. 
By the injectivity of $\iota_1$ we obtain $\mathfrak{e}_i(\TChFlag_1,\dots ,\TChFlag_n)-\mathfrak{e}_i(t_1,\dots ,t_n) = 0
$ in $H^{\ast}_{T}(\fln)$, as claimed. 
\end{proof}

By Proposition~\ref{proposition:flag1}, the homomorphism
\begin{equation} \label{cohomology flag}
\varphi: \mathbb{Q}[x_1,\dots,x_n,t_1,\dots,t_n]/(\mathfrak{e}_i(x_1,\dots ,x_n)-\mathfrak{e}_i(t_1,\dots ,t_n) \mid i\in[n]) \rightarrow H^{\ast}_{T}(\fln)
\end{equation}
which sends $x_i$ to $\TChFlag_i$ and $t_i$ to $t_i$ for each $i \in [n]$ is well-defined. 
Below, we sketch the proof that $\varphi$ is an isomorphism, which means that $\varphi$ is the desired generators-and-relations description of $H^{\ast}_T(\fln)$ as a ring.  Giving this sketch of proof allows us to illustrate several of the 
techniques introduced in Section~\ref{sec: background}; some details are omitted for brevity. 

First, we prove that $H^{\ast}_{T}(\fln)$ is generated as a ring by the elements $\TChFlag_1,\dots ,\TChFlag_n, t_1, \cdots, t_n$. 
In other words, we aim to prove that $\varphi$ is surjective. 
To see this, we need some facts about the ordinary cohomology $H^*(\fln)$ of $\fln$. 
Let
\begin{align}\label{def of ch in flag}
\ChFlag_i \in H^2(\fln)
\end{align}
be the (ordinary, i.e., non-equivariant) first Chern class of the dual of the tautological line bundle $(\mathcal{E}_i/\mathcal{E}_{i-1})^*$, i.e. $\ChFlag_i=c_1((\mathcal{E}_i/\mathcal{E}_{i-1})^*) = 
-c_1(\mathcal{E}_i/\mathcal{E}_{i-1}) \in H^2(\fln)$. 
Basic facts about projective bundles and Chern classes together with the construction of the flag variety $\fln$ as a sequence of projective bundles imply the following (see e.g. \cite[Section~10.2, Proof of Proposition~3]{Fulton-Young} for details). 

\begin{Proposition} \label{proposition:flag generators non-equivariant}
The cohomology ring $H^{\ast}(\fln)$ is generated by $\ChFlag_1,\dots ,\ChFlag_n$ as a $\Q$-algebra. 
Moreover, we can take a set of monomials $\{\ChFlag_1^{\ell_1}\ChFlag_2^{\ell_2}\cdots\ChFlag_n^{\ell_n} \mid 0 \leq \ell_j \leq n-j \}$ as an additive basis for $H^*(\fln)$.
In particular, the Hilbert series of the ordinary cohomology ring $H^{\ast}(\fln)$ (also called the Poincar\'e polynomial of $\fln$) is given by
\begin{align*}
\Hilb(H^{\ast}(\fln), q) = \Poin(\fln,q)=\prod_{i=1}^{n-1} (1+q^2+q^4+\cdots+q^{2i}).
\end{align*}
\end{Proposition}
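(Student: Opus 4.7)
The plan is to realize $\fln$ as an iterated tower of projective bundles over a point, and then apply the projective bundle formula at each stage to build up the cohomology inductively.

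First, for $0 \leq k \leq n$, I would introduce the partial flag variety $\fl_k := \{V_1 \subset \cdots \subset V_k \mid \dim V_i = i\}$, so $\fl_0 = \{\pt\}$ and $\fl_n = \fln$. The forgetful map $\pi_k : \fl_k \to \fl_{k-1}$ can be identified with the projective bundle $\P(\C^n/\mathcal{E}_{k-1}) \to \fl_{k-1}$, where $\mathcal{E}_{k-1}$ is the tautological rank-$(k-1)$ subbundle on $\fl_{k-1}$; its fiber over $(V_1 \subset \cdots \subset V_{k-1})$ is the $\CP^{n-k}$ of lines in $\C^n/V_{k-1}$. The key geometric point is that the tautological line subbundle of this projective bundle has fiber $V_k/V_{k-1}$ over $(V_1 \subset \cdots \subset V_k) \in \fl_k$, so it is precisely $(\mathcal{E}_k/\mathcal{E}_{k-1})|_{\fl_k}$.

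Next, I would apply the projective bundle formula inductively. At stage $k$, this formula asserts that $H^*(\fl_k)$ is a free $H^*(\fl_{k-1})$-module with basis $\{1, \xi_k, \xi_k^2, \ldots, \xi_k^{n-k}\}$, where $\xi_k := c_1((\mathcal{E}_k/\mathcal{E}_{k-1})^*)$ is the degree-$2$ class coming from the dual tautological line bundle. Pulling back along the projections $\fln = \fl_n \to \fl_{n-1} \to \cdots \to \fl_1 \to \fl_0$ identifies $\xi_k$ with $\ChFlag_k \in H^2(\fln)$. Iterating from $\fl_0 = \pt$ up to $\fl_n$ (noting that $\fl_n \to \fl_{n-1}$ is an isomorphism, contributing only $\ell_n = 0$), we conclude that
\[
\{\ChFlag_1^{\ell_1}\ChFlag_2^{\ell_2}\cdots\ChFlag_n^{\ell_n} \mid 0 \leq \ell_j \leq n-j\}
\]
is an additive $\Q$-basis of $H^*(\fln)$, and in particular the $\ChFlag_i$ generate the ring as a $\Q$-algebra.

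Finally, since each $\ChFlag_k$ has cohomological degree $2$, the monomial $\ChFlag_1^{\ell_1}\cdots\ChFlag_{n-1}^{\ell_{n-1}}$ sits in degree $2(\ell_1 + \cdots + \ell_{n-1})$, so summing over the basis gives
\[
\Poin(\fln,q) = \prod_{k=1}^{n-1}\bigl(1 + q^2 + q^4 + \cdots + q^{2(n-k)}\bigr) = \prod_{i=1}^{n-1}\bigl(1 + q^2 + q^4 + \cdots + q^{2i}\bigr)
\]
after reindexing $i := n-k$. The main obstacle, and the only step requiring real geometric input, is verifying the identification $\fl_k \cong \P(\C^n/\mathcal{E}_{k-1})$ together with the matching of its tautological line subbundle with $\mathcal{E}_k/\mathcal{E}_{k-1}$; once this is in hand, every remaining step is a formal iteration of the projective bundle formula and a finite combinatorial count.
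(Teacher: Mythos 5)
Your argument is correct and is essentially the standard iterated-projective-bundle proof that the paper defers to by citation (Fulton, \emph{Young Tableaux}, Section~10.2, Proof of Proposition~3) rather than reproducing; you have in effect filled in the cited reference, with the identifications $\fl_k \cong \P(\C^n/\mathcal{E}_{k-1})$ and the matching of its tautological subbundle with $\mathcal{E}_k/\mathcal{E}_{k-1}$ handled correctly.
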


Proposition~\ref{proposition:flag generators non-equivariant} together with Proposition~\ref{proposition:generators equivariant cohomology}-(1) then yields the following result.\footnote{It turns out that this fact can also be proved in a purely combinatorial manner by using the so-called GKM graph of $\fln$ (\cite[Lemma~3.2]{FukukawaIshidaMasuda}). }

\begin{Proposition} \label{proposition:flag generators}
The equivariant cohomology ring $H^{\ast}_{T}(\fln)$ is generated as a $\Q$-algebra by the elements $\TChFlag_1,\dots ,\TChFlag_n$ and $t_1,\dots ,t_n$. 
In particular, the map $\varphi$ in \eqref{cohomology flag} is surjective.
\end{Proposition}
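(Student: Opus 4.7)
The plan is to apply Proposition~\ref{proposition:generators equivariant cohomology}(1) directly, using the non-equivariant result Proposition~\ref{proposition:flag generators non-equivariant} as input. Thus the task reduces essentially to verifying that the hypotheses of the former apply in the setting of $X = \fln$ with the chosen generators.

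First, I would check that the odd-degree ordinary cohomology of $\fln$ vanishes. This is already implicit in the discussion preceding the proposition: the flag variety is paved by the Schubert cells, each of which is a complex affine space, so standard cellular cohomology arguments show $H^{\mathrm{odd}}(\fln) = 0$. Hence the hypotheses of Proposition~\ref{proposition:generators equivariant cohomology}(1) on $X$ are satisfied.

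Next, I would set $S := \{\ChFlag_1, \dots, \ChFlag_n\} \subset H^2(\fln)$, which by Proposition~\ref{proposition:flag generators non-equivariant} generates $H^*(\fln)$ as a $\Q$-algebra, and take $\widetilde{S} := \{\TChFlag_1, \dots, \TChFlag_n\} \subset H^2_T(\fln)$. To apply Proposition~\ref{proposition:generators equivariant cohomology}(1), I must verify that each $\TChFlag_i$ is a homogeneous lift of $\ChFlag_i$, i.e., that the restriction map $H^*_T(\fln) \to H^*(\fln)$ coming from the Borel fibration sends $\TChFlag_i$ to $\ChFlag_i$. This follows from the naturality of Chern classes together with the remark, made in Section~\ref{subsec: background equivariant cohomology}, that the restriction homomorphism $H^*_T(X) \to H^*(X)$ sends the $T$-equivariant Chern class of a $T$-equivariant vector bundle to the ordinary Chern class of the underlying bundle. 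Applied to the tautological line bundle $(\mathcal{E}_i/\mathcal{E}_{i-1})^*$, which is naturally $T$-equivariant, this shows $\TChFlag_i = c_1^T((\mathcal{E}_i/\mathcal{E}_{i-1})^*)$ restricts to $\ChFlag_i = c_1((\mathcal{E}_i/\mathcal{E}_{i-1})^*)$, as required. Both $\TChFlag_i$ and $\ChFlag_i$ are homogeneous of degree $2$.

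With the hypotheses verified, Proposition~\ref{proposition:generators equivariant cohomology}(1) immediately yields that the set $\{\TChFlag_1,\dots,\TChFlag_n,\, t_1,\dots,t_n\}$ generates $H^*_T(\fln)$ as a $\Q$-algebra, proving the first assertion. The surjectivity of $\varphi$ is then automatic: by construction, $\varphi(x_i) = \TChFlag_i$ and $\varphi(t_i) = t_i$, so the image of $\varphi$ contains a generating set of $H^*_T(\fln)$, and since $\varphi$ is a ring homomorphism its image is all of $H^*_T(\fln)$. There is no real obstacle in this argument; the main step is the conceptual one of recognizing that the entire content of Proposition~\ref{proposition:flag generators} is packaged inside Proposition~\ref{proposition:generators equivariant cohomology}(1) once the non-equivariant generation result Proposition~\ref{proposition:flag generators non-equivariant} is in hand.
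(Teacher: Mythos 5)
Your argument is correct and is exactly the paper's approach: the paper derives this proposition by applying Proposition~\ref{proposition:generators equivariant cohomology}(1) to the non-equivariant generating set from Proposition~\ref{proposition:flag generators non-equivariant}. You have simply spelled out the hypothesis-checking (vanishing of odd cohomology, and that $\TChFlag_i$ lifts $\ChFlag_i$) that the paper leaves implicit.
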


Recall that we want to show that the ring homomorphism $\varphi$ is an isomorphism; the above result shows that $\varphi$ is surjective. We 
also already know that $\varphi$ preserves the gradings on both rings. Thus, in order to prove that $\varphi$ is an isomorphism, it suffices to check that each of the graded pieces of the domain and the codomain of $\varphi$ are equal, or equivalently, it suffices to check 
that the Hilbert series of both rings are equal. 
For the computations below it is useful to recall that 
$\deg x_i = \deg t_i = 2$ for all $i$. 
We first compute the Hilbert series of the equivariant cohomology ring of $\fln$. 

\begin{Lemma} \label{lemma:flag Hilbert series1}
We have $\Hilb(H^{\ast}_{T}(\fln),q)=\frac{1}{(1-q^2)^{n}}\prod_{i=1}^{n-1} (1+q^2+q^4+\cdots+q^{2i})$.
\end{Lemma}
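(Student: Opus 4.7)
The plan is to reduce the computation to facts already established earlier in the excerpt, specifically the collapsing of the Borel spectral sequence when odd cohomology vanishes (equation~\eqref{eq:free module over H(BT)}), together with the known Poincaré polynomial of $\fln$ given in Proposition~\ref{proposition:flag generators non-equivariant}.

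First I would observe that $\fln$ is paved by affine cells (the Schubert cells), so its odd-degree ordinary cohomology vanishes. This is the same input that justified the injectivity of $\iota_1$ in~\eqref{eq:restriction map flag} via Corollary~\ref{corollary:localization_theorem}. Once the odd-degree cohomology vanishes, the isomorphism~\eqref{eq:free module over H(BT)} applies and gives
\[
H^*_T(\fln) \cong H^*(BT) \otimes_{\Q} H^*(\fln)
\]
as graded $\Q$-vector spaces (in fact as $H^*(BT)$-modules, but only the graded vector space structure is needed here).

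Next I would take Hilbert series on both sides. Since taking Hilbert series turns tensor products of graded vector spaces into products of formal power series, this yields
\[
\Hilb(H^*_T(\fln),q) = \Hilb(H^*(BT),q) \cdot \Hilb(H^*(\fln),q).
\]
Now I plug in the two known factors: Example~\ref{example.Hilbert.polynomial.ring} (applied with $H^*(BT) \cong \Q[t_1,\ldots,t_n]$ and $\deg t_i = 2$) gives $\Hilb(H^*(BT),q) = \frac{1}{(1-q^2)^n}$, and Proposition~\ref{proposition:flag generators non-equivariant} gives $\Hilb(H^*(\fln),q) = \prod_{i=1}^{n-1}(1+q^2+q^4+\cdots+q^{2i})$. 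Multiplying these two factors yields the claimed formula.

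There is essentially no obstacle here, as this is a direct assembly of previously established facts; the only thing to be careful about is that~\eqref{eq:free module over H(BT)} is only an isomorphism of $H^*(BT)$-modules and not of rings, but since Hilbert series only sees the underlying graded vector space structure, this subtlety does not affect the computation.
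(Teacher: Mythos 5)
Your proof is correct and follows the same route as the paper: invoke the module isomorphism~\eqref{eq:free module over H(BT)} (justified by odd-degree cohomology vanishing for $\fln$), pass to Hilbert series, and multiply the known series for $H^*(BT)$ and $H^*(\fln)$ from Proposition~\ref{proposition:flag generators non-equivariant}. Your added remark that only the graded vector space structure matters for Hilbert series is a nice clarification that the paper leaves implicit.
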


\begin{proof}
By \eqref{eq:free module over H(BT)} we have 
\begin{equation*}
\Hilb(H^{\ast}_{T}(\fln),q)=\frac{1}{(1-q^2)^{n}} \Hilb(H^{\ast}(\fln), q) 
\end{equation*}
so the result is immediate from 
Proposition~\ref{proposition:flag generators non-equivariant}.
\end{proof}

As our second step, we need to compute the Hilbert series of $$\mathbb{Q}[x_1,\dots,x_n,t_1,\dots,t_n]/(\mathfrak{e}_i(x_1,\dots ,x_n)-\mathfrak{e}_i(t_1,\dots ,t_n) \mid i\in[n]).$$
For notational simplicity, we write 
\begin{equation*}
\f_i:=\mathfrak{e}_i(x_1,\dots ,x_n)-\mathfrak{e}_i(t_1,\dots ,t_n)
\end{equation*}
for $1\le i\le n$ for the rest of this section.

\begin{Lemma} \label{lemma:flag regular sequences2}
The sequence $\f_1,\ldots,\f_n,t_1,\ldots,t_n$ is 
a regular sequence in the polynomial ring $\mathbb{Q}[x_1,\dots,x_n,t_1,\dots,t_n]$.
\end{Lemma}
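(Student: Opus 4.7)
The plan is to invoke Lemma~\ref{lemma: solution regular criterion}, since we have exactly $2n$ positive-degree homogeneous polynomials $\f_1,\ldots,\f_n,t_1,\ldots,t_n$ in the polynomial ring with $2n$ variables $x_1,\ldots,x_n,t_1,\ldots,t_n$. By that lemma, it suffices to show that the common zero locus in $\C^{2n}$ of the equations $\f_1 = \cdots = \f_n = 0$ and $t_1 = \cdots = t_n = 0$ consists only of the origin.

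First I would set $t_1=\cdots=t_n=0$. Substituting into the equations $\f_i = \mathfrak{e}_i(x_1,\ldots,x_n) - \mathfrak{e}_i(t_1,\ldots,t_n) = 0$ for all $i\in[n]$, the terms $\mathfrak{e}_i(t_1,\ldots,t_n)$ vanish and the system reduces to $\mathfrak{e}_i(x_1,\ldots,x_n) = 0$ for all $i\in[n]$. By the argument already given in Example~\ref{example:flag regular sequences1}, this system of equations forces $x_1=\cdots=x_n=0$: indeed, $\mathfrak{e}_n(x_1,\ldots,x_n) = x_1\cdots x_n = 0$ forces some $x_i$ to vanish, and then an induction on $n$ using the lower elementary symmetric polynomials in the remaining variables finishes the argument. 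Hence the common zero locus consists only of $\{0\} \in \C^{2n}$, and Lemma~\ref{lemma: solution regular criterion} concludes the proof.

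An alternative route (which I would mention if space permits) is to first show that $t_1,\ldots,t_n,\f_1,\ldots,\f_n$ is a regular sequence and then apply the permutation lemma. For this, $t_1,\ldots,t_n$ is trivially regular in $\Q[x_1,\ldots,x_n,t_1,\ldots,t_n]$ (cf. Example~\ref{ex: reg sequence poly ring}), and after quotienting by the ideal $(t_1,\ldots,t_n)$, the ring becomes $\Q[x_1,\ldots,x_n]$ and the images of $\f_1,\ldots,\f_n$ become the elementary symmetric polynomials $\mathfrak{e}_1(x_1,\ldots,x_n),\ldots,\mathfrak{e}_n(x_1,\ldots,x_n)$, which are regular in $\Q[x_1,\ldots,x_n]$ by Example~\ref{example:flag regular sequences1}.

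I do not anticipate any real obstacle: the core content of the argument has essentially already been packaged into Lemma~\ref{lemma: solution regular criterion} and Example~\ref{example:flag regular sequences1}, so the proof is a short application of these results. The only mildly delicate point is remembering that the hypothesis of Lemma~\ref{lemma: solution regular criterion} requires the \emph{number} of polynomials to equal the number of variables, which is satisfied here because the ambient polynomial ring has $2n$ variables and our candidate sequence has $2n$ elements.
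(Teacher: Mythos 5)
Your proof is correct, but it takes a different route from the paper. The paper establishes this lemma by verifying the Hilbert series criterion of Lemma~\ref{lemma: hilbert regular criterion}: it computes $\Hilb(\Q[x_1,\dots,x_n,t_1,\dots,t_n]/(\f_1,\dots,\f_n,t_1,\dots,t_n),q)$, recognizes it as the Hilbert series already computed in Example~\ref{example:flag regular sequences1}, and checks that the product formula~\eqref{eq:4.6} holds. You instead go straight to the zero-locus criterion of Lemma~\ref{lemma: solution regular criterion}: set the $t_i$ to zero, observe the system collapses to $\mathfrak{e}_i(x)=0$, and invoke the already-established fact that these have only the trivial common solution. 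Your route is arguably more economical, since the zero-locus argument is exactly the mechanism underlying Example~\ref{example:flag regular sequences1} in the first place, so you avoid the detour through Hilbert series bookkeeping. The paper's Hilbert-series route has the mild pedagogical advantage of exercising the machinery of Lemma~\ref{lemma: hilbert regular criterion} that is reused immediately afterward in Lemma~\ref{lemma:flag Hilbert series2}. Your alternative route (show $t_1,\dots,t_n,\f_1,\dots,\f_n$ is regular and then apply the permutation lemma) is also valid and is perhaps the cleanest conceptually, since modding out by $(t_1,\dots,t_n)$ visibly reduces the $\f_i$ to elementary symmetric polynomials in the $x_j$ alone. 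All three are sound; you correctly note that the hypothesis of Lemma~\ref{lemma: solution regular criterion} that the number of polynomials equal the number of variables ($2n$ each) is satisfied.
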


\begin{proof}  
We compute 
\[
\begin{split}
&\Hilb(\Q[x_1,\dots,x_n,t_1,\dots,t_n]/(\f_1,\dots,\f_n,t_1,\dots,t_n),q)\\
=&\Hilb(\Q[x_1,\dots,x_n]/(\mathfrak{e}_1(x_1,\dots ,x_n),\dots,\mathfrak{e}_n(x_1,\dots ,x_n)),q)\\
=&\prod_{i=1}^{n-1} (1+q^2+q^4+\cdots+q^{2i}) \ \ \ 
\end{split}
\]
where the last equality was obtained in Example~\ref{example:flag regular sequences1}. 
This implies that \eqref{eq:4.6} is satisfied in our setting because $\deg \f_i=2i$ for $1\le i\le n$, $\deg t_j=2$ for $1\le j\le n$ and 
\begin{equation*} 
\Hilb(\Q[x_1,\dots,x_n,t_1,\dots,t_n],q)=\frac{1}{(1-q^2)^{2n}}. 
\end{equation*} 
The claim now follows from Lemma~\ref{lemma: hilbert regular criterion}. 
\end{proof}

Now we can compute the Hilbert series we need.

\begin{Lemma} \label{lemma:flag Hilbert series2}
We have $\Hilb(\Q[x_1,\dots,x_n,t_1,\dots,t_n]/(\f_1,\dots,\f_n),q)=\frac{1}{(1-q^2)^{n}}\prod_{i=1}^{n-1} (1+q^2+q^4+\cdots+q^{2i})$.
\end{Lemma}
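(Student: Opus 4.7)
The plan is to obtain this Hilbert series by applying the Hilbert-series criterion for regular sequences (Lemma~\ref{lemma: hilbert regular criterion}) to the sequence $\f_1,\dots,\f_n$. The preceding lemma (Lemma~\ref{lemma:flag regular sequences2}) already asserts that the longer sequence $\f_1,\dots,\f_n,t_1,\dots,t_n$ is regular in $\Q[x_1,\dots,x_n,t_1,\dots,t_n]$, and it is immediate from Definition~\ref{def:regular} that any initial segment of a regular sequence is itself a regular sequence. So $\f_1,\dots,\f_n$ is regular on the nose.

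With that in hand, Lemma~\ref{lemma: hilbert regular criterion} gives
\begin{equation*}
\Hilb\bigl(\Q[x_1,\dots,x_n,t_1,\dots,t_n]/(\f_1,\dots,\f_n),q\bigr)
= \Hilb\bigl(\Q[x_1,\dots,x_n,t_1,\dots,t_n],q\bigr)\cdot\prod_{i=1}^{n}\bigl(1-q^{\deg\f_i}\bigr).
\end{equation*}
The ambient polynomial ring has $2n$ variables each of degree $2$, so its Hilbert series is $1/(1-q^2)^{2n}$, and $\deg\f_i=2i$ for $1\le i\le n$. Substituting these values, the right-hand side becomes
\begin{equation*}
\frac{1}{(1-q^2)^{2n}}\prod_{i=1}^{n}(1-q^{2i})
= \frac{1}{(1-q^2)^{n}}\prod_{i=1}^{n}\frac{1-q^{2i}}{1-q^2}
= \frac{1}{(1-q^2)^{n}}\prod_{i=1}^{n}\bigl(1+q^2+q^4+\cdots+q^{2(i-1)}\bigr).
\end{equation*}
The $i=1$ factor in the product equals $1$, so after re-indexing $j=i-1$ the product runs only from $j=1$ to $n-1$, producing exactly $\prod_{j=1}^{n-1}(1+q^2+\cdots+q^{2j})$, which is the desired formula.

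There is essentially no obstacle: the only thing to check is that one genuinely has regularity of $\f_1,\dots,\f_n$ (not merely of the longer list), and this is immediate. The computation itself is just the elementary algebraic simplification above. I would write the proof in two or three lines, citing Lemma~\ref{lemma:flag regular sequences2} and Lemma~\ref{lemma: hilbert regular criterion}.
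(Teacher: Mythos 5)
Your proof is correct and follows essentially the same route as the paper: observe that $\f_1,\dots,\f_n$ is an initial segment of the regular sequence from Lemma~\ref{lemma:flag regular sequences2}, hence itself regular, then apply Lemma~\ref{lemma: hilbert regular criterion} and simplify. Your write-up just spells out the elementary algebraic simplification in a bit more detail than the paper does.
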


\begin{proof}
From Lemma~\ref{lemma:flag regular sequences2} we know that $\f_1, \cdots, \f_n, t_1, \cdots, t_n$ is a regular sequence, and from the definition of a regular sequence it follows that the subsequence $\f_1,\dots,\f_n$ is again a regular sequence. Hence it follows from Lemma~\ref{lemma: hilbert regular criterion} that 
\[
\begin{split}
\Hilb(\Q[x_1,\dots,x_n,t_1,\dots,t_n]/(\f_1,\dots,\f_n),q)&=\frac{1}{(1-q^2)^{2n}}\prod_{i=1}^{n}(1-q^{2i})\\
&=\frac{1}{(1-q^2)^{n}}\prod_{i=1}^{n-1} (1+q^2+q^4+\cdots+q^{2i})
\end{split}
\]
as desired. 
\end{proof}

 We can now prove the main claim of this section, namely, that $\varphi$ is an isomorphism; 
 this gives the ring presentation of $H^{\ast}_T(\fln)$. 
 
\begin{Theorem} \label{theorem:presentation flag typeA}
The map $\varphi$ in \eqref{cohomology flag} is an isomorphism as $H^*(BT)$-algebras.
\end{Theorem}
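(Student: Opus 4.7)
The plan is to exploit the fact that $\varphi$ is a degree-preserving surjection between two graded $\Q$-algebras whose Hilbert series we have already computed to be equal; this will force $\varphi$ to be an isomorphism degree-by-degree.

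First, I would observe that $\varphi$ is well-defined (this is immediate from Proposition~\ref{proposition:flag1}) and respects the grading on both sides, since on the domain each $x_i$ and $t_j$ has degree $2$, the relations $\mathfrak{e}_i(x_1,\dots,x_n) - \mathfrak{e}_i(t_1,\dots,t_n)$ are homogeneous of degree $2i$, and on the codomain $\TChFlag_i, t_i \in H^2_T(\fln)$ by construction. Next, I would invoke Proposition~\ref{proposition:flag generators} to conclude that $\varphi$ is surjective.

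The key comparison step is then to apply Lemma~\ref{lemma:flag Hilbert series1} and Lemma~\ref{lemma:flag Hilbert series2}, which give
\[
\Hilb(H^{\ast}_{T}(\fln),q) \;=\; \frac{1}{(1-q^2)^{n}}\prod_{i=1}^{n-1} (1+q^2+q^4+\cdots+q^{2i}) \;=\; \Hilb\bigl(\Q[x_1,\dots,x_n,t_1,\dots,t_n]/(\f_1,\dots,\f_n),q\bigr).
\]
Since $\varphi$ is a surjective homomorphism of graded $\Q$-vector spaces and each graded piece on either side has the same (finite) dimension, the restriction of $\varphi$ to each homogeneous component must be an isomorphism of $\Q$-vector spaces. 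Assembling these componentwise isomorphisms yields that $\varphi$ is a $\Q$-algebra isomorphism, and it is automatically an $H^*(BT)$-algebra isomorphism because $\varphi(t_i) = t_i$ by construction, so the $H^*(BT)$-module structure is preserved.

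There is no real obstacle here: all of the hard work has been done in the preceding lemmas (the construction of the homomorphism via the injectivity of localization to fixed points, the identification of the generators via the tautological bundle argument, the regularity of $\f_1,\dots,\f_n,t_1,\dots,t_n$, and the Hilbert series comparison). The final statement is essentially the formal observation that a graded surjection between finite-dimensional graded pieces of matching dimension is an isomorphism, which is what I would record in the proof.
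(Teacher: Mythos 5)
Your proof is correct and follows the paper's argument essentially verbatim: establish surjectivity via Proposition~\ref{proposition:flag generators}, then compare Hilbert series using Lemmas~\ref{lemma:flag Hilbert series1} and~\ref{lemma:flag Hilbert series2}, and conclude degree-by-degree. The extra remarks you add (well-definedness from Proposition~\ref{proposition:flag1}, degree-preservation, and the observation that $\varphi(t_i)=t_i$ guarantees the $H^*(BT)$-algebra structure is respected) are accurate and make the implicit steps of the paper's proof explicit.
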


\begin{proof}
By Proposition~\ref{proposition:flag generators} the homomorphism 
$$
\varphi: \Q[x_1,\dots,x_n,t_1,\dots,t_n]/(\f_1,\dots,\f_n) \rightarrow H^{\ast}_{T}(\fln); \ x_i \mapsto \TChFlag_i, t_i \mapsto t_i
$$ 
is surjective. 
However, it follows from Lemmas~\ref{lemma:flag Hilbert series1} and \ref{lemma:flag Hilbert series2} that the domain and the codomain have the same Hilbert series.  Since $\varphi$ is surjective, the only way that the two rings can have the same Hilbert series is if $\varphi$ is an isomorphism. This proves the result. 
\end{proof}

\begin{Remark}
The $H^*(BT)$-algebra homomorphism $\varphi$ is also an isomorphism when we consider the cohomology rings with integral coefficients (see e.g. \cite[Theorem~3.1]{FukukawaIshidaMasuda}). 
\end{Remark}

The discussion above concerns the $T$-equivariant cohomology of $\fln$, but 
we can also describe the ordinary cohomology ring $H^*(\fln)$ by taking the quotient by the ideal 
generated by $H^{>0}(BT)$ as in~\eqref{eq:from equivariant to ordinary}; this can be accomplished by setting the variables $t_i$ equal to $0$. We have therefore proved the following.

\begin{Theorem} \label{theorem:flag cofomology}
There exists a $\Q$-algebra isomorphism
\begin{equation}\label{eq:cohofl}
H^{\ast}(\fln)\cong \Q[x_1,\dots,x_n]/(\mathfrak{e}_i(x_1,\dots ,x_n) \mid i\in[n])
\end{equation}
which sends each $x_i$ to the ordinary (i.e. non-equivariant) first Chern class $\ChFlag_i$ defined in \eqref{def of ch in flag}.
\end{Theorem}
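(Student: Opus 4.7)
The plan is to derive this ordinary-cohomology presentation as a direct corollary of the equivariant presentation in Theorem~\ref{theorem:presentation flag typeA}, using the general principle recorded in~\eqref{eq:from equivariant to ordinary}. Since $\fln$ is paved by complex affine cells (the Schubert cells), its odd-degree ordinary cohomology vanishes, so~\eqref{eq:free module over H(BT)} applies and hence the restriction to a fiber of the Borel fibration, $H^*_T(\fln) \twoheadrightarrow H^*(\fln)$, is surjective with kernel the ideal generated by $H^{>0}(BT) = (t_1,\ldots,t_n)$.

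Concretely, I would compose the isomorphism $\varphi$ of Theorem~\ref{theorem:presentation flag typeA} with the quotient map $H^*_T(\fln) \to H^*_T(\fln)/(t_1,\ldots,t_n)$. Under $\varphi$, the ideal $(t_1,\ldots,t_n) \subset H^*_T(\fln)$ corresponds to $(t_1,\ldots,t_n)$ in $\Q[x_1,\ldots,x_n,t_1,\ldots,t_n]/(\f_1,\ldots,\f_n)$, so the resulting quotient is
\[
\Q[x_1,\ldots,x_n,t_1,\ldots,t_n]/(\f_1,\ldots,\f_n,t_1,\ldots,t_n) \;\cong\; \Q[x_1,\ldots,x_n]/\bigl(\mathfrak{e}_i(x_1,\ldots,x_n) \mid i \in [n]\bigr),
\]
where the last identification uses that setting each $t_j = 0$ in $\f_i = \mathfrak{e}_i(x_1,\ldots,x_n) - \mathfrak{e}_i(t_1,\ldots,t_n)$ precisely kills the $\mathfrak{e}_i(t_1,\ldots,t_n)$ term. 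Combining these isomorphisms yields the claimed presentation of $H^*(\fln)$.

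To identify the image of $x_i$ as the ordinary Chern class $\ChFlag_i$, I would use the general fact recalled in Section~\ref{subsec: background equivariant cohomology} that the restriction $H^*_T(X) \to H^*(X)$ sends the equivariant Chern class of a $T$-equivariant bundle to the ordinary Chern class of the underlying bundle. Applied to $(\mathcal{E}_i/\mathcal{E}_{i-1})^*$, this sends $\TChFlag_i \in H^2_T(\fln)$ to $\ChFlag_i \in H^2(\fln)$; tracing through $\varphi$, which sends $x_i \mapsto \TChFlag_i$, confirms that the composite isomorphism sends $x_i \mapsto \ChFlag_i$ as desired.

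There is essentially no obstacle — the entire statement is a formal consequence of the equivariant presentation and~\eqref{eq:from equivariant to ordinary}. The only thing to keep straight is the bookkeeping: verifying that the kernel $(H^{>0}(BT))$ of the restriction map matches up, under $\varphi$, with the ideal $(t_1,\ldots,t_n)$ on the polynomial side, and that the resulting quotient is literally the presentation claimed. Everything else — surjectivity, the vanishing of odd cohomology, the Chern-class identification — has already been established earlier in the section.
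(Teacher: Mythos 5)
Your proposal matches the paper's argument exactly: the paper also derives Theorem~\ref{theorem:flag cofomology} directly from Theorem~\ref{theorem:presentation flag typeA} by applying~\eqref{eq:from equivariant to ordinary} and setting the $t_i$ equal to $0$. Your additional remarks on tracking the kernel through $\varphi$ and on the compatibility of equivariant and ordinary Chern classes are correct and simply make explicit the bookkeeping the paper leaves implicit.
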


\begin{Remark}
The isomorphism~\eqref{eq:cohofl} remains valid also with integral coefficients (see e.g. \cite[Section~10.2, Proposition~3]{Fulton-Young}). 
\end{Remark}

Our second main theorem for this section is an important result of Sara Billey, often called \textbf{Billey's formula} \cite{Billey}. 
Giving a sketch of the proof is beyond the scope of this manuscript, so we restrict ourselves to a statement of the formula and some discussion. We first recall some context. 
Schubert varieties are certain $T$-invariant subvarieties of $\fln$, indexed by the permutations $w \in S_n$, and their corresponding Schubert (cohomology) classes $\{\sigma_w \in H^{\ast}(\fln)\}_{w \in S_n}$ form an additive basis of the ordinary cohomology $H^*(\fln)$.  Since the Schubert varieties are $T$-invariant, it turns out that we can also consider the associated $T$-equivariant Schubert (cohomology) classes, which we denote as $\sigma^T_w \in H^*_T(\fln)$. Note that the $T$-equivariant Schubert classes $\{\sigma^T_w \in H^{\ast}_T(\fln)\}_{w \in S_n}$ form an $H^*(BT)$-basis of the equivariant cohomology $H^*_T(\fln)$ by Proposition~\ref{proposition:generators equivariant cohomology}-(2).
Billey's formula allows us to explicitly compute the restriction of the equivariant Schubert classes to the fixed points $w \in S_n$. 
To state the result, we need some preliminaries.

\begin{Definition}\label{definition.term.in.billey}
Given a permutation $w$,  an index $i \in \{1,2,\ldots,\ell(w)\}$, and a choice of reduced word 
${\bf b}=(b_1, b_2, \ldots, b_{\ell(w)})$ for $w \in S_n$, we define 
\begin{equation}\label{eq:def-beta}
r(i, \mathbf{b}) := s_{b_1} s_{b_2} \cdots s_{b_{i-1}} (\alpha_{b_i}),
\end{equation}
where $\alpha_k \in H^2(BT) = \Q[t_1, \ldots, t_n]$ for $k \in [n-1]$ is defined as  
\begin{align} \label{eq:simple root in type A}
\alpha_k:=t_{k+1} - t_{k}. 
\end{align}
\end{Definition}

Note that, by definition, $r(i, \mathbf{b})$ is an element of $\Q[t_1, \ldots, t_n]$ of the form $t_j - t_k$ for some $j,k$. 
It is known that $r(i,\mathbf{b})$ is in
fact a \textit{positive} root, namely, it has the form $t_j - t_k$
for $j>k$ (see e.g. \cite[Equation 4.1 and surrounding discussion]{Billey}).  These positive
roots $r(i, \mathbf{b})$ are the building blocks of Billey's formula, which we now state.

\begin{Theorem}\label{theorem:billey} (``Billey's formula'', \cite[Theorem 4]{Billey})
Let $w \in S_n$.  Fix a reduced word decomposition $w = s_{b_1}
s_{b_2} \cdots s_{b_{\ell(w)}}$ and let $\mathbf{b}= (b_1, b_2,
\ldots, b_{\ell(w)})$ be the sequence of its indices. 
Let $v \in S_n$. Then the value of the $T$-equivariant Schubert class $\sigma^T_v$ at
the $T$-fixed point $w$ is given by 
\begin{equation}\label{eq:billey-formula}
\sigma^T_v(w) = \sum r(i_1, \mathbf{b})r(i_2, \mathbf{b}) \cdots
r(i_{\ell(v)}, \mathbf{b})
\end{equation}
where the sum is taken over subwords $s_{b_{i_1}} s_{b_{i_2}} \cdots s_{b_{i_{\ell(v)}}}$ of $\mathbf{b}$ that are reduced words for $v$.
\end{Theorem}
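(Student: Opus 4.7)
The plan is to show that the proposed right-hand side
$$B_v(w) := \sum r(i_1,\mathbf{b})\cdots r(i_{\ell(v)},\mathbf{b}),$$
viewed as an element of $\Q[t_1,\ldots,t_n]$ attached to each pair $(v,w)$, agrees with $\sigma^T_v|_w$ by appealing to the axiomatic characterization of $T$-equivariant Schubert classes via the injective restriction map $\iota_1$. Since $\{\sigma^T_v\}_{v \in S_n}$ is an $H^*(BT)$-basis of $H^*_T(\fln)$ (Proposition~\ref{proposition:generators equivariant cohomology}(2)) and $\iota_1$ is injective, these classes are uniquely determined by their localizations, which are in turn characterized (via standard GKM theory for the flag variety) by: (a) homogeneity of degree $\ell(v)$ in the $t_i$; (b) vanishing $\sigma^T_v|_w = 0$ unless $v \leq w$ in Bruhat order; (c) the normalization $\sigma^T_v|_v = r(1,\mathbf{b})\cdots r(\ell(v),\mathbf{b})$ for any reduced word $\mathbf{b}$ for $v$ (the product over the inversion roots of $v$); and (d) GKM compatibility: $\sigma^T_v|_w - \sigma^T_v|_{ws_\alpha}$ is divisible by the appropriate root for every reflection $s_\alpha$.

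The first and most substantial step is to verify that $B_v(w)$ is well-defined, that is, independent of the chosen reduced word $\mathbf{b}$ for $w$. I would handle this by checking invariance under the braid moves --- commutations $s_is_j = s_js_i$ when $|i-j|>1$ and the braid relation $s_is_{i+1}s_i = s_{i+1}s_is_{i+1}$ --- by tracking how the set of reduced subwords for $v$ and their associated products of $r(i,\mathbf{b})$ transform under each move. Since by Matsumoto's theorem any two reduced words for $w$ are connected by a sequence of such moves, invariance under the two move types suffices. With well-definedness in hand, condition (a) is immediate because each $r(i,\mathbf{b})$ is linear in the $t_i$ and exactly $\ell(v)$ factors appear. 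Condition (b) follows from the Subword Property of Bruhat order, which says that $v \leq w$ if and only if every (equivalently, some) reduced word for $w$ contains a reduced subword for $v$; if $v \not\leq w$ the sum defining $B_v(w)$ is empty. Condition (c) is forced by the observation that a reduced subword of a reduced word for $v$ must itself have length $\ell(v)$, hence must equal the full word, which yields the single term $r(1,\mathbf{b})\cdots r(\ell(v),\mathbf{b})$, matching the prescribed normalization by definition.

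Condition (d) is verified by choosing compatible reduced words for $w$ and $ws_\alpha$ that differ by the prepending or deletion of a single generator; the reduced subwords for $v$ then split into two groups depending on whether or not they use the distinguishing generator, and the groups not using it cancel between $B_v(w)$ and $B_v(ws_\alpha)$, leaving an explicit multiple of $\alpha$. Once all of (a)--(d) are established, uniqueness of the Schubert basis under the localization map forces $B_v = \sigma^T_v|_\bullet$, completing the proof. I expect the well-definedness step (independence of $\mathbf{b}$) to be the principal technical obstacle; the four axiomatic verifications are comparatively routine once the underlying subword combinatorics is known to be consistent across reduced-word choices.
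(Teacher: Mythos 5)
The paper does not actually prove Billey's formula; it cites \cite{Billey} and explicitly says that even a sketch of the proof is ``beyond the scope of this manuscript.'' So there is no internal argument to compare against, and your proposal must be assessed on its own merits. The axiomatic-characterization strategy you outline is a legitimate and reasonably standard route, and you correctly identify well-definedness of $B_v(w)$ under braid moves as a real piece of work (it is; the cubic braid relation in particular requires a careful bijection between subword sets). However, there is a genuine gap in your verification of condition (d).

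Condition (d) is what puts $B_v$ into the image of $\iota_1$, and for the flag variety $\fln$ the moment graph has an edge between $w$ and $ws_\alpha$ for \emph{every} reflection $s_\alpha$, not just for those giving a covering relation. Your argument --- choose compatible reduced words for $w$ and $ws_\alpha$ ``that differ by the prepending or deletion of a single generator'' --- is sound only when $\ell(ws_\alpha) = \ell(w)\pm 1$, where the Strong Exchange Condition guarantees such a pair of reduced words. For a non-covering reflection edge, say $e$ and $s_1s_2s_1$ (a length-$3$ step under a single non-simple reflection), no single-letter adjustment of reduced words exists, and your cancellation argument does not apply. Without handling these edges, you have not shown $B_v$ lies in the GKM ring, and conditions (a)--(c) alone do not determine $\sigma^T_v|_\bullet$ among \emph{all} tuples in $\bigoplus_{w}\Q[t_1,\dots,t_n]$, only among those already known to be localizations of classes in $H^*_T(\fln)$.

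A cleaner way to close both this gap and the well-definedness issue at once is to replace (d) by a recursive argument, which is closer in spirit to Billey's own proof and to the Kostant--Kumar nilHecke framework: show directly that $B_v$ obeys, under a single right-multiplication step $w\mapsto ws_i$ with $s_i$ simple and $\ell(ws_i)=\ell(w)+1$, the same recurrence that $\sigma^T_v|_\bullet$ obeys via the equivariant divided-difference operators, and match the base case $w=e$ (where both sides vanish for $v\neq e$ and equal $1$ for $v=e$). Induction on $\ell(w)$ then yields the identity without ever mentioning GKM divisibility for general reflections, and it gives well-definedness of $B_v(w)$ as a corollary rather than a prerequisite, since the left side $\sigma^T_v|_w$ is manifestly independent of the chosen reduced word for $w$.
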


We refer to an individual summand of the
expression~\eqref{eq:billey-formula}, corresponding to a single reduced
subword $v=s_{b_{i_1}} s_{b_{i_2}} \cdots s_{b_{i_{\ell(v)}}}$ of $w$,
 as a {\bf
  summand in Billey's formula}. 
  
\begin{Remark}
Although we restricted the above discussion to the case of $GL_n(\C)$, Billey's formula is in fact stated in \cite[Theorem 4]{Billey}
for general Lie types. We will use this more general version in Section~\ref{section: peterson general Lie type}. 
\end{Remark}

\begin{Example}\label{example: billey upper triangular} 
It is not hard to see from Billey's formula that the  equivariant Schubert classes $\sigma^T_v$ for $v \in S_n$ are ``upper-triangular'' in the following precise sense. Recall that the \textbf{Bruhat (partial) order on $S_n$} is defined as follows: $v \leq w$ if and only if for some (and therefore any) reduced word decomposition for $w$, i.e. $w = s_{b_1}
s_{b_2} \cdots s_{b_{\ell(w)}}$ for some sequence of simple transpositions, then there exists a subword $s_{b_{i_1}} s_{b_{i_2}} \cdots s_{b_{i_{\ell(v)}}}$ which is a reduced word for $v$. Given this definition, it is clear from Theorem~\ref{theorem:billey} that if $v \not \leq w$, then $\sigma^T_v(w)=0$.\footnote{This property also follows from the fact that a permutation flag $w$ belongs to $X^v:=\overline{B_-vB/B}$ if and only if $v \leq w$.} This is the ``upper-triangularity'' phenomenon for $T$-equivariant Schubert classes.  In particular, this means -- for example -- that $\sigma^T_{s_k}(e)=0$ for any simple transposition $s_k$, since $s_k$ is not less than the identity element $e \in S_n$. This will be used in an argument below. 
\end{Example} 

We finish the section with a lemma which we will use later, as well as an example computation.

\begin{Lemma} \label{lemma:equivariant Schubert class simple reflection}
Let $i \in [n-1]$.  Then $\sigma^T_{s_i} = \sum_{k=1}^i (\tau^T_k - t_k)$.  
\end{Lemma}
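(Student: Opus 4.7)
The plan is to use the injectivity of the restriction map to fixed points, then verify the claimed identity pointwise. Since $\fln$ is paved by complex affine cells, its odd-degree ordinary cohomology vanishes, so by Corollary~\ref{corollary:localization_theorem} the restriction
$$\iota_1 : H^*_T(\fln) \hookrightarrow \bigoplus_{w \in S_n} \Q[t_1,\ldots,t_n]$$
is injective. Hence it is enough to show, for every $w \in S_n$, that
$$\sigma^T_{s_i}(w) \;=\; \sum_{k=1}^{i}\bigl(\tau^T_k|_w - t_k|_w\bigr).$$

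By Lemma~\ref{lemma:flag1} and Lemma~\ref{lemma:flag0.5}, the right-hand side restricts to $\sum_{k=1}^{i}(t_{w(k)} - t_k)$. For the left-hand side, I would apply Billey's formula (Theorem~\ref{theorem:billey}). The unique reduced word for $s_i$ is the single letter $(i)$, so given any reduced decomposition $\mathbf{b}=(b_1,\dots,b_{\ell(w)})$ for $w$, the reduced subwords representing $s_i$ are exactly the singleton positions $j$ with $b_j = i$. Billey's formula then yields
$$\sigma^T_{s_i}(w) \;=\; \sum_{\substack{1\le j\le \ell(w) \\ b_j = i}} s_{b_1}s_{b_2}\cdots s_{b_{j-1}}(t_{i+1}-t_i).$$

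The remaining work is the combinatorial identity
$$\sum_{\substack{1\le j\le \ell(w) \\ b_j = i}} s_{b_1}\cdots s_{b_{j-1}}(t_{i+1}-t_i) \;=\; \sum_{k=1}^{i}(t_{w(k)} - t_k),$$
which I would prove by induction on $\ell(w)$. The base case $w=e$ is trivial (both sides are $0$, consistent with the upper-triangularity in Example~\ref{example: billey upper triangular}). For the inductive step, write $w' = w\, s_a$ with $\ell(w') = \ell(w)+1$ and form $\mathbf{b}'$ by appending $a$ to a reduced word $\mathbf{b}$ of $w$. If $a \neq i$, then no new position with entry $i$ is introduced, so the LHS is unchanged; and on the RHS the swap $w' = w\, s_a$ permutes only the values at positions $a$ and $a+1$, which are either both $\le i$ or both $> i$, so $\{w'(1),\dots,w'(i)\} = \{w(1),\dots,w(i)\}$ as multisets and the sum is unchanged. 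If $a = i$, the LHS picks up the single new summand $s_{b_1}\cdots s_{b_{\ell(w)}}(t_{i+1}-t_i) = w(t_{i+1}-t_i) = t_{w(i+1)} - t_{w(i)}$, while the RHS changes by exactly $t_{w(i+1)} - t_{w(i)}$ since $w' = w\, s_i$ swaps the values at positions $i$ and $i+1$. Both cases match, and the inductive step closes.

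The main obstacle is the bookkeeping in this induction: verifying the multiset identity $\{w'(1),\dots,w'(i)\} = \{w(1),\dots,w(i)\}$ in Case~1 and correctly tracking the Weyl group action $w(t_k) = t_{w(k)}$ on the simple root $\alpha_i = t_{i+1}-t_i$ in Case~2. Neither is deep, but both demand care with index conventions. A small bonus of the argument is that it gives a second proof, independent of Example~\ref{example: billey upper triangular}, that $\sigma^T_{s_i}(e) = 0$.
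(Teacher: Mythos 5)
Your argument is correct, but it follows a genuinely different route from the paper's. The paper's proof works structurally in degree $2$ of $H^*_T(\fln)$: by Theorem~\ref{theorem:presentation flag typeA} the classes $\tau^T_1,\dots,\tau^T_{n-1},t_1,\dots,t_n$ form a basis of $H^2_T(\fln)$, so one writes $\sigma^T_{s_i} = \sum a_k \tau^T_k + \sum b_j t_j$ and pins down the coefficients in two passes — the $a_k$ by setting $t_j = 0$ and quoting the classical ordinary-cohomology identity $\sigma_{s_i} = \sum_{k\le i}\tau_k$ from \cite{Fulton-Young}, and the $b_j$ by restricting to the identity fixed point and using $\sigma^T_{s_i}(e)=0$. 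Your approach instead localizes at \emph{all} fixed points and computes $\sigma^T_{s_i}(w)$ from scratch via Billey's formula, closing the combinatorial identity $\sum_{b_j=i} s_{b_1}\cdots s_{b_{j-1}}(\alpha_i) = \sum_{k=1}^i(t_{w(k)}-t_k)$ by induction on $\ell(w)$. The paper's proof is quicker and illustrates how the presentation theorem can be used to reduce a degree-$2$ identity to a coefficient check, at the cost of citing the external formula for $\sigma_{s_i}$. Yours is longer but self-contained (modulo Billey's formula, which the paper quotes anyway), and it recovers the upper-triangularity at $e$ as a by-product rather than assuming it. The inductive bookkeeping you flag as the main obstacle is in fact clean: a right descent $a$ of $w'$ always exists, and the dichotomy $a<i$ versus $a>i$ is exactly what makes the first-$i$-positions multiset invariant in Case~1, while the appended letter in Case~2 contributes $w(\alpha_i) = t_{w(i+1)}-t_{w(i)}$, matching the change on the right-hand side. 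Both proofs are valid; they emphasize complementary techniques (presentation-theoretic versus localization-and-Billey) from the toolkit built up in Sections~\ref{sec: background} and~\ref{section: flag}.
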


\begin{proof}
Since we have not formally defined the Schubert classes, what follows is a sketch of the proof. 
The class $\sigma^T_{s_k}$ corresponding to the simple transposition $s_k$ is degree $2$. 
From Theorem~\ref{theorem:presentation flag typeA} we may conclude that there is a unique expression 
$\sigma^T_{s_k} = \sum_{i=1}^{n-1} a_i \tau^T_i + \sum_{j=1}^n b_j t_j$ in $H^2_T(\fln)$. By setting $t_i=0$ for all $i \in [n]$, we then 
have $\sigma_{s_k} = \sum_{i=1}^{n-1} a_i \tau_i$ in $H^2(\fln)$. It is known that $\sigma_{s_k} = \sum_{i=1}^k \tau_i$ in $H^2(\fln)$ (see e.g. \cite[p.171]{Fulton-Young}).  It also follows from Proposition~\ref{proposition:flag generators non-equivariant} that the 
 $\tau_1,\ldots,\tau_{n-1}$ are linearly independent. The last two facts together imply that $a_i=1$ if $i \leq k$ and $a_i=0$ if $i>k$.
Thus we have $\sigma^T_{s_k} = \sum_{i=1}^{k} \tau^T_i + \sum_{j=1}^n b_j t_j$ in $H^2_T(\fln)$. Restricting this class to the identity element, we have $0 = \sum_{i=1}^{k} t_i + \sum_{j=1}^n b_j t_j$ in $\Q[t_1,\ldots,t_n]$ by Lemmas~\ref{lemma:flag0.5} and \ref{lemma:flag1}, and the fact that $\sigma^T_{s_k}(e)=0$ for any $k$, as observed in Example~\ref{example: billey upper triangular} above.
This implies $b_j=-1$ if $j \leq k$ and $b_j=0$ if $j>k$, as desired.
\end{proof}

Using Lemma~\ref{lemma:equivariant Schubert class simple reflection}, we can give a sample explicit computation of Billey's formula. 
Here we use both the one-line notation $w= w(1) \, w(2) \, w(3) \cdots w(n)$ of $w$ as well as its decomposition into simple transpositions. 

\begin{Example}
Let $n=3$ and consider the permutation $w = 3 2 1 = s_1 s_2 s_1$ which satisfies $w(1)=3, w(2)=2, w(3)=1$. 
By Lemmas~\ref{lemma:equivariant Schubert class simple reflection}, \ref{lemma:flag0.5}, and \ref{lemma:flag1} we have 
\begin{align*}
\sigma^T_{s_1}(321) = (\tau^T_1 - t_1)|_{321} = t_3 - t_1 = (t_2 - t_1) + (t_3 - t_2).
\end{align*}
On the other hand, let us fix a reduced expression $321=s_1s_2s_1$ as above, namely $\mathbf{b}=(1,2,1)$.
Then, by using Billey's formula (Theorem~\ref{theorem:billey}) we can also compute
\begin{align*}
\sigma^T_{s_1}(321) &= r(1, \mathbf{b}) + r(3, \mathbf{b}) = \alpha_1 +s_1s_2(\alpha_1) \\
&=(t_2-t_1) +s_1s_2(t_2-t_1) = (t_2 - t_1) + (t_3 - t_2)
\end{align*}
which is the same result as above, obtained in a different way. 
Note that even if we choose a different reduced word for the same permutation, e.g. $321=s_2s_1s_2$, i.e. $\mathbf{b'}=(2,1,2)$, we obtain the same result using Billey's formula 
\begin{align*}
\sigma^T_{s_1}(321) &= r(2, \mathbf{b'}) = s_2(\alpha_1) = s_2(t_2-t_1) = t_3 - t_1
\end{align*}
as we should. 
\end{Example}

\section{ A variation on the theme: the equivariant cohomology rings of Hessenberg varieties} 
\label{section: A variation on the theme: the equivariant cohomology rings of Hessenberg varieties}

In this section, we briefly discuss how the methods outlined in Section~\ref{subsec: background equivariant cohomology} 
and Section~\ref{section: flag} 
can be adjusted appropriately to analyze the case of regular nilpotent Hessenberg varieties.  
We focus on general discussion here; this will lay the groundwork for the detailed analysis given in the later sections. 
 
First, we must specify the group which acts. 
In Section~\ref{section: flag} we considered the torus $T$ of diagonal matrices and its action on $\fln$, but 
this $T$-action does not, in general, preserve a regular nilpotent Hessenberg (sub)variety sitting inside $\fln$.  
It turns out to be useful to consider, instead, the following $1$-dimensional subtorus $\mathsf{S}$ of $\Tn$: 
\begin{equation} \label{eq:def of S}
\mathsf{S}:= \left\{   \diag(g,g^2,\dots,g^n)
\mid g\in\C^* \right\}.
\end{equation}
By using the explicit choice of $\mathsf{N}$ given by
$$
 \mathsf{N}:=\left(
          \begin{array}{ccccc}
           0 & 1& 0 &\cdots & 0 \\
            0&0&1& \cdots &0  \\
            \vdots & \vdots& \vdots &\ddots & \vdots \\
            0& 0& 0&\cdots  & 1\\
            0 & 0& 0&\cdots & 0 \\
         \end{array}
       \right).
$$
It is not hard to check directly that the regular nilpotent Hessenberg variety $\Hess(\mathsf{N},h)$ 
defined by
$$
\Hess(\mathsf{N},h) := \{ V_\bullet = (\{0\} \subset V_1 \subset V_2 \subset \cdots \subset V_{n-1} \subset \C^n \, \mid \, \dim_{\C}(V_i)=i, \forall i\}
$$
 is preserved by $\mathsf{S}$. Therefore,  in our considerations below, we can think about $\Hess(\mathsf{N},h)$ 
equipped with this $\mathsf{S}$-action.

Let $\chi$ denote the group homomorphism $\chi: diag(g,g^2,\dots,g^n)\mapsto g$
and let $\C_{\chi}$ denote the corresponding 
1-dimensional representation of $\mathsf{S}$. 
Consider the 
associated line bundle 
$E\mathsf{S}\times _{\mathsf{S}}\C_\chi \to B\mathsf{S}$. 
We denote by $t$ the first Chern class of the dual of this line bundle, i.e., 
\begin{align}\label{deg of t in coh}
t=c_1^\mathsf{S}(\C_{\chi}^*) \in H^2(B\mathsf{S}).
\end{align} 
As in the case of $BT$ discussed in Section~\ref{section: flag}, we may 
identify $H^*(B\mathsf{S})$ with the polynomial ring $\mathbb{Q}[t]$ in the single variable $t$.

Since the one-dimensional torus group $\mathsf{S}$ is a subgroup of the torus $T$ of diagonal matrices, there is a natural restriction homomorphism
$H^*(BT) \to H^*(B\mathsf{S})$ induced by the inclusion $\mathsf{S} \hookrightarrow T$. Indeed, we have the restriction map
$\text{Hom}(T,\C^*) \to \text{Hom}(\mathsf{S},\C^*)$, which then induces a linear map $H^2(BT) \to H^2(B\mathsf{S})$ via the 
identification in \eqref{eq:equiv coh of pt degree two}.  This then yields 
the restriction homomorphism
\begin{align} \label{eq:restriction map from BT to BS}
\pi: H^*(BT) \to H^*(B\mathsf{S})
\end{align}
which can also be viewed as a ring homomorphism
\begin{equation}\label{eq: restriction map from BT to BS in coord}
\pi: \Q[t_1, \cdots, t_n] \to \Q[t] 
\end{equation} 
via the isomorphisms $H^*(BT) \cong \Q[t_1,\cdots, t_n]$ and $H^*(B\mathsf{S}) \cong \Q[t]$ as above. 
Indeed, we can compute explicitly where the homomorphism $\pi$ in~\eqref{eq: restriction map from BT to BS in coord}
sends the variables $t_i$ in terms of $t$. This is 
the content of the next lemma.

\begin{Lemma} \label{lemma: ti goes to it}
The homomorphism $\pi$ of~\eqref{eq:restriction map from BT to BS} (or equivalently~\eqref{eq: restriction map from BT to BS in coord}) maps $t_i$ in \eqref{deg of ti in coh} to $i \cdot t$ for all $i \in [n]$. 
\end{Lemma}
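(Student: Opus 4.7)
The plan is to trace through the identifications made in Section~\ref{subsec: background equivariant cohomology} between characters of a torus and degree-$2$ equivariant cohomology classes of $B(\text{torus})$, and then to compute directly the pullback of the relevant character $\chi_i$ under the inclusion $\mathsf{S} \hookrightarrow T$.

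First I would observe that by the functoriality of the Borel construction and equivariant first Chern classes, the restriction homomorphism $\pi$ agrees, on classes of the form $c_1^T(\C_\alpha^*)$, with the map induced by precomposing the character $\alpha: T \to \C^*$ with the inclusion $\mathsf{S} \hookrightarrow T$. In other words, $\pi(c_1^T(\C_\alpha^*)) = c_1^\mathsf{S}(\C_{\alpha|_\mathsf{S}}^*)$. This is an instance of the general principle that under the identification~\eqref{eq:equiv coh of pt degree two} of characters with degree-$2$ classes, restricting a subgroup corresponds to restricting the character.

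Next, I would compute the restriction $\chi_i|_\mathsf{S}$ explicitly. An element of $\mathsf{S}$ has the form $\diag(g, g^2, \ldots, g^n)$ for $g \in \C^*$, and $\chi_i$ picks off the $i$-th diagonal entry, giving $g^i$. Thus $\chi_i|_\mathsf{S} = \chi^i$ where $\chi$ is the generator defined just above~\eqref{deg of t in coh}, so that $\C_{\chi_i|_\mathsf{S}} \cong \C_\chi^{\otimes i}$ as $\mathsf{S}$-representations, and dually $\C_{\chi_i|_\mathsf{S}}^* \cong (\C_\chi^*)^{\otimes i}$.

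Finally, I would invoke the standard fact (implicit in~\eqref{eq:equiv coh of pt degree two}, which asserts that $\alpha \mapsto c_1^T(\C_\alpha)$ is a group \emph{isomorphism}) that the equivariant first Chern class is additive in tensor products of line bundles. Applying this to $\C_{\chi_i|_\mathsf{S}}^* \cong (\C_\chi^*)^{\otimes i}$ gives
\[
\pi(t_i) = c_1^\mathsf{S}(\C_{\chi_i|_\mathsf{S}}^*) = c_1^\mathsf{S}\bigl((\C_\chi^*)^{\otimes i}\bigr) = i \cdot c_1^\mathsf{S}(\C_\chi^*) = i \cdot t,
\]
as desired. There is no serious obstacle here; the only step requiring care is matching conventions about duals (which appear uniformly in both definitions \eqref{deg of ti in coh} and \eqref{deg of t in coh}, so the signs cancel), and tracking that the association character-to-Chern-class respects restriction of subgroups and tensor products of representations.
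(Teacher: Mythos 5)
Your proposal is correct and follows essentially the same path as the paper's own proof: both reduce the claim to the observation that $\pi$, via the identification~\eqref{eq:equiv coh of pt degree two}, is induced by restriction of characters $\text{Hom}(T,\C^*) \to \text{Hom}(\mathsf{S},\C^*)$, and then compute directly from the definition of $\mathsf{S}$ that $\chi_i$ restricts to $\chi^i$. The only cosmetic difference is that you keep track of the dual throughout via the $\C_\alpha^*$ notation and spell out the additivity of $c_1^\mathsf{S}$ under tensor products at the end, whereas the paper works with the inverse characters $\chi_i^{-1}$ and $\chi^{-1}$ from the start and leaves the group-homomorphism property implicit in the identification; the content is identical.
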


\begin{proof}
By the definitions of $t_i$ and $t$ in~\eqref{deg of ti in coh} and~\eqref{deg of t in coh} respectively, 
it suffices to show that the homomorphism $\text{Hom}(T,\C^*) \to \text{Hom}(\mathsf{S},\C^*)$ maps $\chi_i^{-1}$ to $\chi^{-i}=(\chi^{-1})^i$ where $\chi_i^{-1}$ is defined by $\chi_i^{-1}(\diag(g_1,\dots,g_n))= g_i^{-1}$ and $\chi^{-i}$ is defined as $\chi^{-i}(\diag(g,g^2,\dots,g^n)) = g^{-i}$.
But this is clear by the definition of $\mathsf{S}$ in~\eqref{eq:def of S}.
Therefore, we have $\pi(t_i)=it$ for all $i \in [n]$, as desired.
\end{proof}

Next, we observe that the localization techniques described in Section~\ref{subsec: background equivariant cohomology}  holds for regular nilpotent Hessenberg varieties with respect to the action of the $1$-dimensional torus $\mathsf{S}$ defined above. More specifically, 
  as we have mentioned above, it is known that a Hessenberg variety (in Lie type A) admits a paving by complex
  affine cells \cite[Theorem 7.1]{ty}, so their cohomology
  rings are concentrated in even degrees. Hence,
  as in~\eqref{eq:free module over H(BT)}, 
   we can conclude that 
$H_\mathsf{S}^*(\Hess(\mathsf{N},h))$ is a free $H_\mathsf{S}^*(\pt)$-module and we have an isomorphism 
  \begin{equation} \label{eq:equivariant cohomology of Hess(N,h) free}
H_\mathsf{S}^*(\Hess(\mathsf{N},h)) \cong H_\mathsf{S}^*(\pt)\otimes_{\Q} H^*(\Hess(\mathsf{N},h)) \quad \text{as $H_\mathsf{S}^*(\pt)=H^*(B\mathsf{S})$-modules}. 
\end{equation}
Moreover, by Corollary~\ref{corollary:localization_theorem} we also have an injection
\begin{equation}
  \label{eq:localization for HessN}
  \iota_2:H^*_\mathsf{S}(\Hess(\mathsf{N},h)) \into H^*_\mathsf{S}(\Hess(\mathsf{N},h)^\mathsf{S}) 
\end{equation}
where the map is induced from the inclusion $\Hess(\mathsf{N},h)^\mathsf{S} \hookrightarrow \Hess(\mathsf{N},h)$.

Thus, just as in the case of $H^*_T(\fln)$, in order to analyze $H^*_\mathsf{S}(\Hess(\mathsf{N},h))$ it suffices to understand the
restriction to the $\mathsf{S}$-fixed point set $\Hess(\mathsf{N},h)^\mathsf{S}$. 
Recall that the $\fln^T$ 
can be identified with the
permutation group $S_n$ on $n$ letters. 
  Restricting 
to the subtorus $\mathsf{S} \subset  T$, it is straightforward to check that
the $\mathsf{S}$-fixed point set $\fln^\mathsf{S}$ of the flag variety
$\fln$ is in fact equal to $\fln^T$, i.e. 
\begin{align}\label{S-fixed = T-fixed} 
\fln^\mathsf{S} =  \fln^T.
\end{align}
From here it also quickly follows that 
\begin{align}\label{S-fixed Hess(N,h)} 
\Hess(\mathsf{N},h)^\mathsf{S}=\mathcal \Hess(\mathsf{N},h)\cap (\fln)^T.
\end{align}
Thus the set of $\mathsf{S}$-fixed point set $\Hess(\mathsf{N},h)^\mathsf{S}$ is a subset of
$\fln^T$, and through our fixed identification $\fln^T \cong S_n$
we may henceforth view $\Hess(\mathsf{N},h)^\mathsf{S}$ as a subset of
$S_n$. 
In a later section we will describe, 
in explicit combinatorial terms, the fixed point sets $\Hess(\mathsf{N},h)^\mathsf{S}$ (cf. Lemmas~\ref{lemma:fixed_point_Peterson} and~\ref{lemma:Hess fixed points}).

We may now consider the commutative diagram 
\begin{equation}\label{eq:cd}
\begin{CD}
H^{\ast}_{T}(\fln)@>{\iota_1}>> H^{\ast}_{T}(\fln^{T})\cong\displaystyle \bigoplus_{w\in S_n} \Q[t_1,\dots,t_n]\\
@V{}VV @VV{\bigoplus_w \pi}V\\
H^{\ast}_{\mathsf{S}}(\fln)@>{\iota'_1}>> H^{\ast}_{\mathsf{S}}(\fln^{\mathsf{S}})\cong\displaystyle \bigoplus_{w\in S_n} \Q[t]\\
@V{}VV @VV{\mathsf{pr}_{\Hess}}V\\
H^{\ast}_{\mathsf{S}}(\Hess(\mathsf{N},h))@>{\iota_2}>> H^{\ast}_{\mathsf{S}}(\Hess(\mathsf{N},h)^{\mathsf{S}})\cong\displaystyle \bigoplus_{w\in \Hess(\mathsf{N},h)^{\mathsf{S}}\subset S_n} \Q[t]
\end{CD}
\end{equation}
where all maps are induced from inclusion maps on underlying
spaces.  Note that all of $\iota_1$, $\iota'_1$, and $\iota_2$ are
injective, since $\iota'_1$ is a special case of
$\iota_2$. The vertical map $\bigoplus_w \pi$ is by definition the direct sum over $w \in S_n$ 
of the map $\pi: \Q[t_1, \cdots, t_n] \cong H^*_T(w) \to \Q[t] \cong H^*_{\mathsf{S}}(w)$ 
  sending each $t_i$ to $it$ by Lemma~\ref{lemma: ti goes to it}.
  Finally, the vertical map $\mathsf{pr}_{\Hess}$ in~\eqref{eq:cd} is the projection which forgets the coordinates 
  corresponding to $w$ for $w \not \in \Hess(\mathsf{N},h)^\mathsf{S}$.  It is induced by the inclusion map 
  of $\mathsf{S}$-fixed points $\Hess(\mathsf{N},h)^{\mathsf{S}} \into \fln^{\mathsf{S}}$.

  \begin{Remark}\label{remark: pi alpha i} 
 It is useful to note that the map $\pi: \Q[t_1,\cdots,t_n] \to \Q[t]$ above has the property 
\begin{equation}\label{eq:2-1}
\pi(\alpha_i)=t\quad  \textup{ for } \, i \in [n-1] 
\end{equation}
for a simple root $\alpha_i$ defined in \eqref{eq:simple root in type A}, as can be immediately checked using the fact that $\pi(t_i) = it$.  
\end{Remark}

The above discussion shows that the map $\iota_2$ in~\eqref{eq:cd} plays a very similar role with respect to $H^*_\mathsf{S}(\Hess(\mathsf{N},h))$ to that played by $\iota_1$ in the analysis of $H^*_T(\fln)$. In the following sections, we will exploit this 
analogy in order to analyze the equivariant and ordinary cohomology rings of $\Hess(\mathsf{N},h)$.

\section{ Baby case: the cohomology rings of Peterson varieties in type A } 
\label{section: peterson in type A}

Equipped with the background material developed in the previous sections, we are now in a position 
to prove some concrete results about the (equivariant and ordinary) cohomology rings of regular nilpotent Hessenberg varieties. As the title to this section suggests, we begin with the ``baby'' case of the Peterson variety, which is the ``smallest'' interesting regular nilpotent Hessenberg variety, in the sense that any regular nilpotent Hessenberg variety which is not a direct product of smaller-rank regular nilpotent Hessenberg varieties must satisfy $h(i) \geq i+1$ for all $i \in [n-1]$, and the Peterson case is precisely when this inequality is an equality, i.e., $h(i)=i+1$ for all $i \in [n-1]$.  We decided to present this case separately because, due to its simplicity, we can prove some results in this case that are currently out of reach in the general case ; moreover, again because of the simplicity of Petersons, even the generally applicable techniques are perhaps most easily understood and digested in the Peterson case.  Thus we felt that they serve as a useful setting to get an initial glimpse of the flavor, and the techniques, behind these results.

The outline of the section is as follows. In Section~\ref{subsec: background petersons} we give some necessary computations
related to the fixed point sets of Peterson varieties. Then in Section~\ref{subsection:basis_Peterson_typeA} we derive a $H^*_\mathsf{S}(\pt)$-module basis of the $\mathsf{S}$-equivariant cohomology rings of Peterson varieties in Lie type A. 
Then, in Section~\ref{subsec: presentation of Peterson cohomology type A}, we give explicit presentations by generators-and-relations of the (equivariant and ordinary) cohomology rings of Peterson varieties in Lie type A.

\subsection{ The set of $\mathsf{S}$-fixed points in the Peterson variety }
\label{subsec: background petersons}

As mentioned in the Introduction, the Peterson variety $\Y$ is the special case of a regular nilpotent Hessenberg variety $\Hess(\mathsf{N},h)$ where we take
$h = (2,3,4,\cdots, n,n)$, i.e., $h(i)=i+1$ for $i \in [n-1]$.
 Since we consistently assume that $h$ is indecomposable in this exposition, so $h(i) \geq i+1$ for all $i<n$ and $h(n)=n$, it is clear -- as was explained above -- that the Hessenberg function $h$ for the Peterson case is the ``smallest'' example of an (indecomposable) regular nilpotent Hessenberg variety, or in other words, $h(i) \leq h'(i)$ for any other indecomposable $h'$. Not surprisingly, the Peterson variety is also one of the simplest cases to analyze combinatorially, as we shall now see. 
As we mentioned above, the overall goal is to use the techniques discussed thus far in this paper and give an explicit 
and combinatorial description of the equivariant cohomology rings of Peterson varieties. From the diagram~\eqref{eq:cd} 
it is not hard to see that, in order to do so, we must first compute explicitly the $\mathsf{S}$-fixed points $\Y^\mathsf{S}$. This is the content of the following lemma. Here and in what follows, we denote permutations $w \in S_n$ in standard one-line notation 
\[
w = w(1) \, w(2) \cdots w(n-1) \, w(n). 
\]
Also recall that, by slight abuse of notation, we denote a permutation flag $wE_{\bullet}$ simply as $w$, thus identifying 
$\fln^T$ with $S_n$. 

\begin{Lemma} \label{lemma:fixed_point_Peterson}
Let $\mathsf{S}$ be the subtorus of the standard maximal torus $T$ defined by~\eqref{eq:def of S}.  Then $\mathsf{S}$ preserves $\Y$. (See the beginning of Section~\ref{section: A variation on the theme: the equivariant cohomology rings of Hessenberg varieties}.) Moreover, 
the set of $\S1$-fixed points $\Y^{\S1}$ consists of permutation flags $w = w E_{\bullet}$ such that the one-line notation of $w$ is of the form
\begin{equation} \label{eq:2.4}
w=\underbrace{j_1\ j_1-1\cdots 1}_\text{$j_1$ entries}\ \underbrace{j_2\ j_2-1\cdots j_1+1}_\text{$j_2-j_1$ entries} \cdots \underbrace{n\ n-1\cdots j_m+1}_\text{$n-j_m$ entries}, 
\end{equation}
for some $1\le j_1 < j_2 < \cdots < j_m < n$.
\end{Lemma}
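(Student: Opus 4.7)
My plan has three main steps.

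\textbf{Step 1 (torus invariance).} I first verify that $\mathsf{S}$ acts on $\Y$. For $g = \diag(g,g^2,\ldots,g^n) \in \mathsf{S}$, a direct matrix calculation yields $g^{-1}\mathsf{N}g = g\cdot\mathsf{N}$, so conjugation by $g$ merely rescales $\mathsf{N}$ by the nonzero scalar $g$. Since the Peterson Hessenberg condition $\mathsf{N}(V_i) \subseteq V_{i+1}$ is preserved under rescaling of $\mathsf{N}$, the left translate $gV_\bullet$ of any $V_\bullet \in \Y$ still lies in $\Y$.

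\textbf{Step 2 (reduction to a combinatorial condition).} Having established the action, the general observation \eqref{S-fixed Hess(N,h)} gives $\Y^{\mathsf{S}} = \Y \cap \fln^T$, so the task becomes deciding which permutation flags $wE_\bullet$ lie in $\Y$. Using $V_i = \langle e_{w(1)},\ldots,e_{w(i)}\rangle$ and $\mathsf{N}(e_j) = e_{j-1}$ (with $e_0 := 0$), the condition $\mathsf{N}(V_i) \subseteq V_{i+1}$ for every $i \in [n-1]$ translates to: for every $i$ and every $k \le i$ with $w(k) \ge 2$, one has $w^{-1}(w(k)-1) \le i+1$. The tightest instance is $i=k$, and it already implies all the others, so the condition simplifies to
\[
 w^{-1}(w(k)-1)\le k+1 \quad \text{for every } k\in[n] \text{ with } w(k)\ge 2, \qquad (\star)
\]
which I will refer to as $(\star)$ below.

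\textbf{Step 3 (equivalence with the claimed form).} The forward direction \eqref{eq:2.4}$\Rightarrow(\star)$ is a short case analysis: if $k$ is not the rightmost position of its descending block, then $w(k+1) = w(k)-1$ and $(\star)$ holds with equality; if $k$ is the rightmost position of its block and $w(k) \ge 2$, then $w(k)-1$ is the top of the preceding block, sitting at a position strictly less than $k$. For the converse I argue by induction on $n$. Let $v_1 := w(1)$. If $v_1 = 1$, then $\{1\}$ forms the first (singleton) block and the inductive hypothesis applied to the restriction of $w$ to positions $\{2,\ldots,n\}$ (with values in $\{2,\ldots,n\}$) finishes the argument. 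If $v_1 \ge 2$, iterating $(\star)$ at $k = 1,2,\ldots,v_1-1$ forces $w(k) = v_1 - k + 1$ for $k \in [v_1]$, producing a first descending block of length $v_1$; the restriction of $w$ to positions $\{v_1+1,\ldots,n\}$ is a bijection onto values $\{v_1+1,\ldots,n\}$ and again satisfies $(\star)$ after relabeling, so the inductive hypothesis yields the form \eqref{eq:2.4} for the restriction, and concatenation produces the desired form for $w$.

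The main obstacle is the bookkeeping in the inductive step of Step 3: one must check that $(\star)$ transfers cleanly to the restricted permutation. This hinges on the fact that once the first $v_1$ positions are pinned down as the values $v_1, v_1-1, \ldots, 1$, no value $\ge v_1+1$ can sit in those positions, making the restriction well defined; and then $(\star)$ for the restricted permutation at index $k'$ is precisely $(\star)$ for $w$ at index $v_1+k'$, applied to values that are at least $v_1+2$.
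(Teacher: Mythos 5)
Your proof is correct and follows essentially the same route as the paper: reduce to deciding which permutation flags lie in $\Y$, translate the Hessenberg condition into $w^{-1}(w(k)-1)\le k+1$, and match that against the block form \eqref{eq:2.4}. The paper actually leaves the first claim (that $\mathsf{S}$ preserves $\Y$) to the reader and states the final combinatorial equivalence without argument; your Steps 1 and 3 usefully supply both of those missing details, and the bookkeeping in your inductive step is sound.
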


\begin{Example}
When $n=3$, we have 
$$
\Y^{\S1}=\{123, 132, 213, 321\}.
$$
\end{Example}

Before proving Lemma~\ref{lemma:fixed_point_Peterson} we make 
an observation about the $\S1$-fixed points of $\Y$, since this characterization 
will be used in later sections. Moreover, this 
observation allows us to generalize the above lemma to all Lie types. 

\begin{Remark}\label{remark: Pet fixed points and subsets}
The description of the $\S1$-fixed points $\Y^{\S1}$ given above can be interpreted as saying that $\Y^{\S1}$ consists of products of the Bruhat-maximal permutations in certain permutation subgroups of $S_n$. Indeed, from Lemma~\ref{lemma:fixed_point_Peterson}, it is immediate that a $w$ of the form given in~\eqref{eq:2.4} is the Bruhat-maximal permutation in the product subgroup 
$$
S_{\{1,2,\cdots,j_1\}} \times S_{\{j_1+1,\cdots,j_2\}} \times \cdots \times S_{\{j_m+1, \cdots,n\}}
$$
for $1 \leq j_1 < j_2 < \cdots < j_m < n$. In what follows, we associate to such a $w$ the complement set 
$$
\mathcal{A} := [n] \setminus \{j_1, j_2, \cdots, j_m, n\} \subseteq [n-1]
$$ 
and, given a subset $\mathcal{A} \subseteq [n-1]$, denote the corresponding (unique) $w$ by $w_{\mathcal{A}}$. With this notation in place, the content of Lemma~\ref{lemma:fixed_point_Peterson} can be restated as 
$$
\Y^{\S1} = \{w_{\mathcal{A}} \, \mid \, \mathcal{A} \subseteq [n-1] \}.
$$ 
\end{Remark}

We now prove Lemma~\ref{lemma:fixed_point_Peterson}. 

\begin{proof}[Proof of Lemma~\ref{lemma:fixed_point_Peterson}]
We leave the proof of the first claim to the reader. 
For the second claim, we first recall from~\eqref{S-fixed Hess(N,h)} that 
\begin{equation*}
\Y^{\S1} = \Y \cap \fln^{T} = \Y \cap S_n.
\end{equation*}
Hence, it is enough to show that a permutation flag $w E_{\bullet}$ belongs to $\Y$ if and only if $w$ is of the form in \eqref{eq:2.4}. 
We can see this as follows: 
\begin{align*}
w E_{\bullet} \in \Y &\iff Nw E_{i} \subset wE_{i+1} \, \textup{ for all } \, i \in [n-1] \\
                           &\iff w^{-1}Nw E_{i} \subset E_{i+1} \, \textup{ for all } \, i \in [n-1] \\
                           &\iff w^{-1}Nw \langle e_{1}, e_{2}, \cdots ,e_{i}\rangle \subset \langle e_{1}, e_{2}, \cdots ,e_{i+1}\rangle \, \textup{ for all } \, i \in [n-1] \\
                           &\iff w^{-1}Nw e_{i} \in \langle e_{1}, e_{2}, \cdots ,e_{i+1}\rangle \, \textup{ for all } \, i \in [n-1] \\
                           &\iff e_{w^{-1}(w(i)-1)} \in \langle e_{1}, e_{2}, \cdots ,e_{i+1}\rangle \, \textup{ for all } \, i \in [n-1]\\
                           & \hspace{80pt} \text{ (with convention } e_0=0 \text{ and } w^{-1}(0)=0) \\
                           &\iff w^{-1}(w(i)-1) \leq i+1 \, \textup{ for all } \, i \in [n-1] \\
                           &\iff w \text{ is of the form in \eqref{eq:2.4}}
\end{align*}
which completes the proof. 
\end{proof}

\subsection{A basis for $H^*_{\S1}(\Y)$} \label{subsection:basis_Peterson_typeA}

It is well-known that the (ordinary) Schubert classes $\{\sigma_w\}_{w \in S_n}$ form an 
additive basis for the cohomology ring $H^*(\fln)$ of $\fln$. 
An analogous statement is true 
for $T$-equivariant cohomology; more precisely, the $T$-equivariant Schubert classes $\{\sigma^T_w\}_{w \in S_n}$ form 
a module basis for the free $H^*_T(\pt)$-module $H^*_T(\fln) \cong H^*_T(\pt) \otimes H^*(\fln)$ by Proposition~\ref{proposition:generators equivariant cohomology}-(2). 
Moreover, when viewed as elements of $\bigoplus_{w \in S_n} H^*_T(w)$ via the injection $\iota_1$ as in~\eqref{eq:cd}, the $T$-equivariant Schubert classes $\sigma^T_w$ satisfy an upper-triangularity property which make them very useful for explicit combinatorial computations. 
It turns out we can build a $H^*_\mathsf{S}(\pt)$-module basis of the $\mathsf{S}$-equivariant cohomology $H^*_\mathsf{S}(\Y)$ by using 
an appropriately chosen subset of the $T$-equivariant Schubert classes $\{\sigma^T_w\}_{w \in S_n}$ and the commutative diagram~\eqref{eq:cd}, and these classes will satisfy a upper-triangularity property analogous to that of the equivariant Schubert classes when viewed as elements of $\bigoplus_{w \in \Y^\mathsf{S}} H^*_\mathsf{S}(w)$ via the injection $\iota_2$ in~\eqref{eq:cd}. The goal of this section is to briefly explain these assertions and to obtain several significant consequences from this fact. 
The main reference of this section is \cite{HarTym-Monk}.

First we recall the computationally convenient upper-triangularity conditions, as alluded to above, 
which are satisfied by the $T$-equivariant Schubert
classes. 
As discussed in Example~\ref{example: billey upper triangular}, 
 the images of the $T$-equivariant Schubert classes in 
$\bigoplus_{w \in S_n} H^*_T(w)$ via the injection $\iota_1$ satisfy the condition 
\begin{align} \label{eq:GmodB-upper-triangular}
\sigma^T_w(v) = 0 \, \, \textup{  if  } v \not \geq w  
\end{align}
with respect to Bruhat order on $S_n$. Moreover, it is also known (and not hard to check directly using Billey's formula, Theorem~\ref{theorem:billey}) that 
\begin{align}\label{eq: upper triangular part 2} 
\sigma^T_w(w) \neq 0. 
\end{align} 
Together, we refer to these two conditions as the ``upper-triangularity properties'' (with respect to Bruhat order) 
of the $T$-equivariant Schubert classes. These properties make computations with the classes $\sigma^T_w$ much easier.  

Our next goal is to define the classes which will play the role in $H^*_\mathsf{S}(\Y)$ analogous to the equivariant Schubert classes. 
We have already seen that $\Y^{\mathsf{S}}$ is enumerated by the set of subsets ${\mathcal{A}}$ of $[n-1]$, cf. Remark~\ref{remark: Pet fixed points and subsets}. 
We now associate to each subset ${\mathcal{A}} \subset [n-1]$ a permutation $v_{\mathcal{A}} \in S_n$, which is \emph{not} (necessarily) contained 
in $\Y^\mathsf{S}$.  We have the following.

\begin{Definition}\label{def:vA}
  Let \({\mathcal{A}} = \{i_1 < i_2 < \cdots < i_k\}\)
 be a subset of $[n-1]$ as in Remark~\ref{remark: Pet fixed points and subsets}. We define the element $v_{\mathcal{A}} \in S_n$ 
 to be the product of simple transpositions whose indices
 are in ${\mathcal{A}}$, in increasing order, i.e. 
\[
v_{\mathcal{A}} := s_{i_1} s_{i_2} \cdots s_{i_k} = \prod_{j=1}^k s_{i_j}.
\]
\end{Definition}

It is clear from the definition above that each subset $\mathcal{A} \subset [n-1]$
corresponds to a unique permutation of the form $v_{\mathcal{A}}$. 
We now define 
\begin{equation}\label{eq: definition pvA}
p_{v_{\mathcal{A}}} := \textup{ image of $\sigma^T_{v_{\mathcal{A}}}$ under the restriction map $H^*_T(\fln) \to H^*_{\mathsf{S}}(\Y)$ in~\eqref{eq:cd}} 
\end{equation} 
for every subset $\mathcal{A}$. We call $p_{v_{\mathcal{A}}}$ the \textbf{Peterson Schubert class} (associated to $\mathcal{A}$). 
The set of Peterson Schubert classes
$\{p_{v_{\mathcal{A}}}\}$ for all subsets ${\mathcal{A} \subset [n-1]}$
gives rise to a collection of elements in $H^*_{\S1}(\Y)$ in
one-to-one correspondence with the $\S1$-fixed points of $\Y$. 
We will show below that these form a module basis of $H^*_\S1(\Y)$ analogous to Schubert classes, as we already hinted at above.

Our first claim is that the Peterson Schubert classes are upper-triangular in the following sense. 
Here, by slight abuse of language 
we also use the term \textbf{Bruhat order} to refer to the partial order on $\Y^\S1 \subset S_n$ obtained by restricting the standard Bruhat order on $S_n$ to $\Y^\S1$.  With this terminology in place, we can state the upper-triangular property of the $p_{v_{\mathcal{A}}}$ as follows: 
\begin{equation}\label{eq:Peterson-upper-triangular} 
  p_{v_{\mathcal{A}}}(w_{\mathcal{B}}) = 0 \quad \mbox{if} \; \; w_{\mathcal{B}} \not \geq v_{\mathcal{A}} 
\end{equation}
and 
\begin{equation}\label{eq:Peterson-upper-triangular-nonzero}
 p_{v_{\mathcal{A}}}(w_{\mathcal{A}}) \neq 0. 
\end{equation}
The remainder of this section is devoted to proving the above, and from it 
to briefly sketch how one may derive the accompanying statement that these Peterson Schubert classes form a module basis of $H^*_\S1(\Y)$. 
We first note that the definition of $v_{\mathcal{A}}$ immediately implies that 
\[
s_i \leq v_{\mathcal{A}} \quad \textup{ for all } i \in {\mathcal{A}} \, \, \textup{ in Bruhat order.} 
\]
We record some basic facts below which will be important in what
follows. The proofs are straightforward, but we provide brief sketches of proofs to give the reader a sense of their flavor.

\begin{Lemma}\label{fact:vA-length}
The Bruhat-length of $v_{\mathcal{A}}$ is the size of the set
$\mathcal{A}$, i.e. \(\ell(v_{\mathcal{A}}) = |\mathcal{A}|.\) In
particular, the decomposition in Definition~\ref{def:vA} is
minimal-length. 
\end{Lemma}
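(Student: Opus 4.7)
My plan is to argue by induction on $k := |\mathcal{A}|$, using the standard length criterion in $S_n$, which says that $\ell(w s_i) = \ell(w) + 1$ if and only if $w(i) < w(i+1)$. The base case $k=0$ is trivial since $v_\emptyset = e$ has length $0$. For the inductive step, I will write $\mathcal{A} = \mathcal{A}' \cup \{i_k\}$ with $i_k = \max \mathcal{A}$ and $\mathcal{A}' = \{i_1 < \cdots < i_{k-1}\}$, so that by Definition~\ref{def:vA} we have $v_{\mathcal{A}} = v_{\mathcal{A}'} \cdot s_{i_k}$. The inductive hypothesis gives $\ell(v_{\mathcal{A}'}) = k-1$, and it will therefore suffice to check that $v_{\mathcal{A}'}(i_k) < v_{\mathcal{A}'}(i_k + 1)$.

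The point is that every simple transposition $s_{i_j}$ appearing in the product defining $v_{\mathcal{A}'}$ has index $i_j \le i_{k-1} < i_k$, so it permutes only the values in $\{i_j, i_j+1\} \subseteq [1, i_k]$ and fixes everything else. I will verify by a short side-induction on the number of factors that (i) the value $i_k + 1$ is fixed by each $s_{i_j}$ in turn, hence $v_{\mathcal{A}'}(i_k + 1) = i_k + 1$, and (ii) the value $i_k$ is either fixed or decreased at each step (it can only drop to $i_k - 1$ when $i_j = i_k - 1$), so $v_{\mathcal{A}'}(i_k) \le i_k$. Combining these two facts yields $v_{\mathcal{A}'}(i_k) \le i_k < i_k + 1 = v_{\mathcal{A}'}(i_k + 1)$, which is exactly the hypothesis needed to apply the length criterion and conclude $\ell(v_{\mathcal{A}}) = \ell(v_{\mathcal{A}'}) + 1 = k$.

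The final sentence of the lemma, asserting that the expression in Definition~\ref{def:vA} is a minimal-length (i.e.\ reduced) decomposition, then follows for free: the product $s_{i_1} s_{i_2} \cdots s_{i_k}$ is a decomposition of $v_{\mathcal{A}}$ into $k = \ell(v_{\mathcal{A}})$ simple transpositions, which by definition means it is reduced.

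I do not anticipate any serious obstacle here; the only mild care needed will be fixing the convention for composition (we read the product right-to-left as functions) so that the side-induction in (i) and (ii) is stated correctly. Once that is pinned down, both observations are immediate from the fact that all $i_j$ in $\mathcal{A}'$ are strictly less than $i_k$.
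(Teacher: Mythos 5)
Your proof is correct, and it takes a genuinely different (and more self-contained) route than the paper. The paper's proof is only a two-sentence sketch: it says the Bruhat length of $v_{\mathcal{A}}$ can be read off by ``counting the number of descents in the one-line notation of $v_{\mathcal{A}}$'' and defers to \cite{BjornerBrenti}. (In fact that hint is imprecise: for instance $v_{\{1,2\}} = s_1 s_2 = 231$ has one descent but length two, and in general the number of descents of $v_{\mathcal{A}}$ equals the number of \emph{maximal consecutive substrings} of $\mathcal{A}$ rather than $|\mathcal{A}|$; the characterization the authors presumably intended is length $=$ number of inversions.) Your approach sidesteps any such one-line-notation computation entirely: you induct on $|\mathcal{A}|$, peel off $s_{i_k}$ with $i_k = \max\mathcal{A}$, and invoke the standard criterion $\ell(w s_i) = \ell(w) + 1 \iff w(i) < w(i+1)$. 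The verification that $v_{\mathcal{A}'}(i_k+1) = i_k + 1$ and $v_{\mathcal{A}'}(i_k) \le i_k$ is correct; the one place to be slightly more careful when you write up the side-induction is justifying why the value tracked in (ii) never \emph{increases} (a priori applying $s_{i_j}$ to a value equal to $i_j$ would raise it): the clean way is to carry the stronger invariant that after applying $s_{i_j}\cdots s_{i_{k-1}}$ to $i_k$ the result is always $\ge i_j$, which forces the value to be $> i_{j-1}$ before $s_{i_{j-1}}$ acts and hence rules out the increasing case. With that pinned down, your argument is complete and arguably better suited to the paper's expository goals than the original sketch, since it relies only on the single elementary length criterion rather than an unstated inversion count.
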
 

\begin{proof} 
From the Definition~\ref{def:vA} of $v_{\mathcal{A}}$ it is straightforward to compute its Bruhat length; this can be done by, for instance, counting the number of descents in the one-line notation of $v_{\mathcal{A}}$. 
The reader may consult, for example, \cite{BjornerBrenti} for basic information concerning descents 
and its relation to Bruhat order. 
\end{proof}

We denote a consecutive string from $a$ to $b$ with $a<b$ by $[a,b]:= \{a,a+1,\ldots,b-1,b\}$.   We additional say that 
$[a,b]$ is a \textbf{maximal consecutive string} in $\mathcal{A}$ if neither $a-1$ nor $b+1$ are in $\mathcal{A}$ and $[a,b]$ is a subset of $\mathcal{A}$. 

\begin{Lemma}\label{fact:vA-reduced-word-unique}
If $\mathcal{A} = [a,b]$ is a maximal consecutive string in $\mathcal{A}$, then the word in
Definition \ref{def:vA} is the unique reduced word decomposition for
$v_{[a,b]}$. 
\end{Lemma}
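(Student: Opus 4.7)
The plan is to induct on $b-a$, the length of the consecutive string. The base case $a=b$ is immediate, since $v_{[a,a]} = s_a$ is already of length one and has only itself as a reduced word. For the inductive step, I would invoke a standard fact about reduced words (see, e.g., \cite{BjornerBrenti}): if $w$ is any element of $S_n$, then every reduced word for $w$ must end in a simple transposition $s_i$ such that $i$ is a \emph{right descent} of $w$, meaning $w(i) > w(i+1)$. Hence to pin down the last letter of a reduced word for $v_{[a,b]}$, it suffices to locate the right descents of $v_{[a,b]}$.

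First I would compute explicitly the one-line notation of $v_{[a,b]} = s_a s_{a+1} \cdots s_b$: a direct computation (unwinding the transpositions from right to left, using that $s_b$ is applied first) shows that $v_{[a,b]}$ fixes every index outside the window $[a,b+1]$, sends $i \mapsto i+1$ for each $i \in [a,b]$, and sends $b+1 \mapsto a$. Inspecting this one-line notation, the only pair of adjacent positions on which the values decrease is $(b,b+1)$, since $v_{[a,b]}(b) = b+1 > a = v_{[a,b]}(b+1)$ while every other adjacent pair is in strictly increasing order. Therefore position $b$ is the unique right descent of $v_{[a,b]}$.

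Combining these two steps, the last letter in any reduced word for $v_{[a,b]}$ must be $s_b$. Writing the word as $s_{i_1}s_{i_2}\cdots s_{i_{k-1}} s_b$, the prefix $s_{i_1}\cdots s_{i_{k-1}}$ must be a reduced word for $v_{[a,b]} s_b$. A direct check (or a second application of the same one-line notation computation) identifies $v_{[a,b]}s_b$ with $v_{[a,b-1]} = s_a s_{a+1}\cdots s_{b-1}$. By the inductive hypothesis, $v_{[a,b-1]}$ has a unique reduced word, namely the one listed in Definition~\ref{def:vA}, so appending $s_b$ gives the unique reduced word $s_a s_{a+1} \cdots s_b$ for $v_{[a,b]}$, as claimed.

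I do not expect any real obstacle in this proof; the argument is essentially a bookkeeping exercise once the descent characterization is in hand. The only points requiring mild care are (i) keeping the conventions for one-line notation and for multiplying simple reflections from left to right consistent, and (ii) verifying the identity $v_{[a,b]} s_b = v_{[a,b-1]}$, which follows either from a one-line computation or, more abstractly, from cancelling the last factor in the product defining $v_{[a,b]}$ together with Lemma~\ref{fact:vA-length} to see that the resulting length is exactly $b-a$.
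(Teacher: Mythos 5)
Your proof is correct and complete, but it takes a genuinely different route from the paper's. The paper's proof is a one-sentence appeal to the presentation of $S_n$ by simple transpositions and braid relations; implicitly it is invoking Matsumoto's theorem (that any two reduced words for the same element are connected by commutation and braid moves) and observing that the word $s_a s_{a+1} \cdots s_b$ admits no such moves, since consecutive letters never commute and it contains no subword of the form $s_i s_{i+1} s_i$ or $s_{i+1} s_i s_{i+1}$. You instead induct on $b-a$ and use the descent criterion: every reduced word ends in a right descent, and your one-line-notation computation $v_{[a,b]}(i) = i+1$ for $i \in [a,b]$, $v_{[a,b]}(b+1)=a$, and $v_{[a,b]}(i)=i$ otherwise correctly identifies $b$ as the \emph{unique} right descent, forcing the last letter to be $s_b$; peeling it off via $v_{[a,b]}s_b = v_{[a,b-1]}$ and applying the inductive hypothesis finishes the argument. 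Both approaches are elementary and valid. Yours has the advantage of being fully explicit and self-contained (modulo the descent criterion, which is standard and stated in \cite{BjornerBrenti}), whereas the paper's is terser and requires the reader to know Matsumoto's theorem and to see for themselves that no braid move applies; the Matsumoto-based argument, on the other hand, is shorter to state once the prerequisite is granted and generalizes without computation to other Coxeter groups, which is relevant for the arbitrary-Lie-type analogue in Section~\ref{section: peterson general Lie type}.
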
 

\begin{proof} 
From the definition of $v_{[a,b]}$ and the presentation of $S_n$ via the simple transpositions and the braid relations (see e.g. \cite{BjornerBrenti}), it can be seen that there are no possible alternative representations of $v_{[a.b]}$ in terms of the simple transpositions. 
\end{proof} 

\begin{Lemma}\label{fact:vA-product}
If ${\mathcal{A}} =  [a_1, b_1] \cup [a_2,b_2] \cup \cdots \cup
[a_m,b_m]$ is a decomposition 
of $\mathcal{A}$ into maximal consecutive substrings, 
then 
\[v_{\mathcal{A}} = v_{  [a_1, b_1] }v_{[a_2, b_2] } \cdots v_{  [a_m, b_m] }.\]
Moreover, for any reduced word decomposition of $v_{\mathcal{A}}$, 
there exists exactly one subword of that word decomposition which
is equal to $v_{[a_\ell, b_\ell]}$ for any $\ell, 1 \leq \ell \leq m$. 
\end{Lemma}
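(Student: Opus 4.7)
The first equality is immediate from the definition. Listing the elements of $\mathcal{A}$ in increasing order groups the indices block by block, i.e.\ $i_1 < i_2 < \cdots < i_k$ is the concatenation $a_1, a_1+1,\ldots,b_1, a_2,\ldots,b_2,\ldots, a_m,\ldots, b_m$, so the defining product $s_{i_1} s_{i_2} \cdots s_{i_k}$ of $v_{\mathcal{A}}$ literally factors as $v_{[a_1,b_1]} v_{[a_2,b_2]} \cdots v_{[a_m,b_m]}$.

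For the uniqueness claim, my plan is to argue that every reduced word for $v_{\mathcal{A}}$ uses each simple transposition $s_i$ with $i \in \mathcal{A}$ exactly once, and then to track the relative order of adjacent-index letters. First, Lemma~\ref{fact:vA-length} gives $\ell(v_{\mathcal{A}}) = |\mathcal{A}|$, so every reduced word has length $|\mathcal{A}|$. A standard fact in Coxeter combinatorics (e.g.\ \cite{BjornerBrenti}) is that the multiset of simple reflections appearing in a reduced expression is an invariant of the element, because both braid and commutation moves preserve this multiset and by Matsumoto's/Tits' theorem any two reduced words are connected by such moves. Applied to the canonical reduced word of Definition~\ref{def:vA}, this shows that any reduced word for $v_{\mathcal{A}}$ contains each $s_i$ with $i \in \mathcal{A}$ exactly once.

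Since no letter is repeated, no braid move $s_i s_{i+1} s_i \leftrightarrow s_{i+1} s_i s_{i+1}$ is ever applicable to a reduced expression for $v_{\mathcal{A}}$, so by Matsumoto's theorem any two reduced words are related by commutation moves $s_i s_j \leftrightarrow s_j s_i$ with $|i-j|\ge 2$. Such moves can alter the relative order of two letters only when those letters commute, and in particular they preserve the relative order of $s_p$ and $s_{p+1}$ whenever both appear. Within a maximal consecutive string $[a_\ell, b_\ell]$, the letters $s_{a_\ell}, s_{a_\ell+1}, \ldots, s_{b_\ell}$ form such a chain of non-commuting consecutive pairs, so their relative order is forced to be $s_{a_\ell} s_{a_\ell+1} \cdots s_{b_\ell}$, which by Lemma~\ref{fact:vA-reduced-word-unique} is the unique reduced word for $v_{[a_\ell, b_\ell]}$. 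Uniqueness of this subword inside the given reduced word for $v_{\mathcal{A}}$ is then immediate, since each of the letters $s_{a_\ell},\ldots, s_{b_\ell}$ appears only once. The main obstacle in writing this out cleanly is the preservation-of-order claim under commutation moves; it is elementary but best invoked by citing the relevant background on the Coxeter structure of $S_n$ rather than proved from scratch here.
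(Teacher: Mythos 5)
Your proof of the first claim is fine and matches the paper's.

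For the uniqueness claim, the structure of your argument is sound, but you mis-state the key Coxeter fact on which it rests: the \emph{multiset} of simple reflections occurring in a reduced expression is \emph{not} an invariant of the group element. Braid moves destroy it — for instance $s_1 s_2 s_1$ and $s_2 s_1 s_2$ are reduced words for the same element but have multisets $\{s_1,s_1,s_2\}$ and $\{s_1,s_2,s_2\}$. What \emph{is} invariant (and is what you find in \cite{BjornerBrenti}) is the \emph{set} of simple reflections appearing, i.e.\ the support of the element. Your argument is easily repaired, and in fact you already have all the pieces on the table: the support of $v_{\mathcal{A}}$, read off from the canonical word of Definition~\ref{def:vA}, is $\{s_i : i\in\mathcal{A}\}$, which has $|\mathcal{A}|$ elements, and by Lemma~\ref{fact:vA-length} every reduced word has length $|\mathcal{A}|$; so a reduced word of length $|\mathcal{A}|$ drawing its letters from a set of size $|\mathcal{A}|$ must use each letter exactly once. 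From there, your continuation is correct: no braid move is ever applicable, Matsumoto/Tits then says all reduced words are connected by commutations alone, commutations preserve the relative order of any pair of non-commuting letters, and so $s_{a_\ell}, s_{a_\ell+1},\dots,s_{b_\ell}$ are forced to appear (each once) in increasing order, giving a unique subword equal to the unique reduced word for $v_{[a_\ell,b_\ell]}$ from Lemma~\ref{fact:vA-reduced-word-unique}. Note that this is a more explicit argument than the paper supplies; the paper simply says the uniqueness claim ``follows from Lemma~\ref{fact:vA-reduced-word-unique},'' leaving implicit precisely the ``each letter appears once'' and commutation-order steps that you spell out.
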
 

\begin{proof} 
The first claim is immediate from Definition~\ref{def:vA}. The second claim follows from Lemma~\ref{fact:vA-reduced-word-unique}. 
\end{proof}

It follows from Remark~\ref{remark: Pet fixed points and subsets} that for a consecutive string $[a_\ell, b_\ell]=\{a_\ell, a_{\ell}+1,\ldots,
b_\ell\}$, the element $w_{[a_\ell, b_\ell]}$ is the largest element with respect to
Bruhat order in the subgroup $S_{\{a_\ell,a_\ell+1,\cdots,b_\ell, b_\ell+1\}}$ of permutations of the subset
$\{a_\ell, a_{\ell}+1, \cdots, .b_\ell, b_\ell+1\}$.  For our computations below, we fix the following
reduced-word decomposition of $w_{[a_\ell, b_\ell]}$:
\begin{equation}
  \label{eq:wjk-reduced-word}
  w_{[a_\ell, b_\ell]} = \prod_{s=0}^{b_\ell-a_\ell} \left( \prod_{i=0}^{b_\ell-a_\ell-s}
    s_{a_\ell+i} \right).
\end{equation}
Here we take the convention that a product is always composed from the
left to the right, so $\prod_{i=0}^{p} \beta_i = \beta_0 \cdot
\beta_1 \cdots \beta_p$ for any expressions $\beta_i$. 
Then for ${\mathcal{A}} =  [a_1, b_1] \cup [a_2, b_2] \cup \cdots \cup
[a_m, b_m]$, 
a reduced-word decomposition for $w_{\mathcal{A}}$ is given by taking
the product of the above reduced-word decompositions of $w_{[a_\ell, b_\ell]}$ for each of the maximal consecutive
substrings $[a_\ell, b_\ell]$ of ${\mathcal{A}}$, ordered so that maximal consecutive substrings increase from left 
to right. 
In other words, 
\begin{equation}\label{eq:wA-reduced-word} 
  w_{\mathcal{A}} = w_{[a_1, b_1]} w_{[a_2, b_2]} w_{[a_3, b_3]}
  \cdots w_{[a_m, b_m]}
\end{equation}
where $\mathcal{A} = [a_1, b_1] \cup [a_2, b_2] \cup \cdots \cup
[a_m, b_m]$ as above. 
With these conventions in place, we have the following.

\begin{Lemma}\label{fact:unique-vA-in-wA}
  There exists exactly one reduced subword in the reduced word
  decomposition~\eqref{eq:wA-reduced-word} of $w_{\mathcal{A}}$ which
  is equal to $v_{\mathcal{A}}$.  
  \end{Lemma}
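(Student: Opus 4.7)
The plan is to exploit the decomposition $w_{\mathcal{A}} = w_{[a_1, b_1]} w_{[a_2, b_2]} \cdots w_{[a_m, b_m]}$ and the matching factorization $v_{\mathcal{A}} = v_{[a_1, b_1]} v_{[a_2, b_2]} \cdots v_{[a_m, b_m]}$ from Lemma~\ref{fact:vA-product}, and to reduce the question to a uniqueness statement on each block $w_{[a_\ell, b_\ell]}$ separately. Because the maximal consecutive substrings $[a_\ell, b_\ell]$ are by definition separated by at least one index of $[n-1]$ not in $\mathcal{A}$, for $\ell \neq \ell'$ the simple transpositions appearing in the reduced word~\eqref{eq:wjk-reduced-word} for $w_{[a_\ell, b_\ell]}$ have indices at distance $\geq 2$ from those appearing in $w_{[a_{\ell'}, b_{\ell'}]}$; in particular, they commute. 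This is the key structural feature that drives the argument.

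First I would note that a simple transposition $s_i$ with $i \in [a_\ell, b_\ell]$ appears in the reduced word~\eqref{eq:wA-reduced-word} of $w_{\mathcal{A}}$ only inside the $\ell$-th block $w_{[a_\ell, b_\ell]}$. Given any reduced subword of \eqref{eq:wA-reduced-word} whose product equals $v_{\mathcal{A}}$, I would group its letters according to which block they come from. Using the commutation of simple transpositions across different blocks and Lemma~\ref{fact:vA-product}, the sub-product from the $\ell$-th block must equal $v_{[a_\ell, b_\ell]}$. By Lemma~\ref{fact:vA-reduced-word-unique}, the unique reduced word for $v_{[a_\ell, b_\ell]}$ is $s_{a_\ell} s_{a_\ell+1} \cdots s_{b_\ell}$, so the subword entries chosen from block $\ell$ must spell exactly this sequence in order.

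Second, I would prove that inside the reduced word~\eqref{eq:wjk-reduced-word} for $w_{[a_\ell, b_\ell]}$, the unique in-order subword spelling $s_{a_\ell} s_{a_\ell+1} \cdots s_{b_\ell}$ is the initial block (the $s=0$ factor in~\eqref{eq:wjk-reduced-word}). The decisive observation is that $s_{b_\ell}$ occurs only once in \eqref{eq:wjk-reduced-word}, namely as the last letter of the $s=0$ block; hence any subword ending in $s_{b_\ell}$ must pick up that occurrence. A backward induction on $j$ then forces $s_{b_\ell - j}$ to be the immediately preceding occurrence (since all later blocks have already been ruled out by the position of the previously chosen letters), and so on down to $s_{a_\ell}$, yielding the initial block and no other possibility.

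The main obstacle I expect is careful bookkeeping in this second step: one must verify at each stage of the backward induction that no alternative occurrence of $s_{b_\ell - j}$ in a later block $s \geq 1$ of~\eqref{eq:wjk-reduced-word} could be chosen while still leaving room, further left, for the remaining required letters $s_{a_\ell}, \ldots, s_{b_\ell - j - 1}$ in order. This is a small combinatorial verification resting on the fact that each subsequent block in~\eqref{eq:wjk-reduced-word} strictly shortens on the right. Once this is checked, concatenating the unique in-block choices across $\ell = 1, \ldots, m$ produces the single reduced subword of $w_{\mathcal{A}}$ equal to $v_{\mathcal{A}}$, namely the concatenation of the initial segments $s_{a_\ell} s_{a_\ell+1} \cdots s_{b_\ell}$ of each $w_{[a_\ell, b_\ell]}$.
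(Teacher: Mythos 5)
Your argument is correct and fills in exactly the details that the paper's terse proof leaves implicit: the paper simply cites the uniqueness of the reduced word for each $v_{[a_\ell,b_\ell]}$ (Lemma~\ref{fact:vA-reduced-word-unique}) and then asserts the rest follows from the chosen decomposition~\eqref{eq:wjk-reduced-word}--\eqref{eq:wA-reduced-word}, which is precisely your block-by-block analysis. In particular, your observation that $s_{b_\ell}$ occurs only once in~\eqref{eq:wjk-reduced-word} pins down the in-block uniqueness cleanly, so the ``bookkeeping'' you flag as the main obstacle does go through.
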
 

\begin{proof}
There is a unique reduced word decomposition for $v_{ [a_\ell, b_\ell] }$ for each maximal 
  consecutive string in $\mathcal{A}$. The claim then follows from the choice of reduced word decomposition of
  $w_{\mathcal{A}}$.
\end{proof}

The next lemma is the crucial observation which allows us to show that
the Peterson Schubert classes $p_{v_{\mathcal{A}}}$ corresponding to these
special Weyl group elements $v_{\mathcal{A}}$ are a $H^*_{\S1}(\pt)$-module
basis for $H^*_{\S1}(\Y)$. The point is that the Bruhat order on
$\Y^{\S1}$ can be translated to the usual partial order on sets given
by containment.

\begin{Lemma}\label{lemma:containment}
Let ${\mathcal{A}},{\mathcal{B}}$ be subsets in $[n-1]$.  Then $v_{\mathcal{A}} \leq w_{\mathcal{B}}$ if and only if ${\mathcal{A}} \subset {\mathcal{B}}$.
\end{Lemma}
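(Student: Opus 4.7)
The plan is to translate the Bruhat-order comparison $v_{\mathcal{A}} \leq w_{\mathcal{B}}$ into a statement about the sets of simple transpositions appearing in reduced-word decompositions, via the subword characterization of Bruhat order. The two key inputs are: (i) by the construction~\eqref{eq:wA-reduced-word}, the fixed reduced word for $w_{\mathcal{B}}$ uses exactly the simple transpositions $s_i$ with $i \in \mathcal{B}$, and (ii) by Lemma~\ref{fact:vA-length}, the expression $v_{\mathcal{A}} = \prod_{i \in \mathcal{A}} s_i$ is already reduced, so the set of simple transpositions appearing in any reduced word for $v_{\mathcal{A}}$ is exactly $\{s_i : i \in \mathcal{A}\}$ (this uses the standard fact that the ``support'' of a Weyl-group element is well-defined, for which I would cite \cite{BjornerBrenti}).

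For the forward direction, I would assume $v_{\mathcal{A}} \leq w_{\mathcal{B}}$ and apply the subword criterion to the fixed reduced expression~\eqref{eq:wA-reduced-word} of $w_{\mathcal{B}}$: there exists a reduced subword equal to $v_{\mathcal{A}}$. Since every letter of~\eqref{eq:wA-reduced-word} is an $s_i$ with $i \in \mathcal{B}$, this subword only involves such letters, so $v_{\mathcal{A}}$ admits a reduced word whose support lies in $\mathcal{B}$. By invariance of the support, this support must be $\mathcal{A}$, forcing $\mathcal{A} \subset \mathcal{B}$.

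For the backward direction, I would first show the auxiliary inequality $v_{\mathcal{A}} \leq v_{\mathcal{B}}$: since $\mathcal{A} \subset \mathcal{B}$, the reduced word $\prod_{i \in \mathcal{A}} s_i$ for $v_{\mathcal{A}}$ is a subword of the reduced word $\prod_{i \in \mathcal{B}} s_i$ for $v_{\mathcal{B}}$, and Lemma~\ref{fact:vA-length} guarantees that both remain reduced, so the subword criterion applies. Then Lemma~\ref{fact:unique-vA-in-wA} supplies $v_{\mathcal{B}} \leq w_{\mathcal{B}}$ (the subword exhibiting $v_{\mathcal{B}}$ inside the reduced word for $w_{\mathcal{B}}$), and transitivity of Bruhat order gives $v_{\mathcal{A}} \leq w_{\mathcal{B}}$.

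The main obstacle is essentially a bookkeeping one: one must be careful that the subword property applies cleanly in both directions, which amounts to checking that no accidental cancellations shorten the relevant products. Both required reducedness facts are already packaged for us in Lemmas~\ref{fact:vA-length} and~\ref{fact:unique-vA-in-wA}, so once we invoke these together with the ``support is well-defined'' principle from Coxeter-group theory, the argument is short. The only subtle point worth flagging is the need to pick a specific reduced expression for $w_{\mathcal{B}}$ (as in~\eqref{eq:wA-reduced-word}) to apply the subword characterization cleanly; any reduced expression would work in principle, but the explicit choice makes the support statement immediate.
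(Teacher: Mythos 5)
Your proof is correct, and it rests on the same two pillars as the paper's own argument: the subword characterization of Bruhat order and the invariance of the support of a Weyl group element under choice of reduced word. The paper phrases the forward direction slightly differently — it first notes $s_i \leq v_{\mathcal{A}} \leq w_{\mathcal{B}}$ for each $i \in \mathcal{A}$, then concludes $i \in \mathcal{B}$ from the fact that $w_{\mathcal{B}}$ lies in the parabolic subgroup generated by $\mathcal{B}$ — but this is the same support argument applied to a single simple reflection rather than to all of $v_{\mathcal{A}}$ at once, so the two proofs are essentially equivalent.
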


\begin{proof}
  If ${\mathcal{A}} \subset {\mathcal{B}}$ then $v_{\mathcal{A}}
  \leq w_{\mathcal{B}}$ by definition of $v_{\mathcal{A}}$ and
  $w_{\mathcal{B}}$.  Now suppose that $v_{\mathcal{A}} \leq
  w_{\mathcal{B}}$.  In particular this means that $s_i \leq
  v_{\mathcal{A}}$ for all $i \in {\mathcal{A}}$ and Bruhat order is transitive, 
  so $s_i \leq w_{\mathcal B}$.  By definition of
  $w_{\mathcal{B}}$ this means $i \in {\mathcal{B}}$. Hence
  ${\mathcal{A}} \subset {\mathcal{B}}$ as desired. 
\end{proof}

Our next step is to develop  
tools to compute restrictions of
$p_{v_{\mathcal{A}}}$ at various fixed points $w_{\mathcal{B}} \in
\Y^{\S1}$.  These methods allow us to prove the upper-triangularity
condition~\eqref{eq:Peterson-upper-triangular} with
respect to the partial order on sets (equivalent to the restriction of
Bruhat order by
Lemma~\ref{lemma:containment}). The upper-triangularity properties then allow us, in turn, 
to prove that the Peterson Schubert classes are a module basis for $H^*_{\S1}(\Y)$ in Theorem~\ref{theorem:pvA-basis}.

The following is a well-known consequence of Billey's formula (Theorem~\ref{theorem:billey}) and concerns the Schubert classes $\sigma_v^T$ for general $v \in S_n$. 

\begin{Lemma}\label{fact:billey-positive}
Each summand in Billey's formula for $\sigma_v^T(w)$ is a degree $2\ell(v)$ polynomial in
the simple roots $\{t_{i+1} - t_i \}_{i=1}^{n-1}$ with {\em nonnegative} integer coefficients.
Here, we recall that $\deg t_i =2$ for all $i \in [n]$. 
\end{Lemma}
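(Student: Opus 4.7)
The plan is to unpack the definition of each summand in Billey's formula and verify the two claims (the degree count and the nonnegativity of coefficients) directly, by reducing to a well-known fact about positive roots in type A.

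First, I would recall that by Definition~\ref{definition.term.in.billey} a single summand in Billey's formula is a product of $\ell(v)$ factors of the form $r(i_j, \mathbf{b}) = s_{b_1} s_{b_2} \cdots s_{b_{i_j-1}}(\alpha_{b_{i_j}})$. As noted in the text immediately following that definition, each such factor $r(i_j, \mathbf{b})$ is a \emph{positive} root, i.e., an element of $H^2(BT)$ of the form $t_a - t_b$ for some indices $a > b$. Since each $t_i$ has degree $2$, each factor $r(i_j, \mathbf{b})$ is homogeneous of degree $2$, and hence the product of $\ell(v)$ such factors is homogeneous of degree $2\ell(v)$. This settles the degree claim.

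Next, for the nonnegativity of coefficients, the key observation is that every positive root in type $A_{n-1}$ is a nonnegative integer combination of the simple roots $\alpha_k = t_{k+1} - t_k$. Concretely, for $a > b$ the telescoping identity
\begin{equation*}
t_a - t_b = \alpha_{a-1} + \alpha_{a-2} + \cdots + \alpha_{b+1} + \alpha_b
\end{equation*}
expresses the positive root $t_a - t_b$ as a sum of consecutive simple roots, each with coefficient $1$. Thus each factor $r(i_j, \mathbf{b})$ is, as a polynomial in the simple roots $\{\alpha_i\}_{i=1}^{n-1}$, a sum of simple roots with nonnegative integer coefficients.

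Finally, it remains to observe that a product of polynomials (in the variables $\alpha_1, \ldots, \alpha_{n-1}$) with nonnegative integer coefficients is itself a polynomial with nonnegative integer coefficients, since expanding such a product yields only sums of products of nonnegative terms. Applying this to the product $r(i_1,\mathbf{b}) r(i_2, \mathbf{b}) \cdots r(i_{\ell(v)}, \mathbf{b})$ completes the proof. I do not expect any serious obstacle here: the argument is essentially a bookkeeping check once one invokes the standard fact that positive roots in type A are nonnegative integer combinations of simple roots.
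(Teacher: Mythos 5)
Your proof is correct and follows essentially the same route as the paper: each factor $r(i_j,\mathbf{b})$ is a positive root, positive roots are nonnegative integer combinations of simple roots, and products of nonnegative-coefficient polynomials remain nonnegative. You add a bit more explicit detail (the type-A telescoping identity $t_a - t_b = \alpha_b + \cdots + \alpha_{a-1}$, and the degree count), but the substance is identical.
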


\begin{proof} 
As noted in the discussion after Definition~\ref{definition.term.in.billey}, it is known that each $r(i, \mathbf{b})$ is a positive root \cite[Equation 4.1 and surrounding discussion]{Billey}, and thus 
a non-negative integral linear combination
of simple positive roots.  Being the product of such, each term in Billey's formula is a polynomial in the simple positive roots with non-negative coefficients, as desired. 
\end{proof} 

The positivity phenomenon expressed in Lemma~\ref{fact:billey-positive} implies that if any
summand in Billey's formula for $\sigma_v^T(w)$ is nonzero, then the
entire sum is nonzero. From this we derive the following. 

\begin{Corollary}\label{cor:pvAwA-nonzero} 
Let ${\mathcal{A}} \subset [n-1]$. Then $p_{v_{\mathcal{A}}}(w_{\mathcal{A}}) \neq 0$. 
\end{Corollary}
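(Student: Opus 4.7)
The plan is to compute $p_{v_\mathcal{A}}(w_\mathcal{A})$ by first computing $\sigma^T_{v_\mathcal{A}}(w_\mathcal{A}) \in H^*_T(w_\mathcal{A}) \cong \Q[t_1,\ldots,t_n]$ and then applying the restriction map $\pi$ from the diagram \eqref{eq:cd}. By construction, $w_\mathcal{A} \in \Y^\mathsf{S}$, so the projection $\mathsf{pr}_{\Hess}$ does not kill the $w_\mathcal{A}$-component. Combined with the definition \eqref{eq: definition pvA} of $p_{v_\mathcal{A}}$ and the commutativity of \eqref{eq:cd}, this gives
\[
p_{v_\mathcal{A}}(w_\mathcal{A}) = \pi\bigl(\sigma^T_{v_\mathcal{A}}(w_\mathcal{A})\bigr) \in \Q[t].
\]
Thus it suffices to show both that $\sigma^T_{v_\mathcal{A}}(w_\mathcal{A}) \neq 0$ in $\Q[t_1,\ldots,t_n]$ and that its image under $\pi$ is non-zero in $\Q[t]$.

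For the first part, I would fix the specific reduced word decomposition $\mathbf{b}$ of $w_\mathcal{A}$ given by \eqref{eq:wA-reduced-word} and apply Billey's formula (Theorem~\ref{theorem:billey}) to compute $\sigma^T_{v_\mathcal{A}}(w_\mathcal{A})$ as a sum indexed by reduced subwords of $\mathbf{b}$ that equal $v_\mathcal{A}$. By Lemma~\ref{fact:unique-vA-in-wA}, there is exactly one such subword. Hence $\sigma^T_{v_\mathcal{A}}(w_\mathcal{A})$ equals a single summand of the form $\prod_{j=1}^{\ell(v_\mathcal{A})} r(i_j, \mathbf{b})$. Each factor $r(i_j,\mathbf{b})$ is a positive root by the discussion preceding Theorem~\ref{theorem:billey} (and Lemma~\ref{fact:billey-positive}), hence a non-zero element of the integral domain $\Q[t_1,\ldots,t_n]$; the product of non-zero elements in an integral domain is non-zero.

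For the second part, I would apply Remark~\ref{remark: pi alpha i}: the restriction map $\pi$ sends each simple root $\alpha_i = t_{i+1}-t_i$ to $t$, and consequently sends any positive root $t_j - t_k$ (with $j>k$) to the positive integer $j-k$ times $t$. In particular $\pi(r(i_j,\mathbf{b}))$ is a positive integer multiple of $t$ for every $j$, so
\[
\pi\bigl(\sigma^T_{v_\mathcal{A}}(w_\mathcal{A})\bigr) = \prod_{j=1}^{\ell(v_\mathcal{A})} \pi(r(i_j,\mathbf{b}))
\]
is a positive integer multiple of $t^{\ell(v_\mathcal{A})}$, which is manifestly non-zero in $\Q[t]$. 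This yields $p_{v_\mathcal{A}}(w_\mathcal{A}) \neq 0$.

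The proof is essentially a bookkeeping exercise once the diagram \eqref{eq:cd} is in place, so I do not anticipate any real obstacle; the only thing to guard against is losing the positivity information when passing through $\pi$. Because $\pi$ sends each simple root to the same positive element $t$, no cancellation can occur in the product of positive roots — this is precisely the content of Lemma~\ref{fact:billey-positive} combined with Remark~\ref{remark: pi alpha i}, and it is what makes the argument go through cleanly.
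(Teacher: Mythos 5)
Your proposal is correct and follows essentially the same route as the paper's proof: both rely on Lemma~\ref{fact:unique-vA-in-wA} to find $v_\mathcal{A}$ as a reduced subword of the fixed reduced word for $w_\mathcal{A}$, both use the positivity of the factors $r(i_j,\mathbf{b})$ from the discussion around Lemma~\ref{fact:billey-positive}, and both use Remark~\ref{remark: pi alpha i} (that $\pi$ sends simple roots to $t$) to see that the image in $\Q[t]$ remains nonzero. The one small difference is that you invoke the \emph{uniqueness} part of Lemma~\ref{fact:unique-vA-in-wA} to reduce Billey's formula to a single summand (a product of positive roots, manifestly nonzero), whereas the paper only needs \emph{existence} of a subword, and then uses the no-cancellation positivity phenomenon to conclude that the whole sum is nonzero; both variants are valid and the same ingredients are used.
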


\begin{proof}
We noted in Lemma~\ref{fact:unique-vA-in-wA}   
that $v_{\mathcal{A}}$ can be found as a subword of $w_{\mathcal{A}}$.
This implies that
$\sigma_{v_{\mathcal{A}}}^T(w_{\mathcal{A}}) \neq 0$, by the positivity phenomenon in Lemma~\ref{fact:billey-positive},
and since there is at least one non-zero term in the Billey formula for $\sigma^T_{v_{\mathcal{A}}}(w_{\mathcal{A}})$. By Remark~\ref{remark: pi alpha i} we know $\pi(t_{i+1}-t_i) = t$, so 
the image in $\Q[t]$ of any nonzero polynomial in the
$t_{i+1}-t_{i}$ with positive coefficients is also nonzero in $\Q[t]$. 
\end{proof}

The proof of the corollary above also shows the following.

\begin{Proposition}\label{prop:deg-pvA}
Let $\mathcal{A}, \mathcal{B}$ be subsets of $[n-1]$. 
Then 
\begin{enumerate} 
\item the restriction $p_{v_{\mathcal{A}}}(w_{\mathcal{B}})$ of the Peterson
Schubert class $p_{v_{\mathcal{A}}}$ at any $w_{\mathcal{B}}$ has 
degree $2\ell(v_{\mathcal{A}}) = 2\lvert {\mathcal{A}} \rvert$ as a polynomial in
$\Q[t]$ where we recall that $\deg t=2$, 
\item $p_{v_{\mathcal{A}}}$ has cohomology degree $2 \lvert {\mathcal{A}} \rvert$, and
\item $p_{v_{\mathcal{A}}}(w_{\mathcal{B}})$ is non-zero if
$\sigma^T_{v_{\mathcal{A}}}(w_{\mathcal{B}})$ is non-zero. 
\end{enumerate} 
\end{Proposition}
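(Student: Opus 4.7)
The plan is to trace each claim back to Billey's formula via the observation, immediate from the commutative diagram \eqref{eq:cd}, that
\[
p_{v_{\mathcal{A}}}(w_{\mathcal{B}}) = \pi\bigl(\sigma^T_{v_{\mathcal{A}}}(w_{\mathcal{B}})\bigr)
\]
as elements of $\Q[t]$, where $\pi\colon \Q[t_1,\ldots,t_n]\to\Q[t]$ sends $t_i\mapsto it$ and hence each simple root $\alpha_i = t_{i+1}-t_i$ to $t$ (Lemma~\ref{lemma: ti goes to it} and Remark~\ref{remark: pi alpha i}). Crucially, $\pi$ preserves the cohomology grading, since $\deg t_i = \deg t = 2$ for all $i$.

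For part (2), I would simply observe that $\sigma^T_{v_{\mathcal{A}}}$ lies in $H^{2\ell(v_{\mathcal{A}})}_T(\fln)$, that the vertical maps in \eqref{eq:cd} are grading-preserving ring homomorphisms, and that Lemma~\ref{fact:vA-length} gives $\ell(v_{\mathcal{A}}) = |\mathcal{A}|$. For part (1), the restriction to the fixed point $w_{\mathcal{B}}$ is likewise grading-preserving, so $p_{v_{\mathcal{A}}}(w_{\mathcal{B}})$ is a homogeneous element of $H^{2|\mathcal{A}|}_{\mathsf{S}}(w_{\mathcal{B}}) \cong \Q[t]$; it is therefore a scalar multiple of $t^{|\mathcal{A}|}$ and has degree exactly $2|\mathcal{A}|$ whenever it is non-zero.

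The substantive statement is part (3), for which the key tool is the positivity result in Lemma~\ref{fact:billey-positive}. By Billey's formula (Theorem~\ref{theorem:billey}), $\sigma^T_{v_{\mathcal{A}}}(w_{\mathcal{B}})$ is a sum of terms, each of which is a product of $\ell(v_{\mathcal{A}}) = |\mathcal{A}|$ positive roots, and by Lemma~\ref{fact:billey-positive} each such term is a polynomial in the simple roots $\alpha_i$ with \emph{non-negative} integer coefficients. Applying $\pi$ turns each summand into a non-negative integer multiple of $t^{|\mathcal{A}|}$, so
\[
p_{v_{\mathcal{A}}}(w_{\mathcal{B}}) = c \cdot t^{|\mathcal{A}|}
\]
for some $c \in \Z_{\geq 0}$ obtained as a sum of non-negative contributions, one per Billey summand. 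If $\sigma^T_{v_{\mathcal{A}}}(w_{\mathcal{B}}) \neq 0$, then some Billey summand must be non-zero and will contribute strictly positively to $c$; since no other summand can contribute negatively, we conclude $c > 0$ and hence $p_{v_{\mathcal{A}}}(w_{\mathcal{B}}) \neq 0$.

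I do not anticipate a genuine obstacle here: part (3) is essentially a generalization of the argument already given for Corollary~\ref{cor:pvAwA-nonzero} (which handled the diagonal case $\mathcal{B} = \mathcal{A}$), and parts (1) and (2) are really two sides of the same grading-preservation statement. The only substantive point is the interplay between Billey-positivity and the fact that $\pi$ sends every simple root to the same strictly positive multiple of $t$, which rules out cancellation after restriction to $\mathsf{S}$-equivariant cohomology.
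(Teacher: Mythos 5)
Your proof is correct and follows essentially the same route as the paper: the paper states this Proposition as a consequence of the proof of Corollary~\ref{cor:pvAwA-nonzero}, which rests on exactly the same two ingredients you use --- Billey positivity (Lemma~\ref{fact:billey-positive}) and the fact that $\pi$ sends every simple root to the same positive multiple of $t$ (Remark~\ref{remark: pi alpha i}), so cancellation cannot occur after restriction. Your added caveat in part (1) about nonvanishing is a minor sharpening of the statement; otherwise the argument matches the paper's.
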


We can now give most of the proof of the assertion that the Peterson Schubert classes are a module basis of $H^*_{\S1}(\Y)$. 
The only step which we quote without proof is a technical result from \cite[Appendix]{HarTym-Monk}, needed 
at the very end of the argument. 

\begin{Theorem} \label{theorem:pvA-basis}  (\cite[Theorem~4.12]{HarTym-Monk})
Let $\Y$ be the Peterson variety of type $A_{n-1}$ equipped with the
natural $\S1$-action defined by~\eqref{eq:def of S}. For ${\mathcal{A}}
\subset [n-1]$, let $v_{\mathcal{A}} \in S_n$ be the permutation
given in Definition~\ref{def:vA}, and let $p_{v_{\mathcal{A}}}$ be the
corresponding Peterson Schubert class in $H^*_{\S1}(\Y)$. 
Then the classes $\{p_{v_{\mathcal{A}}}: {\mathcal{A}} \subset [n-1] \}$ in
$H^*_{\S1}(\Y)$ 
\begin{itemize}
\item form an $H^*_{\S1}(\pt)$-module basis for $H^*_{\S1}(\Y)$, and 
\item satisfy the upper-triangularity conditions 
\begin{equation}\label{eq:pvA-at-wB}
p_{v_{\mathcal{A}}}(w_{\mathcal{B}}) = 0 \quad \mbox{if } {\mathcal{B}} \not \supset {\mathcal{A}},  
\end{equation}
and
\begin{equation}\label{eq:pvA-at-wA-nonzero}
p_{v_{\mathcal{A}}}(w_{\mathcal{A}}) \neq 0.
\end{equation}
\end{itemize}
\end{Theorem}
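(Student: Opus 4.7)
The plan is to first establish the two upper-triangularity conditions and then use them to deduce the module basis statement. For the vanishing in~\eqref{eq:pvA-at-wB}, I would trace the definition of $p_{v_{\mathcal{A}}}$ through the commutative diagram~\eqref{eq:cd}. By construction, $p_{v_{\mathcal{A}}}$ is the image of the $T$-equivariant Schubert class $\sigma^T_{v_{\mathcal{A}}}$ under the composition $H^*_T(\fln) \to H^*_\mathsf{S}(\fln) \to H^*_\mathsf{S}(\Y)$, so commutativity of~\eqref{eq:cd} gives $p_{v_{\mathcal{A}}}(w_{\mathcal{B}}) = \pi\bigl(\sigma^T_{v_{\mathcal{A}}}(w_{\mathcal{B}})\bigr)$ for each $w_{\mathcal{B}} \in \Y^\mathsf{S}$. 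If $\mathcal{B} \not\supset \mathcal{A}$, then by Lemma~\ref{lemma:containment} we have $v_{\mathcal{A}} \not\leq w_{\mathcal{B}}$ in Bruhat order, so the upper-triangularity of equivariant Schubert classes recalled in Example~\ref{example: billey upper triangular} forces $\sigma^T_{v_{\mathcal{A}}}(w_{\mathcal{B}})=0$, whence $p_{v_{\mathcal{A}}}(w_{\mathcal{B}})=0$.

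The non-vanishing~\eqref{eq:pvA-at-wA-nonzero} is precisely Corollary~\ref{cor:pvAwA-nonzero}: Lemma~\ref{fact:unique-vA-in-wA} guarantees at least one reduced subword of the chosen decomposition of $w_{\mathcal{A}}$ equals $v_{\mathcal{A}}$, so by the positivity phenomenon of Billey's formula (Lemma~\ref{fact:billey-positive}) the polynomial $\sigma^T_{v_{\mathcal{A}}}(w_{\mathcal{A}}) \in \Q[t_1,\dots,t_n]$ is a non-zero non-negative combination of products of simple roots, and applying $\pi$ (which sends each $t_{i+1}-t_i$ to $t \in \Q[t]$ by Remark~\ref{remark: pi alpha i}) preserves this positivity and hence its non-vanishing.

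With both upper-triangularity statements in hand, I would prove linear independence over $H^*_\mathsf{S}(\pt) \cong \Q[t]$ by a minimality argument. Suppose $\sum_{\mathcal{A}} f_{\mathcal{A}}\, p_{v_{\mathcal{A}}} = 0$ is a non-trivial relation with $f_\mathcal{A} \in \Q[t]$, and pick an $\mathcal{A}_0$ that is minimal under inclusion among those with $f_{\mathcal{A}_0} \neq 0$. Restricting the relation at $w_{\mathcal{A}_0}$, all terms indexed by $\mathcal{A} \not\subset \mathcal{A}_0$ vanish by~\eqref{eq:pvA-at-wB}, and all terms indexed by proper subsets $\mathcal{A} \subsetneq \mathcal{A}_0$ vanish by the minimal choice of $\mathcal{A}_0$. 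What remains is $f_{\mathcal{A}_0}\cdot p_{v_{\mathcal{A}_0}}(w_{\mathcal{A}_0}) = 0$ in $\Q[t]$, which is an integral domain; since the second factor is non-zero by~\eqref{eq:pvA-at-wA-nonzero}, we conclude $f_{\mathcal{A}_0}=0$, a contradiction.

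The main obstacle is upgrading linear independence to a full module basis. The cleanest route is through~\eqref{eq:equivariant cohomology of Hess(N,h) free}: since $\Y$ is paved by affines by the result of Tymoczko~\cite{ty}, the odd cohomology of $\Y$ vanishes and $H^*_\mathsf{S}(\Y)$ is free over $H^*(B\mathsf{S})$ of rank in each even degree $2d$ equal to $\dim_\Q H^{2d}(\Y)$, which in turn equals the number of paving cells of complex dimension $d$. By Proposition~\ref{prop:deg-pvA}, each $p_{v_{\mathcal{A}}}$ has cohomology degree $2|\mathcal{A}|$, so the counting reduces to matching, in each degree, the number of subsets $\mathcal{A} \subset [n-1]$ of size $d$ against the rank of $H^{2d}_\mathsf{S}(\Y)$ as a free $\Q[t]$-module. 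Combining this degree-by-degree rank count with the linear independence just established yields the basis statement. The delicate combinatorial input identifying the cells of the Peterson paving with the subsets $\mathcal{A}$ in a degree-preserving way is the piece that requires the technical appendix of~\cite{HarTym-Monk}.
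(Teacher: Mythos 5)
Your proposal follows the paper's argument very closely: the same use of Lemma~\ref{lemma:containment} together with the upper-triangularity of equivariant Schubert classes to get~\eqref{eq:pvA-at-wB}, the same appeal to Corollary~\ref{cor:pvAwA-nonzero} for~\eqref{eq:pvA-at-wA-nonzero}, and the same minimality argument for linear independence.

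One clarification on the final step, though, since you slightly misattribute the roles of the two external inputs. The degree-by-degree count---that the number of subsets $\mathcal{A}\subset [n-1]$ of size $\ell$ equals the $2\ell$-th Betti number of $\Y$, i.e., $\binom{n-1}{\ell}$---is not what the technical appendix of~\cite{HarTym-Monk} supplies; that count is the result of Sommers and Tymoczko~\cite{ST}, quoted separately. The appendix is needed for a different reason: over the ring $\Q[t]$, linear independence of a set of the correct cardinality and correct degrees does \emph{not} by itself imply that the set spans. (The element $t$ is $\Q[t]$-linearly independent and of cohomology degree $2$, yet $\{t\}$ is not a basis of the rank-one free module $\Q[t]$ generated in degree $0$.) The appendix is precisely what closes this gap between ``linearly independent, right count'' and ``module basis,'' so the logical order is: (i) establish linear independence via upper-triangularity, (ii) cite~\cite{ST} for the degree-matching count, and then (iii) invoke the appendix to conclude that the classes span. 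Your sentence ``Combining this degree-by-degree rank count with the linear independence just established yields the basis statement'' would be false if taken at face value; it is the appendix that justifies the conclusion, not the count plus independence alone.
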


\begin{proof}
  We begin with a proof of the upper-triangularity
  condition~\eqref{eq:pvA-at-wB}. 
  Recall that $v_{\mathcal{A}} \leq w_{\mathcal{B}}$ if and only if
  ${\mathcal{A}} \subset {\mathcal{B}}$ by
  Lemma~\ref{lemma:containment}.  The image of zero
  under the map $\pi$ in \eqref{eq:restriction map from BT to BS} is still zero, so
  it suffices to show that $\sigma^T_{v_{\mathcal{A}}}(w_{\mathcal{B}})
  = 0$ if $v_{\mathcal{A}} \not \leq w_{\mathcal{B}}$. This follows from the upper-triangularity of
  equivariant Schubert classes~\eqref{eq:GmodB-upper-triangular} (or can be proven
  directly from Billey's formula). 

  Next, the assertion that $p_{v_{\mathcal{A}}}(w_{\mathcal{A}}) \neq 0$ is
  the content of Corollary~\ref{cor:pvAwA-nonzero}. 

  We now show that assertions~\eqref{eq:pvA-at-wB}
  and~\eqref{eq:pvA-at-wA-nonzero} imply that the
  $\{p_{v_{\mathcal{A}}}\}$, ranging over all subsets ${\mathcal{A}}$ of
  $[n-1]$, are $H^*_{\S1}(\pt)$-linearly independent.  Indeed, suppose $\sum_{\mathcal{A}} c_{\mathcal{A}}
  p_{v_{\mathcal{A}}} = 0 \in H^*_{\S1}(\Y)$ for some coefficients \(c_{\mathcal{A}} \in
  H^*_{\S1}(\pt).\) If any subset ${\mathcal{A}}$ has
  $c_{\mathcal{A}} \neq 0,$ then there must exist a minimal such, say
  ${\mathcal{B}}$.  Evaluating at $w_{\mathcal{B}}$,
  we conclude that 
\begin{equation}\label{eq:localization-at-wB}
\sum_{\mathcal{A}} c_{\mathcal{A}} \cdot p_{v_{\mathcal{A}}}(w_{\mathcal{B}}) =0.
\end{equation}
  By hypothesis on
  ${\mathcal{B}},$ we have $c_{\mathcal{A}} = 0$ for all ${\mathcal{A}}
  \subsetneq {\mathcal{B}}$. On the other hand, by~\eqref{eq:pvA-at-wB} we know
  that $p_{v_{\mathcal{A}}}(w_{\mathcal{B}})=0$ for all ${\mathcal{A}} \not
  \subseteq {\mathcal{B}}$.  Hence the equality~\eqref{eq:localization-at-wB} simplifies to
\[
c_{\mathcal{B}} \cdot p_{v_{\mathcal{B}}}(w_{\mathcal{B}})=0.
\]
 From~\eqref{eq:pvA-at-wA-nonzero} and the fact that $H^*_{\S1}(\pt)
 = \Q[t]$ is an integral domain,  we conclude $c_{\mathcal{B}} = 0$, a
  contradiction. Hence the $\{p_{v_{\mathcal{A}}}\}$ are linearly
  independent over $H^*_{\S1}(\pt)$. 

  Lemmas~\ref{fact:vA-length} and~\ref{fact:billey-positive} show that for any \(w \in \Y^{\S1}\) the degree of
  the polynomial $p_{v_{\mathcal{A}}}(w)$ is
  $2\lvert {\mathcal{A}} \rvert$. 
  The polynomial variable $t$ has
  cohomology degree $2$ so the cohomology degree of
  $p_{v_{\mathcal{A}}}$ in $H^*_{\S1}(\Y)$ is $2\lvert {\mathcal{A}} \rvert$.
  Since the $p_{v_{\mathcal{A}}}$ are enumerated precisely by the
  subsets $\mathcal{A}$ of $[n-1]$, 
  we 
  conclude that there are $\binom{n-1}{\ell}$ distinct Peterson Schubert
  classes $p_{v_{\mathcal{A}}}$ of cohomology degree precisely $2\ell$. 
A result of Sommers and Tymoczko \cite{ST} states that
$\binom{n-1}{\ell}$  is precisely the $2\ell$-th Betti number of $H^*(\Y)$. 
Hence by \cite[Appendix]{HarTym-Monk}, 
 the $\{p_{v_{\mathcal{A}}}\}$ form an $H^*_{\S1}(\pt)$-module
basis for $H^*_{\S1}(\Y)$, as desired.  
\end{proof}

The following surjectivity result is a straightforward consequence of the above.
Here we use that, for a $T$-space $X$, the forgetful map $H^*_T(X) \to H^*(X)$ is surjective if $H^*(X)$ is concentrated in even degrees. (This is because, in such a situation, the Serre spectral sequence collapses at the $E_2$-stage \cite[Chapter III, Theorem 2.10]{MiTo}.) Since both $\fln$ and $\Y$ admit pavings by complex affine cells, their cohomology is generated in even degrees, so we know that both $H^*_T(\fln) \to H^*(\fln)$ and $H^*_{\S1}(\Y) \to H^*(\Y)$ are surjective. We will use this in the argument below.

\begin{Corollary} \label{coro:restriction surjective in type A}
The restriction map $H^*_T(\fln) \to H^*_{\S1}(\Y)$ on equivariant cohomology is surjective.
Moreover, the restriction map $H^*(\fln) \to H^*(\Y)$ on ordinary cohomology is also surjective.
\end{Corollary}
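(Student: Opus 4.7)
The plan is to deduce the equivariant statement immediately from the module basis result just established, and then pass to ordinary cohomology via a standard diagram chase. Conceptually, there is no serious obstacle here: the heavy lifting has been done in Theorem~\ref{theorem:pvA-basis}, and the corollary is essentially a repackaging of that theorem combined with the compatibility of restriction maps.

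First I would argue surjectivity of the equivariant restriction $H^*_T(\fln) \to H^*_{\S1}(\Y)$ as follows. By Theorem~\ref{theorem:pvA-basis}, the Peterson Schubert classes $\{p_{v_{\mathcal{A}}}\}_{\mathcal{A} \subseteq [n-1]}$ form an $H^*_{\S1}(\pt)$-module basis for $H^*_{\S1}(\Y)$, and by the very definition in~\eqref{eq: definition pvA} each $p_{v_{\mathcal{A}}}$ is the image of $\sigma^T_{v_{\mathcal{A}}} \in H^*_T(\fln)$ under the restriction map of diagram~\eqref{eq:cd}. So the basis elements are in the image. An arbitrary element of $H^*_{\S1}(\Y)$ takes the form $\sum_{\mathcal{A}} f_{\mathcal{A}}(t) \cdot p_{v_{\mathcal{A}}}$ with $f_{\mathcal{A}}(t) \in \mathbb{Q}[t] = H^*_{\S1}(\pt)$, so it suffices to show that every polynomial in $t$ lies in the image. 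By Lemma~\ref{lemma: ti goes to it}, $t = \pi(t_1)$, where $t_1 \in H^*_T(\pt)$ sits inside $H^*_T(\fln)$ via the pullback from the Borel fibration. Since the restriction map is a ring homomorphism, polynomials in $t_1$ pull back to polynomials in $t$, and products with the $p_{v_{\mathcal{A}}}$ remain in the image; summing over $\mathcal{A}$ gives the desired preimage.

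For the ordinary cohomology statement, I would assemble the commutative square
\begin{equation*}
\begin{CD}
H^{\ast}_{T}(\fln) @>>> H^{\ast}_{\S1}(\Y) \\
@VVV @VVV \\
H^{\ast}(\fln) @>>> H^{\ast}(\Y)
\end{CD}
\end{equation*}
in which the horizontal maps are the restriction maps and the vertical maps are the forgetful maps from equivariant to ordinary cohomology. As noted just before the corollary, both $\fln$ and $\Y$ are paved by complex affine cells (by \cite{ty}), hence their ordinary cohomology rings vanish in odd degrees, and so by the surjectivity statement recalled there (cf.~\eqref{eq:from equivariant to ordinary}) both vertical arrows are surjective. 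The top horizontal arrow is surjective by the first part of the proof. A quick diagram chase — given $y \in H^*(\Y)$, lift to $\tilde y \in H^*_{\S1}(\Y)$, then pull back to some $x \in H^*_T(\fln)$, and push down to $H^*(\fln)$; commutativity of the square forces the resulting element to map to $y$ — completes the proof.
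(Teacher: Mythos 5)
Your proof is correct and follows essentially the same route as the paper: the equivariant surjectivity comes from hitting the module basis $\{p_{v_{\mathcal{A}}}\}$ via the equivariant Schubert classes (you make slightly more explicit than the paper does why the coefficients $\Q[t]$ also lie in the image), and the ordinary statement follows by the same diagram chase through the forgetful/restriction square.
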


\begin{proof}
Since the restriction map $H^*_T(\fln) \to H^*_{\S1}(\Y)$ sends $\sigma^T_{v_A}$ to $p_{v_{\mathcal{A}}}$, it follows from Theorem~\ref{theorem:pvA-basis} that the restriction map $H^*_T(\fln) \to H^*_{\S1}(\Y)$ is surjective.
The last statement follows because, as indicated above, the horizontal maps are surjective in the following commutative diagram
\begin{equation*} 
\begin{CD}
\HT(\fln) @>>> H^*(\fln) \\
@VVV @VVV \\ 
\HS(\Y) @>>> H^*(\Y) \\
\end{CD}
\end{equation*}
where each horizontal map is induced from the Borel fibration.
\end{proof}

\begin{Remark}
Corollary~\ref{coro:restriction surjective in type A} also follows from \cite[Theorem~2]{Insko-Tymoczko}.
In fact, the homomorphism $H_*(\Y;\Z) \rightarrow H_*(\fln;\Z)$ induced from the inclusion $\Y \hookrightarrow \fln$ is injective by \cite[Theorem~2]{Insko-Tymoczko} which implies that the restriction map $\rho: H^*(\fln) \to H^*(\Y)$ with rational coefficients is surjective. 
Then we can prove that the restriction map $\tilde\rho: H^*_T(\fln) \to H^*_{\S1}(\Y)$ on equivariant cohomology is surjective by a similar argument as in the proof of Proposition~\ref{proposition:generators equivariant cohomology}-(1).
Indeed, consider the following commutative diagram of exact sequences
\[
\xymatrix{
  0 \ar[r] & \left(H^{>0}(BT)\right) \ar[d]_-{} \ar[r]^-{} & \HT(\fln) \ar[d]_-{\tilde \rho} \ar[r]^-{} & H^*(\fln) \ar[d]_-{\rho}^-{\textrm{surj}} \ar[r] & 0 \\
  0 \ar[r] & \left(H^{>0}(B\S1)\right) \ar[r]_-{} & \HS(\Y) \ar[r]_-{} & H^*(\Y) \ar[r] & 0
}
\]
where $\left(H^{>0}(BT)\right)$ denotes the ideal of $\HT(\fln)$ generated by elements of $H^{>0}(BT)$ (and similarly for the ideal $\left(H^{>0}(B\S1)\right)$ in $\HS(\Y)$).
Consider a homogeneous element $g \in H^{*}_{\S1}(\Y)$ and let $\overline{g} \in H^*(\Y)$ denote its image in $H^*(\Y)$ under the forgetful map. By the surjectivity of $\rho$, the surjectivity of $H^*_T(\fln) \to H^*(\fln)$, and the commutativity of the diagram, it follows that there exists a homogeneous element $f \in H^{*}_{T}(\fln)$ such that $g-\tilde\rho(f)$ is the kernel of the forgetful map $\HS(\Y) \to H^*(\Y)$. In other words, we can write $g = \tilde{\rho}(f) + t g'$ for some homogeneous element $g' \in H^{*}_{\S1}(\Y)$ with $\deg(g') = \deg(g) - \deg(t) = \deg(g)-2$. 
By induction on degree, it follows that $g'$ can be written as $g' = \tilde\rho(f')$ for some $f' \in H^{*}_{T}(\fln)$ and hence we have $g = \tilde{\rho}(f + t_1 f')$. 
In other words, we have just shown that the restriction map $\tilde\rho: H^*_T(\fln) \to H^*_{\S1}(\Y)$ on equivariant cohomology is surjective.
Since the injectivity of $H_*(\Y;\Z) \rightarrow H_*(\fln;\Z)$ is in fact stated in \cite[Theorem~2]{Insko-Tymoczko} for general Lie types, one can see that Corollary~\ref{coro:restriction surjective in type A} holds for all Lie types.
In Section~\ref{section: peterson general Lie type}, we will give another explanation of this surjectivity for general Lie types using a result of \cite{Dre}, using the theory of Peterson Schubert classes. 
\end{Remark}

Now that we know that the $\{p_{v_{\mathcal{A}}}\}$ form a module basis for $H^*_{\S1}(\Y)$ as given by Theorem~\ref{theorem:pvA-basis}, we can ask -- in the spirit of (and in analogy to) classical Schubert calculus for $H^*_T(\fln)$ -- for the structure constants of the multiplication of the Peterson Schubert classes. 
More specifically, since the $\{p_{v_{\mathcal{A}}}\}$ are a module basis, we know that there exist $c_{\mathcal{A'}\mathcal{A}}^\mathcal{B} \in \Q[t]$ such that 
\begin{equation*}
p_{v_{\mathcal{A'}}} \cdot p_{v_{\mathcal{A}}} = \sum_{\mathcal{B} \subset [n-1]} c_{\mathcal{A'}\mathcal{A}}^\mathcal{B} p_{v_{\mathcal{B}}}
\end{equation*}
and we can ask for explicit computations of the $c_{\mathcal{A'}\mathcal{A}}^\mathcal{B}$. We call this problem \textbf{($\S1$-equivariant) Peterson Schubert calculus}. 

Below we recount two results in this direction, namely, two explicit formulas which we call the Monk's formula and Giambelli's formula for Peterson varieties, and which are analogues of known results with this nomenclature in the case of $\fln$. We refer the reader to \cite{BH} and \cite{HarTym-Monk} respectively for details. 
In order to explain Monk's formula for the Peterson variety, we need some notation. 
For any subset $\mathcal{C} \subset [n-1]$ and $\ell \in \mathcal{C}$, we define
\begin{align*}
\mathcal{H}_\mathcal{C}(\ell) &:= \textrm{the maximal element in the maximal consecutive substring of $\mathcal{C}$ containing $\ell$}; \\
\mathcal{T}_\mathcal{C}(\ell) &:= \textrm{the minimal element in the maximal consecutive substring of $\mathcal{C}$ containing $\ell$}. 
\end{align*}

The Monk formula for $\Y$ gives a complete computation of the structure constants for products of the form $p_{s_i} \cdot p_{v_\mathcal{A}}$ as follows.  We refer to \cite{HarTym-Monk} for the proof. 

\begin{Theorem} $($Monk's formula for the Peterson variety, \cite[Lemma~6.4 and Theorem~6.12]{HarTym-Monk}$)$ \label{theorem:Monk_typeA}
Suppose $i \in [n-1]$, 
$v_{\mathcal{A}} \in S_n$ is the permutation
given in Definition~\ref{def:vA}, and $p_{v_\mathcal{A}}$ is the corresponding Peterson Schubert class. 
Then 
\begin{align*} 
p_{s_i} \cdot p_{v_\mathcal{A}} =p_{s_i}(w_\mathcal{A})p_{v_\mathcal{A}} + \sum_{\mathcal{B} \supsetneq \mathcal{A} \atop |\mathcal{B}|=|\mathcal{A}|+1} c_{i,\mathcal{A}}^\mathcal{B} \, p_{v_\mathcal{B}}, 
\end{align*}
where, for a subset $\mathcal{B} \subset [n-1]$ which is a disjoint union $\mathcal{B} = \mathcal{A}  \sqcup \{j \}$,
\begin{enumerate}
\item if $i \not\in \mathcal{A}$ then $p_{s_i}(w_\mathcal{A})=0$,
\item if $i \in \mathcal{A}$ then $p_{s_i}(w_\mathcal{A})=(\mathcal{H}_\mathcal{A}(i)-i+1)(i-\mathcal{T}_\mathcal{A}(i)+1)t, $
\item if $i \not\in \mathcal{B}$ then $c_{i,\mathcal{A}}^\mathcal{B}=0$,
\item if $i \in \mathcal{B}$ and $i \not\in [\mathcal{T}_\mathcal{B}(j), \mathcal{H}_\mathcal{B}(j)]$, then $c_{i,\mathcal{A}}^\mathcal{B}=0$,
\item if $j \leq i \leq  \mathcal{H}_\mathcal{B}(j)$, then 
$$
c_{i,\mathcal{A}}^\mathcal{B}=(\mathcal{H}_\mathcal{B}(j)-i+1)\dbinom{\mathcal{H}_\mathcal{B}(j)-\mathcal{T}_\mathcal{B}(j)+1}{j-\mathcal{T}_\mathcal{B}(j)},
$$
\item if $\mathcal{T}_\mathcal{B}(j) \leq i \leq j-1$, then 
$$
c_{i,\mathcal{A}}^\mathcal{B}=(i-\mathcal{T}_\mathcal{B}(j)+1)\dbinom{\mathcal{H}_\mathcal{B}(j)-\mathcal{T}_\mathcal{B}(j)+1}{j-\mathcal{T}_\mathcal{B}(j)+1}.
$$
\end{enumerate}
In particular, the structure constants $p_{s_i}(w_\mathcal{A})$ and $c_{i,\mathcal{A}}^\mathcal{B}$ are non-negative integers.
\end{Theorem}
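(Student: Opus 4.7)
The plan is to exploit the injectivity of the localization map $\iota_2$ from the commutative diagram~\eqref{eq:cd} together with the basis property of the Peterson Schubert classes from Theorem~\ref{theorem:pvA-basis}, so that the entire computation reduces to evaluating restrictions at the $\mathsf{S}$-fixed points $w_\mathcal{B}$, each of which is accessible via Billey's formula (Theorem~\ref{theorem:billey}). Since $\{p_{v_\mathcal{B}}\}_{\mathcal{B} \subseteq [n-1]}$ is an $H^*_{\S1}(\pt) = \Q[t]$-module basis, we may write uniquely
\[
p_{s_i} \cdot p_{v_\mathcal{A}} = \sum_{\mathcal{B} \subseteq [n-1]} c_{i,\mathcal{A}}^\mathcal{B} \, p_{v_\mathcal{B}} \qquad \text{with } c_{i,\mathcal{A}}^\mathcal{B} \in \Q[t],
\]
and homogeneity together with Proposition~\ref{prop:deg-pvA} forces $c_{i,\mathcal{A}}^\mathcal{B}$ to be of polynomial degree $2(|\mathcal{A}|+1 - |\mathcal{B}|)$ in $t$. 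In particular $c_{i,\mathcal{A}}^\mathcal{B} = 0$ whenever $|\mathcal{B}| > |\mathcal{A}|+1$, so only the diagonal $\mathcal{B} = \mathcal{A}$ term and the codimension-one additions $\mathcal{B} = \mathcal{A} \sqcup \{j\}$ can survive.

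Next I would show $c_{i,\mathcal{A}}^\mathcal{B} = 0$ whenever $\mathcal{A} \not\subseteq \mathcal{B}$ by induction on $|\mathcal{B}|$. Restricting the identity at $w_\mathcal{B}$ and invoking the upper-triangularity property~\eqref{eq:pvA-at-wB}, the left-hand side vanishes since $p_{v_\mathcal{A}}(w_\mathcal{B}) = 0$, while on the right only indices $\mathcal{C} \subseteq \mathcal{B}$ contribute. Using the inductive hypothesis on smaller $\mathcal{C}$ together with the non-vanishing $p_{v_\mathcal{B}}(w_\mathcal{B}) \neq 0$ from~\eqref{eq:pvA-at-wA-nonzero}, one deduces $c_{i,\mathcal{A}}^\mathcal{B} = 0$. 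Localizing next at $w_\mathcal{A}$ and noting that all $\mathcal{C} \subsetneq \mathcal{A}$ fail to contain $\mathcal{A}$, only the diagonal term survives on the right, yielding $c_{i,\mathcal{A}}^\mathcal{A} = p_{s_i}(w_\mathcal{A})$. To evaluate this, I would apply Lemma~\ref{lemma:equivariant Schubert class simple reflection} to obtain $\sigma^T_{s_i}(w_\mathcal{A}) = \sum_{k=1}^{i}(t_{w_\mathcal{A}(k)} - t_k)$ via Lemmas~\ref{lemma:flag0.5} and~\ref{lemma:flag1}, then apply $\pi$ using Lemma~\ref{lemma: ti goes to it} to get
\[
p_{s_i}(w_\mathcal{A}) \;=\; \sum_{k=1}^{i} \bigl(w_\mathcal{A}(k) - k\bigr)\, t.
\]
Reading off $w_\mathcal{A}(k)$ from the block description in Lemma~\ref{lemma:fixed_point_Peterson}, one checks that the sum telescopes to zero when $i \notin \mathcal{A}$ (case (1)) and to $(\mathcal{H}_\mathcal{A}(i) - i + 1)(i - \mathcal{T}_\mathcal{A}(i) + 1)\, t$ when $i \in \mathcal{A}$ (case (2)).

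For the off-diagonal constants $c_{i,\mathcal{A}}^\mathcal{B}$ with $\mathcal{B} = \mathcal{A} \sqcup \{j\}$, localization at $w_\mathcal{B}$ gives
\[
c_{i,\mathcal{A}}^\mathcal{B} \;=\; \frac{\bigl(p_{s_i}(w_\mathcal{B}) - p_{s_i}(w_\mathcal{A})\bigr) \cdot p_{v_\mathcal{A}}(w_\mathcal{B})}{p_{v_\mathcal{B}}(w_\mathcal{B})}.
\]
This is where the main combinatorial work lies. I would compute $p_{v_\mathcal{A}}(w_\mathcal{B})$ and $p_{v_\mathcal{B}}(w_\mathcal{B})$ by applying Billey's formula to the fixed reduced-word decomposition of $w_\mathcal{B}$ given in~\eqref{eq:wA-reduced-word}. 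Because the maximal consecutive substrings of $\mathcal{B}$ involve disjoint simple transpositions, the subword sums factor over blocks, and by Lemma~\ref{fact:unique-vA-in-wA} each maximal consecutive substring of $\mathcal{A}$ (respectively $\mathcal{B}$) contributes a tractable subsum. After applying $\pi$ via Remark~\ref{remark: pi alpha i} so that every positive root becomes $t$, only the block of $\mathcal{B}$ containing $j$ produces a nontrivial ratio, and that ratio becomes a count of subwords inside a single consecutive string. If $i \notin \mathcal{B}$ then $p_{s_i}$ contributes nothing beyond what is already cancelled in the numerator, giving case (3); if $i \in \mathcal{B}$ but $i$ lies in a different block from $j$, the numerator difference vanishes block-by-block, yielding case (4); when $i$ and $j$ share a block, the binomial coefficient and linear factor in cases (5) and (6) arise from counting the reduced-word positions of $s_i$ within the unique reduced word of the block's $v$-element.

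The main obstacle I expect is precisely this final combinatorial bookkeeping: tracking how subword counts in Billey's formula change when one adds a single simple transposition $s_j$ to $v_\mathcal{A}$, carefully separating the cases by the relative position of $i$ and $j$ inside the block $[\mathcal{T}_\mathcal{B}(j), \mathcal{H}_\mathcal{B}(j)]$. Once the four non-trivial cases are verified, the positivity statement follows immediately from the manifestly non-negative integer shape of the formulas.
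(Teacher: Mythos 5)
The paper does not reproduce a proof of this theorem—it explicitly defers to \cite[Lemma~6.4 and Theorem~6.12]{HarTym-Monk}—so there is no in-paper argument to compare against. That said, your strategy is correct and is essentially the same one underlying the reference: expand in the basis $\{p_{v_\mathcal{B}}\}$, use degree and the upper-triangularity conditions~\eqref{eq:pvA-at-wB} and~\eqref{eq:pvA-at-wA-nonzero} to kill all coefficients except $\mathcal{B} \supseteq \mathcal{A}$ with $|\mathcal{B}| \leq |\mathcal{A}|+1$, then read off each surviving coefficient by localizing at a single fixed point. Your derivation of the diagonal coefficient $c_{i,\mathcal{A}}^\mathcal{A} = p_{s_i}(w_\mathcal{A})$ and the formula
\[
c_{i,\mathcal{A}}^\mathcal{B} = \frac{\bigl(p_{s_i}(w_\mathcal{B}) - p_{s_i}(w_\mathcal{A})\bigr)\, p_{v_\mathcal{A}}(w_\mathcal{B})}{p_{v_\mathcal{B}}(w_\mathcal{B})}
\]
is correct (note this is exactly the shape that reappears in Drellich's Theorem~\ref{theorem:Monk in all types}), and your observations handle cases (1)--(4) cleanly: the telescoping sum for $p_{s_i}(w_\mathcal{A})$ gives (1) and (2), while (3) follows since $i\notin\mathcal{B}\Rightarrow i\notin\mathcal{A}$ and both restrictions vanish, and (4) follows since the block of $\mathcal{B}$ containing $i$ coincides with the block of $\mathcal{A}$ containing $i$ when $i$ lies outside the block of $j$, so the numerator difference is zero. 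What you have not actually done is verify cases (5) and (6), and you say so honestly; this is genuinely the bulk of the work in \cite{HarTym-Monk}, requiring careful Billey-style subword counts in the reduced word~\eqref{eq:wA-reduced-word} and the factorization over maximal consecutive blocks from Lemma~\ref{fact:vA-product}. Your outline for attacking it—reduce to a single block via the block factorization, then count reduced subwords for $s_i$ inside $w_{[a,b]}$—is the right one, but as written it is a plan rather than a proof. So: correct framework, correct reductions, and a known-to-be-tractable but unexecuted combinatorial core.
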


The last result of this section is the Peterson analogue of the classical Giambelli formula. It gives a description of the classes $p_{v_\mathcal{A}}$ in terms of the 
degree-$2$ classes $p_{s_i}$, as follows. We refer the reader to \cite{BH} for the proof.

\begin{Theorem}$($Giambelli's formula for the Peterson variety in Lie type A, \cite[Theorem~3.2]{BH}$)$ \label{theorem_Giambelli_typeA}
If $\mathcal{A} = \coprod_{j=1}^m \mathcal{A}_{j}$ is a decomposition of $\mathcal{A}$ into maximal consecutive substrings, then we have 
\begin{align} \label{eq:Giambelli}
 p_{v_\mathcal{A}} = \frac{1}{|\mathcal{A}_1| ! |\mathcal{A}_2| ! \cdots |\mathcal{A}_m| !} \prod_{i\in \mathcal{A}} p_{s_i}.
\end{align}
\end{Theorem}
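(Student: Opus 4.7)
The proof will proceed by induction on $|\mathcal{A}|$, using Monk's formula (Theorem~\ref{theorem:Monk_typeA}) as the key tool. The base case $|\mathcal{A}|=0$ is immediate since $v_\emptyset = e$, $p_e = 1$ and the empty product on the right is $1$ (or one may start at $|\mathcal{A}|=1$: then $v_{\{i\}}=s_i$ and the claimed equality reduces to $p_{s_i} = \tfrac{1}{1!} p_{s_i}$).

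For the inductive step, the strategy is to isolate an element $i \in \mathcal{A}$ so that multiplying $p_{s_i}$ into the product picks out $p_{v_\mathcal{A}}$ with a single, explicitly computable coefficient. The correct choice is to let $i := \max \mathcal{A}_m$ be the largest element of the rightmost maximal consecutive substring of $\mathcal{A}$, and to set $\mathcal{A}' := \mathcal{A} \setminus \{i\}$. The maximal consecutive substring decomposition of $\mathcal{A}'$ is then $\mathcal{A}_1, \ldots, \mathcal{A}_{m-1}, \mathcal{A}_m \setminus \{i\}$ (with the last piece omitted when $|\mathcal{A}_m|=1$), and the inductive hypothesis gives
$$\prod_{j \in \mathcal{A}'} p_{s_j} \;=\; |\mathcal{A}_1|! \cdots |\mathcal{A}_{m-1}|! \,(|\mathcal{A}_m|-1)! \cdot p_{v_{\mathcal{A}'}}.$$
Multiplying both sides by $p_{s_i}$, it will suffice to verify the identity
$$p_{s_i} \cdot p_{v_{\mathcal{A}'}} \;=\; |\mathcal{A}_m| \cdot p_{v_\mathcal{A}} \qquad \text{in } H^*_{\S1}(\Y),$$
since the extra factor $|\mathcal{A}_m|$ upgrades $(|\mathcal{A}_m|-1)!$ to $|\mathcal{A}_m|!$ and yields the formula~\eqref{eq:Giambelli}.

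To establish this identity I would apply Monk's formula directly to $p_{s_i} \cdot p_{v_{\mathcal{A}'}}$. Because $i \notin \mathcal{A}'$, part~(1) of Theorem~\ref{theorem:Monk_typeA} forces $p_{s_i}(w_{\mathcal{A}'}) = 0$, so the diagonal term drops out. The remaining sum ranges over $\mathcal{B} = \mathcal{A}' \sqcup \{j\}$; part~(3) demands $i \in \mathcal{B}$, and since $i \notin \mathcal{A}'$ this pins down $j = i$ and $\mathcal{B} = \mathcal{A}$ as the unique surviving term. Since $i = \max \mathcal{A}_m$ is the right endpoint of its maximal consecutive substring in $\mathcal{A}$, we have $\mathcal{H}_\mathcal{A}(i) = i$ and $\mathcal{T}_\mathcal{A}(i) = \min \mathcal{A}_m$, so $j = i \le \mathcal{H}_\mathcal{A}(j)$ places us in case~(5), yielding
$$c_{i,\mathcal{A}'}^{\mathcal{A}} \;=\; \bigl(\mathcal{H}_\mathcal{A}(i) - i + 1\bigr)\binom{\mathcal{H}_\mathcal{A}(i) - \mathcal{T}_\mathcal{A}(i) + 1}{i - \mathcal{T}_\mathcal{A}(i)} \;=\; 1 \cdot \binom{|\mathcal{A}_m|}{|\mathcal{A}_m|-1} \;=\; |\mathcal{A}_m|,$$
exactly as required.

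The argument is essentially mechanical once the right $i$ is selected. What I would call the main (mild) obstacle is precisely that selection: picking the maximum of a rightmost maximal consecutive substring simultaneously ensures that $i \notin \mathcal{A}'$ (killing the diagonal term in Monk) and arranges the combinatorial data $\mathcal{H}_\mathcal{A}(i), \mathcal{T}_\mathcal{A}(i)$ so that case~(5) returns exactly the factor $|\mathcal{A}_m|$ needed to convert $(|\mathcal{A}_m|-1)!$ into $|\mathcal{A}_m|!$ and close the induction.
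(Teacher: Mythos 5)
Your proof is correct, and it uses the natural approach: induction on $|\mathcal{A}|$, peeling off one simple reflection at a time and invoking Monk's formula to collapse the product. The paper itself does not reproduce a proof of Theorem~\ref{theorem_Giambelli_typeA} (it refers to \cite[Theorem~3.2]{BH}), but your argument is the one used in that reference.

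A few small remarks, mostly to confirm your reasoning is airtight. Your choice $i = \max \mathcal{A}_m$ is not forced: any $i$ that is the maximum (or, symmetrically, the minimum) of \emph{some} maximal consecutive substring $\mathcal{A}_\ell$ works, because in either case $\mathcal{H}_{\mathcal{A}}(i) = i$ (resp.\ $\mathcal{T}_{\mathcal{A}}(i) = i$) forces case~(5) of Monk's formula to evaluate to $|\mathcal{A}_\ell|$. What would \emph{not} work is choosing an interior element of a substring, since then $i \in \mathcal{A}'$ and the diagonal term $p_{s_i}(w_{\mathcal{A}'}) p_{v_{\mathcal{A}'}}$ survives, complicating the recursion. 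You also correctly handle the two sub-cases $|\mathcal{A}_m| = 1$ (where the last block disappears from $\mathcal{A}'$, and the binomial coefficient reduces to $\binom{1}{0} = 1$) and $|\mathcal{A}_m| > 1$ (where $\mathcal{A}_m \setminus \{i\}$ remains a maximal consecutive substring of $\mathcal{A}'$, which is worth verifying explicitly since $i-1 \in \mathcal{A}'$ but $i \notin \mathcal{A}'$ and $\min\mathcal{A}_m - 1 \notin \mathcal{A}'$). Finally, the step ``the only surviving $\mathcal{B}$ is $\mathcal{A}$'' is clean because part~(3) kills all $\mathcal{B}$ not containing $i$, and $i \notin \mathcal{A}'$ forces $j = i$ in the decomposition $\mathcal{B} = \mathcal{A}' \sqcup \{j\}$.
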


\subsection{A presentation of the cohomology ring of Peterson varieties in Lie type A } 
\label{subsec: presentation of Peterson cohomology type A}

The main result of this section is Theorem~\ref{theo:3.1}
below, which gives an explicit ring presentation via generators and relations of
the ${\S1}$-equivariant cohomology ring of the Peterson variety $\Y$ in Lie type A. 
We will use the essential tools introduced in previous sections -- namely, the combinatorial description of the $\S1$-equivariant cohomology rings of Peterson varieties and the theory of Hilbert series from commutative algebra -- to their full effect.

More specifically, the idea is as follows.  We use the commutative diagram~\eqref{eq:3.1} below, together with knowledge of the generators of $H^*_T(\fln)$, in order to obtain candidates for a set of ring generators of $H^*_{\S1}(\Y)$.  (Note that in this section, we are looking for a set of \emph{ring} generators, as distinct from the question asked in the previous section, which was for \emph{module} generators.) Once we have a (candidate set of) generators in hand, we use the combinatorial data of the restrictions of these classes to the fixed points $\Y^{\S1}$ in order to derive algebraic relations which must be satisfied by the generators. Finally, in order to prove that the set of relations thus derived is sufficient to determine the full ideal of relations, we use the arguments with Hilbert series and dimension-counting as developed in Section~\ref{sec: background}.

Following the sketch given above, we first 
consider the following commutative diagram, which is a version of~\eqref{eq:cd} but specialized to the Peterson case: 
\begin{equation} \label{eq:3.1}
\begin{CD}
\HT(\fln) @>\iota_1 >> \bigoplus_{w\in Fl(n)^T=S_n}\HT(w) \\
@V VV @V  V  \mathsf{pr}_{\Hess} \circ (\oplus_w \pi ) V  \\ 
\HS(\Y) @>\iota_2 >> \bigoplus_{w \in \Y^{\S1}\subset S_n}\HS(w). \\
\end{CD}
\end{equation}
Since both $H^{odd}(\fln)$ and
$H^{odd}(\Y)$ vanish, the maps $\iota_1$ and $\iota_2$ above are both
injective.  Also, left vertical map above is surjective by Corollary~\ref{coro:restriction surjective in type A}.  Therefore, $\HS(\Y)$ is isomorphic to the image of $\HT(\fln)$ in~\eqref{eq:3.1}. 

As mentioned above, our goal in this section is to derive a presentation by ring generators and relations of $H^*_{\S1}(\Y)$. To accomplish this, we will first derive a generating set, motivated by~\eqref{eq:3.1}. In fact, our ring generators will be a subset of 
the module basis found in Theorem~\ref{theorem:pvA-basis}. More specifically,  
we let $p_k$ for $1 \leq k \leq n-1$ denote the 
 (equivariant) Peterson Schubert class $p_{s_k}$ defined in 
Section~\ref{subsection:basis_Peterson_typeA}.  Our first step is the following.

\begin{Proposition}\label{prop: generators}
The elements $\xi_1, \ldots, \xi_{n-1}, t$ generate $H^*_{\S1}(\Y)$ as $\Q$-algebras. 
\end{Proposition}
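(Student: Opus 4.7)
The plan is to combine the module basis theorem (Theorem~\ref{theorem:pvA-basis}) with Giambelli's formula (Theorem~\ref{theorem_Giambelli_typeA}) to express every element of $H^*_{\S1}(\Y)$ as a polynomial in $p_1, \ldots, p_{n-1}, t$ with rational coefficients.

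First I would observe that, by Theorem~\ref{theorem:pvA-basis}, the Peterson Schubert classes $\{p_{v_\mathcal{A}}\}_{\mathcal{A} \subset [n-1]}$ form an $H^*_{\S1}(\pt)$-module basis of $H^*_{\S1}(\Y)$. Since $H^*_{\S1}(\pt) \cong \Q[t]$, any element $f \in H^*_{\S1}(\Y)$ can be written as $f = \sum_{\mathcal{A} \subset [n-1]} g_\mathcal{A}(t)\, p_{v_\mathcal{A}}$ for some polynomials $g_\mathcal{A}(t) \in \Q[t]$. Thus it suffices to show that each basis element $p_{v_\mathcal{A}}$ is itself a polynomial in $p_1, \ldots, p_{n-1}$ with rational coefficients.

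This is provided directly by Giambelli's formula (Theorem~\ref{theorem_Giambelli_typeA}): writing the decomposition of $\mathcal{A}$ into maximal consecutive substrings as $\mathcal{A} = \coprod_{j=1}^m \mathcal{A}_j$, we have $p_{v_\mathcal{A}} = \frac{1}{|\mathcal{A}_1|!\cdots|\mathcal{A}_m|!}\prod_{i \in \mathcal{A}} p_i$. Substituting this into the expansion for $f$ above exhibits $f$ as a polynomial in $p_1, \ldots, p_{n-1}, t$ with $\Q$-coefficients, which proves the proposition.

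Given the machinery already assembled, there is no substantive obstacle to this argument; the real content is already packaged inside Giambelli's formula. A conceptually distinct route would be to invoke Proposition~\ref{proposition:generators equivariant cohomology}-(1), with the one-dimensional torus $\S1$ playing the role of $T$ and with $t$ spanning $H^2(B\S1)$; this would reduce the claim to the non-equivariant statement that the images $\bar p_1, \ldots, \bar p_{n-1}$ generate $H^*(\Y)$ as a $\Q$-algebra. The hypothesis of Proposition~\ref{proposition:generators equivariant cohomology} is satisfied because type-A Hessenberg varieties admit affine pavings and hence $H^{\mathrm{odd}}(\Y) = 0$, and the non-equivariant generation statement is itself an immediate consequence of Giambelli (after setting $t=0$). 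Either route therefore passes through Giambelli's formula, which is where the real work lies.
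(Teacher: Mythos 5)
Your proof is correct and is essentially the paper's own argument, just spelled out in more detail: the paper's proof is precisely ``Theorem~\ref{theorem:pvA-basis} gives the module basis $\{p_{v_\mathcal{A}}\}$ over $\Q[t]$, and Theorem~\ref{theorem_Giambelli_typeA} expresses each $p_{v_\mathcal{A}}$ as a polynomial in the $p_{s_i}$.'' Your additional remark about an alternative route via Proposition~\ref{proposition:generators equivariant cohomology}-(1) is a reasonable observation but is not what the paper does.
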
 

\begin{proof}
Since the $\{p_{v_{\mathcal{A}}}\}$ form a $\HS(\pt)$-module basis of $H^*_{\S1}(\Y)$ by Theorem~\ref{theorem:pvA-basis}, the result follows from Theorem~\ref{theorem_Giambelli_typeA}. 
\end{proof}

Now recall from Lemma~\ref{lemma: ti goes to it} that $\pi(t_i)=it$ 
and from Lemma~\ref{lemma:flag1} that $\tau^T_i \vert_w = t_{w(i)}$. Similarly it is 
straightforward that $t_i \vert_w = t_i$ (where by abuse of notation we denote $\iota_1(\tau^T_i)$ and $\iota_1(t_i)$ by $\tau^T_i$ and $t_i$ respectively). 
Using this, we can compute the images of the $\xi_k$ in $H^*_{\S1}(\Y^{\S1})
\cong \bigoplus_{w \in \Y^{\S1}} H^*_{\S1}(w)$ under the map $\iota_2$
in~\eqref{eq:3.1}.

\begin{Lemma}\label{lemma:xik}
Let $\xi_k \in H^*_{\S1}(\Y)$ for $1 \le k \le n-1$ as above. Then 
\[
\iota_2(\xi_k)|_{w} =\sum_{i=1}^k(w(i)-i)t. 
\]
\end{Lemma}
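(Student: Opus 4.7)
The plan is to exploit the commutative diagram~\eqref{eq:3.1} to reduce the computation of $\iota_2(\xi_k)|_w$ to a computation in $H^*_T(\fln^T)$ that has already been done in Section~\ref{section: flag}. Specifically, since $\xi_k = p_{s_k}$ is by construction (see~\eqref{eq: definition pvA}) the image of the $T$-equivariant Schubert class $\sigma^T_{s_k}$ under the vertical restriction map $H^*_T(\fln) \to H^*_{\S1}(\Y)$ on the left of~\eqref{eq:3.1}, the commutativity of the diagram tells us that
\[
\iota_2(\xi_k)|_w = \bigl(\mathsf{pr}_{\Hess} \circ (\oplus_v \pi)\bigr)\bigl(\iota_1(\sigma^T_{s_k})\bigr)\Big|_w = \pi\bigl(\sigma^T_{s_k}(w)\bigr)
\]
for any $w \in \Y^{\S1} \subset S_n$. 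So the problem reduces to (i)~computing $\sigma^T_{s_k}(w) \in \Q[t_1,\ldots,t_n]$ and (ii)~applying $\pi$.

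For step (i), I would invoke Lemma~\ref{lemma:equivariant Schubert class simple reflection}, which gives the explicit identity $\sigma^T_{s_k} = \sum_{i=1}^k (\tau^T_i - t_i)$ in $H^2_T(\fln)$. Restricting to the fixed point $w$ via $\iota_1$ and using Lemma~\ref{lemma:flag1} (which says $\tau^T_i|_w = t_{w(i)}$) together with Lemma~\ref{lemma:flag0.5} (which says $t_i|_w = t_i$), we obtain
\[
\sigma^T_{s_k}(w) = \sum_{i=1}^k \bigl(t_{w(i)} - t_i\bigr) \quad \text{in } \Q[t_1,\ldots,t_n].
\]

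For step (ii), I would apply Lemma~\ref{lemma: ti goes to it}, which tells us that the homomorphism $\pi : \Q[t_1,\ldots,t_n] \to \Q[t]$ sends $t_j$ to $jt$. Applying $\pi$ termwise yields
\[
\pi\bigl(\sigma^T_{s_k}(w)\bigr) = \sum_{i=1}^k \bigl(w(i) \cdot t - i \cdot t\bigr) = \sum_{i=1}^k (w(i)-i)\, t,
\]
which combined with the identity $\iota_2(\xi_k)|_w = \pi(\sigma^T_{s_k}(w))$ from the diagram chase gives the stated formula.

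There is no real obstacle here: the entire proof is a diagram chase combined with three previously established lemmas. The only thing one must be careful about is bookkeeping with the sign convention in~\eqref{deg of ti in coh} and~\eqref{def of T eq ch in flag} (where duals are taken), but this has already been absorbed into the statements of Lemmas~\ref{lemma:flag0.5} and~\ref{lemma:flag1}, so one can use those lemmas as black boxes.
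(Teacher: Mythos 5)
Your proposal is correct and follows essentially the same route as the paper's proof: both use the commutativity of the diagram~\eqref{eq:3.1} to reduce to a computation of the restriction of $\sigma^T_{s_k}=\sum_{i=1}^k(\tau^T_i - t_i)$ (Lemma~\ref{lemma:equivariant Schubert class simple reflection}) at the fixed point $w$ via Lemmas~\ref{lemma:flag0.5} and~\ref{lemma:flag1}, and then apply $\pi$ using Lemma~\ref{lemma: ti goes to it}. The only difference is cosmetic: the paper quotes the ingredient identities as ``remarks before the proof'' rather than naming each lemma inline as you do.
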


\begin{proof}
For $w \in \Y^{\S1}$ and $1 \le k \le n-1$ we have 
\begin{equation} \label{eq:3.8}
\begin{split}
\iota_2(\xi_k)|_{w}&
= (\mathsf{pr}_{\Hess} \circ \bigoplus_{w \in S_n} \pi) \circ ( \iota_1(\sum_{i=1}^k(\tau^T_i-t_i)))|_w \quad \textup{ by commutativity
  of~\eqref{eq:3.1} and definition of $p_k$} \\
&= \pi \left( \sum_{i=1}^k(t_{w(i)}-t_{i}) \right) \quad \textup{ by the remarks before the proof}\\
&=\sum_{i=1}^k(w(i)-i)t \quad \textup{ by Lemma~\ref{lemma: ti goes to it} } 
\end{split}
\end{equation}
as desired. 
\end{proof}

Proposition~\ref{prop: generators} gives a set of ring generators for the ring $H^*_{\S1}(\Y)$. Next, we 
wish to derive a (minimal) set of relations which defines the ring structure. A collection of such relations is obtained in the next lemma. 
Note that $\iota_2(t) \vert_w = t$ for all $w \in \Y^{\S1}$.

\begin{Lemma}\label{lemm:3.1}
Let $\xi_k \in H^*_{\S1}(\Y)$ for $1 \le k \le n-1$ be defined as above. Then 
$$
\xi_k \left(\xi_k-\frac{1}{2}\xi_{k-1}-\frac{1}{2}\xi_{k+1}-t \right)=0
$$
 for $k=1,2,\dots,n-1$, where we take the convention $\xi_0=\xi_n=0$.  
\end{Lemma}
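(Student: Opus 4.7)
The plan is to exploit the injectivity of the restriction map $\iota_2: H^*_{\S1}(\Y) \hookrightarrow H^*_{\S1}(\Y^{\S1})$ from~\eqref{eq:3.1}, which holds because $\Y$ is paved by affines and therefore has vanishing odd cohomology (Corollary~\ref{corollary:localization_theorem}). This reduces the claim to checking, for each $\S1$-fixed point $w \in \Y^{\S1}$, the scalar identity
\[
\iota_2(\xi_k)|_w \cdot \left( \iota_2(\xi_k)|_w - \tfrac{1}{2}\iota_2(\xi_{k-1})|_w - \tfrac{1}{2}\iota_2(\xi_{k+1})|_w - t \right) = 0
\]
in $H^*_{\S1}(w) \cong \Q[t]$.

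First I would derive a clean closed-form expression for $\iota_2(\xi_k)|_w$ at each $w \in \Y^{\S1}$. Given $w$ of the form~\eqref{eq:2.4} determined by $1 \le j_1 < j_2 < \cdots < j_m < n$ (adopting the conventions $j_0 := 0$ and $j_{m+1} := n$ from Lemma~\ref{lemma:fixed_point_Peterson}), and given $k \in [n-1]$, let $s$ be the unique index with $j_{s-1} < k \le j_s$. By Lemma~\ref{lemma:xik} I need to evaluate $\sum_{i=1}^k (w(i)-i)\, t$. The key observation is that on each completed block $[j_{r-1}+1, j_r]$, the permutation $w$ acts as the order-reversal of that block, so $\sum_{i=j_{r-1}+1}^{j_r}(w(i)-i) = 0$. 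Hence only the partial contribution from the block containing $k$ survives, and a direct computation using $w(i) = j_{s-1}+j_s+1-i$ on that block gives the clean formula
\[
\iota_2(\xi_k)|_w = (k - j_{s-1})(j_s - k)\, t.
\]

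Next I would split into two cases. If $k \in \{j_1, \dots, j_m\}$ (equivalently, $k \notin \mathcal{A}$ in the notation of Remark~\ref{remark: Pet fixed points and subsets}), then $\iota_2(\xi_k)|_w = 0$ and the identity is trivial. Otherwise $k$ is strictly interior to its block, so $k-1$ and $k+1$ lie in the closure of the same block; in the edge subcases $k-1 = j_{s-1}$ or $k+1 = j_s$, the formula above returns $0$, which is consistent with the conventions $\xi_0 = \xi_n = 0$. Setting $a := k - j_{s-1}$ and $b := j_s - k$, I obtain the uniform formulas
\[
\xi_k|_w = ab\, t, \qquad \xi_{k-1}|_w = (a-1)(b+1)\, t, \qquad \xi_{k+1}|_w = (a+1)(b-1)\, t,
\]
and the second factor becomes
\[
\bigl( ab - \tfrac{1}{2}(a-1)(b+1) - \tfrac{1}{2}(a+1)(b-1) - 1 \bigr) t = \bigl( ab - ab + 1 - 1 \bigr) t = 0.
\]

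The only real bookkeeping is the telescoping sum of Step 1, where one must carefully track the index ranges and then verify that the resulting formula agrees with the boundary conventions $\xi_0 = \xi_n = 0$ for $k=1$ and $k=n-1$. Beyond that, everything reduces to a two-line polynomial identity in $a$ and $b$, so I do not anticipate any genuine obstacle; the proof hinges entirely on combining the fixed-point description from Section~\ref{subsec: background petersons} with the restriction formula of Lemma~\ref{lemma:xik}.
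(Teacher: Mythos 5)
Your proof is correct and takes essentially the same route as the paper: localize at the $\S1$-fixed points via the injectivity of $\iota_2$, use the explicit description of $w\in\Y^{\S1}$ from~\eqref{eq:2.4}, and split into cases according to whether $k$ lies on a block boundary. The only difference is cosmetic: the paper skips your closed form $\xi_k|_w=(k-j_{s-1})(j_s-k)\,t$ (which is in effect item (2) of Theorem~\ref{theorem:Monk_typeA}) by simplifying the second factor directly to $\tfrac12(w(k)-w(k+1)-1)t$ and noting that $w(k+1)=w(k)-1$ whenever $k$ is interior to its block, whereas you plug in the closed form for all three $\xi$'s and verify a two-line polynomial identity in $a,b$.
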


\begin{proof}
Let $w\in \Y^{\S1}\subset S_n$.  It follows from Lemma~\ref{lemma:xik} that  
\begin{equation} \label{eq:3.10}
\begin{split}
&\iota_2(\xi_k-\frac{1}{2}\xi_{k-1}-\frac{1}{2}\xi_{k+1}-t)|_w\\
=& \sum_{i=1}^k(w(i)-i)t-\frac{1}{2}\sum_{i=1}^{k-1}(w(i)-i)t-\frac{1}{2}\sum_{i=1}^{k+1}(w(i)-i)t-t\\
=& \frac{1}{2}(w(k)-w(k+1)-1)t.
\end{split}
\end{equation}
Since $w$ is in $\Y^{\S1}$, we know it must be of the form given
in~\eqref{eq:2.4}. If $k=j_q$ for some $1 \le q \le m$, then
$\sum_{i=1}^k i=\sum_{i=1}^k w(i)$. Otherwise, $w(k+1)=w(k)-1$.
Therefore, for any $w \in \Y^{\S1}$ and for any $k$, either \eqref{eq:3.8} or \eqref{eq:3.10} vanishes.  This implies the lemma because $\iota_2$ is injective. 
\end{proof}

It turns out that the relations obtained in the lemma above are sufficient to describe the ring.   We have the following \cite[Theorem~3.3]{FukukawaHaradaMasuda}.

\begin{Theorem} \label{theo:3.1} 
Let $n$ be a positive integer, $n \ge 2$ and $\Y \subset \fln$ the associated Peterson variety, equipped with the $\S1$-action above.  
Then the
$\S1$-equivariant cohomology ring of $\Y$ has a presentation 
\begin{equation} \label{eq:Cohomology Peterson type A}
\HS(\Y) \cong \Q[z_1,\dots,z_{n-1}, t]/J
\end{equation}
where the identification is given by sending $z_k$ to $\xi_k$ for all $k \in [n-1]$ and $t$ to $t$, 
and $J$ is the ideal generated by the quadratic polynomials
\begin{equation}\label{eq:defining relations}
z_k\left(z_k-\frac{1}{2}z_{k-1}-\frac{1}{2}z_{k+1}-t\right)\quad\text{for $k \in [n-1]$}
\end{equation}
where we take the convention $z_0=z_n=0$. 
\end{Theorem}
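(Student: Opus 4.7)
The plan is to define the natural $\mathbb{Q}$-algebra homomorphism
\[
\varphi: \Q[z_1,\ldots,z_{n-1},t]/J \to H^*_{\S1}(\Y), \qquad z_k \mapsto \xi_k, \quad t \mapsto t,
\]
and show it is an isomorphism. First, $\varphi$ is well-defined because the generators $\xi_k$ and $t$ satisfy the relations~\eqref{eq:defining relations} by Lemma~\ref{lemm:3.1}. Next, $\varphi$ is surjective since $\xi_1,\ldots,\xi_{n-1},t$ generate $H^*_{\S1}(\Y)$ as a $\Q$-algebra by Proposition~\ref{prop: generators}. Since $\varphi$ is a graded, surjective map of graded $\Q$-algebras, it will be an isomorphism provided the Hilbert series of the domain and codomain agree.

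I would compute the Hilbert series of the codomain directly. By~\eqref{eq:equivariant cohomology of Hess(N,h) free} we have $H^*_{\S1}(\Y) \cong H^*(B\S1) \otimes_\Q H^*(\Y)$ as $H^*(B\S1)$-modules, so
\[
\Hilb(H^*_{\S1}(\Y),q) = \frac{1}{1-q^2}\, \Poin(\Y,q).
\]
The result of Sommers--Tymoczko cited in the proof of Theorem~\ref{theorem:pvA-basis} gives $\dim H^{2\ell}(\Y) = \binom{n-1}{\ell}$, hence $\Poin(\Y,q) = (1+q^2)^{n-1}$, and so $\Hilb(H^*_{\S1}(\Y),q) = (1+q^2)^{n-1}/(1-q^2)$.

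For the domain, the plan is to show that the sequence of relation polynomials $f_k := z_k(z_k - \tfrac12 z_{k-1} - \tfrac12 z_{k+1} - t)$ for $k \in [n-1]$, together with $t$, forms a regular sequence $f_1, \ldots, f_{n-1}, t$ in $\Q[z_1,\ldots,z_{n-1},t]$. This is a sequence of $n$ homogeneous polynomials in $n$ variables, so by Lemma~\ref{lemma: solution regular criterion} it suffices to show that the common vanishing locus in $\C^n$ consists only of the origin. After setting $t=0$, the equations become $z_k(2z_k - z_{k-1} - z_{k+1}) = 0$ (with $z_0 = z_n = 0$). The key observation is a linear-recurrence argument: if all $z_k$ were nonzero, then $2z_k = z_{k-1} + z_{k+1}$ on the full range forces $z_k$ to be an arithmetic progression, which together with the boundary conditions $z_0 = z_n = 0$ yields $z_k \equiv 0$. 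So some $z_j = 0$, and one recurses on the two shorter segments $[1,j-1]$ and $[j+1,n-1]$, concluding that all $z_k = 0$. I expect this recursive/combinatorial argument about the vanishing locus to be the main technical obstacle, though it is not difficult. Once regularity is established, Lemma~\ref{lemma: hilbert regular criterion} applied to the subsequence $f_1, \ldots, f_{n-1}$ (which inherits regularity) yields
\[
\Hilb(\Q[z_1,\ldots,z_{n-1},t]/J, q) = \frac{1}{(1-q^2)^n}\prod_{k=1}^{n-1}(1-q^4) = \frac{(1+q^2)^{n-1}}{1-q^2},
\]
matching the codomain. Hence the surjection $\varphi$ is an isomorphism.
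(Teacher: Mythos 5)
Your proof is correct and follows the paper's overall architecture exactly: you establish well-definedness and surjectivity of $\varphi$ from Proposition~\ref{prop: generators} and Lemma~\ref{lemm:3.1}, you compute the Hilbert series of $H^*_{\S1}(\Y)$ via the Sommers--Tymoczko Poincar\'e polynomial and~\eqref{eq:equivariant cohomology of Hess(N,h) free}, and you reduce the computation of the domain's Hilbert series to showing $f_1,\dots,f_{n-1},t$ is a regular sequence, which by Lemma~\ref{lemma: solution regular criterion} amounts to showing the common zero locus in $\C^n$ is the origin. That reduction matches the paper.

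Where you diverge is in how you verify the zero-locus claim. The paper proves this through Lemma~\ref{lemm:4.1} by appealing to the more general Lemma~\ref{lemm:5.1} (for arbitrary coefficients $a_k,b_k$), which requires a non-vanishing continued-fraction criterion~\eqref{eq:5.2}, itself verified via the recurrence analysis of Lemma~\ref{lemm:5.2}. You instead argue directly in the symmetric case $a_k=b_k=\tfrac12$: if all $z_k\neq 0$, then $2z_k = z_{k-1}+z_{k+1}$ for $k\in[n-1]$ forces an arithmetic progression, and the boundary conditions $z_0=z_n=0$ give a contradiction; hence some $z_j=0$, and the system decouples into two shorter segments of the same shape, to which induction on $n$ applies. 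This argument is correct (the base case is vacuous, and the segment $[j+1,n-1]$ reindexes to the same system with $n$ replaced by $n-j$) and is genuinely more elementary and self-contained than the paper's route. What the paper's argument buys in exchange is generality: Lemma~\ref{lemm:5.1} handles asymmetric coefficients, which the authors had in mind for broader applications, whereas your segment-recursion trick depends crucially on the symmetry that turns the nonvanishing case into a two-sided discrete Laplace equation. For the Peterson case alone, your version is the cleaner proof.
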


The proof of Theorem~\ref{theo:3.1} will use heavily the techniques of Hilbert series developed in Section~\ref{sec: background}. However, before delving into those arguments, we take a moment to observe that from Theorem~\ref{theo:3.1} we may readily obtain a presentation also of the ordinary cohomology ring $H^*(\Y)$, cf. \cite[Corollary~3.4]{FukukawaHaradaMasuda}.

\begin{Corollary} \label{coro:3.1} 
Let $\bxi_k$ denote the restriction of $\xi_k$ to $H^*(\Y)$. (In other words, $\bxi_k$ is the image of the Schubert class $\sigma_{s_k}$ under the restriction map $H^*(\fln) \to H^*(\Y)$.) Then 
\[
H^*(\Y) \cong \Q[z_1,\dots,z_{n-1}]/\check{J},
\]
which sends $z_k$ to $\bxi_k$ for all $k \in [n-1]$.
Here, $\check{J}$ is the ideal generated by 
\[
z_k\left(z_k-\frac{1}{2}z_{k-1}-\frac{1}{2}z_{k+1}\right)\quad\text{for $k \in [n-1]$}
\]
with $z_0=z_n=0$. 
\end{Corollary}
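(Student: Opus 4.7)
The plan is to deduce the corollary directly from Theorem~\ref{theo:3.1}, using the general principle of equation~\eqref{eq:from equivariant to ordinary}: for a $T$-space $X$ whose odd-degree ordinary cohomology vanishes, the ordinary cohomology ring is obtained from the $T$-equivariant cohomology ring by quotienting by the ideal generated by $H^{>0}(BT)$. So the entire corollary reduces to setting $t=0$ in the presentation of Theorem~\ref{theo:3.1}, provided we justify that this is really what~\eqref{eq:from equivariant to ordinary} says in our setting.

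First I would note that since $\Y$ is paved by complex affine cells (by \cite{ty}), its odd-degree cohomology vanishes. Therefore equation~\eqref{eq:from equivariant to ordinary} applies with $T$ replaced by $\S1$, and we have
\[
H^*(\Y) \;\cong\; H^*_{\S1}(\Y)\bigl/\bigl(H^{>0}(B\S1)\bigr).
\]
Since $H^*(B\S1) \cong \Q[t]$ with $t$ in (cohomology) degree $2$, the ideal $(H^{>0}(B\S1))$ is generated, as an ideal in $H^*_{\S1}(\Y)$, by the single element $t$.

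Next I would apply Theorem~\ref{theo:3.1}, which yields an isomorphism $\Q[z_1,\dots,z_{n-1},t]/J \xrightarrow{\cong} H^*_{\S1}(\Y)$ sending $z_k \mapsto \xi_k$ and $t \mapsto t$. Taking the quotient by $(t)$ on both sides gives
\[
H^*(\Y) \;\cong\; \bigl(\Q[z_1,\dots,z_{n-1},t]/J\bigr)\big/(t) \;\cong\; \Q[z_1,\dots,z_{n-1},t]\big/\bigl(J+(t)\bigr) \;\cong\; \Q[z_1,\dots,z_{n-1}]\big/\check{J},
\]
where $\check{J}$ is the ideal obtained from $J$ by setting $t=0$. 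Each generator $z_k(z_k - \tfrac{1}{2}z_{k-1} - \tfrac{1}{2}z_{k+1} - t)$ of $J$ then specializes to the generator $z_k(z_k - \tfrac{1}{2}z_{k-1} - \tfrac{1}{2}z_{k+1})$ of $\check{J}$, as asserted. Tracking $z_k$ through the composition $\Q[z_1,\dots,z_{n-1}]/\check{J} \xrightarrow{\cong} H^*_{\S1}(\Y)/(t) \xrightarrow{\cong} H^*(\Y)$ shows that $z_k$ is identified with the image $\bxi_k$ of $\xi_k$ under the restriction map $H^*_{\S1}(\Y) \to H^*(\Y)$.

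There is essentially no obstacle here beyond bookkeeping: once Theorem~\ref{theo:3.1} is in hand, the corollary is pure commutative algebra combined with the one topological input~\eqref{eq:from equivariant to ordinary}, itself already established in Section~\ref{subsec: background equivariant cohomology} via the collapsing of the Serre spectral sequence for the Borel fibration. In particular, no new computations with Hilbert series or fixed-point restrictions are required at this stage.
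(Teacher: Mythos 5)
Your proposal is correct and follows essentially the same route as the paper, which presents the corollary as an immediate consequence of Theorem~\ref{theo:3.1} obtained by setting $t=0$, i.e., by quotienting out $(H^{>0}(B\S1))$ via~\eqref{eq:from equivariant to ordinary}. The bookkeeping you carry out (vanishing odd cohomology, identifying $(H^{>0}(B\S1))$ with $(t)$, passing the presentation through the quotient, and tracking $z_k \mapsto \bxi_k$) is exactly the intended justification that the paper leaves implicit.
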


The remainder of this section is devoted to a proof of Theorem~\ref{theo:3.1}.    As we have already mentioned, 
we will use the Hilbert series techniques developed in Section~\ref{sec: background} to 
show that the relations derived in Lemma~\ref{lemm:3.1} are precisely \emph{all} of the necessary relations. 
Let us give a sketch of the structure of our argument below (and which serves as a blueprint for many of the arguments of this manuscript). Let us denote the (equivariant or ordinary) cohomology ring which we want to analyze by $R$. Here is the strategy.  First, we find a finite set of generators $\beta_1, \cdots, \beta_N$ for $R$. This immediately implies that we have a surjective ring homomorphism
\[
\varphi: \Q[x_1, \ldots, x_N] \twoheadrightarrow R,   \hspace{5mm} x_i \mapsto \beta_i. 
\]
Now suppose in addition that we find some relations $f_1, \ldots, f_s$ which must be satisfied by the $\{\beta_1,\cdots, \beta_N\}$, i.e. the $f_j$ are polynomials in $x_1, \ldots, x_N$ which have the property that $f_j(\beta_1, \cdots, \beta_N)=0$ in $R$.  Let $J$ denote the ideal in $\Q[x_1,\ldots, x_N]$ generated by the relations $f_j$. Since these are relations among the $\beta_i$, we know there is a well-defined and surjective homomorphism, which by slight abuse of notation we still denote by $\varphi$, given by 
\[
\varphi: \Q[x_1,\ldots, x_N]/J \twoheadrightarrow R, \hspace{5mm} \bar{x_i} \mapsto \beta_i
\]
where $\bar{x_i}$ denotes the equivalence class of $x_i$ in the quotient ring.   We may now wonder whether the relations $J$ are sufficient to determine the ring, i.e., whether or not $\varphi$ in the equation above is an isomorphism. This is where Hilbert series come into the picture, as we now explain.  

The basic idea is quite simple. Suppose we have a homomorphism $\varphi: S \to R$ of graded rings (so in particular, $\varphi$ preserves the grading on both sides). Suppose in addition that we know that $\varphi$ is a surjection.  This means that, as an additive homomorphism on each graded piece, the map $\varphi_\ell: S_\ell \to R_\ell$ is a surjective linear map of vector spaces. Thus it follows that 
$\dim_\Q(S_\ell) \geq \dim_\Q(R_\ell)$ for all $\ell$.  Recall that the Hilbert series $\Hilb(S, q)$ of $S$ is defined as $\sum_\ell \dim_\Q(S_\ell) q^\ell$ and similarly for $R$. We will write 
\[
\Hilb(S,q) \geq \Hilb(R,q)
\]
if and only if, by definition, $\dim_\C(S_\ell) \geq \dim_\C(R_\ell)$ for all $\ell$. With this notation in hand, if we know that there exists a surjection $\varphi: S \to R$ of graded rings, it immediately follows that $\Hilb(S,q) \geq \Hilb(R,q)$. Moreover, it is also evident that in order to prove that $\varphi$ is an isomorphism, it suffices to prove that $\dim_\Q(S_\ell) = \dim_\Q(R_\ell)$ for all $\ell$, or in other words, that 
\[
\Hilb(S,q) = \Hilb(R,q).
\]
Therefore, if we are able to compute by some other means the LHS and the RHS of the above equation, and then verify that they are equal, then we have shown that 
\[
R \cong S
\]
as desired. 
Applying the above reasoning to $S := \Q[x_1,\ldots, x_N]/J$ and $R$, we can therefore obtain the desired presentation 
\[
R \cong \Q[x_1,\ldots, x_N]/J
\]
of $R$.

We now apply the sketch of reasoning above to the case $R = H^*_S(\Y)$ and the ring $S = \Q[z_1, \cdots, z_{n-1}, t]/J$ where $J$ is the ideal given in Theorem~\ref{theo:3.1}. In fact, it will turn out that it is beneficial to simultaneously consider the presentations of the equivariant and the ordinary cohomology rings $H^*_S(\Y)$ and $H^*(\Y)$, for reasons which will become clear. 
First, we have from Proposition~\ref{prop: generators} and Lemma~\ref{lemm:3.1} that the natural homomorphism of graded rings  
\begin{equation} \label{eq:4.1}
\varphi: \Q[z_1,\dots,z_{n-1},t]/J\twoheadrightarrow \HS(\Y); \ z_k \mapsto \xi_k \ {\rm and} \ t \mapsto t 
\end{equation} 
is surjective, and by forgetting the $S$-action (as in the discussion before the statement of Corollary~\ref{coro:restriction surjective in type A}) we also have that the graded ring homomorphism 
\begin{equation} \label{eq:4.2}
\check{\varphi}: \Q[z_1,\dots,z_{n-1}]/\check{J}\twoheadrightarrow H^*(\Y); \ z_k \mapsto \bxi_k
\end{equation} 
is also surjective.

As explained above, this means that the Hilbert series of the rings appearing in \eqref{eq:4.1} and \eqref{eq:4.2} satisfy 
\begin{eqnarray} 
\Hilb(\Q[z_1,\dots,z_{n-1},t]/J,q)&\ge \Hilb(\HS(\Y),q) \label{eq:4.3}\\
\Hilb(\Q[z_1,\dots,z_{n-1}]/\check{J},q)&\ge \Hilb(H^*(\Y),q). \label{eq:4.4}
\end{eqnarray}
We have seen that $\varphi$ (resp. $\check{\varphi}$) is an isomorphism if and only if the inequality in~\eqref{eq:4.3} (resp. ~\eqref{eq:4.4}) is in fact an equality.  Thus, we are now reduced to showing equalities of Hilbert series, and to do so, we can use
the techniques outlined in Section~\ref{subsec: background on Hilbert series and regular sequences}. 

Indeed, the Hilbert series of the right hand sides of~\eqref{eq:4.3} and~\eqref{eq:4.4} are known. For instance, in \cite{ST}
 it is shown that  
\begin{equation} \label{eq:4.5}
\Hilb(H^*(\Y),q)=(1+q^2)^{n-1}
\end{equation}
and since $\HS(\Y)=H^*(B{\S1})\otimes H^*(\Y)$ as $H^*(B{\S1})$-modules, \eqref{eq:4.5}  immediately implies 
\begin{equation} \label{eq:4.5-1}
\Hilb(\HS(\Y),q)=\frac{(1+q^2)^{n-1}}{1-q^2}.
\end{equation}  
Thus the remaining task is to compute the Hilbert series of the LHS of~\eqref{eq:4.3} and~\eqref{eq:4.4} and checking that they are equal to the RHS of~\eqref{eq:4.5} and~\eqref{eq:4.5-1} respectively. 
The following lemma computes the left hand side of~\eqref{eq:4.4}.

\begin{Lemma} \label{lemm:4.1}
$\Hilb(\Q[z_1,\dots,z_{n-1}]/\check{J},q)=(1+q^2)^{n-1}$.  
\end{Lemma}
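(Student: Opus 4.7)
The plan is to apply the Hilbert series machinery from Section~\ref{subsec: background on Hilbert series and regular sequences} to the sequence of defining relations of $\check J$. Observe that $\check J$ is generated by the $n-1$ homogeneous polynomials
\[
 f_k \;:=\; z_k\!\left(z_k - \tfrac{1}{2}z_{k-1} - \tfrac{1}{2}z_{k+1}\right), \qquad k \in [n-1],
\]
(with convention $z_0 = z_n = 0$) in the polynomial ring $\Q[z_1,\ldots,z_{n-1}]$ in $n-1$ variables, and each $f_k$ is homogeneous of degree $4$ since $\deg z_i = 2$. The fact that the number of relations equals the number of variables is exactly the setting in which Lemma~\ref{lemma: solution regular criterion} can be invoked. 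Thus the overall plan is (1) to prove that $f_1,\ldots,f_{n-1}$ form a regular sequence and (2) to read off the Hilbert series from Lemma~\ref{lemma: hilbert regular criterion}, which will give
\[
 \Hilb(\Q[z_1,\ldots,z_{n-1}]/\check J,q) \;=\; \frac{1}{(1-q^2)^{n-1}}\prod_{k=1}^{n-1}(1-q^{4}) \;=\; \frac{(1-q^4)^{n-1}}{(1-q^2)^{n-1}} \;=\; (1+q^2)^{n-1},
\]
which is the claim.

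The substantive step is (1), for which by Lemma~\ref{lemma: solution regular criterion} it suffices to show that the only point of $\C^{n-1}$ where all $f_k$ vanish simultaneously is the origin. So suppose $(z_1,\ldots,z_{n-1}) \in \C^{n-1}$ satisfies $f_k(z) = 0$ for every $k \in [n-1]$. Then for each $k$, either $z_k = 0$ or $2z_k = z_{k-1} + z_{k+1}$; in particular, the map $k \mapsto z_k$ (extended to $k \in \{0,1,\ldots,n\}$ by $z_0 = z_n = 0$) satisfies a discrete harmonic equation at every index $k$ where $z_k \neq 0$, with Dirichlet boundary conditions at $k = 0$ and $k = n$. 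I expect this to force $z \equiv 0$ via a maximum-principle argument.

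The hard part will be running that maximum principle carefully over $\C$. The plan is as follows. Let $M := \max_{0 \le k \le n} \operatorname{Re}(z_k)$; since $z_0 = z_n = 0$, we have $M \ge 0$. If $M > 0$, choose the largest index $k^\ast$ attaining $M$; then $k^\ast \in [n-1]$ (since $\operatorname{Re}(z_0) = \operatorname{Re}(z_n) = 0 < M$) and $z_{k^\ast} \neq 0$, so $2z_{k^\ast} = z_{k^\ast - 1} + z_{k^\ast + 1}$. Taking real parts yields $2M = \operatorname{Re}(z_{k^\ast - 1}) + \operatorname{Re}(z_{k^\ast + 1}) \le 2M$, which forces $\operatorname{Re}(z_{k^\ast + 1}) = M$, contradicting the maximality of $k^\ast$. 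Hence $M \le 0$, and the symmetric argument applied to $\min_k \operatorname{Re}(z_k)$ (or equivalently to $-z$) gives $\operatorname{Re}(z_k) \ge 0$ for all $k$, whence $\operatorname{Re}(z_k) = 0$ for all $k$. The identical argument applied to $\operatorname{Im}(z_k)$ yields $\operatorname{Im}(z_k) = 0$ for all $k$, so $z = 0$, as required.

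With regularity of $f_1,\ldots,f_{n-1}$ established, step (2) is immediate: by Example~\ref{example.Hilbert.polynomial.ring} the ambient polynomial ring has Hilbert series $(1-q^2)^{-(n-1)}$, and plugging into the formula of Lemma~\ref{lemma: hilbert regular criterion} with $\deg f_k = 4$ for each $k$ produces exactly $(1+q^2)^{n-1}$, completing the proof.
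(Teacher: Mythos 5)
Your proof is correct and takes a genuinely different route from the paper. Both proofs reduce (via Lemmas~\ref{lemma: solution regular criterion} and~\ref{lemma: hilbert regular criterion}) to showing that the only common zero of $f_1,\dots,f_{n-1}$ in $\C^{n-1}$ is the origin, and your Hilbert-series computation in step~(2) is exactly as in the paper. The difference is in how the zero-locus is analyzed. The paper proves a more general statement, Lemma~\ref{lemm:5.1}, about the system $z_k^2 = z_k(a_{k-1}z_{k-1} + b_k z_{k+1})$ with essentially arbitrary complex coefficients, controlled by a continued-fraction nonvanishing condition~\eqref{eq:5.2}; it then specializes to $a_k=b_k=\tfrac12$ and checks the condition via an elementary recurrence (Lemma~\ref{lemm:5.2}). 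You instead exploit the \emph{specific} structure of the Peterson coefficients: for any $k$ with $z_k\neq 0$ one has $2z_k = z_{k-1}+z_{k+1}$, so the sequence is discretely harmonic wherever it is nonzero, with Dirichlet boundary data $z_0=z_n=0$, and you run a maximum-principle argument separately on the real and imaginary parts (using homogeneity of $f_k$ to reduce the min to the max on $-z$, and the imaginary part to the real part on $\pm iz$). I checked the details: the choice of the \emph{largest} maximizing index $k^\ast$ gives the contradiction cleanly, $z_{k^\ast}\neq 0$ follows from $\operatorname{Re}(z_{k^\ast})>0$, and both boundary endpoints are excluded. The trade-off is clear: your argument is shorter and more self-contained for the lemma at hand, but it depends crucially on the coefficients $\tfrac12,\tfrac12$ being non-negative and summing to $1$, so it does not extend to the paper's general Lemma~\ref{lemm:5.1} (where the coefficients can be arbitrary complex numbers and no discrete mean-value property is available), which is why the paper goes through the continued-fraction machinery.
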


In what follows, we take a slightly unconventional approach, in the following sense. We first prove Theorem~\ref{theo:3.1} by assuming Lemma~\ref{lemm:4.1}.  The argument is a straightforward application of Hilbert series ideas. We will then return to
a discussion of the proof of Lemma~\ref{lemm:4.1}, where in fact we will see it as a special case of a more general statement.

\begin{proof}[Proof of Theorem~\ref{theo:3.1} assuming Lemma~\ref{lemm:4.1}]
We first show that in the polynomial ring $\Q[z_1,\dots,z_{n-1},t]$, the sequence 
\[
\begin{split}
\f_k:&=z_k(z_k-\frac{1}{2}z_{k-1}-\frac{1}{2}z_{k+1}-t) \quad \text{for $k \in [n-1]$},\\
\f_n:&=t.
\end{split}
\]
is regular. 
To see this, first observe that since $\f_n=t,$ 
from the definitions of $\f_k$ and the ideals $J$ and $\check{J}$ given in the statements of Theorem~\ref{theo:3.1} and Corollary~\ref{coro:3.1} it follows that 
\[
\begin{split}
&\Hilb(\Q[z_1,\dots,z_{n-1},t]/(\f_1,\dots,\f_{n-1},\f_n),q)\\
=&\Hilb(\Q[z_1,\dots,z_{n-1}]/\check{J},q)\\
=&(1+q^2)^{n-1}
\end{split}
\]
where the last equality follows from Lemma~\ref{lemm:4.1}. This implies that \eqref{eq:4.6} is satisfied in our setting because $\deg \f_i=4$ for $ i \in [n-1]$, $\deg \f_n=2$ and 
\begin{equation} \label{eq:4.6-2}
\Hilb(\Q[z_1,\dots,z_{n-1},t],q)=\frac{1}{(1-q^2)^n}. 
\end{equation} 
Thus the claim follows from Lemma~\ref{lemma: hilbert regular criterion}.

Now, from the definition of a regular sequence it is clear that the subsequence $\f_1,\dots,\f_{n-1}$ of a regular sequence $\f_1,\dots,\f_n$ is again a regular sequence. Hence it follows from Lemma~\ref{lemma: hilbert regular criterion} and \eqref{eq:4.6-2} that 
\[
\begin{split}
\Hilb(\Q[z_1,\dots,z_{n-1},t]/J,q)&=\Hilb(\Q[z_1,\dots,z_{n-1},t]/(\f_1,\dots,\f_{n-1}),q)\\
&=\frac{1}{(1-q^2)^n}\prod_{i=1}^{n-1}(1-q^{\deg \f_i})\\
&=\frac{(1+q^2)^{n-1}}{1-q^2}.
\end{split}
\]
This together with \eqref{eq:4.5-1} shows that the equality holds in \eqref{eq:4.3}. Hence the map $\varphi$ in \eqref{eq:4.1} is an isomorphism, as desired. 
\end{proof}

To complete the argument, we must now prove Lemma~\ref{lemm:4.1}.  In fact, we will put the lemma in a more general setting. 
Note first that by Lemma~\ref{lemma: solution regular criterion} and the computation in Example~\ref{example.Hilbert.polynomial.ring}, in order to prove Lemma~\ref{lemm:4.1} it suffices to show that 
the solution set in $\C^{n-1}$ of the equations 
\begin{equation}\label{eq:5.0} 
z_k \left(z_k-\frac{1}{2}z_{k-1}-\frac{1}{2}z_{k+1} \right) \, \, \textup{ for } k \in [n-1]
\end{equation} 
(where $z_0=z_n=0$ by convention) consists only of the origin $\{0\}$. 
For the remainder of this argument, we will not recount all details and refer the reader to \cite{FukukawaHaradaMasuda}. 
The idea is to consider a more general set of 
equations in $\C^{n-1}$ (for $n \ge 3$), namely:   
\begin{equation} \label{eq:5.1}
\begin{split}
z_1^2&=b_1z_1z_2\\
z_k^2 &=z_k(a_{k-1}z_{k-1} +b_kz_{k+1}) \quad  \textup{ for } 2 \leq k \leq n-2 \\
z_{n-1}^2&=a_{n-2}z_{n-2}z_{n-1}
\end{split}
\end{equation}
where $a_k,b_k$ for $k=1,2,\dots,n-2$ are fixed complex numbers. We have the following. 

\begin{Lemma} \label{lemm:5.1}
In the setting above, set $c_i :=a_ib_i$ for $i=1,2,\dots,n-2$.  If 
\begin{equation} \label{eq:5.2}
 1-\cfrac{c_i}{1-\cfrac{c_{i+1}}{\qquad \cfrac{\ddots}{1-\cfrac{c_{j-1}}{1-c_j}}}}\not=0
\end{equation}
for all $1\le i\le j\le n-2$, 
then the solution set of the equations \eqref{eq:5.1} consists of only the origin in $\C^{n-1}$. 
\end{Lemma}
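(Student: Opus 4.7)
The plan is to show that any solution $(z_1,\ldots,z_{n-1}) \in \C^{n-1}$ of \eqref{eq:5.1} must be zero, by analyzing its support $S := \{k \in [n-1] : z_k \ne 0\}$ and reducing, on each maximal consecutive substring of $S$, to a tridiagonal linear-algebra problem whose determinant can be read off from the continued fractions in \eqref{eq:5.2}.

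First I would factor each equation in \eqref{eq:5.1} as $z_k \cdot L_k = 0$, where $L_1 = z_1 - b_1 z_2$, $L_k = z_k - a_{k-1}z_{k-1} - b_k z_{k+1}$ for $2 \le k \le n-2$, and $L_{n-1} = z_{n-1} - a_{n-2}z_{n-2}$; thus for every $k$ either $z_k = 0$ or $L_k = 0$. Suppose for contradiction $S \ne \emptyset$ and pick a maximal consecutive substring $[i',j'] \subseteq S$. By maximality (with the conventions $z_0 = z_n = 0$), $z_{i'-1} = z_{j'+1} = 0$. Since $L_k = 0$ for every $k \in [i',j']$, the vector $\mathbf{z} := (z_{i'},\ldots,z_{j'})^T$ satisfies $M\mathbf{z} = 0$, where $M$ is the $k \times k$ tridiagonal matrix (with $k := j' - i' + 1$) having $1$'s on the diagonal, $-b_{i'},\ldots,-b_{j'-1}$ on the super-diagonal, and $-a_{i'},\ldots,-a_{j'-1}$ on the sub-diagonal.

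The heart of the argument will be to show that
\[
\det M \;=\; \prod_{i = i'}^{j'-1} \Phi_i^{\,j'-1},
\]
where $\Phi_i^{\,j}$ denotes the continued fraction in \eqref{eq:5.2}. To this end I would let $P_\ell(s)$ denote the determinant of the analogous $\ell\times\ell$ tridiagonal matrix starting at index $s$, and derive (by expanding along the first row and then the first column of the resulting cofactor) the two-parameter recurrence $P_\ell(s) = P_{\ell-1}(s+1) - c_s P_{\ell-2}(s+2)$ with $P_0 = P_1 = 1$. This already shows that $\det M$ depends only on the products $c_m = a_m b_m$, as the statement requires. The same recurrence identifies the ratio $P_\ell(s)/P_{\ell-1}(s+1)$ with the continued fraction $\Phi_s^{\,s+\ell-2}$ of \eqref{eq:5.2}, equivalently $\Phi_i^{\,j} = P_{j-i+2}(i)/P_{j-i+1}(i+1)$. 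Substituting into the product above produces a clean telescoping cancellation whose only surviving factor is $P_k(i') = \det M$.

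To conclude, every index pair in the product satisfies $1 \le i' \le i \le j' - 1 \le n - 2$, so the hypothesis \eqref{eq:5.2} gives $\Phi_i^{\,j'-1} \ne 0$ for every factor, whence $\det M \ne 0$ and $\mathbf{z} = 0$; this contradicts $[i',j'] \subseteq S$, forcing $S = \emptyset$ and $(z_1,\ldots,z_{n-1}) = 0$. I expect the main obstacle to be the identification of $\det M$ with the product of the specific continued fractions in \eqref{eq:5.2}: expanding the tridiagonal determinant purely by size (keeping the starting index $s$ fixed) produces a continued fraction in the \emph{reverse} order to \eqref{eq:5.2}, which is not directly useful. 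The resolution is to vary both the size and the starting index via the recurrence $P_\ell(s) = P_{\ell-1}(s+1) - c_s P_{\ell-2}(s+2)$, so that the ratios $P_\ell(s)/P_{\ell-1}(s+1)$ match the continued fractions of \eqref{eq:5.2} in the correct direction and permit the telescoping.
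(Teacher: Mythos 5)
Your proof is correct. The reduction via the support set $S$ is clean and handles the case analysis automatically: once one observes that each equation in \eqref{eq:5.1} factors as $z_k L_k = 0$, a hypothetical nonzero solution forces $L_k=0$ on a maximal consecutive block $[i',j'] \subseteq S$ with $z_{i'-1}=z_{j'+1}=0$, and that is exactly the homogeneous tridiagonal system $M\mathbf{z}=0$ you write down. The two-parameter recurrence $P_\ell(s)=P_{\ell-1}(s+1)-c_sP_{\ell-2}(s+2)$ is correct (expand along the first row, then along the first column of the off-diagonal cofactor), the identification $\Phi_i^{\,j}=P_{j-i+2}(i)/P_{j-i+1}(i+1)$ checks out by induction starting from $\Phi_j^{\,j}=1-c_j=P_2(j)/P_1(j+1)$, and the telescoping $\prod_{i=i'}^{j'-1}\Phi_i^{\,j'-1}=P_{j'-i'+1}(i')/P_1(j')=\det M$ is exact (and the empty product covers the case $j'=i'$). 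The index bookkeeping $1\le i'\le i\le j'-1\le n-2$ is also right, since $j'\le n-1$; so every factor is among the continued fractions in \eqref{eq:5.2} and is nonvanishing by hypothesis. One small point worth making explicit when writing this up: the ratio identity implicitly requires $P_{j-i+1}(i+1)\neq0$, and this is best folded into a single induction on $\ell$ proving simultaneously that $P_\ell(s)\neq 0$ and that $P_\ell(s)/P_{\ell-1}(s+1)=\Phi_s^{\,s+\ell-2}$; the hypothesis furnishes the nonvanishing at each step, so the telescoping never divides by zero.

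The paper proves the statement only by citing \cite[Lemma~5.2]{FukukawaHaradaMasuda}, describing that proof as an elementary induction. Your route is a genuine reorganization: instead of chasing the ratios $z_{k+1}/z_k$ recursively along the chain of equations, you package the linear system on each maximal block of the support as a tridiagonal matrix and read off the continued fractions as ratios of leading principal minors, with the telescoping doing the bookkeeping. This buys you two things: the dependence of the determinant on the products $c_m=a_mb_m$ alone (rather than on $a_m,b_m$ separately) becomes visible immediately from the recurrence, and the directional subtlety you flag in your last paragraph (that a naive size-based expansion reverses the order of the continued fraction) is resolved structurally by the two-parameter recurrence rather than by an ad hoc re-indexing. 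The paper's deferred proof is presumably more computational but avoids introducing determinants; both are elementary, and yours is arguably a more transparent explanation of \emph{why} the continued fractions in \eqref{eq:5.2} are exactly the right quantities.
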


The proof of the above lemma proceeds by induction and uses only elementary techniques; see \cite[Lemma~5.2]{FukukawaHaradaMasuda}.  

Now, we return to the special case for which $a_k=b_k=\frac{1}{2}$ and hence $c_k=\frac{1}{4}$ for all $k \in [n-2]$.  Below, we give a sufficient condition for \eqref{eq:5.2} to be satisfied when $c_k=a_kb_k$ (for $k \in [n-2]$) is a constant $c$ independent of $k$.  This special case suffices to prove Lemma~\ref{lemm:4.1}. For this purpose, consider the numerical sequence $\{x_m\}_{m=0}^\infty$ defined by the following recurrence relation and with $x_0=1$: 
\begin{equation} \label{eq:5.5}
x_{m}=1-\frac{c}{x_{m-1}} \quad\text{for $m\ge 1$}.  
\end{equation}
In the situation when the $c_k$ are all equal, it is straightforward to see that the condition \eqref{eq:5.2} is equivalent to the statement that $x_m\not=0$ for $m \in [n-2]$.  
We have the following; see \cite[Lemma~5.4]{FukukawaHaradaMasuda} for the proof. 

\begin{Lemma}\label{lemm:5.2}
Let $\{x_m\}$ be the sequence defined in~\eqref{eq:5.5}.  
If $c$ is any real number $\le 1/4$, then $x_m>0$ for all  $m\ge 1$.  
\end{Lemma}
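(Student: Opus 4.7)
The plan is to find an invariant lower bound that the iteration $x_m = 1 - c/x_{m-1}$ preserves, and then show by induction that the sequence stays above it. The natural candidate is a fixed point of the map $f(x) = 1 - c/x$, whose fixed points satisfy $x^2 - x + c = 0$. Since $c \le 1/4$ by hypothesis, the discriminant $1 - 4c$ is nonnegative, so the roots $\beta_{\pm} = (1 \pm \sqrt{1-4c})/2$ are real. I will work with the larger root $\beta := \beta_+ = (1+\sqrt{1-4c})/2$, and note the useful identity $c/\beta = \beta_{-} = 1 - \beta$ coming from Vieta's formulas $\beta_+ + \beta_- = 1$ and $\beta_+ \beta_- = c$.

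The argument naturally splits into two cases based on the sign of $c$. If $c \le 0$, the claim is essentially immediate: assuming inductively that $x_{m-1} > 0$, the term $-c/x_{m-1}$ is nonnegative, so $x_m = 1 - c/x_{m-1} \ge 1 > 0$; combined with $x_0 = 1$, this gives $x_m \ge 1$ for all $m \ge 0$. The interesting case is $0 < c \le 1/4$, for which I claim $x_m \ge \beta$ for all $m \ge 0$. The base case holds because $\beta \le 1 = x_0$ (using $\sqrt{1-4c} \le 1$). For the inductive step, if $x_{m-1} \ge \beta > 0$, then $c/x_{m-1} \le c/\beta = 1 - \beta$, and hence
\[
x_m = 1 - \frac{c}{x_{m-1}} \ge 1 - (1 - \beta) = \beta > 0,
\]
which completes the induction. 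Since $\beta = (1+\sqrt{1-4c})/2 > 0$ whenever $c \le 1/4$, this gives the desired strict positivity.

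There is no real obstacle here beyond correctly identifying the invariant lower bound; once one recognizes that the fixed points of $f$ encode where the iteration can be ``trapped,'' the inductive step reduces to the elementary Vieta identity $c/\beta = 1 - \beta$. The slight subtlety, which is the only thing worth being careful about, is that the map $f(x) = 1 - c/x$ is orientation-preserving on $(0,\infty)$ when $c > 0$, so inequalities propagate through $f$ without sign flips; this is exactly what makes the induction go through cleanly and is why the larger fixed point $\beta$ (rather than the smaller one) is the right bound to use when starting from $x_0 = 1 \ge \beta$.
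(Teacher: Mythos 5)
Your proof is correct. The split into $c\le 0$ and $0<c\le 1/4$ is clean, the invariant $x_m\ge\beta$ with $\beta=(1+\sqrt{1-4c})/2$ the larger fixed point of $f(x)=1-c/x$ is exactly the right thing to propagate by induction, and the Vieta identity $c/\beta=1-\beta$ closes the inductive step without any computation. Note that the paper itself does not reproduce a proof of this lemma but instead defers to the cited reference, so there is no in-text argument to compare against; that said, the fixed-point/invariant-interval argument you give is the standard one. A minor remark: since $\beta\ge 1/2$ whenever $0<c\le 1/4$, one can run the same induction with the slightly cruder invariant $x_m\ge 1/2$ (base: $x_0=1\ge 1/2$; step: $c/x_{m-1}\le 2c\le 1/2$), which avoids introducing $\beta$ at all. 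Your version buys the sharp bound $x_m\ge\beta$ and makes clear where the threshold $c\le 1/4$ comes from (real fixed points), while the cruder version is a one-liner; both are fine.
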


The proof of Lemma~\ref{lemm:4.1} is now straightforward. 

\begin{proof}[Proof of Lemma~\ref{lemm:4.1}]
The statement of Lemma~\ref{lemm:4.1} follows from Lemmas~\ref{lemma: hilbert regular criterion}, \ref{lemma: solution regular criterion}, \ref{lemm:5.1} and \ref{lemm:5.2}.
\end{proof}

\section{ The cohomology rings of Peterson varieties in general Lie type} 
\label{section: peterson general Lie type} 

In Section~\ref{section: peterson in type A},  we focused on the special case of Lie type A, when $G=GL_n(\C)$. 
In this section we briefly recount analogous results that can be obtained for (equivariant) cohomology rings of Peterson varieties in arbitrary Lie type. (In the type A case, because we can make explicit arguments with matrices and linear algebra we often blur the distinction between the reductive $GL_n(\C)$ and the semisimple $SL_n(\C)$, but in arbitrary Lie type, we restrict to semisimple groups.)
We keep the account brief, since in many ways it parallels the results in the $GL_n(\C)$ case. We refer the reader to \cite{Dre, HarHorMas} for details.

For this section, we fix $G$ a complex semisimple linear algebraic group of rank $n$. 
We fix $B$ a Borel subgroup and $T$ a maximal torus of $G$ such that $T \subset B \subset G$. 
We denote their Lie algebras by $\mathfrak{t}\subset\mathfrak{b}\subset\mathfrak{g}$ and $W$ is the associated Weyl group.
We also denote by $\mathfrak{g}_{\alpha} \subset \mathfrak{g}$ the root space for each root $\alpha$ and fix a set of simple positive roots $\Delta=\{\alpha_1,\dots,\alpha_n\}$.
An element $x \in \mathfrak{g}$ is \textbf{regular} if its $G$-orbit of the adjoint action has the largest possible dimension.

\begin{Definition} \label{defpet}
Let $E_{\alpha}$ be a basis element of the root space $\mathfrak{g}_{\alpha}$ and let $N_0 :=\sum_{\alpha\in\Delta}E_{\alpha }$, a regular nilpotent element. (We refer the reader to \cite[Theorem~5.3]{Kos59} for a proof of the regularity of $N_0$.) In this setting we define the \textbf{Peterson variety (associated to $\mathfrak{g}$)} as 
\begin{equation*} \label{eqS}
Pet:=\{gB\in G/B \mid Ad(g^{-1})(N_0)\in \mathfrak{b}\oplus\displaystyle\bigoplus_{\alpha\in-\Delta}\mathfrak{g}_{\alpha}\}, 
\end{equation*}
where $-\Delta$ denotes the set of simple negative roots, obtained by negating the simple positive roots. \footnote{It is also possible to define this Hessenberg subspace without reference to the choice $-\Delta$ of simple negative roots. For instance, it can be defined as the annihilator of $[\mathfrak{n}, \mathfrak{n}]$ with respect to the Killing form, where $\mathfrak{n}$ denotes the nilpotent radical of $\mathfrak{b}$.} 
\end{Definition}

As in the previous section, the action of the maximal torus $T$ on $G/B$ does not in general preserve 
the Peterson variety. However, using the homomorphism 
$\phi:T\rightarrow(\mathbb{C}^*)^n$ defined by $g\mapsto (\alpha_1(g),\dots,\alpha_n(g))$, we define
a one-dimensional subtorus $\mathsf{S}$ as the connected component of the identity in
\begin{equation*} 
\phi^{-1}(\{(c,c,\dots,c) \mid c\in\mathbb{C}^*\}). 
\end{equation*}
The restriction of the $T$-action on $G/B$ to the subgroup $\mathsf{S}$ does preserve $Pet$ \cite[Lemma~5.1-(3)]{HarTym-Poset}. 
The simple root $\alpha_i : T \to \C^*$ determines the associated line bundle $ET\times _{T}\C_{\alpha_i} \to BT$. 
By abuse of notation, we denote by $\alpha_i$ the first Chern class of this line bundle 
\begin{align}\label{deg of alphai in coh all Lie type}
\alpha_i=c_1^T(\C_{\alpha_i}) \in H^2(BT), \ \ \ i \in [n].
\end{align} 
We then identify $H^*(BT)$ with the polynomial ring $\mathbb{Q}[\alpha_1,\ldots,\alpha_n]$. 
Let $\nu$ be the character of $\mathsf{S}$ defined by the composition of the inclusion $\mathsf{S} \hookrightarrow T$ and $\phi:T\rightarrow(\mathbb{C}^*)^n$ where we may regard the image $\{(c,c,\dots,c) \mid c\in \C^*\}$ as isomorphic to $\C^*$ in the natural way.
Considering the associated line bundle $E\mathsf{S}\times _{\mathsf{S}}\C_\nu \to B\mathsf{S}$, we denote by $t$ the first Chern class of this line bundle, namely
\begin{align}\label{deg of t in coh all Lie type}
t=c_1^\mathsf{S}(\C_{\nu}) \in H^2(B\mathsf{S}).
\end{align} 
We also have the identification $H^*(B\mathsf{S})\cong \mathbb{Q}[t]$. 
By a similar argument to that for Lemma~\ref{lemma: ti goes to it}, we obtain the following lemma.

\begin{Lemma} \label{lemma: alphai goes to t in all types}
The homomorphism $\pi: H^*(BT) \to H^*(B\mathsf{S})$ induced from the inclusion $\mathsf{S} \hookrightarrow T$ sends $\alpha_i$ in \eqref{deg of alphai in coh all Lie type} to $t$ for all $i \in [n]$. 
\end{Lemma}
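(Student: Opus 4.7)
The plan is to mirror the argument given for Lemma~\ref{lemma: ti goes to it}, adapted to the arbitrary Lie type setting. The key simplifying feature is that, unlike the type A case where $t_i$ and $t$ were defined using \emph{dual} representations and thus picked up inverse characters, here $\alpha_i$ and $t$ are defined via $\C_{\alpha_i}$ and $\C_\nu$ directly, so no inversion enters.

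First I would invoke the identification~\eqref{eq:equiv coh of pt degree two}, which says that the correspondence $\lambda \mapsto c_1^T(\C_\lambda)$ yields an isomorphism $\text{Hom}(T,\C^*) \cong H^2(BT;\Z)$, and similarly for $\mathsf{S}$. Under this identification, the ring homomorphism $\pi: H^*(BT) \to H^*(B\mathsf{S})$ restricts in degree $2$ to the restriction-of-characters map $\text{Hom}(T,\C^*) \to \text{Hom}(\mathsf{S},\C^*)$ obtained from the inclusion $\mathsf{S} \hookrightarrow T$. (This naturality of the first Chern class under pullback of line bundles is standard.) Consequently, it suffices to prove the character-level statement $\alpha_i|_{\mathsf{S}} = \nu$ in $\text{Hom}(\mathsf{S},\C^*)$ for each $i \in [n]$.

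To verify this character equality, I would unwind the definitions. Let $g \in \mathsf{S}$. Since $\mathsf{S}$ is contained in $\phi^{-1}(\{(c,c,\ldots,c) \mid c \in \C^*\})$, we have $\phi(g) = (c, c, \ldots, c)$ for some $c \in \C^*$. By definition of $\phi$, this means $\alpha_j(g) = c$ for \emph{every} $j \in [n]$; in particular $\alpha_i(g) = c$. On the other hand, the character $\nu: \mathsf{S} \to \C^*$ is, by construction, the composition of $\mathsf{S} \hookrightarrow T \xrightarrow{\phi} (\C^*)^n$ with the identification of the diagonal subgroup $\{(c,\ldots,c)\}$ with $\C^*$ via projection onto any coordinate. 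Hence $\nu(g) = c$ also, so $\alpha_i(g) = \nu(g)$. Since $g \in \mathsf{S}$ was arbitrary, $\alpha_i|_{\mathsf{S}} = \nu$, which translates back to $\pi(\alpha_i) = t$ in $H^2(B\mathsf{S})$.

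There is no substantive obstacle here: the argument is essentially a bookkeeping check that $\mathsf{S}$ was \emph{defined} precisely so that all simple roots restrict identically to it. The only point to watch is that $\mathsf{S}$ is taken as the identity component of $\phi^{-1}(\text{diagonal})$, but this does not affect the argument because the containment $\mathsf{S} \subseteq \phi^{-1}(\{(c,\ldots,c)\})$ is all that is needed for the character computation above.
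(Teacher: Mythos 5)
Your proof is correct and follows the same route the paper takes: the paper merely states that Lemma~\ref{lemma: alphai goes to t in all types} is obtained ``by a similar argument to that for Lemma~\ref{lemma: ti goes to it},'' and the argument there likewise reduces to a character-level verification via the identification~\eqref{eq:equiv coh of pt degree two}. Your remark that no dualization (and hence no sign inversion) occurs here, unlike in the type~A lemma, is an accurate and worthwhile clarification that the paper leaves implicit.
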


\subsection{Peterson Schubert classes in general Lie type}\label{subsec.drellich}

As in Section~\ref{subsection:basis_Peterson_typeA}, the 
$\mathsf{S}$-fixed points $Pet^\mathsf{S}$ of the Peterson variety satisfy the relation
\[
Pet^\mathsf{S}=Pet\cap (G/B)^T
\]
and hence we may view $Pet^\mathsf{S}$ as a subset of the Weyl group $W$. Indeed, the fixed point set $Pet^\mathsf{S}$ may be described concretely as follows. For 
a subset $K$ of the set $\Delta$ simple roots, let $W_K$ denote the parabolic subgroup generated by the simple reflections indexed by $K$ and let $w_K$ denote 
the longest element of $W_K$. Then it is known \cite[Proposition 5.8]{HarTym-Poset} that 
\begin{equation*}
Pet^\mathsf{S}=\{w_K \mid K\subset\Delta \}.
\end{equation*}

In \cite{Dre}, Drellich defines an element $v_{K}$ in the Weyl group $W$ which are analogous to the $v_{\mathcal{A}}$ which appeared in the previous section. To define them, we need some preliminaries. First recall that there is a one-to-one correspondence between the set of vertices of the Dynkin diagram of $\Phi$ and the set of the simple roots $\Delta=\{\alpha_1,\ldots,\alpha_n\}$. 
Throughout this section, we assume to be fixed an ordering of the simple roots as given in \cite[p.58]{Humphreys-LieAlgebra}. (See Figure~\ref{pic: Dynkin diagrams}.)

\begin{figure}[h]
\setlength{\unitlength}{1mm}
\begin{center} 
  \begin{picture}(50, 15)(0,0)
  \put(-25,9){Type $A_n$:}
  \put(0,10){\circle{2}}
  \put(10,10){\circle{2}}
  \put(20,10){\circle{2}}
  \put(50,10){\circle{2}}
  \put(60,10){\circle{2}}
  \put(1,10){\line(1,0){8}}
  \put(11,10){\line(1,0){8}}
  \put(21,10){\line(1,0){8}}
  \put(33,8.8){$\cdots$}
  \put(41,10){\line(1,0){8}}
  \put(51,10){\line(1,0){8}}
  \put(-2,3){$\alpha_1$}
  \put(8,3){$\alpha_2$}
  \put(18,3){$\alpha_3$}
  \put(48,3){$\alpha_{n-1}$}
  \put(58,3){$\alpha_n$}
  \end{picture}
\end{center}  

\setlength{\unitlength}{1mm}
\begin{center}
  \begin{picture}(50, 15)(0,0)
  \put(-25,9){Type $B_n$:}
  \put(0,10){\circle{2}}
  \put(10,10){\circle{2}}
  \put(20,10){\circle{2}}
  \put(50,10){\circle{2}}
  \put(60,10){\circle{2}}
  \put(1,10){\line(1,0){8}}
  \put(11,10){\line(1,0){8}}
  \put(21,10){\line(1,0){8}}
  \put(33,8.8){$\cdots$}
  \put(41,10){\line(1,0){8}}
  \put(51,10.5){\line(1,0){8}}
  \put(51,9.5){\line(1,0){8}}
  \put(-2,3){$\alpha_1$}
  \put(8,3){$\alpha_2$}
  \put(18,3){$\alpha_3$}
  \put(48,3){$\alpha_{n-1}$}
  \put(58,3){$\alpha_n$}
  \qbezier(54.5,11.5)(57.5,10)(54.5,8.5)
  \end{picture}
\end{center}  

\setlength{\unitlength}{1mm}
\begin{center}
  \begin{picture}(50, 15)(0,0)
  \put(-25,9){Type $C_n$:}
  \put(0,10){\circle{2}}
  \put(10,10){\circle{2}}
  \put(20,10){\circle{2}}
  \put(50,10){\circle{2}}
  \put(60,10){\circle{2}}
  \put(1,10){\line(1,0){8}}
  \put(11,10){\line(1,0){8}}
  \put(21,10){\line(1,0){8}}
  \put(33,8.8){$\cdots$}
  \put(41,10){\line(1,0){8}}
  \put(51,10.5){\line(1,0){8}}
  \put(51,9.5){\line(1,0){8}}
  \put(-2,3){$\alpha_1$}
  \put(8,3){$\alpha_2$}
  \put(18,3){$\alpha_3$}
  \put(48,3){$\alpha_{n-1}$}
  \put(58,3){$\alpha_n$}
  \qbezier(55.5,11.5)(52.5,10)(55.5,8.5)
  \end{picture}
\end{center}  

\setlength{\unitlength}{1mm}
\begin{center}
  \begin{picture}(50, 20)(0,0)
  \put(-25,9){Type $D_n$:}
  \put(0,10){\circle{2}}
  \put(10,10){\circle{2}}
  \put(20,10){\circle{2}}
  \put(50,10){\circle{2}}
  \put(60,4){\circle{2}}
  \put(60,16){\circle{2}}
  \put(1,10){\line(1,0){8}}
  \put(11,10){\line(1,0){8}}
  \put(21,10){\line(1,0){8}}
  \put(33,8.8){$\cdots$}
  \put(41,10){\line(1,0){8}}
  \put(51,10.5){\line(3,2){8}}
  \put(51,9.5){\line(3,-2){8}}
  \put(-2,3){$\alpha_1$}
  \put(8,3){$\alpha_2$}
  \put(18,3){$\alpha_3$}
  \put(48,3){$\alpha_{n-2}$}
  \put(63,15){$\alpha_{n-1}$}
  \put(63,3){$\alpha_n$}
  \end{picture}
\end{center}  

\setlength{\unitlength}{1mm}
\begin{center}
  \begin{picture}(50, 25)(0,0)
  \put(-25,9){Type $E_6$:}
  \put(0,10){\circle{2}}
  \put(10,10){\circle{2}}
  \put(20,10){\circle{2}}
  \put(30,10){\circle{2}}
  \put(40,10){\circle{2}}
  \put(20,20){\circle{2}}
  \put(1,10){\line(1,0){8}}
  \put(11,10){\line(1,0){8}}
  \put(21,10){\line(1,0){8}}
  \put(31,10){\line(1,0){8}}
  \put(20,11){\line(0,1){8}}
  \put(-2,3){$\alpha_1$}
  \put(8,3){$\alpha_3$}
  \put(18,3){$\alpha_4$}
  \put(28,3){$\alpha_5$}
  \put(38,3){$\alpha_6$}
  \put(22,19){$\alpha_2$}
  \end{picture}
\end{center}  

\setlength{\unitlength}{1mm}
\begin{center}
  \begin{picture}(50, 25)(0,0)
  \put(-25,9){Type $E_7$:}
  \put(0,10){\circle{2}}
  \put(10,10){\circle{2}}
  \put(20,10){\circle{2}}
  \put(30,10){\circle{2}}
  \put(40,10){\circle{2}}
  \put(50,10){\circle{2}}
  \put(20,20){\circle{2}}
  \put(1,10){\line(1,0){8}}
  \put(11,10){\line(1,0){8}}
  \put(21,10){\line(1,0){8}}
  \put(31,10){\line(1,0){8}}
  \put(41,10){\line(1,0){8}}
  \put(20,11){\line(0,1){8}}
  \put(-2,3){$\alpha_1$}
  \put(8,3){$\alpha_3$}
  \put(18,3){$\alpha_4$}
  \put(28,3){$\alpha_5$}
  \put(38,3){$\alpha_6$}
  \put(48,3){$\alpha_7$}
  \put(22,19){$\alpha_2$}
  \end{picture}
\end{center}  

\setlength{\unitlength}{1mm}
\begin{center}
  \begin{picture}(50, 25)(0,0)
  \put(-25,9){Type $E_8$:}
  \put(0,10){\circle{2}}
  \put(10,10){\circle{2}}
  \put(20,10){\circle{2}}
  \put(30,10){\circle{2}}
  \put(40,10){\circle{2}}
  \put(50,10){\circle{2}}
  \put(60,10){\circle{2}}
  \put(20,20){\circle{2}}
  \put(1,10){\line(1,0){8}}
  \put(11,10){\line(1,0){8}}
  \put(21,10){\line(1,0){8}}
  \put(31,10){\line(1,0){8}}
  \put(41,10){\line(1,0){8}}
  \put(51,10){\line(1,0){8}}
  \put(20,11){\line(0,1){8}}
  \put(-2,3){$\alpha_1$}
  \put(8,3){$\alpha_3$}
  \put(18,3){$\alpha_4$}
  \put(28,3){$\alpha_5$}
  \put(38,3){$\alpha_6$}
  \put(48,3){$\alpha_7$}
  \put(58,3){$\alpha_8$}
  \put(22,19){$\alpha_2$}
  \end{picture}
\end{center}  

\setlength{\unitlength}{1mm}
\begin{center}
  \begin{picture}(30, 15)(0,0)
  \put(-35,9){Type $F_4$:}
  \put(0,10){\circle{2}}
  \put(10,10){\circle{2}}
  \put(20,10){\circle{2}}
  \put(30,10){\circle{2}}
  \put(1,10){\line(1,0){8}}
  \put(11,10.5){\line(1,0){8}}
  \put(11,9.5){\line(1,0){8}}
  \put(21,10){\line(1,0){8}}
  \put(-2,3){$\alpha_1$}
  \put(8,3){$\alpha_2$}
  \put(18,3){$\alpha_3$}
  \put(28,3){$\alpha_4$}
  \qbezier(14.5,11.5)(17.5,10)(14.5,8.5)
  \end{picture}
\end{center}  

\setlength{\unitlength}{1mm}
\begin{center}
  \begin{picture}(10, 15)(0,0)
  \put(-45,9){Type $G_2$:}
  \put(0,10){\circle{2}}
  \put(10,10){\circle{2}}
  \put(1,10){\line(1,0){8}}
  \put(1,10.7){\line(1,0){8}}
  \put(1,9.3){\line(1,0){8}}
  \put(-2,3){$\alpha_1$}
  \put(8,3){$\alpha_2$}
  \qbezier(5.5,11.5)(2.5,10)(5.5,8.5)
  \end{picture}
\end{center}  
\caption{The Dynkin diagrams.}
\label{pic: Dynkin diagrams}
\end{figure}
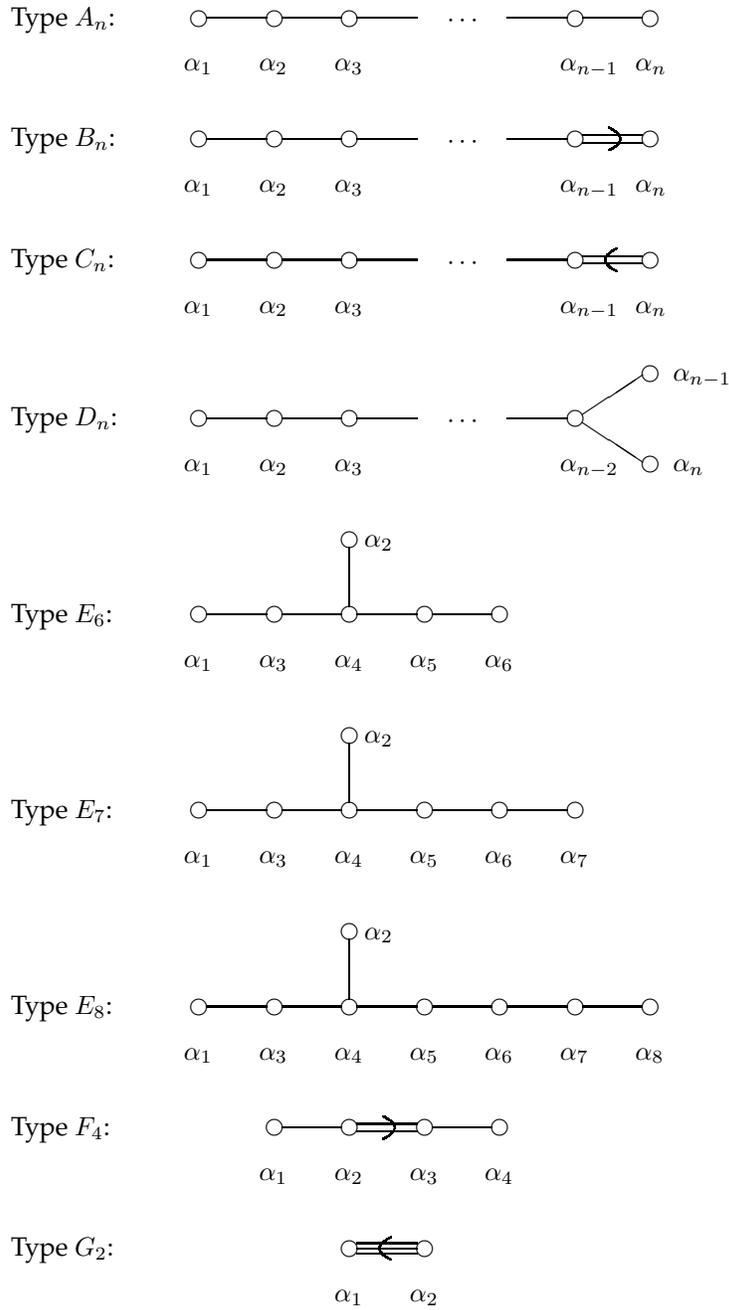  We say that a subset $K \subset \Delta$ of simple roots is \textbf{connected} if the induced Dynkin diagram of $K$ is a connected subgraph of the Dynkin diagram of $\Delta$. 
We denote by $\Phi_K$ the root subsystem associated with a connected subset $K \subset \Delta$.

\begin{Example}
Suppose $\Phi$ is of type $E_7$. We take the ordering of the simple roots $\alpha_1,\ldots,\alpha_7$ given in Figure~\ref{pic: Dynkin diagrams}.
If we take a connected subset $K=\{\alpha_2,\alpha_4,\alpha_5,\alpha_6,\alpha_7 \}$, then $\Phi_K$ is of type $A_5$.
If we take $K'=\{\alpha_2,\alpha_3,\alpha_4,\alpha_5 \}$, then $\Phi_{K'}$ is of type $D_4$.
\end{Example}

Every subset $K \subset \Delta$ can be written as $K = K_1 \times \cdots \times K_m$ where for each $i \in [m]$, $K_i$ is a maximal connected subset. 
Note that each connected subset corresponds to its own Lie type, and these may be distinct. 

\begin{Definition} \label{def:vK}
For a subset $K\subset \Delta$, we define $v_K \in W$ as follows.
\begin{enumerate}
\item For a connected subset $K \subset \Delta$, we fix an ordering of the simple roots in $\Phi_K$ as in Figure~\ref{pic: Dynkin diagrams}.
Then we define $v_K \in W$ as 
\begin{align*}
v_K := \prod_{{\rm{Root}}_K(i)=1}^{|K|} s_i 
\end{align*}
where ${\rm{Root}}_K(i)$ is the fixed index of the corresponding root in the root system $\Phi_K$. 
\item If a subset $K\subset \Delta$ can be decomposed into maximal connected subsets as $K = K_1 \times K_2 \times \cdots \times K_m$, then we define $v_K := v_{K_1} v_{K_2} \cdots v_{K_m}$.
\end{enumerate}
\end{Definition}

\begin{Example} \label{ex:vK generalLietype}
Continuing with the previous example, for the case $K=\{\alpha_2,\alpha_4,\alpha_5,\alpha_6,\alpha_7 \}$, we fix an ordering of elements of $K$ by ${\rm{Root}}_K(2)=1, {\rm{Root}}(4)=2, {\rm{Root}}_K(5)=3, {\rm{Root}}_K(6)=4, {\rm{Root}}_K(7)=5$. 
Then we have $v_K=s_2s_4s_5s_6s_7$.
For the case $K'=\{\alpha_2,\alpha_3,\alpha_4,\alpha_5 \}$ in $\Phi_{E_7}$, we fix an ordering of elements of $K'$ by ${\rm{Root}}_{K'}(5)=1, {\rm{Root}}_{K'}(4)=2, {\rm{Root}}_{K'}(2)=3, {\rm{Root}}_{K'}(3)=4$, one has $v_{K'}=s_5s_4s_2s_3$.
\end{Example}

Drellich proves an analogue of Theorem~\ref{theorem:pvA-basis} for arbitrary Lie type as follows; see \cite[Theorem~3.5]{Dre}. 

\begin{Theorem} 
\label{thm:basis}
Let $Pet$ be the Peterson variety variety associated to $\mathfrak{g}$. Then 
the Peterson Schubert classes $\lbrace p_{v_K} : K\subset \Delta \rbrace$ are a $\HS(\pt)$-module basis of $H_{\mathsf{S}}^*(Pet)$. 
\end{Theorem}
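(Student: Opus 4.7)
The plan is to mirror the argument of Theorem~\ref{theorem:pvA-basis} in the general Lie-type setting, replacing each combinatorial input with its analogue. Concretely, we define the Peterson Schubert class $p_{v_K} \in H^*_\mathsf{S}(Pet)$ to be the image of the $T$-equivariant Schubert class $\sigma^T_{v_K} \in H^*_T(G/B)$ under the restriction map $H^*_T(G/B) \to H^*_\mathsf{S}(Pet)$ arising from the general-type analogue of the diagram~\eqref{eq:cd}. As in the type A case, the goal is to (i) prove an upper-triangularity statement for the matrix $\bigl(p_{v_K}(w_L)\bigr)_{K,L \subseteq \Delta}$ with respect to inclusion of subsets of $\Delta$, (ii) deduce linear independence of $\{p_{v_K}\}_{K \subseteq \Delta}$ over $H^*_\mathsf{S}(\pt)=\Q[t]$, and (iii) match the cardinality $2^{|\Delta|} = 2^n$ of the set $\{v_K\}$ against the total Betti number of $Pet$ to conclude the basis statement.

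For step (i), I would first show the Bruhat-theoretic analogue of Lemma~\ref{lemma:containment}, namely that $v_K \leq w_L$ in the Bruhat order of $W$ if and only if $K \subseteq L$. The $(\Leftarrow)$ direction is immediate from Definition~\ref{def:vK}, since each simple reflection $s_i$ appearing in $v_K$ lies below $w_L$ when $i \in L$. For the reverse direction, $v_K \leq w_L$ would force every $s_i$ with $i \in K$ to satisfy $s_i \leq w_L$, and by the standard characterization of the parabolic longest element, $s_i \leq w_L$ forces $i \in L$. Combining this with the $T$-equivariant upper-triangularity~\eqref{eq:GmodB-upper-triangular} of Schubert classes in $H^*_T(G/B)$, which survives the restriction $\pi$ since $\pi(0)=0$, gives $p_{v_K}(w_L)=0$ whenever $K \not\subseteq L$.

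For step (ii), I would establish the nonvanishing $p_{v_K}(w_K) \neq 0$ by the positivity argument used in Corollary~\ref{cor:pvAwA-nonzero}. The key input is that $v_K$ appears as a reduced subword of (a fixed reduced word for) $w_K$: for each maximal connected component $K_i$ of $K$, the element $v_{K_i}$ is a reduced product of the simple reflections indexed by $K_i$ and therefore embeds as a subword of the longest element $w_{K_i}$ in $W_{K_i}$. Billey's formula (Theorem~\ref{theorem:billey}, which holds in all Lie types by the remark after it) then yields a sum each of whose terms is a product of positive roots; applying $\pi$ and using Lemma~\ref{lemma: alphai goes to t in all types} (which gives $\pi(\alpha_i)=t$ and thus $\pi(\beta) \in \Z_{\geq 0}\cdot t$ for any positive root $\beta$), every summand becomes a positive multiple of $t^{\ell(v_K)}$, so the sum is nonzero. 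Upper-triangularity plus this diagonal nonvanishing, combined with the fact that $\Q[t]$ is a domain, gives linear independence over $H^*_\mathsf{S}(\pt)$ by the same localization-at-$w_K$ argument used in the proof of Theorem~\ref{theorem:pvA-basis}.

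Finally, for step (iii), the Peterson Schubert classes $\{p_{v_K}\}$ are indexed by subsets of $\Delta$, of which there are $2^n$. The corresponding count for the Peterson variety in general Lie type is known (this generalizes \cite{ST}): the total rank of $H^*(Pet)$ equals $2^n$, with $p_{v_K}$ lying in cohomological degree $2\ell(v_K) = 2|K|$ by the degree argument of Proposition~\ref{prop:deg-pvA}. Since $H^*_\mathsf{S}(Pet)$ is a free $H^*_\mathsf{S}(\pt)$-module of rank equal to $\dim_\Q H^*(Pet) = 2^n$ (because the odd cohomology of $Pet$ vanishes, as $Pet$ is paved by affines), the linearly independent set $\{p_{v_K}\}$ has the right cardinality in each graded piece to be a basis; I would conclude as in Theorem~\ref{theorem:pvA-basis} by invoking the general principle (the ``appendix'' argument of \cite{HarTym-Monk}) that a collection of the correct cardinality in each degree which restricts to a linearly independent set at the fixed points is automatically a module basis. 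I expect the main obstacle to be pinning down the Bruhat order equivalence $v_K \leq w_L \iff K \subseteq L$ rigorously in the parabolic-subgroup context spanning all Lie types, together with verifying that the chosen ordering in Definition~\ref{def:vK} actually produces a reduced word that embeds into the chosen reduced word for $w_K$ in every case appearing in Figure~\ref{pic: Dynkin diagrams}.
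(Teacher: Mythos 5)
Your outline is correct and is exactly the route the paper indicates: the paper explicitly omits the proof of Theorem~\ref{thm:basis}, citing Drellich \cite{Dre} and stating only that the argument ``uses techniques analogous to those of Section~\ref{section: peterson in type A},'' which is precisely the upper-triangularity--plus--Billey-positivity--plus--Betti-count strategy you reproduce. The one place to be a little more careful is the final cardinality count: you invoke the Sommers--Tymoczko formula \cite{ST} for the statement that $\Poin(Pet,q)=(1+q^2)^n$, but that reference treats Lie type A; for general Lie type one should instead appeal to the paving-by-affines of $Pet$ (with one cell per fixed point $w_K$, of complex dimension $|K|$), or cite the relevant general-type reference, to justify the degree-by-degree match $b_{2\ell}(Pet)=\binom{n}{\ell}$ that the module-basis conclusion requires.
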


\begin{Remark}
The definition of $v_K$ depends on a choice of an ordering of elements of $K$, but the Peterson Schubert class $p_{v_K}$ does not depend on these choices \cite[Remark~7.12]{Horiguchi-mixedEuler}. 
For instance, in Example~\ref{ex:vK generalLietype}, if we change the ordering of the elements of $K$ to be ${\rm{Root}}_{K}(7)=1, {\rm{Root}}_{K}(6)=2, {\rm{Root}}_{K}(5)=3, {\rm{Root}}_{K}(4)=4, {\rm{Root}}_{K}(2)=5$, then we obtain $v_{K}=s_7s_6s_5s_4s_2$, which is different from the element $s_2s_4s_5s_6s_7$. 
However, the Peterson Schubert classes associated to these elements are equal.  
\end{Remark}

The proof of Theorem~\ref{thm:basis}, which we omit, uses techniques analogous to those of Section~\ref{section: peterson in type A}, among other techniques. As a corollary, we obtain the following result.

\begin{Corollary} \label{coro:restriction surjective in all Lie type}
The restriction homomorphism $H^*_T(G/B) \to H^*_\mathsf{S}(Pet)$ is surjective.
In particular, the (ordinary) restriction homomorphism $H^*(G/B) \to H^*(Pet)$ is also surjective.
\end{Corollary}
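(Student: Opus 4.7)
The plan is to deduce this corollary directly from Theorem~\ref{thm:basis}, in essentially the same manner that Corollary~\ref{coro:restriction surjective in type A} was deduced from Theorem~\ref{theorem:pvA-basis} in the type A case. By construction, each Peterson Schubert class $p_{v_K}$ is defined to be the image of the equivariant Schubert class $\sigma^T_{v_K} \in H^*_T(G/B)$ under the restriction map $H^*_T(G/B) \to H^*_\mathsf{S}(Pet)$. Thus the image of this restriction map already contains the entire set $\{p_{v_K} : K \subset \Delta\}$, which by Theorem~\ref{thm:basis} is a module basis for $H^*_\mathsf{S}(Pet)$ over $H^*_\mathsf{S}(\pt)$.

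To conclude that the restriction map is surjective, I need to show that the image is closed under $H^*_\mathsf{S}(\pt)$-scalar multiplication, which reduces to knowing that the coefficient ring $H^*_\mathsf{S}(\pt) = \Q[t]$ itself lies in the image. This follows from Lemma~\ref{lemma: alphai goes to t in all types}, which asserts $\pi(\alpha_i) = t$ for every simple root, so the natural homomorphism $\pi : H^*(BT) \to H^*(B\mathsf{S})$ is surjective. Concretely, any element $\sum_K f_K(t) \cdot p_{v_K} \in H^*_\mathsf{S}(Pet)$ is the image of a lift $\sum_K \tilde f_K(\alpha_1,\ldots,\alpha_n) \cdot \sigma^T_{v_K} \in H^*_T(G/B)$, where $\tilde f_K$ is any preimage of $f_K$ under $\pi$. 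This proves the first statement.

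For the second statement, I would use the commutative diagram
\begin{equation*}
\begin{CD}
H^{\ast}_{T}(G/B) @>>> H^{\ast}(G/B) \\
@VVV @VVV \\
H^{\ast}_{\mathsf{S}}(Pet) @>>> H^{\ast}(Pet)
\end{CD}
\end{equation*}
in which the horizontal maps are the forgetful maps coming from the Borel fibrations and the vertical maps are the restrictions. The top horizontal map is surjective because $H^{\mathrm{odd}}(G/B) = 0$ (via the Bruhat decomposition), and hence $H^*_T(G/B) \to H^*(G/B)$ is surjective, as in the discussion preceding Corollary~\ref{coro:restriction surjective in type A}. The same reasoning applies to the bottom horizontal map provided $H^{\mathrm{odd}}(Pet) = 0$, which holds in all Lie types by the paving results in the Hessenberg literature. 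Since the left vertical map is surjective by the first part and both horizontal maps are surjective, a routine diagram chase yields surjectivity of the right vertical map.

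The main subtlety here is not combinatorial but conceptual: Theorem~\ref{thm:basis} is doing all the heavy lifting, so the proof of this corollary amounts to unpacking the definition of $p_{v_K}$ and invoking the surjectivity of $\pi$. The one place where some care is needed is in the ordinary cohomology statement, where one must know that $H^{\mathrm{odd}}(Pet)=0$ in general Lie type to apply the diagram chase; in type A this was cited from \cite{ty}, while for the general case we must appeal to analogous paving results.
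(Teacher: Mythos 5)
Your proposal is correct and follows essentially the same approach as the paper: invoke Drellich's Theorem~\ref{thm:basis} to see the restriction map hits a $\Q[t]$-module basis, use surjectivity of $\pi:H^*(BT)\to H^*(B\mathsf{S})$ (Lemma~\ref{lemma: alphai goes to t in all types}) to get all of $H^*_\mathsf{S}(Pet)$, then run the same commutative diagram chase as for Corollary~\ref{coro:restriction surjective in type A}. Your explicit remark that the restriction map is only $\pi$-semilinear, so one must lift the coefficients $f_K(t)$ through $\pi$, is a small but genuine point of care that the paper leaves implicit.
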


Finally, we introduce the Monk formula and the Giambelli formula for $Pet$, as given by Drellich \cite[Theorem~4.2, Theorem~5.3, Theorem~5.5]{Dre}.   For a brief account of further developments since the work of Drellich, we refer the reader to Section~\ref{section.further.developments}.

\begin{Theorem} \label{theorem:Monk in all types} (\cite[Theorem~4.2]{Dre})
Let $Pet$ be a Peterson variety associated to $\mathfrak{g}$. 
Let $i \in [n]$,
let $v_K \in S_n$ be the permutation given in Definition~\ref{def:vK}, and let $p_{v_K}$ the corresponding Peterson Schubert class. 
Then 
\begin{align*} 
p_{s_i} \cdot p_{v_K} =p_{s_i}(w_K)p_{v_K} + \sum_{J \supsetneq K \atop |J|=|K|+1} c_{i,K}^J \, p_{v_J}, 
\end{align*}
where the coefficients $c_{i,K}^J$ are non-negative rational numbers given by 
\begin{align*}
c_{i,K}^J = \big(p_{s_i}(w_J)-p_{s_i}(w_K)\big) \cdot \frac{p_{v_K}(w_J)}{p_{v_J}(w_J)}.
\end{align*}
\end{Theorem}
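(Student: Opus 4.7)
The plan is to mimic the proof of Monk's formula in type A (Theorem~\ref{theorem:Monk_typeA}) by working abstractly with the module basis of Peterson Schubert classes guaranteed by Theorem~\ref{thm:basis}, together with the general-Lie-type analogues of the upper-triangularity statements~\eqref{eq:pvA-at-wB} and~\eqref{eq:pvA-at-wA-nonzero} and the injectivity of $\iota_2$ in the diagram~\eqref{eq:cd}. All needed input (that $p_{v_J}(w_M)=0$ unless $J \subset M$, that $p_{v_J}(w_J) \neq 0$, and the degree computation $\deg p_{v_J} = 2\lvert J \rvert$ in $\Q[t]$) is built into Theorem~\ref{thm:basis} via Billey's formula (Theorem~\ref{theorem:billey}), Lemma~\ref{fact:billey-positive}, and the general-type analogue of Lemma~\ref{lemma:containment}.

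Since $\{p_{v_J}\}_{J \subset \Delta}$ is an $\HS(\pt)$-module basis for $H^*_{\mathsf{S}}(Pet)$, I would uniquely expand
\[
p_{s_i} \cdot p_{v_K} = \sum_{J \subset \Delta} c_{i,K}^J \, p_{v_J}
\]
with $c_{i,K}^J \in \Q[t]$. Degree counting forces $\deg c_{i,K}^J = 2(\lvert K \rvert+1-\lvert J \rvert)$, so $c_{i,K}^J = 0$ once $\lvert J \rvert > \lvert K \rvert+1$, and $c_{i,K}^J$ is a rational constant when $\lvert J \rvert = \lvert K \rvert+1$.

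I would next show that $c_{i,K}^{J_0} = 0$ unless $K \subset J_0$ by a minimality argument: suppose $J_0$ is minimal with respect to inclusion among subsets satisfying $K \not\subset J_0$ and $c_{i,K}^{J_0} \neq 0$, and evaluate both sides of the expansion at the fixed point $w_{J_0}$. The left-hand side vanishes since $p_{v_K}(w_{J_0}) = 0$; on the right, each $J \subsetneq J_0$ either satisfies $K \subset J$ (forcing $K \subset J_0$, a contradiction) or has $c_{i,K}^J = 0$ by minimality, so only $c_{i,K}^{J_0}\, p_{v_{J_0}}(w_{J_0})$ survives, and the non-vanishing of $p_{v_{J_0}}(w_{J_0})$ in the integral domain $\Q[t]$ forces $c_{i,K}^{J_0}=0$, a contradiction. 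The expansion thus reduces to $J=K$ or $J \supsetneq K$ with $\lvert J \rvert = \lvert K \rvert+1$. Evaluating at $w_K$ isolates $c_{i,K}^K = p_{s_i}(w_K)$; for $J = K \sqcup \{j\}$, evaluating at $w_J$ and noting that the only $J' \supsetneq K$ of size $\lvert K \rvert+1$ with $J' \subset J$ is $J' = J$ itself yields
\[
p_{s_i}(w_J)\, p_{v_K}(w_J) = p_{s_i}(w_K)\, p_{v_K}(w_J) + c_{i,K}^J \, p_{v_J}(w_J),
\]
which is the stated formula after solving for $c_{i,K}^J$.

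The main obstacle I anticipate is establishing the non-negativity claim. The ratio $p_{v_K}(w_J)/p_{v_J}(w_J)$ is a non-negative rational because both numerator and denominator are non-negative integer multiples of the appropriate powers of $t$ by Billey positivity (Lemma~\ref{fact:billey-positive} together with Remark~\ref{remark: pi alpha i}), and $p_{v_J}(w_J)>0$ by the general-type analogue of Corollary~\ref{cor:pvAwA-nonzero}. What remains is monotonicity of the form $p_{s_i}(w_J) \geq p_{s_i}(w_K)$ whenever $K \subset J$; I expect this to follow from Billey's formula by choosing a reduced-word decomposition of $w_J$ that extends one for $w_K$, so that every reduced subword of $w_K$ expressing $s_i$ remains a reduced subword of $w_J$, making the Billey sum for $p_{s_i}(w_J)$ dominate that for $p_{s_i}(w_K)$ term-by-term. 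Constructing such compatible reduced-word decompositions uniformly across Lie types is the main technical point requiring care.
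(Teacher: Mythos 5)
Your derivation of the formula itself is correct and is essentially the standard argument for this kind of Monk-type identity: expand $p_{s_i}\cdot p_{v_K}$ in the module basis $\{p_{v_J}\}_{J\subset\Delta}$, use degree counting to bound $\lvert J\rvert$, a minimality argument and the upper-triangularity $p_{v_K}(w_M)=0$ for $K\not\subset M$ to show $c^J_{i,K}=0$ unless $K\subset J$, and then evaluate at $w_K$ and at $w_J$ to extract the diagonal coefficient and the off-diagonal ones. (The paper does not give its own proof, quoting Drellich's \cite[Theorem~4.2]{Dre}, but this is the same localization strategy used to prove the type $A$ version in \cite{HarTym-Monk}, so your route matches the intended one.)

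Your flagged concern — producing a reduced word for $w_J$ that is compatible with one for $w_K$ — is real but has a clean resolution that you should spell out, because Bruhat order ($v_K\le w_J$) alone is not enough; you need the \emph{weak} order. The relevant fact: if $K\subset J$, then $w_J$ factors as $w_J = w_K\cdot u$ with $\ell(w_J)=\ell(w_K)+\ell(u)$, where $u$ is the longest element of the set of minimal-length right coset representatives for $W_K$ in $W_J$. Therefore there \emph{is} a reduced word $\mathbf{b}$ for $w_J$ whose initial segment $\mathbf{b}'$ is a reduced word for $w_K$. Since Billey's $r(j,\mathbf{b})$ depends only on the prefix $b_1,\dots,b_{j-1}$, the terms in $\sigma^T_{s_i}(w_K)$ computed from $\mathbf{b}'$ literally reappear as the terms in $\sigma^T_{s_i}(w_J)$ from $\mathbf{b}$ with $j\le\ell(w_K)$, and the extra terms with $j>\ell(w_K)$ are positive roots; applying $\pi$ then gives $p_{s_i}(w_J)\ge p_{s_i}(w_K)$. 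Alternatively, one can avoid reduced words entirely by using the closed form $\sigma^T_{s_i}(w)=\varpi_i - w(\varpi_i)$ (which follows by an easy induction from Billey's formula, or from the Chevalley formula), so that $\sigma^T_{s_i}(w_J)-\sigma^T_{s_i}(w_K)=w_K(\varpi_i)-w_J(\varpi_i)$; since $w_K\le w_J$ in Bruhat order and $\varpi_i$ is dominant, this difference lies in $\sum_{\alpha\in\Delta}\Z_{\ge0}\,\alpha$, and $\pi$ sends it to a non-negative multiple of $t$. Either way the non-negativity of $c^J_{i,K}$ follows, together with the Billey-positivity of $p_{v_K}(w_J)$ and $p_{v_J}(w_J)$ established as in Lemma~\ref{fact:billey-positive} and Corollary~\ref{cor:pvAwA-nonzero}, so your proof is complete once this step is made precise.
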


\begin{Theorem}\label{theorem_Giambelli in all types} (\cite[Theorems~5.3 and 5.5]{Dre})
In the setting of the theorem above, 
\begin{enumerate}
\item if $K$ and $J$ are connected with $K \cap J \neq \emptyset$ and $K \cup J$ is disconnected, then we have 
\begin{align*} 
 p_{v_{K \cup J}} = p_{v_K} \cdot p_{v_J} 
\end{align*}
\item if $K$ is connected, then 
\begin{align*} 
p_{v_K} = \frac{|\mbox{Red}(v_K)|}{|K|!} \prod_{\alpha_i \in K} p_{s_i}
\end{align*} 
where $\mbox{Red}(v_K)$ is the set of reduced words for $v_K$.
\end{enumerate}
\end{Theorem}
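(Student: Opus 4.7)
The plan is to establish both parts by exploiting the injection $\iota_2$ from~\eqref{eq:localization for HessN}: equalities in $H^*_{\mathsf{S}}(Pet)$ may be verified by evaluating at each fixed point $w_M \in Pet^{\mathsf{S}}$, using (the general Lie-type version of) Billey's formula from Theorem~\ref{theorem:billey}, transported along $\pi$ via Lemma~\ref{lemma: alphai goes to t in all types}. For part (1), the hypothesis is that (after correcting an evident typo to $K \cap J = \emptyset$) the connected subsets $K$ and $J$ lie in distinct connected components of the Dynkin diagram of $K \cup J$, so every simple reflection $s_i$ with $i \in K$ commutes with every $s_j$ with $j \in J$. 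Consequently $v_{K \cup J} = v_K v_J = v_J v_K$, a reduced word for $v_{K \cup J}$ is the concatenation of reduced words for $v_K$ and $v_J$, and the same holds for the fixed-point representatives $w_{K \cup J}$. Billey's formula for $p_{v_{K \cup J}}(w_M)$ then factors into a product of independent sums over reduced subwords coming from the $K$-part and the $J$-part of a concatenated reduced word for $w_M$; these two factors are respectively $p_{v_K}(w_M)$ and $p_{v_J}(w_M)$. Injectivity of $\iota_2$ concludes part (1).

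For part (2), the strategy is to expand the right-hand product in the module basis of Theorem~\ref{thm:basis} and pin down the coefficients. Writing
\[ \prod_{\alpha_i \in K} p_{s_i} = \sum_{M \subseteq \Delta} c_M\, t^{|K|-|M|}\, p_{v_M}, \qquad c_M \in \mathbb{Q}, \]
the degree constraint forces $|M| \le |K|$. I will argue by induction on $|M|$ that $c_M = 0$ for every $M \neq K$. Evaluating at $w_M$, the left-hand side equals $\prod_{i \in K} p_{s_i}(w_M)$, which vanishes whenever $K \not\subseteq M$ since $p_{s_i}(w_M) = 0$ for $i \notin M$ (upper-triangularity, as in~\eqref{eq:pvA-at-wB}). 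On the right, upper-triangularity kills all summands with $M' \not\subseteq M$, and by minimality only the diagonal term $c_M\, t^{|K|-|M|}\, p_{v_M}(w_M)$ survives; since $p_{v_M}(w_M) \neq 0$ by Corollary~\ref{cor:pvAwA-nonzero} (which carries over to arbitrary Lie type), we conclude $c_M = 0$. The case $K \subsetneq M$ is excluded by the degree bound. Thus $\prod_{\alpha_i \in K} p_{s_i} = c_K\, p_{v_K}$ for a single rational constant $c_K$.

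It remains to identify $c_K = |K|!/|\mathrm{Red}(v_K)|$, which is where the main work lies. Evaluating at $w_K$ gives $c_K = \prod_{i \in K} p_{s_i}(w_K)/p_{v_K}(w_K)$, so I need a combinatorial comparison of the two Billey sums. Fix a reduced word $\mathbf{b}$ for $w_K$. By Billey's formula and Lemma~\ref{lemma: alphai goes to t in all types}, $p_{v_K}(w_K) = t^{|K|}\sum_{\mathbf{i}} \prod_{\ell} \mathrm{ht}(r(i_\ell, \mathbf{b}))$, where $\mathbf{i}$ ranges over reduced subwords of $\mathbf{b}$ spelling $v_K$, while $\prod_i p_{s_i}(w_K)$ becomes $t^{|K|}\prod_{i \in K}\bigl(\sum_j \mathrm{ht}(r(j_i,\mathbf{b}))\bigr)$ with $j_i$ running over positions in $\mathbf{b}$ labeled $i$. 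Expanding the second product as a sum over tuples $(j_{i_1},\dots,j_{i_k})$ and separating by the set of positions used, the ``transverse'' (position-set) choice groups the contributions into orbits under the $|K|!$ permutations of the factors, while the genuine reduced orderings of $v_K$ that realize the same position set as a subword form an orbit of size $|\mathrm{Red}(v_K)|$. A careful double counting — using Lemma~\ref{fact:billey-positive} to guarantee positivity so no cancellation occurs — gives the ratio $|K|!/|\mathrm{Red}(v_K)|$. This combinatorial repackaging, which must be done uniformly across the Lie types enumerated in Figure~\ref{pic: Dynkin diagrams}, is the principal obstacle; everything else is formal from the basis theorem and upper-triangularity.
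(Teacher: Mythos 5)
The paper cites this theorem from Drellich without reproducing a proof, so there is no in-paper argument to compare against; I assess your proposal on its own merits. Your Part (1) argument is correct: after the evident correction to $K \cap J = \emptyset$, the decomposition $W_{K\cup J}=W_K\times W_J$ shows that a reduced subword of a reduced word $\mathbf{b}$ (for any fixed point) spelling $v_{K\cup J}$ is exactly the union of a reduced subword spelling $v_K$ and one spelling $v_J$, so Billey's sum factors as a product and injectivity of $\iota_2$ finishes the proof.

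Your Part (2) has a genuine gap. The reduction, via upper-triangularity and degree counting in the module basis, to $\prod_{\alpha_i\in K} p_{s_i}=c_K\, p_{v_K}$ with $c_K\in\Q$ is sound, but the identification $c_K=|K|!/|\mathrm{Red}(v_K)|$ is the entire content of the theorem, and the heuristic you offer for it is incorrect. Fix a reduced word $\mathbf{b}$ for $w_K$ and, for each linear order $\sigma$ of $K$, let $A_\sigma$ denote the height-weighted sum over position-sets of $\mathbf{b}$ (one position per label in $K$) whose label-order is $\sigma$, so that $\prod_i p_{s_i}(w_K)=t^{|K|}\sum_\sigma A_\sigma$ and $p_{v_K}(w_K)=t^{|K|}\sum_{\sigma\in\mathrm{Red}(v_K)}A_\sigma$. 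The ``double counting into orbits under the $|K|!$ permutations of the factors'' you describe would require $A_\sigma$ to be independent of $\sigma$, and this fails: for $K=\{1,2,3\}$ in type $A_3$ with $\mathbf{b}=(1,2,1,3,2,1)$ for $w_0$, the height sequence is $(1,2,1,3,2,1)$ and one computes $A_{132}=12$ while $A_{312}=0$. The ratio $|K|!/|\mathrm{Red}(v_K)|$ does emerge, but for the subtler reason that $|\mathrm{Red}(v)|^{-1}\sum_{\sigma\in\mathrm{Red}(v)}A_\sigma$ (equivalently, $p_v(w_K)/|\mathrm{Red}(v)|$) is independent of which Coxeter element $v$ of $W_K$ one takes; this equidistribution is not supplied by the positivity of Lemma~\ref{fact:billey-positive} alone. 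Establishing it uniformly across Lie types is precisely where Drellich's work lies, and your proposal, while correctly identifying this as the ``principal obstacle,'' does not close it.
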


\subsection{A presentation of the cohomology ring of Peterson varieties in arbitrary Lie type}

We maintain the notation and setup of algebraic groups and Lie algebras given above. In this section we quickly explain the presentation, by generators and relations, of the cohomology ring of Peterson varieties $Pet$ in arbitrary Lie type. As in Section~\ref{subsec.drellich} we keep the discussion brief and refer the reader to \cite{HarHorMas} for details. 

As before, we fix a choice of $G$. We begin with the cohomology ring of the flag variety $G/B$. 
We have already defined above the characters $\chi: T \to \C^*$ from the torus $T$. Note that a character on $T$ can be easily extended to the Borel subgroup $B$ by using the abelianization map $B \to B/[B,B] \cong T$. 
For $\chi \in \mathrm{Hom}(T, \C^*)$ we denote by $\tilde{\chi}$ the extension to $B$ defined by the composition $\tilde{\chi}: B \to B/[B,B] \cong T \xrightarrow{ \ \chi \ } \C^*$. With this notation in place, 
we can associate to a character $\chi: T \to \C^*$ the complex line bundle $L_\chi:=G \times_B \C_{\tilde{\chi}}$ over $G/B$, where $\C_{\tilde{\chi}}$ is the complex one-dimensional $B$-representation defined by the character $\tilde{\chi}$, and $G \times_B \C_{\tilde{\chi}}$ is the quotient space of $G \times \C_{\tilde{\chi}}$ by the $B$-action $b \cdot (g,z):=(gb^{-1},\tilde{\chi}(b)z)$ for $(g,z) \in G \times \C_{\tilde{\chi}}$ and $b \in B$.
Denote the symmetric algebra of the $\Q$-vector space $\mathfrak{t}^*_\Z \otimes_{\Z}\Q$ by 
\begin{align*}
\mathcal{R}:=\text{Sym}_{\Q}(\mathfrak{t}^*_\Z \otimes_{\Z}\Q). 
\end{align*}
Then  the assignment $\chi \mapsto c_1(L_{\chi}^*) \in H^2(G/B)$ induces a homomorphism
\begin{align} \label{eq:the map to flag in all Lie types}
\mathcal{R} \to H^*(G/B)
\end{align}
(where the map above doubles the grading on $\mathcal{R}$).  
The following is a famous result of Borel \cite{Borel}. 

\begin{Theorem}\label{theorem:Borel} (\cite{Borel})
The map in \eqref{eq:the map to flag in all Lie types} is surjective and its kernel is the ideal in $\mathcal{R}$ generated by $W$-invariants. 
\end{Theorem}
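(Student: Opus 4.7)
The plan is to realize $G/B$ as the fiber of a Borel-type fibration whose total space is $BT$ and whose base is $BG$, and then run exactly the same ``Serre spectral sequence collapses'' argument that underlies the isomorphism~\eqref{eq:free module over H(BT)}.

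First, since $G/B$ is paved by complex Bruhat cells, its odd cohomology vanishes. Since $G$ is semisimple, $H^*(BG;\Q)$ is also concentrated in even degrees, and of course so is $H^*(BT;\Q) \cong \mathcal{R}$. Next, using that the inclusion $T \hookrightarrow B$ is a homotopy equivalence, the associated bundle $EG\times_G (G/B)$ is homotopy equivalent to $EG/B \simeq BB \simeq BT$, so we obtain a fibration
\begin{equation*}
G/B \;\hookrightarrow\; BT \;\longrightarrow\; BG.
\end{equation*}
Because all three spaces have only even rational cohomology, the Serre spectral sequence of this fibration degenerates at $E_2$ (just as for the Borel fibration discussed in Section~\ref{subsec: background equivariant cohomology}), giving
\begin{equation*}
H^*(BT) \;\cong\; H^*(BG)\otimes_{\Q} H^*(G/B)
\end{equation*}
as $H^*(BG)$-modules, and the fiber restriction map $H^*(BT)\to H^*(G/B)$ is surjective with kernel the ideal generated by $H^{>0}(BG)$.

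Next I would invoke Chevalley's classical theorem identifying $H^*(BG) \cong H^*(BT)^W = \mathcal{R}^W$ via the map induced by $BT\to BG$. Combining this with the previous step, the restriction $\mathcal{R} \cong H^*(BT) \to H^*(G/B)$ is surjective with kernel $(\mathcal{R}^W_+)$, the ideal generated by the positive-degree $W$-invariants.

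The remaining point is to identify this fiber-restriction map with the map $\chi \mapsto c_1(L_\chi^*)$ described in the statement. For this, observe that the line bundle $L_\chi = G\times_B \C_{\tilde\chi}$ has a canonical extension to an associated line bundle $EG\times_G L_\chi \cong EG\times_B \C_{\tilde\chi}$ over the total space $EG\times_G(G/B)\simeq BT$; restricting to the fiber $G/B$ recovers $L_\chi$, and the first Chern class of its dual on $BT$ corresponds under $H^2(BT;\Z)\cong \mathfrak{t}^*_{\Z}$ to $\chi$ itself (compare~\eqref{eq:equiv coh of pt degree two}). By naturality of Chern classes under pullback along the fiber inclusion, the fiber restriction of this class is $c_1(L_\chi^*)\in H^2(G/B)$. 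This identifies the fiber-restriction with the Borel map of the theorem, completing the proof.

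The main obstacle is the bookkeeping in the final paragraph: one must match the dual-and-sign conventions between $\chi \mapsto c_1^T(\C_\chi^*)$ used to identify $\mathfrak{t}^*_\Z$ with $H^2(BT;\Z)$ and the convention $\chi \mapsto c_1(L_\chi^*)$ used to define the map into $H^*(G/B)$. The spectral-sequence collapse itself is routine (given the vanishing of odd cohomology), and the step $H^*(BG)\cong \mathcal{R}^W$ is the classical Chevalley--Borel theorem which we would quote rather than reprove.
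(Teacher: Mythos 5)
The paper gives no proof of this theorem --- it is quoted directly from Borel \cite{Borel} --- so there is no in-paper argument to compare against. Your sketch is the standard derivation and it is correct: the fibration $G/B \hookrightarrow EG/B \simeq BT \to BG$, the collapse of its Serre spectral sequence (all three spaces have purely even rational cohomology and $BG$ is simply connected), and the Borel isomorphism $H^*(BG;\Q)\cong H^*(BT;\Q)^W\cong \mathcal{R}^W$ together give that the fiber restriction $\mathcal{R}\cong H^*(BT)\to H^*(G/B)$ is surjective with kernel the ideal generated by the positive-degree $W$-invariants. On the sign bookkeeping you flag at the end: the identification~\eqref{eq:equiv coh of pt degree two} used in Section~\ref{section: peterson general Lie type} is $\alpha\mapsto c_1^T(\C_\alpha)$ with \emph{no} dual (the dual convention $t_i := c_1^T(\C_{\chi_i}^*)$ appears only in the type-A Section~\ref{section: flag}); under it the fiber restriction sends $\chi\mapsto c_1(L_\chi)$, whereas~\eqref{eq:the map to flag in all Lie types} sends $\chi\mapsto c_1(L_\chi^*)=-c_1(L_\chi)$. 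The two maps thus differ by precomposition with $v\mapsto -v$ on $\mathcal{R}$, a graded algebra automorphism that commutes with the $W$-action and so preserves the ideal generated by positive-degree $W$-invariants; surjectivity and the description of the kernel are therefore unaffected, and the discrepancy is harmless exactly as you anticipated. One small attribution note: the isomorphism $H^*(BG;\Q)\cong\mathcal{R}^W$ is also Borel's (from the same 1953 paper); Chevalley's theorem is the purely algebraic statement that $\mathcal{R}^W$ is a polynomial algebra, which you do not actually need for this argument.
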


It follows from Corollary~\ref{coro:restriction surjective in all Lie type} that the surjective map in \eqref{eq:the map to flag in all Lie types} composed with the restriction map  \begin{align} \label{eq:the map to pet in all Lie types}
\mathcal{R} \twoheadrightarrow H^*(G/B) \twoheadrightarrow H^*(Pet)
\end{align}
is also surjective. Next, we wish to analyze the kernel. We have the following from \cite{HarHorMas}. 

\begin{Theorem} \label{mainthm Lie terminology} (\cite[Theorem~4.1]{HarHorMas})
The kernel of the surjective map in \eqref{eq:the map to pet in all Lie types} is generated by the quadratic forms 
\begin{align*}
\alpha_i \varpi_i \ \ \ {\rm for} \ i \in [n]
\end{align*}
where $\varpi_1, \ldots, \varpi_n$ denote the fundamental weights. 
In particular, we obtain the following presentation  
\begin{align*}
H^*(Pet) \cong \mathcal{R}/(\alpha_i \varpi_i \mid i \in [n]).
\end{align*}
\end{Theorem}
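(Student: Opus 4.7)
The plan is to show that the surjection $\mathcal{R} \twoheadrightarrow H^*(Pet)$ from~\eqref{eq:the map to pet in all Lie types} descends to an isomorphism $\mathcal{R}/(\alpha_i \varpi_i \mid i \in [n]) \xrightarrow{\sim} H^*(Pet)$. This will proceed in two independent stages: first, verify each $\alpha_i \varpi_i$ lies in the kernel (yielding a well-defined surjection from the quotient), and second, compare Hilbert series to force the induced surjection to be an isomorphism.

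For the first stage, I would lift the situation to $\mathsf{S}$-equivariant cohomology and exploit the fixed-point injection $\iota_2: H^*_\mathsf{S}(Pet) \hookrightarrow \bigoplus_{K \subset \Delta} \Q[t]$. Using the standard formula for the equivariant first Chern class of a line bundle associated to a character, the image of any $\chi \in \mathfrak{t}^*_\Q$ restricts at $w_K$ to $\pi(w_K \cdot \chi)$, which is a $\Q$-multiple of $t$ because $\pi$ sends every simple root to $t$ by Lemma~\ref{lemma: alphai goes to t in all types} and $\mathfrak{t}^*_\Q$ is spanned by the simple roots. Hence $\iota_2(\alpha_i \varpi_i)|_{w_K}$ is a rational multiple of $t^2$ at every fixed point $w_K$. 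To upgrade this pointwise divisibility by $t$ into genuine divisibility in the free $\Q[t]$-module $H^*_\mathsf{S}(Pet)$, I would expand $\alpha_i \varpi_i$ in the Peterson Schubert basis $\{p_{v_K}\}$ of Theorem~\ref{thm:basis} and run a descending induction on $|K|$ using the upper-triangularity $p_{v_K}(w_{K'}) = 0$ for $K \not\subseteq K'$ and $p_{v_K}(w_K) \neq 0$ (the general Lie type analog of Theorem~\ref{theorem:pvA-basis}), concluding that every coefficient is divisible by $t$. Thus $\alpha_i \varpi_i \in t \cdot H^*_\mathsf{S}(Pet)$ and so vanishes in $H^*(Pet) \cong H^*_\mathsf{S}(Pet)/(t)$.

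For the second stage, I would prove that $\alpha_1 \varpi_1, \ldots, \alpha_n \varpi_n$ form a regular sequence in $\mathcal{R} \cong \Q[x_1,\ldots,x_n]$. By Lemma~\ref{lemma: solution regular criterion}, it suffices to verify that the common zero locus in $\mathfrak{t}^*_\C$ is just $\{0\}$. A common zero $x$ determines a partition $[n] = S \sqcup S^c$ recording which factor vanishes at $x$; the crucial linear-algebraic fact is that the collection $\{\alpha_i\}_{i \in S} \cup \{\varpi_j\}_{j \notin S}$ is always a basis of $\mathfrak{t}^*_\Q$. Indeed, since $\{\varpi_j\}$ is the basis dual to the simple coroots $\{\alpha_j^\vee\}$, linear independence of this collection reduces via the pairing to the invertibility of the restriction of the Cartan matrix of $\mathfrak{g}$ to indices in $S$, which is the Cartan matrix of a semisimple Levi subalgebra and hence nonsingular. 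Thus $x = 0$ is forced, and Lemma~\ref{lemma: hilbert regular criterion} yields $\Hilb(\mathcal{R}/(\alpha_i \varpi_i), q) = (1+q^2)^n$. This matches the Poincar\'e polynomial of $Pet$, which is obtained from the affine paving of $Pet$ by cells indexed by subsets of $\Delta$ (specializing the Type A computation of Section~\ref{subsec: presentation of Peterson cohomology type A}). The equality of Hilbert series, together with the surjectivity established in the first stage, forces the map to be an isomorphism.

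The main obstacle I foresee is the inductive step in the first stage, which depends on the general-Lie-type upper-triangular basis property and on sufficiently precise information about the diagonal evaluations $p_{v_K}|_{w_K}$ via a general Lie type Billey's formula. A potentially cleaner alternative, paralleling Section~\ref{subsec: presentation of Peterson cohomology type A}, is to first establish an equivariant presentation $H^*_\mathsf{S}(Pet) \cong \mathcal{R}[t]/(\widetilde{f}_i)$ with $\widetilde{f}_i \equiv \alpha_i \varpi_i \pmod{t}$ via an analogous Hilbert-series argument in the larger ring, and only afterwards specialize at $t=0$ to recover the statement of Theorem~\ref{mainthm Lie terminology}.
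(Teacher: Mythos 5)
Your proposal correctly identifies the overall strategy (surjectivity of the quotient map, then Hilbert series comparison), and Stage~2 is both correct and appealing: reducing the regular-sequence condition, via Lemma~\ref{lemma: solution regular criterion}, to the invertibility of principal submatrices of the Cartan matrix is a clean linear-algebraic argument. But Stage~1 — the verification that each $\alpha_i \varpi_i$ actually lies in the kernel — has a genuine gap, and it is precisely there that the paper does its real work.

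The issue is that the ``pointwise divisibility by $t$'' you observe is vacuous. The natural equivariant lift $\widetilde{\alpha_i \varpi_i}$ has cohomological degree $4$, so its restriction at any $\mathsf{S}$-fixed point lands in $H^4(B\mathsf{S}) \cong \Q\, t^2$ automatically; every degree-$4$ class has this property and the observation carries no information. Writing $\widetilde{\alpha_i \varpi_i} = \sum_K c_K\, p_{v_K}$ with $c_K \in \Q[t]$ homogeneous of degree $4 - 2|K|$, the $c_K$ for $|K| < 2$ are divisible by $t$ simply because they have positive degree, and $c_K = 0$ for $|K| > 2$ by degree. The entire content of the claim is that $c_K = 0$ for $|K| = 2$ — in other words, that the degree-$4$ part of $\alpha_i \varpi_i$ expands with zero coefficient on each $p_{v_{\{\alpha_i,\alpha_j\}}}$. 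Your upper-triangularity induction determines each $c_K$ in terms of the restrictions $\pi(w_K \alpha_i)\pi(w_K\varpi_i)$ and the diagonal entries $p_{v_K}(w_K)$, but it does not tell you that the resulting constants vanish: that requires actually computing the restrictions and the structure constants of the product $p_{s_i} p_{s_j}$, which is a genuine arithmetic fact about rank-$2$ root subsystems. In the paper this is done via Monk's and Giambelli's formulas together with a case analysis on the type ($A_1 \times A_1$, $A_2$, $B_2$, $G_2$) of the parabolic subsystem $\{\alpha_i,\alpha_j\}$ (Lemmas~\ref{lemcommute} and~\ref{lemcartan}), which yields the precise identity $c_i^j = -\langle\alpha_i,\alpha_j\rangle$; that identity is what makes the degree-$4$ coefficients cancel. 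Your alternative sketch at the end — lifting to an equivariant presentation $\mathcal{R}[t]/(\widetilde f_i)$ first — is indeed the paper's route (Theorem~\ref{Propqua}), but it requires exactly the same computation to verify that the proposed $\widetilde f_i$ vanish in $H^*_\mathsf{S}(Pet)$, so the gap is not avoided. Finally, the claim that $\operatorname{Poin}(Pet,q) = (1+q^2)^n$ in all Lie types is not a ``specialization'' of the type $A$ case; it is a separate input (an affine paving indexed by subsets of $\Delta$) that needs to be cited on its own, though it is indeed true and is used implicitly in the paper as well.
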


In what follows, we take the fundamental weights $\varpi_1, \ldots, \varpi_n$ as our chosen basis of $\mathfrak{t}^*_\Z \otimes_{\Z}\Q$.
Then we have 
\begin{align*}
\mathcal{R} \cong \Q[\varpi_1,\ldots,\varpi_n].
\end{align*}
Since the simple root $\alpha_i$ can be written as $\alpha_i=\sum_{j=1}^n \langle\alpha_i,\alpha_j\rangle \varpi_j$ where $\langle\alpha_i,\alpha_j\rangle$ denotes the Cartan integer, the quadratic form $\alpha_i \varpi_i$ appearing in Theorem~\ref{mainthm Lie terminology} can be written 
\begin{align*}
\alpha_i \varpi_i = \sum_{j=1}^n \langle\alpha_i,\alpha_j\rangle \varpi_i \varpi_j.
\end{align*}
It is known that the ring homomorphism in \eqref{eq:the map to flag in all Lie types} sends $\varpi_k$ to the Schubert class $\sigma_{s_k} \in H^2(G/B)$, see e.g. \cite{BGG}. \footnote{We can also see it directly if we know about divided difference operators. Since one can write $\varpi_k=\sum_{j=1}^n c_j \sigma_{s_j}$ in $H^2(G/B)$ where $\varpi_k$ denotes the image of $\varpi_k$ under the map in \eqref{eq:the map to flag in all Lie types}, applying the divided difference operator $\partial_j$ on both sides yields $\delta_{kj}=c_j$ for $j \in [n]$, from which we get $\varpi_k=\sigma_{s_k}$.}
In order to prove Theorem~\ref{mainthm Lie terminology}, it is necessary to first show that the ring homomorphism 
\begin{equation} 
\check\varphi: \Q[\varpi_1,\dots,\varpi_n]/(\sum_{j=1}^n \langle\alpha_i,\alpha_j\rangle \varpi_i \varpi_j \mid i \in [n]) \twoheadrightarrow H^*(Pet); \ \ \ \varpi_k \mapsto \bxi_k 
\end{equation}
is well-defined where $\bxi_k$ denotes the image of the (ordinary) Schubert class $\sigma_{s_k}$ under the restriction map $H^*(G/B) \to H^*(Pet)$.
For this purpose, we begin with some elementary computations involving Peterson Schubert classes. First, from 
Monk's formula (Theorem~\ref{theorem:Monk in all types}) applied to the case $K=\{\alpha_i\}$ and $v_K=s_i$ we obtain 
\begin{equation} \label{eq:3-1}
p_{s_i}^2=p_{s_i}(s_i)\cdot p_{s_i}+\displaystyle\sum_{j\neq i} c_i^j\cdot p_{v_{\{\alpha_i,\alpha_j\}}} 
\end{equation}
where 
\begin{equation} \label{eq:3-2}
c_i^j=(p_{s_i}(w_{\{\alpha_i,\alpha_j\}})-p_{s_i}(s_i))\cdot \frac {p_{s_i}(w_{\{\alpha_i,\alpha_j\}})}{p_{v_{\{\alpha_i,\alpha_j\}}}(w_{\{\alpha_i,\alpha_j\}})}. 
\end{equation}
More specifically, since Theorem~\ref{theorem:billey} implies that  $\sigma_{s_i}(s_i)=\alpha_i$, from Lemma~\ref{lemma: alphai goes to t in all types} we conclude 
\begin{equation} \label{eq:3-3}
p_{s_i}(s_i)=t. 
\end{equation}

We record the following. 
\begin{Lemma} \label{lemcommute}
In~\eqref{eq:3-1}, if $s_i$ and $s_j$ commute, then $c_i^j=0$.
\end{Lemma}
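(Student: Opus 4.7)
The plan is to show that the numerator of the expression for $c_i^j$ in \eqref{eq:3-2}, namely $p_{s_i}(w_{\{\alpha_i,\alpha_j\}}) - p_{s_i}(s_i)$, vanishes whenever $s_i$ and $s_j$ commute. Since the denominator $p_{v_{\{\alpha_i,\alpha_j\}}}(w_{\{\alpha_i,\alpha_j\}})$ is nonzero (by the upper-triangularity analogue of \eqref{eq:pvA-at-wA-nonzero} established in the proof of Theorem~\ref{thm:basis}), this immediately gives $c_i^j=0$.

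First I would observe that, under the hypothesis that $s_i$ and $s_j$ commute, the two simple roots $\alpha_i, \alpha_j$ form a disconnected subdiagram of the Dynkin diagram of $\Delta$, so the parabolic subgroup $W_{\{\alpha_i,\alpha_j\}}$ is the direct product $\langle s_i\rangle \times \langle s_j\rangle$ and its longest element is
\[
w_{\{\alpha_i,\alpha_j\}} = s_i s_j = s_j s_i,
\]
which is a length-$2$ reduced expression. I then fix the reduced word $\mathbf{b} = (i,j)$ and apply Billey's formula (Theorem~\ref{theorem:billey}) to compute $\sigma^T_{s_i}(w_{\{\alpha_i,\alpha_j\}})$: the only reduced subword of $(i,j)$ representing $s_i$ is the singleton at position $1$, so
\[
\sigma^T_{s_i}(w_{\{\alpha_i,\alpha_j\}}) = r(1,\mathbf{b}) = \alpha_i.
\]
Similarly, with the length-one reduced word $(i)$ for $s_i$, Billey's formula gives $\sigma^T_{s_i}(s_i) = \alpha_i$, which is consistent with \eqref{eq:3-3}.

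Next I push both restrictions through the map $\pi : H^*(BT) \to H^*(B\S1)$ of Lemma~\ref{lemma: alphai goes to t in all types}, which sends every simple root $\alpha_k$ to $t$. This yields
\[
p_{s_i}(w_{\{\alpha_i,\alpha_j\}}) = t = p_{s_i}(s_i),
\]
so the numerator in \eqref{eq:3-2} is zero, and therefore $c_i^j = 0$. The only conceptual point to verify is that $p_{v_{\{\alpha_i,\alpha_j\}}}(w_{\{\alpha_i,\alpha_j\}})\neq 0$, so that \eqref{eq:3-2} is literally well-defined rather than $0/0$; this is the upper-triangularity property guaranteed by Theorem~\ref{thm:basis}. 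No substantial obstacle arises: the argument is a direct Billey-formula computation combined with $\pi(\alpha_k)=t$.
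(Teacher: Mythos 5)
Your proof is correct and follows essentially the same route as the paper: identify $w_{\{\alpha_i,\alpha_j\}}=s_is_j$, compute $\sigma^T_{s_i}(s_is_j)=\alpha_i$ via Billey's formula, push through $\pi$ to get $p_{s_i}(w_{\{\alpha_i,\alpha_j\}})=t=p_{s_i}(s_i)$, and conclude from \eqref{eq:3-2}. Your remark that the denominator is nonzero is a reasonable extra check but not something the paper dwells on.
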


\begin{proof}
Since 
$s_i$ and $s_j$ commute, we have $w_{\{\alpha_i,\alpha_j\}}=s_is_j$. Moreover  
from Theorem~\ref{theorem:billey} we can compute 
\[
\sigma_{s_i}(w_{\{\alpha_i,\alpha_j\}})=\sigma_{s_i}(s_is_j)=\alpha_i. 
\]
From Lemma~\ref{lemma: alphai goes to t in all types} we get $p_{s_i}(w_{\{\alpha_i,\alpha_j\}})=t$.  Then the equations~\eqref{eq:3-2}, \eqref{eq:3-3} imply $c_i^j=0$ as desired. 
\end{proof}

In the case when $s_i$ and $s_j$ do not commute, the Dynkin diagram corresponding to the subset 
$K=\{\alpha_i,\alpha_j\}$ is connected, so Giambelli's formula (Theorem~\ref{theorem_Giambelli in all types}) yields
\begin{equation} \label{eq:3-4}
p_{v_{\{\alpha_i,\alpha_j\}}}=\frac{1}{2}p_{s_i}p_{s_j}.
\end{equation}

In this case, the coefficient appearing in~\eqref{eq:3-1} can be expressed in terms of the Cartan integer. 

\begin{Lemma}\label{lemcartan}
In~\eqref{eq:3-1}, if $s_i$ and $s_j$ do not commute, then  
\begin{equation*}
c_i^j=-\langle\alpha_i,\alpha_j\rangle.
\end{equation*}
\end{Lemma}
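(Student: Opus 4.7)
The plan is to reduce the calculation to an explicit computation inside the rank-two root subsystem $\Phi_K$ associated to $K = \{\alpha_i, \alpha_j\}$. Since $s_i$ and $s_j$ do not commute, the induced Dynkin diagram on $K$ is connected, so Giambelli's formula (Theorem~\ref{theorem_Giambelli in all types}(2)) applied to $v_K$, which has length $2$ and therefore a unique reduced expression, gives $p_{v_{\{\alpha_i,\alpha_j\}}} = \tfrac{1}{2}\, p_{s_i}\, p_{s_j}$, and in particular $p_{v_K}(w_K) = \tfrac{1}{2}\, p_{s_i}(w_K)\, p_{s_j}(w_K)$ for $w_K := w_{\{\alpha_i,\alpha_j\}}$. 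Substituting this together with $p_{s_i}(s_i) = t$ from \eqref{eq:3-3} into the defining formula~\eqref{eq:3-2} of $c_i^j$, the ratio collapses to
\[
c_i^j \;=\; \frac{2\bigl(p_{s_i}(w_K) - t\bigr)}{p_{s_j}(w_K)}.
\]
The problem thus reduces to computing $p_{s_i}(w_K)$ and $p_{s_j}(w_K)$.

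For this I would apply Billey's formula (Theorem~\ref{theorem:billey}, in its general Lie-type version noted in the Remark following it) to the alternating reduced word $\mathbf{b} = (i, j, i, j, \ldots)$ of length $m := m_{ij}$ for the longest element $w_K$. The reduced subwords equal to the length-one element $s_i$ occur precisely at the odd positions, and each contributing root is $r(2k-1, \mathbf{b}) = (s_i s_j)^{k-1}(\alpha_i) \in \Phi_K^+$. By Lemma~\ref{lemma: alphai goes to t in all types}, the map $\pi: H^*(BT) \to H^*(B\mathsf{S})$ sends every simple root to $t$, so it sends any positive root $\beta = \sum_\ell n_\ell \alpha_\ell$ to $\operatorname{ht}(\beta)\, t$. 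Consequently,
\[
p_{s_i}(w_K) \;=\; \biggl(\sum_{k=1}^{\lceil m/2 \rceil} \operatorname{ht}\bigl((s_i s_j)^{k-1}(\alpha_i)\bigr)\biggr) t,
\]
with a symmetric expression for $p_{s_j}(w_K)$ (using instead the reduced word starting with $s_j$, or equivalently, reading $\mathbf{b}$ at the even positions).

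Since $s_i$ and $s_j$ do not commute, $\Phi_K$ is one of the rank-two irreducible root systems $A_2$, $B_2$ (equivalently $C_2$), or $G_2$, corresponding to $m = 3$, $4$, or $6$. In each case the iterated reflections $(s_i s_j)^{k-1}(\alpha_i)$ are explicitly determined by the Cartan integers $\langle \alpha_i, \alpha_j\rangle$ and $\langle \alpha_j, \alpha_i\rangle$; one obtains the pairs $(p_{s_i}(w_K), p_{s_j}(w_K)) = (2t, 2t)$ in type $A_2$, $(4t, 3t)$ in type $B_2$ with $\alpha_i$ long (and the swap otherwise), and $(10t, 6t)$ in type $G_2$ with $\alpha_i$ long. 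Plugging these into the displayed formula for $c_i^j$ yields in each case exactly $-\langle \alpha_i, \alpha_j \rangle$, completing the verification. The only obstacle is managing the three possible rank-two types separately, but each is a short, elementary bookkeeping computation with Cartan integers.
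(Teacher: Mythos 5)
Your argument is correct and is essentially the paper's own: both reduce via Giambelli's formula to $c_i^j = 2\bigl(p_{s_i}(w_K) - t\bigr)/p_{s_j}(w_K)$, then evaluate the two fixed-point restrictions via Billey's formula, treating the three rank-two types $A_2$, $B_2$, $G_2$ separately. The only cosmetic difference is that the paper keeps the Cartan integers $a_{ij},a_{ji}$ symbolic and specializes only the product $a=a_{ij}a_{ji}$ in each case, which absorbs your ``and the swap otherwise'' sub-cases into a single uniform computation.
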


\begin{proof}
From \eqref{eq:3-2}, \eqref{eq:3-3}, \eqref{eq:3-4} we can compute 
\begin{equation} \label{eq:eq:3-5}
c_i^j=\frac {2(p_{s_i}(w_{\{\alpha_i,\alpha_j\}})-t)}{p_{s_j}(w_{\{\alpha_i,\alpha_j\}})}
\end{equation}
so it suffices to compute 
$p_{s_i}(w_{\{\alpha_i,\alpha_j\}})$ and 
$p_{s_j}(w_{\{\alpha_i,\alpha_j\}})$. 
In what follows we use the notation 
\[
a_{ij}:=\langle \alpha_i,\alpha_j\rangle \ \ (i\not=j), \quad a:=a_{ij}a_{ji}. 
\]
With this notation in place, note that by definition of the Cartan integers we have that the action of the simple reflections $s_j$ on the simple roots $\alpha_j$ may be expressed as 
\begin{equation} \label{eq:3-6}
s_j(\alpha_i)=\begin{cases} \alpha_i-a_{ij}\alpha_j \quad&(i\not=j),\\
                              -\alpha_i \quad&(i=j).
                              \end{cases}
\end{equation}

In order to prove the lemma, we consider each of the possible cases. 
                            
(i) In the case when the Dynkin diagram corresponding to $\{\alpha_i, \alpha_j\}$ is of the form 
\begin{picture}(35,10)                   
                       \put(5,5){\circle{5}}    
                       \put(7.5,5){\line(1,0){20}}
                       \put(30,5){\circle{5}}
                       
                       \put(3,-7){$i$} 
                       \put(27,-7){$j$}                                                                     
\end{picture}
the order of $s_is_j$ is $3$ so we have 
\[
w_{\{\alpha_i,\alpha_j\}}=s_is_js_i=s_js_is_j.
\]
Using 
Theorem~\ref{theorem:billey} and~\eqref{eq:3-6} in this case we can compute that
\begin{align*}
&\sigma_{s_i}(w_{\{\alpha_i,\alpha_j\}})=\sigma_{s_i}(s_is_js_i)=\alpha_i+s_is_j(\alpha_i)=a\alpha_i-a_{ij}\alpha_j,\\
&\sigma_{s_j}(w_{\{\alpha_i,\alpha_j\}})=\sigma_{s_j}(s_js_is_j)=\alpha_j+s_js_i(\alpha_j)=a\alpha_j-a_{ji}\alpha_i. 
\end{align*}
Then Lemma~\ref{lemma: alphai goes to t in all types} implies 
\begin{equation*}
p_{s_i}(w_{\{\alpha_i,\alpha_j\}})=(a-a_{ij})t,\quad p_{s_j}(w_{\{\alpha_i,\alpha_j\}})=(a-a_{ji})t.
\end{equation*}
Finally~\eqref{eq:eq:3-5} yields 
\begin{equation} \label{eq:3-7}
c_i^j= \frac{2(a-a_{ij}-1)}{(a-a_{ji})}
\end{equation}
and substituting $a=a_{ij}a_{ji}$, $a_{ij}=-1$ we obtain $c_i^j=-a_{ij}$ as desired.  

(ii) In the case 
\begin{picture}(35,10)                   
                       \put(5,5){\circle{5}}    
                       \put(7.5,6){\line(1,0){20}}
                       \put(7.5,4){\line(1,0){20}}
                       \put(30,5){\circle{5}}
                       
                       \put(3,-7){$i$} 
                       \put(27,-7){$j$}                                                                    
\end{picture}
the order of $s_is_j$ is $4$ so we have 
\[
w_{\{\alpha_i,\alpha_j\}}=s_is_js_is_j=s_js_is_js_i.
\]
Using the above together with 
Theorem~\ref{theorem:billey} and~\eqref{eq:3-6} we may compute
\begin{align*}
&\sigma_{s_i}(w_{\{\alpha_i,\alpha_j\}})=\sigma_{s_i}(s_is_js_is_j)=\alpha_i+s_is_j(\alpha_i)=a\alpha_i-a_{ij}\alpha_j,\\
&\sigma_{s_j}(w_{\{\alpha_i,\alpha_j\}})=\sigma_{s_j}(s_js_is_js_i)=\alpha_j+s_js_i(\alpha_j)=a\alpha_j-a_{ji}\alpha_i  
\end{align*}
which is the same as case (i) above. Thus~\eqref{eq:3-7}
also holds in this case and since $a=a_{ij}a_{ji}=2$ we obtain $c_i^j=-a_{ij}$ as required. 

(iii) Finally, in the case 
\begin{picture}(35,10)                   
                       \put(5,5){\circle{5}}  
                       \put(7,6.5){\line(1,0){20}}
                       \put(7.5,5){\line(1,0){20}}
                       \put(7,3.5){\line(1,0){20}}
                       \put(30,5){\circle{5}} 
                                                                   
                       \put(3,-7){$i$} 
                       \put(27,-7){$j$}
                                                                    
\end{picture}
the element $s_is_j$ has order $6$ and thus 
\[
w_{\{\alpha_i,\alpha_j\}}=s_is_js_is_js_is_j=s_js_is_js_is_js_i.
\]
In this case we have $a=3$ so Theorem~\ref{theorem:billey} and~\eqref{eq:3-6} yield that 
\begin{align*}
\sigma_{s_i}(w_{\{\alpha_i,\alpha_j\}})&=\sigma_{s_i}(s_is_js_is_js_is_j)
=\alpha_i+s_is_j(\alpha_i)+(s_is_j)^2(\alpha_i)=4\alpha_i-2a_{ij}\alpha_j,\\
\sigma_{s_j}(w_{\{\alpha_i,\alpha_j\}})&=\sigma_{s_j}(s_js_is_js_is_js_i)
=\alpha_j+s_js_i(\alpha_j)+(s_js_i)^2(\alpha_j)=4\alpha_j-2a_{ji}\alpha_i.
\end{align*}
Then from Lemma~\ref{lemma: alphai goes to t in all types} we compute 
\begin{equation*}
p_{s_i}(w_{\{\alpha_i,\alpha_j\}})=(4-2a_{ij})t,\quad 
p_{s_j}(w_{\{\alpha_i,\alpha_j\}})=(4-2a_{ji})t.
\end{equation*}
Equation~\eqref{eq:eq:3-5} then implies 
\[
c_i^j=\frac{2(3-2a_{ij})}{4-2a_{ji}}
\]
and finally using that $a_{ij}a_{ji}=3$ we get that $c_i^j=-a_{ij}$ as desired. 
\end{proof}

The main theorem of \cite{HarHorMas} is the following. 

\begin{Theorem}\label{Propqua} (\cite[Theorem~4.1]{HarHorMas})
Following the notation of this section, we have 
\begin{equation*} 
H^*_{\S1}(Pet) \cong \Q[\varpi_1,\cdots, \varpi_n, t] \bigg/ 
\left\langle \displaystyle\sum_{j=1}^{n}\langle\alpha_i,\alpha_j\rangle \varpi_i \varpi_j -2t \varpi_i,  \ \ 1\leq i\leq n \right\rangle 
\end{equation*}
and 
\begin{equation*} 
H^*(Pet) \cong \Q[\varpi_1, \cdots, \varpi_n] \bigg/
\left\langle \displaystyle\sum_{j=1}^{n}\langle\alpha_i,\alpha_j\rangle \varpi_i \varpi_j ,  \ \ 1\leq i\leq n \right\rangle 
\end{equation*}
where the identifications are given by $\varpi_i \mapsto p_{s_i}$ and $\varpi_i \mapsto \check{p}_{s_i}$ respectively. 
\end{Theorem}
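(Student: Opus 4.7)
The plan is to mirror the blueprint established for the type A case in Theorem~\ref{theo:3.1}: first derive the explicit quadratic relations satisfied by the degree-$2$ generators $\{p_{s_i}\}_{i=1}^n$ and $t$, obtaining a surjective ring map from the proposed quotient ring onto $H^*_{\S1}(Pet)$, and then verify via Hilbert series together with a regular-sequence argument that this surjection is an isomorphism. The ordinary cohomology statement will then follow by setting $t=0$ via~\eqref{eq:from equivariant to ordinary}.

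First I would derive, for each $i\in[n]$, the identity
\begin{equation*}
\sum_{j=1}^n \langle \alpha_i,\alpha_j\rangle \, p_{s_i} p_{s_j} - 2t\, p_{s_i} = 0 \quad \text{in } H^*_{\S1}(Pet).
\end{equation*}
Starting from~\eqref{eq:3-1} and substituting $p_{s_i}(s_i) = t$ from~\eqref{eq:3-3}, the coefficient $c_i^j = 0$ when $s_is_j=s_js_i$ (Lemma~\ref{lemcommute}) and $c_i^j = -\langle \alpha_i,\alpha_j\rangle$ otherwise (Lemma~\ref{lemcartan}), together with the Giambelli expansion $p_{v_{\{\alpha_i,\alpha_j\}}} = \tfrac{1}{2}p_{s_i}p_{s_j}$ from~\eqref{eq:3-4}, the identity emerges after multiplying by $2$ and absorbing the $j=i$ contribution via $\langle\alpha_i,\alpha_i\rangle=2$. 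Since Theorem~\ref{theorem_Giambelli in all types} writes every $p_{v_K}$ as a product of $p_{s_i}$'s and these span $H^*_{\S1}(Pet)$ over $\Q[t]$ by Theorem~\ref{thm:basis}, we obtain a well-defined surjective graded ring map
\begin{equation*}
\varphi:\Q[\varpi_1,\ldots,\varpi_n,t]/\langle f_i\mid i\in[n]\rangle \twoheadrightarrow H^*_{\S1}(Pet),
\end{equation*}
sending $\varpi_i\mapsto p_{s_i}$ and $t\mapsto t$, where $f_i := \sum_j \langle\alpha_i,\alpha_j\rangle\varpi_i\varpi_j - 2t\varpi_i$.

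The main technical step is to show that $f_1,\ldots,f_n,t$ form a regular sequence in $\Q[\varpi_1,\ldots,\varpi_n,t]$. By Lemma~\ref{lemma: solution regular criterion}, this reduces to verifying that the common vanishing set in $\C^{n+1}$ is $\{0\}$. Setting $t=0$, each equation factors as $\varpi_i\cdot\bigl(\sum_j\langle\alpha_i,\alpha_j\rangle\varpi_j\bigr)=0$. Letting $S:=\{i:\varpi_i\neq 0\}$, for each $i\in S$ the second factor must vanish, producing the square linear system $\sum_{j\in S}\langle\alpha_i,\alpha_j\rangle\varpi_j=0$ for $i\in S$, whose coefficient matrix is the principal submatrix of the Cartan matrix indexed by $S$. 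Such a submatrix is itself the Cartan matrix of the semisimple sub-root-system $\Phi_S$ and is therefore non-singular, forcing $\varpi_j=0$ for $j\in S$, a contradiction unless $S=\emptyset$. I expect this regular-sequence verification -- specifically, the appeal to non-singularity of all principal sub-Cartan matrices -- to be the principal conceptual hurdle.

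Finally, applying Lemma~\ref{lemma: hilbert regular criterion} to the initial subsequence $f_1,\ldots,f_n$ (each of cohomology degree $4$ in the convention $\deg\varpi_i=\deg t=2$) yields
\begin{equation*}
\Hilb\bigl(\Q[\varpi_1,\ldots,\varpi_n,t]/\langle f_i\rangle,q\bigr) = \frac{(1-q^4)^n}{(1-q^2)^{n+1}} = \frac{(1+q^2)^n}{1-q^2}.
\end{equation*}
On the other hand, Theorem~\ref{thm:basis} provides a $\Q[t]$-module basis $\{p_{v_K}\}_{K\subset\Delta}$ of $H^*_{\S1}(Pet)$ with $p_{v_K}$ of cohomology degree $2|K|$, so the Hilbert series of $H^*_{\S1}(Pet)$ is $(1+q^2)^n/(1-q^2)$ as well. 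Combined with surjectivity of $\varphi$, the equality of Hilbert series forces $\varphi$ to be an isomorphism, proving the equivariant presentation. Setting $t=0$ and invoking~\eqref{eq:from equivariant to ordinary} immediately yields the presentation for $H^*(Pet)$.
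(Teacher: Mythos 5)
Your proposal is correct and follows the same overall blueprint the manuscript sketches: derive the quadratic relations from the Monk formula~\eqref{eq:3-1} together with Lemmas~\ref{lemcommute} and~\ref{lemcartan} and the Giambelli identity~\eqref{eq:3-4}, obtain surjectivity of $\varphi$ from Theorems~\ref{thm:basis} and~\ref{theorem_Giambelli in all types}, and then compare Hilbert series. The one place where the manuscript's proof is deliberately terse --- ``the remainder of the argument uses Hilbert series techniques similar to previous proofs'' --- is exactly the regular-sequence verification, and you supply a clean argument there that deserves a remark. For the type-A Peterson case the manuscript verifies that the relations cut out only the origin by a somewhat elaborate continued-fraction computation (Lemmas~\ref{lemm:5.1} and~\ref{lemm:5.2}); your argument instead observes that after setting $t=0$ each relation factors as $\varpi_i\cdot\bigl(\sum_{j}\langle\alpha_i,\alpha_j\rangle\varpi_j\bigr)$, so any common zero is governed by the linear system over the support set $S$ whose coefficient matrix is the principal submatrix $(\langle\alpha_i,\alpha_j\rangle)_{i,j\in S}$ of the Cartan matrix. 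Since this submatrix is (a diagonal rescaling of) the Gram matrix of the linearly independent vectors $\{\alpha_i:i\in S\}$, it is non-singular, forcing $S=\emptyset$. This is more uniform and more conceptual than the type-A-specific continued-fraction estimate, and it is the right argument for arbitrary Lie type. The rest --- the Hilbert series computation yielding $(1+q^2)^n/(1-q^2)$ on both sides, and the passage to ordinary cohomology by setting $t=0$ via~\eqref{eq:from equivariant to ordinary} --- matches the paper's intent exactly.
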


\begin{proof}
We give a sketch only. 
If $s_i$ and $s_j$ commute then $\langle \alpha_i,\alpha_j\rangle=0$, so by 
Lemma~\ref{lemcommute}, the conclusion of 
Lemma~\ref{lemcartan} holds in this case. From this and \eqref{eq:3-4} we see that 
\eqref{eq:3-1} can be expressed as 
\begin{equation*}
p_{s_i}^2=t\cdot p_{s_i}-\frac{1}{2}\sum_{j\neq i} \langle\alpha_i,\alpha_j\rangle p_{s_i}p_{s_j}. 
\end{equation*}
Since $\langle \alpha_i,\alpha_i\rangle=2$
for any $i$, the above equation can be re-written to agree with one of the relations given among the $\varpi_i$.

From Theorems~\ref{thm:basis} and \ref{theorem_Giambelli in all types} it follows that $p_{s_1},\ldots p_{s_n},t$ generate $H^*_{\S1}(Pet)$ as $\Q$-algebras. 
Thus, the maps sending $\varpi_i$ to $p_{s_i}$ and $\check{p}_{s_i}$ respectively give rise to surjective ring maps $\varphi$ and $\check{\varphi}$ respectively. The remainder of the argument uses Hilbert series techniques similar to previous proofs given in this manuscript. 
\end{proof}

\section{ The cohomology rings of regular nilpotent Hessenberg varieties in type A  } 
\label{section: reg nilp Hess}

In the previous sections, we gave an explicit ring presentation of the equivariant and ordinary cohomology rings of Peterson varieties, which are special cases of regular nilpotent Hessenberg varieties. 
We now turn  to a similar such derivation -- at least, for the case of Lie type A -- for the general case of \emph{any} regular nilpotent Hessenberg variety.  The restriction to Lie type A is partly for simplicity, but is also due to the fact that the situation for other Lie types turns out to be best discussed using substantially different perspectives. Our exposition for this section will be based on \cite{AHHM2019}.  As already mentioned, the Peterson case ca be seen as a ``motivating'' case, and we hope that the reader will find that some of the features of the analysis given below are familiar from the previous sections.  Indeed, in broad strokes the structure of our argument remains the same: namely, after coming up with a set of generators for the relevant rings (which come from the cohomology ring of the flag variety), we first lean on the theory of localization in equivariant cohomology to aid us in showing that certain relations hold among these generators, thus giving a well-defined homomorphism 
\[
\varphi: \Q[x_1,\ldots, x_n]/I_h \to H^*_S(\Hess(\mathsf{N},h))
\]
where $x_1,\ldots,x_n$ are the generators and the ideal $I_h$ is generated by the relations we have found. Then, in order to show that the given relations suffice to describe the ring, i.e., to show that $\varphi$ is an isomorphism, we turn to the theory of Hilbert series and show that the domain and the codomain have identical Hilbert series. 

Based on the above brief sketch, the reader could be forgiven for wondering if there is anything at all that is different about the general case as compared to the Peterson case.  Our reply to this question is a typical one in mathematics: namely, the general case turns out to present technical difficulties not seen in the special case, and hence requires some genuinely new insights and arguments. 

With this in mind, our aim in the exposition below is to make this general case comprehensible by making as clear as possible both the similarities and the disparities between the Peterson case and the general case. It is explicitly \emph{not} our aim to reproduce, here, the most technical (and lengthy) portions of the actual proofs of some of the statements, since these may be readily found in the original manuscript \cite{AHHM2019}. Instead, we have sought to provide the reader with motivation, examples, and overall perspective, armed with which an interested reader would be able to absorb the proofs in \cite{AHHM2019} without undue difficulty. 

In particular, and from our point of view most importantly, we give a leisurely exposition concerning the derivation of our set of relations, denoted below as $f_{i,j}$ (respectively $\cf_{i,j}$) for the equivariant (respectively ordinary) cohomology ring of $\Hess(\mathsf{N},h)$. This is because our $f_{i,j}$ (or $\cf_{i,j}$) are not completely straightforward generalizations of the quadratic relations found in the Peterson case in Theorem~\ref{theo:3.1}. In this sense, the derivation of these relations is one of the conceptually important contributions of \cite{AHHM2019}.

\subsection{The motivation and definition of the relations $f_{i,j}$ and statement of main results}\label{subsec: derivation fij}

As remarked at the end of the previous section, one of the important contributions of \cite{AHHM2019} is the 
derivation of the appropriate polynomials $f_{i,j}$ which generate the ideal of relations for $H^*_\mathsf{S}(\Hess(\mathsf{N},h))$. 
This section is devoted to the task of finding these polynomials. 
Let $\SChNil_i\in H_S^2(\Hess(\mathsf{N},h))$ denote the image of $\TChFlag_i$ in \eqref{def of T eq ch in flag} under the restriction homomorphism $H^*_T(\fln) \to H^*_S(\Hess(\mathsf{N},h))$ which is the composition of the two left vertical maps in~\eqref{eq:cd}. 
We note that this point in the exposition, we have not justified that the classes $\SChNil_i$ generate $H^*_S(\Hess(\mathsf{N},h))$. Our approach to the proof does not attempt to show this directly; rather, we find the relations satisfied by the $\SChNil_i$, and then a Hilbert series argument will in fact show, as a corollary, that the classes $\SChNil_i$ do indeed generate $H^*_S(\Hess(\mathsf{N},h))$ (as a $H^*(B\S1)$-algebra). 
More specifically, our first task below is to find polynomials $f(x_1,\ldots,x_n, t) \in \mathbb{Q}[x_1,\cdots, x_n, t]$ such that $f(\SChNil_1, \cdots, \SChNil_n, t) = 0 \in H^*_\mathsf{S}(\Hess(\mathsf{N},h))$.

We will find such relations among the $\SChNil_i$ inductively.  The base cases (``the building blocks'') are the polynomials defined as follows: 
\begin{equation} \label{eq:f-1}
g_j:=\sum_{k=1}^j(x_k-kt) \in \Q[x_1, \ldots, x_n, t] \quad \textup{ for } j
\in [n]. 
\end{equation} 
For convenience we also set $g_0 :=0$.   With this in hand, we can now define our polynomials $f_{i,j}$ in a recursive fashion as follows. 

\begin{Definition} \label{definition:fij}
Let $(i,j)$ be a pair of integers satisfying  $1 \leq j \leq i \leq n$. 
We define polynomials $f_{i,j}$ inductively as follows. As the base
case, when $i=j$, we
define 
\begin{equation*}
f_{j,j}:=g_j \quad \textup{ for } j \in [n]. 
\end{equation*}
Proceeding inductively, for $(i,j)$ with $1 \leq j < i \leq n$ we define 
\begin{equation} \label{eq:f-3}
f_{i,j}:=f_{i-1,j-1}+\big(x_j-x_i-t\big)f_{i-1,j}
\end{equation}
where we take the convention that $f_{*,0}:=0$ for any $*$. 
\end{Definition}

\begin{Example}\label{exam:f}
Suppose $n=4$. Then the $f_{i,j}$ have the following form. 
\begin{align*}
&f_{1,1} = g_1, f_{2,2}=g_2, f_{3,3} = g_3, f_{4,4} = g_4 \\
&f_{2,1}=(x_1-x_2-t)g_1 \\
&f_{3,2}=(x_1-x_2-t)g_1+(x_2-x_3-t)g_2  \\
&f_{4,3}=(x_1-x_2-t)g_1+(x_2-x_3-t)g_2+(x_3-x_4-t)g_3 \\
&f_{3,1}=(x_1-x_3-t)(x_1-x_2-t)g_1  \\
&f_{4,2}=(x_1-x_3-t)(x_1-x_2-t)g_1+(x_2-x_4-t)\{(x_1-x_2-t)g_1+(x_2-x_3-t)g_2\} \\
&f_{4,1}=(x_1-x_4-t)(x_1-x_3-t)(x_1-x_2-t)g_1
\end{align*}
\end{Example}

It helps to informally visualize each $f_{i,j}$ for $i \geq  j$ as being associated to the $(i,j)$-th entry in an $n \times n$ matrix, as follows: 
\begin{equation}\label{eq:matrix fij}
\begin{pmatrix}
f_{1,1} & 0 & \cdots & \cdots & 0 \\
f_{2,1} & f_{2,2} & 0 & \cdots & 0 \\
f_{3,1} & f_{3,2} & f_{3,3} & \ddots & \vdots \\
\vdots &  \vdots      &     \vdots            & \ddots & 0 \\
f_{n,1} & f_{n,2} & f_{n,3} & \cdots & f_{n,n} 
\end{pmatrix}.
\end{equation}
From Definition~\ref{definition:fij} we see that $f_{i,j}$ is determined by the entries ``right above'' it and 
``to its upper-left'' (and when there is no entry to its upper-left, we treat it as $0$).

Our next goal is to motivate the above definition of the $f_{i,j}$. In particular, as we already stated above, 
it is not straightforward to shift from the relations appearing in the presentation of the Petersons in Section~\ref{section: peterson in type A} to the relations $f_{i,j}$.  Some non-trivial new ideas are needed.  By illustration through concrete examples, recorded below, we aim to give the reader a feeling for how we derived the $f_{i,j}$ in Definition~\ref{definition:fij}, where we started our explorations by manipulating and expanding on the relations for the Peterson case. 
To describe this process, let us first recall that in the analysis of the Peterson case in Section~\ref{section: peterson in type A} we had defined the $p_{v_{\mathcal{A}}}$ in~\eqref{eq: definition pvA}, and using Lemma~\ref{lemma:equivariant Schubert class simple reflection} we can conclude 
\begin{equation}\label{eq.pj.relations.and.generator}
p_i := \pi_1(\sigma_{s_i}^T) =\pi_1 \left( \sum_{k=1}^i ( \tau_k^T - t_k ) \right) = \sum_{k=1}^i ( \SChNil_k - kt ), 
\end{equation} 
where $\pi_1$ is the restriction map $H^*_T(\fln) \to H^*_S(\Hess(\mathsf{N},h))$.
Under the map taking the variable $x_k$ to $\bar{\tau}^{\S1}_k$, we see from~\eqref{eq:f-1} that $p_i$ is represented by the polynomial $g_i$.  
Indeed, this motivates our definition of the $g_i$. 
Noting that $\sum_{k=1}^n ( \TChFlag_k - t_k )=0$ in $H^*_T(\fln)$ by Theorem~\ref{theorem:presentation flag typeA}, we always have the relation
\begin{equation}\label{eq.gn.is.zero}
p_n=0 \ \ \ {\rm in} \ H^*_\mathsf{S}(\Hess(\mathsf{N},h))
\end{equation}
for any Hessenberg function $h$, which translates to the relation $g_n=0$.

We now translate between the presentation of $H^*_{\S1}(\Y)$ in Theorem~\ref{theo:3.1}, which is stated in terms of variables $z_i$ which get sent to $p_i, i \in [n-1]$, and a presentation in terms of variables $x_j$ which correspond to the $\overline{\tau}^{\S1}_j, j \in [n]$. Keeping in mind the relation~\eqref{eq.gn.is.zero} that $g_n=0$ and using~\eqref{eq.pj.relations.and.generator} to see that $z_i$ corresponds to $g_i=\sum_{k=1}^i (x_k-kt)$, it is not hard to see that 
\begin{align*}
 H^*_{\S1}(\Y) & \cong \Q[z_1, \cdots, z_{n-1}, t] \bigg/  \langle z_k(z_k - \frac{1}{2} z_{k-1} - \frac{1}{2} z_{k+1} - t)  \, \mid \, k \in [n-1] \rangle \\ 
& \cong \Q[z_1, \cdots, z_{n-1}, t] \bigg/ \langle z_k(2z_k - z_{k-1}-z_{k+1}-2t) \, \mid \, k \in [n-1] \rangle \\
& \cong \Q[x_1, \cdots, x_n, t] \bigg/  \langle g_k(x_k - x_{k+1}-t) \, \mid \, k \in [n-1] \rangle + \langle g_n \rangle. 
\end{align*}
Thus we see that the fundamental relations for $\Y$ in terms of the $x_j$ variables have a simple relationship to the $f_{i,j}$ of Definition~\ref{definition:fij}. This is easiest to see in an explicit example. 

\begin{Example}\label{example.n.is.4.peterson}
Let $n=4, h=(2,3,4,4)$ for $Pet_4$. The relations are then 
\begin{equation}\label{eq:example.g.ideal}
\langle g_1(x_1-x_2-t), g_2(x_2-x_3-t), g_3(x_3-x_4-t), g_4 \rangle 
\end{equation}
Notice from Example~\ref{exam:f} that 
$$
f_{2,1} = g_1(x_1-x_2-t), f_{3,2} = f_{2,1} + g_2(x_2-x_3-t), f_{4,3} = f_{3,2} + g_3(x_3-x_4-t), f_{4,4} = g_4.
$$
So it is evident from the above that the two ideals 
$$ 
\langle f_{2,1}, f_{3,2}, f_{4,3}, f_{4,4} \rangle  \, \textup{ and } \, \langle g_1(x_1-x_2-t), g_2(x_2-x_3-t), g_3(x_3-x_4-t), g_4 \rangle
$$
are equal.  Thus, the ideal in~\eqref{eq:example.g.ideal} can also be described as $\langle f_{2,1}, f_{3,2}, f_{4,3}, f_{4,4}\rangle$, or in other words, we take the ideal generated by the $f_{i,j}$ polynomials corresponding to the lowest box in the Hessenberg diagram of $h$ for each column of the diagram. Our methods below attempt to generalize this beyond the Peterson case. 
\end{Example}

It is now our task to appropriately generalize the relations in Theorem~\ref{theo:3.1} to the case where the Hessenberg function is larger than the Peterson Hessenberg function. 
To accomplish this, it is useful to visualize a Hessenberg function as a ``collection of boxes'' in a Hessenberg diagram, which consists of an $n \times n$ grid of square boxes, and the Hessenberg diagram consists of the collection of $(i,j)$-th boxes for pairs $(i,j)$ with $i \leq h(j)$. (For an illustration in the case $n=4$, see Example~\ref{ex:h=(2,3,4,4)} below.)  In this context, the Peterson Hessenberg function $h(i)=i+1$ can be viewed as the minimal such collection of boxes satisfying $h(i) \geq i+1$, which means that all boxes immediately below the main diagonal are included in the Hessenberg diagram. More general indecomposable Hessenberg functions are obtained by simply ``adding boxes'' to the Peterson Hessenberg diagram.  Moreover, as we have just seen above, we can view the quadratic relations defining $H^*_S(\Y)$ correspond to the polynomials contained in the $(j+1,j)$-th boxes for $j \in [n-1]$, as well as the $g_n=0$ coming from the $(n,n)$-th box. 
Thus, another way of phrasing our task, below, is to ask: what happens to the quadratic Peterson relations as we ``add boxes''? It is natural to think of this process as inductive, i.e., we may add one box at a time, and consider the consequences of adding that single box. We illustrate this thought process through the concrete examples that follow.

In Section~\ref{section: peterson in type A}, when we analyzed the relations satisfied by the classes $p_i$ in $H^*_\mathsf{S}(\Y)$, we used the injectivity into the 
equivariant cohomology of the fixed points and concrete data about the restrictions of the classes to $w \in \Y^\mathsf{S}$. We continue 
using this strategy as we seek to understand the new relations that arise when we ``add boxes''. In particular, we will need -- in analogy to Lemma~\ref{lemma:fixed_point_Peterson} -- a concrete description of the $\mathsf{S}$-fixed points in $\Hess(\mathsf{N},h)$. We quote the following \cite[Lemma~2.3]{AHHM2019}. 

\begin{Lemma} \label{lemma:Hess fixed points}
The $\mathsf{S}$-fixed point set $\Hess(\mathsf{N},h)^\mathsf{S} \subset S_n$ of
$\Hess(\mathsf{N},h)$ is given by 
\begin{equation*} \label{eq:H-1}
 \Hess(\mathsf{N},h)^\mathsf{S}=\{w\in S_n \mid w^{-1}(w(i)-1)\le h(i)\text{ for all } i \in [n] \}.
\end{equation*}
Here, we use a convention 
\begin{equation*} 
v(0)=0 \quad \text{for all $v \in S_n$}.
\end{equation*}
\end{Lemma}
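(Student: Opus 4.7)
The plan is to reduce the problem to an explicit computation on basis vectors, following closely the template already used in the proof of Lemma~\ref{lemma:fixed_point_Peterson}, which handled the special Peterson case $h(i) = i+1$. The general case is essentially identical except that the upper bound $i+1$ on the right-hand side of the inequality gets replaced by $h(i)$.

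First, I would invoke \eqref{S-fixed Hess(N,h)}, which tells us that $\Hess(\mathsf{N},h)^\mathsf{S} = \Hess(\mathsf{N},h) \cap (\fln)^T$, together with the identification $(\fln)^T \cong S_n$. So it suffices to determine, for $w \in S_n$, whether the permutation flag $wE_\bullet$ lies in $\Hess(\mathsf{N},h)$. By the defining condition of a Hessenberg variety, this happens exactly when $\mathsf{N} w E_i \subset w E_{h(i)}$ for all $i \in [n]$, which rewrites as $w^{-1}\mathsf{N} w E_i \subset E_{h(i)}$ for all $i \in [n]$.

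Next, I would compute $w^{-1} \mathsf{N} w e_i$ directly. Since $\mathsf{N}$ is the Jordan block with $\mathsf{N} e_k = e_{k-1}$ (using the convention $e_0 = 0$), we have $\mathsf{N} w e_i = \mathsf{N} e_{w(i)} = e_{w(i)-1}$, and hence $w^{-1} \mathsf{N} w e_i = e_{w^{-1}(w(i)-1)}$, where we interpret $e_0 = 0$ and $w^{-1}(0) = 0$. Therefore the containment $w^{-1}\mathsf{N} w E_i \subset E_{h(i)}$ for each $i$ is equivalent, one basis vector at a time, to the condition $e_{w^{-1}(w(i)-1)} \in \langle e_1, \ldots, e_{h(i)} \rangle$, i.e., to $w^{-1}(w(i)-1) \leq h(i)$ for all $i \in [n]$.

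Chaining the three equivalences yields precisely the stated characterization
\[
\Hess(\mathsf{N},h)^\mathsf{S} = \{ w \in S_n \mid w^{-1}(w(i)-1) \leq h(i) \text{ for all } i \in [n]\}.
\]
There are no genuine obstacles in this argument; essentially all the work was already done in Lemma~\ref{lemma:fixed_point_Peterson}. The only small point one must be careful about is tracking the $0$-conventions (so that the case $w(i) = 1$, where $w(i) - 1 = 0$, poses no issue) and noting that the condition $w^{-1}(w(i)-1) \leq h(i)$ is automatically satisfied at $i = n$ since $h(n) = n$ and $w^{-1}$ takes values in $[n]$.
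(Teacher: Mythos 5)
The paper itself does not prove this lemma — it simply cites \cite[Lemma~2.3]{AHHM2019} — so your proposal is filling in an omitted argument. What you write is correct, and it is precisely the natural generalization of the paper's own proof of Lemma~\ref{lemma:fixed_point_Peterson}, where the chain of equivalences is carried out verbatim for the Peterson case $h(i)=i+1$; replacing the bound $i+1$ by $h(i)$ gives exactly the statement here.

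One small point worth making explicit (the paper's Peterson proof glosses over the same thing): the passage from ``$w^{-1}\mathsf{N}wE_i \subset E_{h(i)}$ for all $i$'' to ``$w^{-1}\mathsf{N}w e_i \in E_{h(i)}$ for all $i$'' is not literally checking one basis vector per index. The forward implication is immediate, but the converse uses that $h$ is non-decreasing: if $e_{w^{-1}(w(j)-1)} \in E_{h(j)}$ for every $j$, then for $j \le i$ one has $E_{h(j)} \subset E_{h(i)}$, whence the whole subspace $w^{-1}\mathsf{N}wE_i$ lands inside $E_{h(i)}$. This is a one-line remark, but since the lemma's statement never explicitly records that $h$ is a Hessenberg function (hence non-decreasing), it is worth flagging as the place where that hypothesis is used.
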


To begin our exploration, we first record a computation, for a small value of $n$, for the base case of the Peterson Hessenberg function. 

\begin{Example} \label{ex:h=(2,3,4,4)}
Let $n=4$ and $h=(2,3,4,4)$. This is precisely the Peterson case, with corresponding ``box'' diagram 
\begin{center}
\begin{picture}(60,60)
\put(0,48){\colorbox{gray}}
\put(0,52){\colorbox{gray}}
\put(0,57){\colorbox{gray}}
\put(4,48){\colorbox{gray}}
\put(4,52){\colorbox{gray}}
\put(4,57){\colorbox{gray}}
\put(9,48){\colorbox{gray}}
\put(9,52){\colorbox{gray}}
\put(9,57){\colorbox{gray}}

\put(15,48){\colorbox{gray}}
\put(15,52){\colorbox{gray}}
\put(15,57){\colorbox{gray}}
\put(19,48){\colorbox{gray}}
\put(19,52){\colorbox{gray}}
\put(19,57){\colorbox{gray}}
\put(24,48){\colorbox{gray}}
\put(24,52){\colorbox{gray}}
\put(24,57){\colorbox{gray}}

\put(30,48){\colorbox{gray}}
\put(30,52){\colorbox{gray}}
\put(30,57){\colorbox{gray}}
\put(34,48){\colorbox{gray}}
\put(34,52){\colorbox{gray}}
\put(34,57){\colorbox{gray}}
\put(39,48){\colorbox{gray}}
\put(39,52){\colorbox{gray}}
\put(39,57){\colorbox{gray}}

\put(45,48){\colorbox{gray}}
\put(45,52){\colorbox{gray}}
\put(45,57){\colorbox{gray}}
\put(49,48){\colorbox{gray}}
\put(49,52){\colorbox{gray}}
\put(49,57){\colorbox{gray}}
\put(54,48){\colorbox{gray}}
\put(54,52){\colorbox{gray}}
\put(54,57){\colorbox{gray}}

\put(0,33){\colorbox{gray}}
\put(0,37){\colorbox{gray}}
\put(0,42){\colorbox{gray}}
\put(4,33){\colorbox{gray}}
\put(4,37){\colorbox{gray}}
\put(4,42){\colorbox{gray}}
\put(9,33){\colorbox{gray}}
\put(9,37){\colorbox{gray}}
\put(9,42){\colorbox{gray}}

\put(15,33){\colorbox{gray}}
\put(15,37){\colorbox{gray}}
\put(15,42){\colorbox{gray}}
\put(19,33){\colorbox{gray}}
\put(19,37){\colorbox{gray}}
\put(19,42){\colorbox{gray}}
\put(24,33){\colorbox{gray}}
\put(24,37){\colorbox{gray}}
\put(24,42){\colorbox{gray}}

\put(30,33){\colorbox{gray}}
\put(30,37){\colorbox{gray}}
\put(30,42){\colorbox{gray}}
\put(34,33){\colorbox{gray}}
\put(34,37){\colorbox{gray}}
\put(34,42){\colorbox{gray}}
\put(39,33){\colorbox{gray}}
\put(39,37){\colorbox{gray}}
\put(39,42){\colorbox{gray}}

\put(45,33){\colorbox{gray}}
\put(45,37){\colorbox{gray}}
\put(45,42){\colorbox{gray}}
\put(49,33){\colorbox{gray}}
\put(49,37){\colorbox{gray}}
\put(49,42){\colorbox{gray}}
\put(54,33){\colorbox{gray}}
\put(54,37){\colorbox{gray}}
\put(54,42){\colorbox{gray}}


\put(15,18){\colorbox{gray}}
\put(15,22){\colorbox{gray}}
\put(15,27){\colorbox{gray}}
\put(19,18){\colorbox{gray}}
\put(19,22){\colorbox{gray}}
\put(19,27){\colorbox{gray}}
\put(24,18){\colorbox{gray}}
\put(24,22){\colorbox{gray}}
\put(24,27){\colorbox{gray}}

\put(30,18){\colorbox{gray}}
\put(30,22){\colorbox{gray}}
\put(30,27){\colorbox{gray}}
\put(34,18){\colorbox{gray}}
\put(34,22){\colorbox{gray}}
\put(34,27){\colorbox{gray}}
\put(39,18){\colorbox{gray}}
\put(39,22){\colorbox{gray}}
\put(39,27){\colorbox{gray}}

\put(45,18){\colorbox{gray}}
\put(45,22){\colorbox{gray}}
\put(45,27){\colorbox{gray}}
\put(49,18){\colorbox{gray}}
\put(49,22){\colorbox{gray}}
\put(49,27){\colorbox{gray}}
\put(54,18){\colorbox{gray}}
\put(54,22){\colorbox{gray}}
\put(54,27){\colorbox{gray}}

%

\put(30,3){\colorbox{gray}}
\put(30,7){\colorbox{gray}}
\put(30,12){\colorbox{gray}}
\put(34,3){\colorbox{gray}}
\put(34,7){\colorbox{gray}}
\put(34,12){\colorbox{gray}}
\put(39,3){\colorbox{gray}}
\put(39,7){\colorbox{gray}}
\put(39,12){\colorbox{gray}}

\put(45,3){\colorbox{gray}}
\put(45,7){\colorbox{gray}}
\put(45,12){\colorbox{gray}}
\put(49,3){\colorbox{gray}}
\put(49,7){\colorbox{gray}}
\put(49,12){\colorbox{gray}}
\put(54,3){\colorbox{gray}}
\put(54,7){\colorbox{gray}}
\put(54,12){\colorbox{gray}}

\put(0,0){\framebox(15,15)}
\put(15,0){\framebox(15,15)}
\put(30,0){\framebox(15,15)}
\put(45,0){\framebox(15,15)}
\put(0,15){\framebox(15,15)}
\put(15,15){\framebox(15,15)}
\put(30,15){\framebox(15,15)}
\put(45,15){\framebox(15,15)}
\put(0,30){\framebox(15,15)}
\put(15,30){\framebox(15,15)}
\put(30,30){\framebox(15,15)}
\put(45,30){\framebox(15,15)}
\put(0,45){\framebox(15,15)}
\put(15,45){\framebox(15,15)}
\put(30,45){\framebox(15,15)}
\put(45,45){\framebox(15,15)}
\end{picture}
\end{center}
and by Lemma~\ref{lemma:fixed_point_Peterson}, we may compute that its set of $\mathsf{S}$-fixed points is 
\begin{align} \label{eq:fixed_point_h=(2,3,4,4)}
1234,1243,1324,1432,2134,2143,3214,4321. 
\end{align}
In this case we have seen in Example~\ref{example.n.is.4.peterson} that the elements 
\begin{align*}
&\Delta_1:=\xi_1(\SChNil_1-\SChNil_2-t) \\
&\Delta_2:=\xi_2(\SChNil_2-\SChNil_3-t) \\
&\Delta_3:=\xi_3(\SChNil_3-\SChNil_4-t) \\
&\Delta_4:=\xi_4 
\end{align*}
vanish in $H^*_\mathsf{S}(\Y)$, and moreover, the corresponding polynomials generate the ideal of relations among the $\SChNil_i$ and $t$. Note that it is also straightforward to verify directly that $\Delta_i \vert_w=0$ for all the $\mathsf{S}$-fixed points $w\in \Y^\mathsf{S}$ in the list above. 
\end{Example}

The next example adds one box -- the box in the $(3,1)$-th position of the matrix -- to the previous example. 

\begin{Example} \label{ex:h=(3,3,4,4)} 
Consider again $n=4$ and this time, let us consider $h=(3,3,4,4)$. The corresponding configuration of boxes is below. 
\begin{center}
\begin{picture}(60,60)
\put(0,48){\colorbox{gray}}
\put(0,52){\colorbox{gray}}
\put(0,57){\colorbox{gray}}
\put(4,48){\colorbox{gray}}
\put(4,52){\colorbox{gray}}
\put(4,57){\colorbox{gray}}
\put(9,48){\colorbox{gray}}
\put(9,52){\colorbox{gray}}
\put(9,57){\colorbox{gray}}

\put(15,48){\colorbox{gray}}
\put(15,52){\colorbox{gray}}
\put(15,57){\colorbox{gray}}
\put(19,48){\colorbox{gray}}
\put(19,52){\colorbox{gray}}
\put(19,57){\colorbox{gray}}
\put(24,48){\colorbox{gray}}
\put(24,52){\colorbox{gray}}
\put(24,57){\colorbox{gray}}

\put(30,48){\colorbox{gray}}
\put(30,52){\colorbox{gray}}
\put(30,57){\colorbox{gray}}
\put(34,48){\colorbox{gray}}
\put(34,52){\colorbox{gray}}
\put(34,57){\colorbox{gray}}
\put(39,48){\colorbox{gray}}
\put(39,52){\colorbox{gray}}
\put(39,57){\colorbox{gray}}

\put(45,48){\colorbox{gray}}
\put(45,52){\colorbox{gray}}
\put(45,57){\colorbox{gray}}
\put(49,48){\colorbox{gray}}
\put(49,52){\colorbox{gray}}
\put(49,57){\colorbox{gray}}
\put(54,48){\colorbox{gray}}
\put(54,52){\colorbox{gray}}
\put(54,57){\colorbox{gray}}

\put(0,33){\colorbox{gray}}
\put(0,37){\colorbox{gray}}
\put(0,42){\colorbox{gray}}
\put(4,33){\colorbox{gray}}
\put(4,37){\colorbox{gray}}
\put(4,42){\colorbox{gray}}
\put(9,33){\colorbox{gray}}
\put(9,37){\colorbox{gray}}
\put(9,42){\colorbox{gray}}

\put(15,33){\colorbox{gray}}
\put(15,37){\colorbox{gray}}
\put(15,42){\colorbox{gray}}
\put(19,33){\colorbox{gray}}
\put(19,37){\colorbox{gray}}
\put(19,42){\colorbox{gray}}
\put(24,33){\colorbox{gray}}
\put(24,37){\colorbox{gray}}
\put(24,42){\colorbox{gray}}

\put(30,33){\colorbox{gray}}
\put(30,37){\colorbox{gray}}
\put(30,42){\colorbox{gray}}
\put(34,33){\colorbox{gray}}
\put(34,37){\colorbox{gray}}
\put(34,42){\colorbox{gray}}
\put(39,33){\colorbox{gray}}
\put(39,37){\colorbox{gray}}
\put(39,42){\colorbox{gray}}

\put(45,33){\colorbox{gray}}
\put(45,37){\colorbox{gray}}
\put(45,42){\colorbox{gray}}
\put(49,33){\colorbox{gray}}
\put(49,37){\colorbox{gray}}
\put(49,42){\colorbox{gray}}
\put(54,33){\colorbox{gray}}
\put(54,37){\colorbox{gray}}
\put(54,42){\colorbox{gray}}

\put(0,18){\colorbox{gray}}
\put(0,22){\colorbox{gray}}
\put(0,27){\colorbox{gray}}
\put(4,18){\colorbox{gray}}
\put(4,22){\colorbox{gray}}
\put(4,27){\colorbox{gray}}
\put(9,18){\colorbox{gray}}
\put(9,22){\colorbox{gray}}
\put(9,27){\colorbox{gray}}

\put(15,18){\colorbox{gray}}
\put(15,22){\colorbox{gray}}
\put(15,27){\colorbox{gray}}
\put(19,18){\colorbox{gray}}
\put(19,22){\colorbox{gray}}
\put(19,27){\colorbox{gray}}
\put(24,18){\colorbox{gray}}
\put(24,22){\colorbox{gray}}
\put(24,27){\colorbox{gray}}

\put(30,18){\colorbox{gray}}
\put(30,22){\colorbox{gray}}
\put(30,27){\colorbox{gray}}
\put(34,18){\colorbox{gray}}
\put(34,22){\colorbox{gray}}
\put(34,27){\colorbox{gray}}
\put(39,18){\colorbox{gray}}
\put(39,22){\colorbox{gray}}
\put(39,27){\colorbox{gray}}

\put(45,18){\colorbox{gray}}
\put(45,22){\colorbox{gray}}
\put(45,27){\colorbox{gray}}
\put(49,18){\colorbox{gray}}
\put(49,22){\colorbox{gray}}
\put(49,27){\colorbox{gray}}
\put(54,18){\colorbox{gray}}
\put(54,22){\colorbox{gray}}
\put(54,27){\colorbox{gray}}

%

\put(30,3){\colorbox{gray}}
\put(30,7){\colorbox{gray}}
\put(30,12){\colorbox{gray}}
\put(34,3){\colorbox{gray}}
\put(34,7){\colorbox{gray}}
\put(34,12){\colorbox{gray}}
\put(39,3){\colorbox{gray}}
\put(39,7){\colorbox{gray}}
\put(39,12){\colorbox{gray}}

\put(45,3){\colorbox{gray}}
\put(45,7){\colorbox{gray}}
\put(45,12){\colorbox{gray}}
\put(49,3){\colorbox{gray}}
\put(49,7){\colorbox{gray}}
\put(49,12){\colorbox{gray}}
\put(54,3){\colorbox{gray}}
\put(54,7){\colorbox{gray}}
\put(54,12){\colorbox{gray}}

\put(0,0){\framebox(15,15)}
\put(15,0){\framebox(15,15)}
\put(30,0){\framebox(15,15)}
\put(45,0){\framebox(15,15)}
\put(0,15){\framebox(15,15)}
\put(15,15){\framebox(15,15)}
\put(30,15){\framebox(15,15)}
\put(45,15){\framebox(15,15)}
\put(0,30){\framebox(15,15)}
\put(15,30){\framebox(15,15)}
\put(30,30){\framebox(15,15)}
\put(45,30){\framebox(15,15)}
\put(0,45){\framebox(15,15)}
\put(15,45){\framebox(15,15)}
\put(30,45){\framebox(15,15)}
\put(45,45){\framebox(15,15)}
\end{picture}
\end{center}

Now the set of $\mathsf{S}$-fixed points is larger than that of the previous example.  Specifically, by Lemma~\ref{lemma:Hess fixed points}, in addition to the $\mathsf{S}$-fixed points $\Y^\mathsf{S}$ listed in~\eqref{eq:fixed_point_h=(2,3,4,4)}, we also have the following $\mathsf{S}$-fixed points 
\begin{align} \label{eq:fixed_point_h=(3,3,4,4)}
2314,3124,3421,4132. 
\end{align}
Now let us consider what happens to the elements $\Delta_1, \Delta_2, \Delta_3$, which vanished at the $\mathsf{S}$-fixed points $\Y^\mathsf{S}$, when we restrict them to these ``extra'' $4$ fixed points in $\Hess(\mathsf{N},h)^\mathsf{S}$. It is straightforward to compute the restrictions, and listing these in order we have 
\begin{align*}
&(\Delta_1|_{2314},\Delta_1|_{3124},\Delta_1|_{3421},\Delta_1|_{4132})=(-2t^2,2t^2,-4t^2,6t^2) \\
&(\Delta_2|_{2314},\Delta_2|_{3124},\Delta_2|_{3421},\Delta_2|_{4132})=(2t^2,-2t^2,4t^2,-6t^2) \\
&(\Delta_3|_{2314},\Delta_3|_{3124},\Delta_3|_{3421},\Delta_3|_{4132})=(0,0,0,0). 
\end{align*}
The above computation shows that while $\Delta_3$ still vanishes in $H^*_\mathsf{S}(\Hess(\mathsf{N},h))$ for $h=(3,3,4,4)$, the elements $\Delta_1, \Delta_2$, in contrast to the Peterson case, do not restrict to $0$ at all fixed points and therefore do not vanish. On the other hand, a glance at the restrictions of $\Delta_1$ and $\Delta_2$ immediately reveals that the \emph{sum} element $\Delta_1+\Delta_2$ does vanish for $h=(3,3,4,4)$. In summary, we have determined that 
\begin{align*}
&\Delta_1+\Delta_2=\xi_1(\SChNil_1-\SChNil_2-t)+\xi_2(\SChNil_2-\SChNil_3-t)=0 \\ 
&\Delta_3=\xi_3(\SChNil_3-\SChNil_4-t)=0 
\end{align*}
are relations in $H^*_\mathsf{S}(\Hess(\mathsf{N},h))$ for $h=(3,3,4,4)$.

Our next observation comes from an examination of properties of the 4 ``extra'' fixed points~\eqref{eq:fixed_point_h=(3,3,4,4)}. Specifically, observe that for all 4 of the new fixed points, the difference between the first entry in the one-line notation of $w$ and its third is always $1$, i.e., $w(1)-w(3)=1$ for all $w$ in~\eqref{eq:fixed_point_h=(3,3,4,4)}.  From this it immediately follows that 
\begin{align*}
(\SChNil_1-\SChNil_3-t)|_w=0  
\end{align*}
for the four fixed points $w$ in~\eqref{eq:fixed_point_h=(3,3,4,4)}. The element $\Delta_1$, as we have seen, vanishes on $\Y^\mathsf{S}$ but does not vanish on the $4$ fixed points in~\eqref{eq:fixed_point_h=(3,3,4,4)}. The element $(\SChNil_1-\SChNil_3-t)$ vanishes at all the $4$ fixed points in~\eqref{eq:fixed_point_h=(3,3,4,4)}. Putting these two together, then, we may conclude that 
\begin{align*}
\Delta_1(\SChNil_1-\SChNil_3-t)=\xi_1(\SChNil_1-\SChNil_2-t)(\SChNil_1-\SChNil_3-t)=0  \, \textup{ in } \, H^*_{\S1}(\Hess(\mathsf{N},h))
\end{align*}
since the product does vanish at all fixed points $w \in \Hess(\mathsf{N},h)^\S1$. 
Putting the above observations together, we have derived the following list of relations that hold in $H^*_\mathsf{S}(\Hess(\mathsf{N},h))$ for $h=(3,3,4,4)$: 
\begin{align*}
\Delta_1(\SChNil_1-\SChNil_3-t)&=0 \\
\Delta_1+\Delta_2&=0 \\ 
\Delta_3&=0 \\
\Delta_4&=0 
\end{align*}
and, if we were feeling optimistic, we might guess that this forms the complete list of relations.  
\end{Example}

The next example adds a box to the Peterson case of Example~\ref{ex:h=(2,3,4,4)} in a \emph{different} manner than that of Example~\ref{ex:h=(3,3,4,4)}. 

\begin{Example} \label{ex:h=(2,4,4,4)}
Consider again $n=4$ and this time, let $h=(2,4,4,4)$. The corresponding ``box'' diagram is

\begin{center}
\begin{picture}(60,60)
\put(0,48){\colorbox{gray}}
\put(0,52){\colorbox{gray}}
\put(0,57){\colorbox{gray}}
\put(4,48){\colorbox{gray}}
\put(4,52){\colorbox{gray}}
\put(4,57){\colorbox{gray}}
\put(9,48){\colorbox{gray}}
\put(9,52){\colorbox{gray}}
\put(9,57){\colorbox{gray}}

\put(15,48){\colorbox{gray}}
\put(15,52){\colorbox{gray}}
\put(15,57){\colorbox{gray}}
\put(19,48){\colorbox{gray}}
\put(19,52){\colorbox{gray}}
\put(19,57){\colorbox{gray}}
\put(24,48){\colorbox{gray}}
\put(24,52){\colorbox{gray}}
\put(24,57){\colorbox{gray}}

\put(30,48){\colorbox{gray}}
\put(30,52){\colorbox{gray}}
\put(30,57){\colorbox{gray}}
\put(34,48){\colorbox{gray}}
\put(34,52){\colorbox{gray}}
\put(34,57){\colorbox{gray}}
\put(39,48){\colorbox{gray}}
\put(39,52){\colorbox{gray}}
\put(39,57){\colorbox{gray}}

\put(45,48){\colorbox{gray}}
\put(45,52){\colorbox{gray}}
\put(45,57){\colorbox{gray}}
\put(49,48){\colorbox{gray}}
\put(49,52){\colorbox{gray}}
\put(49,57){\colorbox{gray}}
\put(54,48){\colorbox{gray}}
\put(54,52){\colorbox{gray}}
\put(54,57){\colorbox{gray}}

\put(0,33){\colorbox{gray}}
\put(0,37){\colorbox{gray}}
\put(0,42){\colorbox{gray}}
\put(4,33){\colorbox{gray}}
\put(4,37){\colorbox{gray}}
\put(4,42){\colorbox{gray}}
\put(9,33){\colorbox{gray}}
\put(9,37){\colorbox{gray}}
\put(9,42){\colorbox{gray}}

\put(15,33){\colorbox{gray}}
\put(15,37){\colorbox{gray}}
\put(15,42){\colorbox{gray}}
\put(19,33){\colorbox{gray}}
\put(19,37){\colorbox{gray}}
\put(19,42){\colorbox{gray}}
\put(24,33){\colorbox{gray}}
\put(24,37){\colorbox{gray}}
\put(24,42){\colorbox{gray}}

\put(30,33){\colorbox{gray}}
\put(30,37){\colorbox{gray}}
\put(30,42){\colorbox{gray}}
\put(34,33){\colorbox{gray}}
\put(34,37){\colorbox{gray}}
\put(34,42){\colorbox{gray}}
\put(39,33){\colorbox{gray}}
\put(39,37){\colorbox{gray}}
\put(39,42){\colorbox{gray}}

\put(45,33){\colorbox{gray}}
\put(45,37){\colorbox{gray}}
\put(45,42){\colorbox{gray}}
\put(49,33){\colorbox{gray}}
\put(49,37){\colorbox{gray}}
\put(49,42){\colorbox{gray}}
\put(54,33){\colorbox{gray}}
\put(54,37){\colorbox{gray}}
\put(54,42){\colorbox{gray}}


\put(15,18){\colorbox{gray}}
\put(15,22){\colorbox{gray}}
\put(15,27){\colorbox{gray}}
\put(19,18){\colorbox{gray}}
\put(19,22){\colorbox{gray}}
\put(19,27){\colorbox{gray}}
\put(24,18){\colorbox{gray}}
\put(24,22){\colorbox{gray}}
\put(24,27){\colorbox{gray}}

\put(30,18){\colorbox{gray}}
\put(30,22){\colorbox{gray}}
\put(30,27){\colorbox{gray}}
\put(34,18){\colorbox{gray}}
\put(34,22){\colorbox{gray}}
\put(34,27){\colorbox{gray}}
\put(39,18){\colorbox{gray}}
\put(39,22){\colorbox{gray}}
\put(39,27){\colorbox{gray}}

\put(45,18){\colorbox{gray}}
\put(45,22){\colorbox{gray}}
\put(45,27){\colorbox{gray}}
\put(49,18){\colorbox{gray}}
\put(49,22){\colorbox{gray}}
\put(49,27){\colorbox{gray}}
\put(54,18){\colorbox{gray}}
\put(54,22){\colorbox{gray}}
\put(54,27){\colorbox{gray}}


\put(15,3){\colorbox{gray}}
\put(15,7){\colorbox{gray}}
\put(15,12){\colorbox{gray}}
\put(19,3){\colorbox{gray}}
\put(19,7){\colorbox{gray}}
\put(19,12){\colorbox{gray}}
\put(24,3){\colorbox{gray}}
\put(24,7){\colorbox{gray}}
\put(24,12){\colorbox{gray}}

\put(30,3){\colorbox{gray}}
\put(30,7){\colorbox{gray}}
\put(30,12){\colorbox{gray}}
\put(34,3){\colorbox{gray}}
\put(34,7){\colorbox{gray}}
\put(34,12){\colorbox{gray}}
\put(39,3){\colorbox{gray}}
\put(39,7){\colorbox{gray}}
\put(39,12){\colorbox{gray}}

\put(45,3){\colorbox{gray}}
\put(45,7){\colorbox{gray}}
\put(45,12){\colorbox{gray}}
\put(49,3){\colorbox{gray}}
\put(49,7){\colorbox{gray}}
\put(49,12){\colorbox{gray}}
\put(54,3){\colorbox{gray}}
\put(54,7){\colorbox{gray}}
\put(54,12){\colorbox{gray}}

\put(0,0){\framebox(15,15)}
\put(15,0){\framebox(15,15)}
\put(30,0){\framebox(15,15)}
\put(45,0){\framebox(15,15)}
\put(0,15){\framebox(15,15)}
\put(15,15){\framebox(15,15)}
\put(30,15){\framebox(15,15)}
\put(45,15){\framebox(15,15)}
\put(0,30){\framebox(15,15)}
\put(15,30){\framebox(15,15)}
\put(30,30){\framebox(15,15)}
\put(45,30){\framebox(15,15)}
\put(0,45){\framebox(15,15)}
\put(15,45){\framebox(15,15)}
\put(30,45){\framebox(15,15)}
\put(45,45){\framebox(15,15)}
\end{picture}
\end{center}
which we see by comparison with Example~\ref{ex:h=(2,3,4,4)} that it is formed from the Peterson diagram by adding the single box at the $(4,2)$-th 
spot. This time, the $4$ extra $\mathsf{S}$-fixed points in $\Hess(\mathsf{N},h)^\mathsf{S}$ which do not appear in $\Y^\mathsf{S}$ are 

\begin{align} \label{eq:fixed_point_h=(2,4,4,4)}
1342,1423,3241,4312. 
\end{align}
As in Example~\ref{ex:h=(3,3,4,4)} with $h=(3,3,4,4)$, we can compute in a straightforward manner the restrictions of the classes $\Delta_1, \Delta_2, \Delta_3$ to these $4$ extra fixed points and we find 
\begin{align*}
&(\Delta_1|_{1342},\Delta_1|_{1423},\Delta_1|_{3241},\Delta_1|_{4312})=(0,0,0,0) \\
&(\Delta_2|_{1342},\Delta_2|_{1423},\Delta_2|_{3241},\Delta_2|_{4312})=(-2t^2,2t^2,-6t^2,4t^2) \\
&(\Delta_3|_{1342},\Delta_3|_{1423},\Delta_3|_{3241},\Delta_3|_{4312})=(2t^2,-2t^2,6t^2,-4t^2) 
\end{align*}
so again we see that one of the elements, $\Delta_1$, vanishes on the $4$ extra fixed points, but $\Delta_2$ and $\Delta_3$ do not. However, as in Example~\ref{ex:h=(3,3,4,4)} it is easy to see that the element $\Delta_2+\Delta_3$ does vanish. Moreover, by examining the $4$ extra fixed points as we did in Example~\ref{ex:h=(3,3,4,4)}, we see that $w(2)-w(4)=1$ for all four fixed points and thus 
\[
(\SChNil_2 - \SChNil_4-t) \vert_w = 0
\]
for these $4$ fixed points. Now by the same reasoning as in Example~\ref{ex:h=(3,3,4,4)} we find 
that 
\begin{align*}
\Delta_2(\SChNil_2-\SChNil_4-t)=\xi_2(\SChNil_2-\SChNil_3-t)(\SChNil_2-\SChNil_4-t)=0
\end{align*}
in $H^*_\mathsf{S}(\Hess(\mathsf{N},h))$ and obtain 
a list of relations satisfied in $H^*_\mathsf{S}(\Hess(\mathsf{N},h))$: 
\begin{align*}
\Delta_1&=0 \\ 
\Delta_2(\SChNil_2-\SChNil_4-t)&=0 \\
\Delta_2+\Delta_3&=0 \\
\Delta_4&=0 
\end{align*}
which is similar to what we found for $h=(3,3,4,4)$. 
\end{Example}

At this point we may take a step back and consider what happened in Examples~\ref{ex:h=(3,3,4,4)} and~\ref{ex:h=(2,4,4,4)}. Note that we began with the elements $\Delta_1, \Delta_2, \Delta_3, \Delta_4$, which generated the ideal of relations for the Peterson variety case $h=(2,3,4,4)$. When we added a single box at the $(3,1)$ spot, what happened is that these relations ``morphed'' to the relations 
\begin{align*}
&\Delta_1(\SChNil_1-\SChNil_3-t) \\
&\Delta_1+\Delta_2 \\ 
&\Delta_3 \\
&\Delta_4
\end{align*}
whereas, if we instead added to the Peterson the single box at the $(4,2)$-th spot, the relations instead ``morphed'' into 
\begin{align*}
&\Delta_1 \\ 
&\Delta_2(\SChNil_2-\SChNil_4-t) \\
&\Delta_2+\Delta_3 \\
&\Delta_4. 
\end{align*}
We note that adding a box at the $(i,j)$-th appears, at least in these two examples, to entail a multiplication of one of the relations by a factor of $(\SChNil_j - \SChNil_i - t)$. We also see that we sometimes need to take the sum of two previous generators, and it seems plausible that there would be a rule to determine which sums must be taken.

With this in mind, let us consider two more examples. 

\begin{Example} \label{ex:h=(3,4,4,4)} 
Let $n=4$ still and take $h=(3,4,4,4)$. 
Then the box diagram is 
\begin{center}
\begin{picture}(60,60)
\put(0,48){\colorbox{gray}}
\put(0,52){\colorbox{gray}}
\put(0,57){\colorbox{gray}}
\put(4,48){\colorbox{gray}}
\put(4,52){\colorbox{gray}}
\put(4,57){\colorbox{gray}}
\put(9,48){\colorbox{gray}}
\put(9,52){\colorbox{gray}}
\put(9,57){\colorbox{gray}}

\put(15,48){\colorbox{gray}}
\put(15,52){\colorbox{gray}}
\put(15,57){\colorbox{gray}}
\put(19,48){\colorbox{gray}}
\put(19,52){\colorbox{gray}}
\put(19,57){\colorbox{gray}}
\put(24,48){\colorbox{gray}}
\put(24,52){\colorbox{gray}}
\put(24,57){\colorbox{gray}}

\put(30,48){\colorbox{gray}}
\put(30,52){\colorbox{gray}}
\put(30,57){\colorbox{gray}}
\put(34,48){\colorbox{gray}}
\put(34,52){\colorbox{gray}}
\put(34,57){\colorbox{gray}}
\put(39,48){\colorbox{gray}}
\put(39,52){\colorbox{gray}}
\put(39,57){\colorbox{gray}}

\put(45,48){\colorbox{gray}}
\put(45,52){\colorbox{gray}}
\put(45,57){\colorbox{gray}}
\put(49,48){\colorbox{gray}}
\put(49,52){\colorbox{gray}}
\put(49,57){\colorbox{gray}}
\put(54,48){\colorbox{gray}}
\put(54,52){\colorbox{gray}}
\put(54,57){\colorbox{gray}}

\put(0,33){\colorbox{gray}}
\put(0,37){\colorbox{gray}}
\put(0,42){\colorbox{gray}}
\put(4,33){\colorbox{gray}}
\put(4,37){\colorbox{gray}}
\put(4,42){\colorbox{gray}}
\put(9,33){\colorbox{gray}}
\put(9,37){\colorbox{gray}}
\put(9,42){\colorbox{gray}}

\put(15,33){\colorbox{gray}}
\put(15,37){\colorbox{gray}}
\put(15,42){\colorbox{gray}}
\put(19,33){\colorbox{gray}}
\put(19,37){\colorbox{gray}}
\put(19,42){\colorbox{gray}}
\put(24,33){\colorbox{gray}}
\put(24,37){\colorbox{gray}}
\put(24,42){\colorbox{gray}}

\put(30,33){\colorbox{gray}}
\put(30,37){\colorbox{gray}}
\put(30,42){\colorbox{gray}}
\put(34,33){\colorbox{gray}}
\put(34,37){\colorbox{gray}}
\put(34,42){\colorbox{gray}}
\put(39,33){\colorbox{gray}}
\put(39,37){\colorbox{gray}}
\put(39,42){\colorbox{gray}}

\put(45,33){\colorbox{gray}}
\put(45,37){\colorbox{gray}}
\put(45,42){\colorbox{gray}}
\put(49,33){\colorbox{gray}}
\put(49,37){\colorbox{gray}}
\put(49,42){\colorbox{gray}}
\put(54,33){\colorbox{gray}}
\put(54,37){\colorbox{gray}}
\put(54,42){\colorbox{gray}}

\put(0,18){\colorbox{gray}}
\put(0,22){\colorbox{gray}}
\put(0,27){\colorbox{gray}}
\put(4,18){\colorbox{gray}}
\put(4,22){\colorbox{gray}}
\put(4,27){\colorbox{gray}}
\put(9,18){\colorbox{gray}}
\put(9,22){\colorbox{gray}}
\put(9,27){\colorbox{gray}}

\put(15,18){\colorbox{gray}}
\put(15,22){\colorbox{gray}}
\put(15,27){\colorbox{gray}}
\put(19,18){\colorbox{gray}}
\put(19,22){\colorbox{gray}}
\put(19,27){\colorbox{gray}}
\put(24,18){\colorbox{gray}}
\put(24,22){\colorbox{gray}}
\put(24,27){\colorbox{gray}}

\put(30,18){\colorbox{gray}}
\put(30,22){\colorbox{gray}}
\put(30,27){\colorbox{gray}}
\put(34,18){\colorbox{gray}}
\put(34,22){\colorbox{gray}}
\put(34,27){\colorbox{gray}}
\put(39,18){\colorbox{gray}}
\put(39,22){\colorbox{gray}}
\put(39,27){\colorbox{gray}}

\put(45,18){\colorbox{gray}}
\put(45,22){\colorbox{gray}}
\put(45,27){\colorbox{gray}}
\put(49,18){\colorbox{gray}}
\put(49,22){\colorbox{gray}}
\put(49,27){\colorbox{gray}}
\put(54,18){\colorbox{gray}}
\put(54,22){\colorbox{gray}}
\put(54,27){\colorbox{gray}}


\put(15,3){\colorbox{gray}}
\put(15,7){\colorbox{gray}}
\put(15,12){\colorbox{gray}}
\put(19,3){\colorbox{gray}}
\put(19,7){\colorbox{gray}}
\put(19,12){\colorbox{gray}}
\put(24,3){\colorbox{gray}}
\put(24,7){\colorbox{gray}}
\put(24,12){\colorbox{gray}}

\put(30,3){\colorbox{gray}}
\put(30,7){\colorbox{gray}}
\put(30,12){\colorbox{gray}}
\put(34,3){\colorbox{gray}}
\put(34,7){\colorbox{gray}}
\put(34,12){\colorbox{gray}}
\put(39,3){\colorbox{gray}}
\put(39,7){\colorbox{gray}}
\put(39,12){\colorbox{gray}}

\put(45,3){\colorbox{gray}}
\put(45,7){\colorbox{gray}}
\put(45,12){\colorbox{gray}}
\put(49,3){\colorbox{gray}}
\put(49,7){\colorbox{gray}}
\put(49,12){\colorbox{gray}}
\put(54,3){\colorbox{gray}}
\put(54,7){\colorbox{gray}}
\put(54,12){\colorbox{gray}}

\put(0,0){\framebox(15,15)}
\put(15,0){\framebox(15,15)}
\put(30,0){\framebox(15,15)}
\put(45,0){\framebox(15,15)}
\put(0,15){\framebox(15,15)}
\put(15,15){\framebox(15,15)}
\put(30,15){\framebox(15,15)}
\put(45,15){\framebox(15,15)}
\put(0,30){\framebox(15,15)}
\put(15,30){\framebox(15,15)}
\put(30,30){\framebox(15,15)}
\put(45,30){\framebox(15,15)}
\put(0,45){\framebox(15,15)}
\put(15,45){\framebox(15,15)}
\put(30,45){\framebox(15,15)}
\put(45,45){\framebox(15,15)}
\end{picture}
\end{center}
The $\mathsf{S}$-fixed points $w$ for this choice of $h$ have all the fixed points appearing in Example~\ref{ex:h=(3,3,4,4)} above and, in addition, the following 
6 permutations: 
\begin{align} \label{eq:fixed_point_h=(3,4,4,4)}
1342,1423,2413,3241,4231,4312. 
\end{align}
Now consider the three elements which vanished in $H^*_\mathsf{S}(\Hess(\mathsf{N},(3,3,4,4)))$ in the $h=(3,3,4,4)$ case in Example~\ref{ex:h=(3,3,4,4)}, namely, 
\begin{align*}
f_1&:=\Delta_1(\SChNil_1-\SChNil_3-t)=\xi_1(\SChNil_1-\SChNil_2-t)(\SChNil_1-\SChNil_3-t),\\
f_2&:=\Delta_1+\Delta_2=\xi_1(\SChNil_1-\SChNil_2-t)+\xi_2(\SChNil_2-\SChNil_3-t),\\
f_3&:=\Delta_3=\xi_3(\SChNil_3-\SChNil_4-t). 
\end{align*}
Restricting the $f_1, f_2$ and $f_3$ to the 6 fixed points~\eqref{eq:fixed_point_h=(3,4,4,4)} we can compute that 
\begin{align*}
&(f_1|_{1342},f_1|_{1423},f_1|_{2413},f_1|_{3241},f_1|_{4231},f_1|_{4312})=(0,0,0,0,0,0) \\
&(f_2|_{1342},f_2|_{1423},f_2|_{2413},f_2|_{3241},f_2|_{4231},f_2|_{4312})=(-2t^2,2t^2,3t^2,-6t^2,-3t^2,4t^2) \\
&(f_3|_{1342},f_3|_{1423},f_3|_{2413},f_3|_{3241},f_3|_{4231},f_3|_{4312})=(2t^2,-2t^2,-3t^2,6t^2,3t^2,-4t^2). 
\end{align*}
Thus we find that $f_1$ still vanishes in the $h=(3,4,4,4)$ case, but $f_2$ and $f_3$ do not.  However, again we can easily see that 
$f_2+f_3 = \Delta_1 + \Delta_2 + \Delta_3$ does vanish. Moreover, notice that for all the $6$ ``new'' fixed points $w$ as listed in~\eqref{eq:fixed_point_h=(3,4,4,4)} we have $w(2)-w(4)=1$. This implies that 
\begin{align*}
(\SChNil_2-\SChNil_4-t)|_w=0  
\end{align*}
in $H^*_\mathsf{S}(\Hess(\mathsf{N},(3,4,4,4)))$. Following similar reasoning as in the above examples, we obtain that 
\begin{align*}
f_2(\SChNil_2-\SChNil_4-t)=(\Delta_1+\Delta_2)(\SChNil_2-\SChNil_4-t)=0 \in H^*_\mathsf{S}(\Hess(\mathsf{N},(3,4,4,4))).
\end{align*}
In summary, we have obtained the following relations 
\begin{align*}
\Delta_1(\SChNil_1-\SChNil_3-t)&=0 \\
(\Delta_1+\Delta_2)(\SChNil_2-\SChNil_4-t)&=0 \\
\Delta_1+\Delta_2+\Delta_3&=0 \\ 
\Delta_4&=0 
\end{align*}
in the $\mathsf{S}$-equivariant cohomology of $\Hess(\mathsf{N}, (3,4,4,4))$, and we may again guess that these are all the relations. 

\end{Example}

\begin{Example} \label{ex:h=(4,4,4,4)}
Finally, let $n=4$ still and consider $h=(4,4,4,4)$, the box diagram of which is 
\begin{center}
\begin{picture}(60,60)
\put(0,48){\colorbox{gray}}
\put(0,52){\colorbox{gray}}
\put(0,57){\colorbox{gray}}
\put(4,48){\colorbox{gray}}
\put(4,52){\colorbox{gray}}
\put(4,57){\colorbox{gray}}
\put(9,48){\colorbox{gray}}
\put(9,52){\colorbox{gray}}
\put(9,57){\colorbox{gray}}

\put(15,48){\colorbox{gray}}
\put(15,52){\colorbox{gray}}
\put(15,57){\colorbox{gray}}
\put(19,48){\colorbox{gray}}
\put(19,52){\colorbox{gray}}
\put(19,57){\colorbox{gray}}
\put(24,48){\colorbox{gray}}
\put(24,52){\colorbox{gray}}
\put(24,57){\colorbox{gray}}

\put(30,48){\colorbox{gray}}
\put(30,52){\colorbox{gray}}
\put(30,57){\colorbox{gray}}
\put(34,48){\colorbox{gray}}
\put(34,52){\colorbox{gray}}
\put(34,57){\colorbox{gray}}
\put(39,48){\colorbox{gray}}
\put(39,52){\colorbox{gray}}
\put(39,57){\colorbox{gray}}

\put(45,48){\colorbox{gray}}
\put(45,52){\colorbox{gray}}
\put(45,57){\colorbox{gray}}
\put(49,48){\colorbox{gray}}
\put(49,52){\colorbox{gray}}
\put(49,57){\colorbox{gray}}
\put(54,48){\colorbox{gray}}
\put(54,52){\colorbox{gray}}
\put(54,57){\colorbox{gray}}

\put(0,33){\colorbox{gray}}
\put(0,37){\colorbox{gray}}
\put(0,42){\colorbox{gray}}
\put(4,33){\colorbox{gray}}
\put(4,37){\colorbox{gray}}
\put(4,42){\colorbox{gray}}
\put(9,33){\colorbox{gray}}
\put(9,37){\colorbox{gray}}
\put(9,42){\colorbox{gray}}

\put(15,33){\colorbox{gray}}
\put(15,37){\colorbox{gray}}
\put(15,42){\colorbox{gray}}
\put(19,33){\colorbox{gray}}
\put(19,37){\colorbox{gray}}
\put(19,42){\colorbox{gray}}
\put(24,33){\colorbox{gray}}
\put(24,37){\colorbox{gray}}
\put(24,42){\colorbox{gray}}

\put(30,33){\colorbox{gray}}
\put(30,37){\colorbox{gray}}
\put(30,42){\colorbox{gray}}
\put(34,33){\colorbox{gray}}
\put(34,37){\colorbox{gray}}
\put(34,42){\colorbox{gray}}
\put(39,33){\colorbox{gray}}
\put(39,37){\colorbox{gray}}
\put(39,42){\colorbox{gray}}

\put(45,33){\colorbox{gray}}
\put(45,37){\colorbox{gray}}
\put(45,42){\colorbox{gray}}
\put(49,33){\colorbox{gray}}
\put(49,37){\colorbox{gray}}
\put(49,42){\colorbox{gray}}
\put(54,33){\colorbox{gray}}
\put(54,37){\colorbox{gray}}
\put(54,42){\colorbox{gray}}

\put(0,18){\colorbox{gray}}
\put(0,22){\colorbox{gray}}
\put(0,27){\colorbox{gray}}
\put(4,18){\colorbox{gray}}
\put(4,22){\colorbox{gray}}
\put(4,27){\colorbox{gray}}
\put(9,18){\colorbox{gray}}
\put(9,22){\colorbox{gray}}
\put(9,27){\colorbox{gray}}

\put(15,18){\colorbox{gray}}
\put(15,22){\colorbox{gray}}
\put(15,27){\colorbox{gray}}
\put(19,18){\colorbox{gray}}
\put(19,22){\colorbox{gray}}
\put(19,27){\colorbox{gray}}
\put(24,18){\colorbox{gray}}
\put(24,22){\colorbox{gray}}
\put(24,27){\colorbox{gray}}

\put(30,18){\colorbox{gray}}
\put(30,22){\colorbox{gray}}
\put(30,27){\colorbox{gray}}
\put(34,18){\colorbox{gray}}
\put(34,22){\colorbox{gray}}
\put(34,27){\colorbox{gray}}
\put(39,18){\colorbox{gray}}
\put(39,22){\colorbox{gray}}
\put(39,27){\colorbox{gray}}

\put(45,18){\colorbox{gray}}
\put(45,22){\colorbox{gray}}
\put(45,27){\colorbox{gray}}
\put(49,18){\colorbox{gray}}
\put(49,22){\colorbox{gray}}
\put(49,27){\colorbox{gray}}
\put(54,18){\colorbox{gray}}
\put(54,22){\colorbox{gray}}
\put(54,27){\colorbox{gray}}

\put(0,3){\colorbox{gray}}
\put(0,7){\colorbox{gray}}
\put(0,12){\colorbox{gray}}
\put(4,3){\colorbox{gray}}
\put(4,7){\colorbox{gray}}
\put(4,12){\colorbox{gray}}
\put(9,3){\colorbox{gray}}
\put(9,7){\colorbox{gray}}
\put(9,12){\colorbox{gray}}

\put(15,3){\colorbox{gray}}
\put(15,7){\colorbox{gray}}
\put(15,12){\colorbox{gray}}
\put(19,3){\colorbox{gray}}
\put(19,7){\colorbox{gray}}
\put(19,12){\colorbox{gray}}
\put(24,3){\colorbox{gray}}
\put(24,7){\colorbox{gray}}
\put(24,12){\colorbox{gray}}

\put(30,3){\colorbox{gray}}
\put(30,7){\colorbox{gray}}
\put(30,12){\colorbox{gray}}
\put(34,3){\colorbox{gray}}
\put(34,7){\colorbox{gray}}
\put(34,12){\colorbox{gray}}
\put(39,3){\colorbox{gray}}
\put(39,7){\colorbox{gray}}
\put(39,12){\colorbox{gray}}

\put(45,3){\colorbox{gray}}
\put(45,7){\colorbox{gray}}
\put(45,12){\colorbox{gray}}
\put(49,3){\colorbox{gray}}
\put(49,7){\colorbox{gray}}
\put(49,12){\colorbox{gray}}
\put(54,3){\colorbox{gray}}
\put(54,7){\colorbox{gray}}
\put(54,12){\colorbox{gray}}

\put(0,0){\framebox(15,15)}
\put(15,0){\framebox(15,15)}
\put(30,0){\framebox(15,15)}
\put(45,0){\framebox(15,15)}
\put(0,15){\framebox(15,15)}
\put(15,15){\framebox(15,15)}
\put(30,15){\framebox(15,15)}
\put(45,15){\framebox(15,15)}
\put(0,30){\framebox(15,15)}
\put(15,30){\framebox(15,15)}
\put(30,30){\framebox(15,15)}
\put(45,30){\framebox(15,15)}
\put(0,45){\framebox(15,15)}
\put(15,45){\framebox(15,15)}
\put(30,45){\framebox(15,15)}
\put(45,45){\framebox(15,15)}
\end{picture}
\end{center}
The $\mathsf{S}$-fixed points in this case contain $6$ more permutations 
\begin{align} \label{eq:fixed_point_h=(4,4,4,4)}
2341,2431,3142,3412,4123,4213 
\end{align}
which were not present in the $h=(3,4,4,4)$ example. 
Let us consider the $3$ elements that vanished in Example~\ref{ex:h=(3,4,4,4)}, as listed at the end: 
\begin{align*}
g_1&:=\Delta_1(\SChNil_1-\SChNil_3-t)=\xi_1(\SChNil_1-\SChNil_2-t)(\SChNil_1-\SChNil_3-t),\\
g_2&:=(\Delta_1+\Delta_2)(\SChNil_2-\SChNil_4-t)=\xi_1(\SChNil_1-\SChNil_2-t)(\SChNil_2-\SChNil_4-t)+\xi_2(\SChNil_2-\SChNil_3-t)(\SChNil_2-\SChNil_4-t), \\
g_3&:=\Delta_1+\Delta_2+\Delta_3=\xi_1(\SChNil_1-\SChNil_2-t)+\xi_2(\SChNil_2-\SChNil_3-t)+\xi_3(\SChNil_3-\SChNil_4-t). 
\end{align*}
Following the examples we analyzed above, we can try restricting these elements to the new fixed points~\eqref{eq:fixed_point_h=(4,4,4,4)} and obtain 
\begin{align*}
&(g_1|_{2341},g_1|_{2431},g_1|_{3142},g_1|_{3412},g_1|_{4123},g_1|_{4213})=(6t^3,6t^3,-4t^3,-4t^3,6t^3,6t^3) \\
&(g_2|_{2341},g_2|_{2431},g_2|_{3142},g_2|_{3412},g_2|_{4123},g_2|_{4213})=(-6t^3,-6t^3,4t^3,4t^3,-6t^3,-6t^3) \\
&(g_3|_{2341},g_3|_{2431},g_3|_{3142},g_3|_{3412},g_3|_{4123},g_3|_{4213})=(0,0,0,0,0,0). 
\end{align*}
Thus we find that $g_1, g_2$ do not vanish but $g_3$ does. However, as before, $g_1+g_2=\Delta_1(\SChNil_1-\SChNil_3-t)+(\Delta_1+\Delta_2)(\SChNil_2-\SChNil_4-t)$ does vanish. Moreover, since $w(1)-w(4)=1$ for all $w$ in the list~\eqref{eq:fixed_point_h=(4,4,4,4)} we have as before that 
\begin{align*}
g_1(\SChNil_1-\SChNil_4-t)=\xi_1(\SChNil_1-\SChNil_2-t)(\SChNil_1-\SChNil_3-t)(\SChNil_1-\SChNil_4-t)=0. 
\end{align*}
Summarizing, we obtain the following list of relations for this (last) case of $h=(4,4,4,4)$: 
\begin{align*}
\Delta_1(\SChNil_1-\SChNil_3-t)(\SChNil_1-\SChNil_4-t)&=0 \\
\Delta_1(\SChNil_1-\SChNil_3-t)+(\Delta_1+\Delta_2)(\SChNil_2-\SChNil_4-t)&=0 \\
\Delta_1+\Delta_2+\Delta_3&=0 \\ 
\Delta_4&=0. 
\end{align*}
\end{Example}

This is where the fun begins.  Having computed the above examples, can we now guess a systematic method for constructing the appropriate relations for $H^*_\mathsf{S}(\Hess(\mathsf{N},h))$?  In the process of writing \cite{AHHM2019}, our approach (as hinted above) was to think of the process inductively, the base case being the Peterson variety $h=(2,3,4,\cdots, n,n)$, and then adding ``one box at a time''.  Suppose then that a Hessenberg function $h$ is obtained from $h'$ by adding the $(i,j)$-th box. We additionally visualized each box in the Hessenberg box diagram as containing a polynomial relation, where the relation for the $(i,j)$-th box can be inductively derived from the relations associated to the boxes in $h$. We also recall here that we had already guessed that adding a $(i,j)$-th box should somehow translate to a transformation which involves a multiplication by $(\SChNil_j - \SChNil_i - t)$.

\noindent \textbf{Suggested Exercise.} 
With the above discussion in mind, the reader is at this point invited to give it a try him/herself to attempt to come up with a systematic method for constructing a (candidate) list of relations for $H^*_\mathsf{S}(\Hess(\mathsf{N},h))$ which ``formalizes'' the phenomena we saw in the concrete examples above, before reading further. 

\medskip

We hope the reader has done the suggested exercise, and we now proceed to a more complete description of the thought process which lead us to Definition~\ref{definition:fij}. To do so, let us look again at the step in the inductive process when we add the $(4,1)$-box to $h'=(3,4,4,4)$ to obtain $h=(4,4,4,4)$:

\begin{align*}
\Delta_1(\SChNil_1-\SChNil_3-t) \quad & \quad \leadsto \quad \Delta_1(\SChNil_1-\SChNil_3-t)(\SChNil_1-\SChNil_4-t) \\
(\Delta_1+\Delta_2)(\SChNil_2-\SChNil_4-t)& \quad \leadsto \quad \Delta_1(\SChNil_1-\SChNil_3-t)+(\Delta_1+\Delta_2)(\SChNil_2-\SChNil_4-t)\\
\Delta_1+\Delta_2+\Delta_3& \quad \leadsto \quad \Delta_1+\Delta_2+\Delta_3\\ 
\Delta_4& \quad \leadsto \quad \Delta_4
\end{align*}
We think of the LHS as the relations for $h'=(3,4,4,4)$, and moreover, we associate the generators to the boxes at positions $(3,1),(4,2),(4,3),(4,4)$ respectively, reading from top to bottom. On the RHS, we think of these four generators as associated to the boxes at positions $(4,1), (4,2), (4,3), (4,4)$ respectively, also reading from top to bottom.

We now examine once again what happens under these transformations $\leadsto$ from the LHS to the RHS.  The top transformation is a multiplication by $\SChNil_1-\SChNil_4-t$, as we had suspected. 
In the next line, it is useful to recognize at this point that since $\Delta_1(\SChNil_1-\SChNil_3-t)+(\Delta_1+\Delta_2)(\SChNil_2-\SChNil_4-t)$ vanishes for $h'=(3,4,4,4)$ in any case, we could have taken on the LHS of the second line to be 
\[
\Delta_1(\SChNil_1-\SChNil_3-t)+(\Delta_1+\Delta_2)(\SChNil_2-\SChNil_4-t) 
\]
and we ask the reader to keep this in mind as we continue looking at further examples. 

Now let us examine the inductive step at which we added a box at position $(3,1)$ to $h'=(2,4,4,4)$ to obtain $h=(3,4,4,4)$.   Recall that we decided above that the second relation for $h=(3,4,4,4)$ could be taken to be $\Delta_1(\SChNil_1-\SChNil_3-t)+(\Delta_1+\Delta_2)(\SChNil_2-\SChNil_4-t)=0$. Writing the relations obtained in the above exposition for $h'=(2,4,4,4)$ on the LHS and those for $h = (3,4,4,4)$ on the RHS, we have 
\begin{align*}
\Delta_1& \quad \leadsto \quad \Delta_1(\SChNil_1-\SChNil_3-t) \\
\Delta_2(\SChNil_2-\SChNil_4-t)& \quad \leadsto \quad \Delta_1(\SChNil_1-\SChNil_3-t)+(\Delta_1+\Delta_2)(\SChNil_2-\SChNil_4-t)\\
\Delta_2+\Delta_3& \quad \leadsto \quad \Delta_1+\Delta_2+\Delta_3\\ 
\Delta_4& \quad \leadsto \quad \Delta_4
\end{align*}
We see that at the first (top) line, the transformation from $h'$ to $h$ involves a multiplication by $(\SChNil_1 - \SChNil_3-t)$ as suspected. 
For the second and third lines, from the above lists we see that there appear to be transformations occurring, but upon inspection we see (as in the case above) that since $\Delta_1=0$, the original relations for the $h'=(2,4,4,4)$ case could be replaced by 
$\Delta_1(\SChNil_1-\SChNil_3-t)+(\Delta_1+\Delta_2)(\SChNil_2-\SChNil_4-t)$ and $\Delta_1+\Delta_2+\Delta_3$ respectively without causing any disruption in the ideal of relations.  Thus henceforth we may imagine that the relations for $h'$ are altered in this way and the complete list is 
\begin{align*}
&\Delta_1 \\
&\Delta_1(\SChNil_1-\SChNil_3-t)+(\Delta_1+\Delta_2)(\SChNil_2-\SChNil_4-t)\\
&\Delta_1+\Delta_2+\Delta_3\\ 
&\Delta_4. 
\end{align*}
Here we are imagining that the above relations are associated to the boxes at positions 
$(2,1), (4,2), (4,3), (4,4)$ respectively.

As a final consideration, let us examine the step in which $h'=(3,3,4,4)$ and we add the $(4,2)$-th box to obtain $h=(3,4,4,4)$. We start with 
the transformation of relations as follows 
\begin{align*}
\Delta_1(\SChNil_1-\SChNil_3-t)& \quad \leadsto \quad \Delta_1(\SChNil_1-\SChNil_3-t) \\
\Delta_1+\Delta_2& \quad \leadsto \quad \Delta_1(\SChNil_1-\SChNil_3-t)+(\Delta_1+\Delta_2)(\SChNil_2-\SChNil_4-t)\\
\Delta_3& \quad \leadsto \quad \Delta_1+\Delta_2+\Delta_3\\ 
\Delta_4& \quad \leadsto \quad \Delta_4
\end{align*}
but following reasoning similar to the above cases, we may replace the third relation on the LHS with $\Delta_1+\Delta_2+\Delta_3$ (since $\Delta_1+\Delta_2=0$), resulting in the fact that only the second relation undergoes a non-trivial transformation.  In this case, this non-trivial transformation at the second line, corresponding to the box in position $(4,2)$, involves two operations: multiplication by $(\SChNil_2-\SChNil_4-t)$, and then an addition of the term $\Delta_1(\SChNil_1-\SChNil_3-t)$.  This is a new phenomenon that we have not seen before! How should we interpret this?  The reader may also be wondering whether we could simply take $(\Delta_1+\Delta_2)(\SChNil_2-\SChNil_4-t)$ as the relation associated to the box $(4,2)$, since this vanishes for $h=(3,4,4,4)$. However, thinking back to the computation for $h=(4,4,4,4)$, we also recall that $(\Delta_1+\Delta_2)(\SChNil_2-\SChNil_4-t)$ did \emph{not} vanish for $h=(4,4,4,4)$, whereas the sum $\Delta_1(\SChNil_1-\SChNil_3-t)+(\Delta_1+\Delta_2)(\SChNil_2-\SChNil_4-t)$ does vanish. This seems to suggest to us that the ``correct'' relation (polynomial) to associated to box $(4,2)$ is $\Delta_1(\SChNil_1-\SChNil_3-t)+(\Delta_1+\Delta_2)(\SChNil_2-\SChNil_4-t)$ and not $(\Delta_1+\Delta_2)(\SChNil_2-\SChNil_4-t)$.

As just commented, in the previous paragraph we have come across a process by which we both multiplied a previous relation by a linear term involving $\tau_i$'s and $t$, and then added another relation. 
It is now time to notice that this ``other relation'' that we added -- at least, in the example above -- is exactly the relation $\Delta_1(\SChNil_1-\SChNil_3-t)$ which is associated to the box $(3,1)$. At this point we have the following idea: what this process is doing (in this case) is: 
\textbf{ to obtain the relation for box $(4,2)$, take the relation for box $(3,2)$ and multiply by $(\SChNil_2-\SChNil_4-t)$, then add the relation for box $(3,1)$.}    This then immediately generalizes to the following guess for a general algorithm. Namely: \textbf{ to obtain the relation for box $(i,j)$, take the relation for the box $(i-1,j)$ and multiply by $(\SChNil_j-\SChNil_i-t)$, then add the relation for box $(i-1,j-1)$. } (Note that we saw the second part of this algorithm (``add the box $(i-1,j-1)$'') for the first time in the $h'=(3,3,4,4)$ and $h=(3,4,4,4)$ example because all the other examples had taken $j=1$, in which case there is no box $(i-1,j-1)$, so the second part of the algorithm does nothing.)

The general algorithm boldfaced in the previous paragraph is just a guess!  Let us know see how to start the process by defining the relations for the base cases, which we take to be the boxes $(j,j)$ for $1 \leq j \leq n$ along the main diagonal, and see what kind of relations we obtain.  For simplicity we illustrate with $n=4$, but it should be clear that nothing we say really depends on this restriction. 
To define the relations along the main diagonal, the most natural thing to try is 
\[
f_{j,j} := p_j \, \, \textup{ for } 1 \leq j \leq 4.
\]
In the case $h=(1,2,3,4)$, the corresponding regular nilpotent Hessenberg variety is $\Hess(\mathsf{N}, (1,2,3,4)) = \{\mathrm{id}\}$ is a single point and $\xi_j(id)=\sum_{k=1}^j (\SChNil_k(id)-kt)=\sum_{k=1}^j (kt-kt)=0$, so these are indeed appropriate relations for these boxes. Now if we follow the algorithm described above (boldfaced), we would then recursively define the $f_{i,j}$ for $i>j$ as
follows: 
\begin{align*}
f_{2,1}&=(\SChNil_1-\SChNil_2-t)f_{1,1}=(\SChNil_1-\SChNil_2-t)p_1 \\
f_{3,2}&=(\SChNil_2-\SChNil_3-t)f_{2,2}+f_{2,1}=(\SChNil_1-\SChNil_2-t)p_1+(\SChNil_2-\SChNil_3-t)p_2 \\
f_{4,3}&=(\SChNil_3-\SChNil_4-t)f_{3,3}+f_{3,2}=(\SChNil_1-\SChNil_2-t)p_1+(\SChNil_2-\SChNil_3-t)p_2+(\SChNil_3-\SChNil_4-t)p_3 \\
f_{3,1}&=(\SChNil_1-\SChNil_3-t)f_{2,1}=(\SChNil_1-\SChNil_3-t)(\SChNil_1-\SChNil_2-t)p_1 \\
f_{4,2}&=(\SChNil_2-\SChNil_4-t)f_{3,2}+f_{3,1}\\
&=(\SChNil_1-\SChNil_3-t)(\SChNil_1-\SChNil_2-t)p_1+(\SChNil_2-\SChNil_4-t)\left((\SChNil_1-\SChNil_2-t)p_1+(\SChNil_2-\SChNil_3-t)p_2\right) \\
f_{4,1}&=(\SChNil_1-\SChNil_4-t)f_{3,1}=(\SChNil_1-\SChNil_4-t)(\SChNil_1-\SChNil_3-t)(\SChNil_1-\SChNil_2-t)p_1 
\end{align*}
and for any given Hessenberg function $h$, it is then possible to check, following the methods in the examples above, that the four relations $f_{h(1),1}, f_{h(2),2}, f_{h(3),3}, f_{h(4),4}$ corresponding to the boxes ``at the bottom'' of the Hessenberg box diagram of $h$ are indeed $0$ in $H^*_S(\Hess(\mathsf{N},h))$.

From here it is straightforward to conjecture the statement given in our main theorem. 
At this point we invite the reader to go back and view Definition~\ref{definition:fij}, bearing in mind this motivating discussion. 
The natural conjecture to make at this point formalizes the guess, already hinted at above, that the $f_{h(j),j}$ for $j \in [n]$ generate the ideal of relations in the equivariant cohomology ring. 
Having come this far, we are finally in a position to state the result in \cite{AHHM2019}.

\begin{Theorem} \label{theorem:reg nilp Hess cohomology} (\cite[Theorem~3.3]{AHHM2019})
Let $n$ be a positive integer and $h: [n] \to [n]$ a Hessenberg function. Let $\Hess(\mathsf{N},h) \subset \fln$ denote the corresponding regular nilpotent Hessenberg variety equipped with the action of the 1-dimensional subtorus $\S1$. Then the restriction map 
\begin{equation*}
H^*_T(\fln) \to H^*_\mathsf{S}(\Hess(\mathsf{N},h))
\end{equation*} 
is surjective, and 
there is an isomorphism of graded $\mathbb{Q}[t]$-algebras
\begin{equation*}
H^{\ast}_{\mathsf{S}}(\mathcal \Hess(\mathsf{N},h))\cong \mathbb{Q}[x_1,\dots,x_n,t]/\Ih 
\end{equation*}
sending $x_i$ to $\SChNil_i$ and $t$ to $t$, 
where $\SChNil_i$ is the $\mathsf{S}$-equivariant first Chern class of the dual of the
tautological line bundle  restricted to $\Hess(\mathsf{N},h)$ and we
identify $H^{\ast}(B\mathsf{S}) \cong \mathbb{Q}[t]$, and the ideal $\Ih$ is defined by 
\begin{equation}\label{eq:def Ih} 
\Ih:= (f_{h(j),j} \mid j \in [n] ).
\end{equation} 
\end{Theorem}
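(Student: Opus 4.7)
The proof will follow the same architecture already used in the Peterson case (Theorem~\ref{theo:3.1}): I will exhibit a graded $\Q[t]$-algebra homomorphism
$$
\varphi:\Q[x_1,\ldots,x_n,t]/\Ih \;\longrightarrow\; H^*_{\S1}(\Hess(\mathsf{N},h)),\qquad x_i\mapsto \SChNil_i,\ t\mapsto t,
$$
then prove it is surjective, compute the Hilbert series of source and target, and conclude from equality of Hilbert series that $\varphi$ is an isomorphism. The stated surjectivity of $H^*_T(\fln)\to H^*_{\S1}(\Hess(\mathsf{N},h))$ will fall out of the proof, since the $\SChNil_i$ are, by construction, the images of the generators $\TChFlag_i$ of $H^*_T(\fln)$ (together with the $t_i$, each of which maps into $\Q[t]$ by Lemma~\ref{lemma: ti goes to it}).

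\textbf{Well-definedness of $\varphi$.}
I must verify that $f_{h(j),j}(\SChNil_1,\ldots,\SChNil_n,t)=0$ in $H^*_{\S1}(\Hess(\mathsf{N},h))$ for each $j\in[n]$. Since $H^{\mathrm{odd}}(\Hess(\mathsf{N},h))=0$ by the Tymoczko paving \cite{ty}, Corollary~\ref{corollary:localization_theorem} makes the restriction map $\iota_2$ of~\eqref{eq:cd} injective, so it suffices to check that $f_{h(j),j}$ restricts to $0$ at every fixed point. Combining Lemma~\ref{lemma:flag1} with Lemma~\ref{lemma: ti goes to it} gives $\SChNil_i|_w=w(i)t$, so the claim becomes the purely numerical statement $f_{h(j),j}\bigl(w(1)t,\ldots,w(n)t,t\bigr)=0$ in $\Q[t]$ for every $w\in\Hess(\mathsf{N},h)^{\S1}$. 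Setting $F_{i,j}(w):=f_{i,j}(w(1)t,\ldots,w(n)t,t)/t^{i-j+1}\in\Z$, the recursion in Definition~\ref{definition:fij} reads
$$
F_{i,j}(w) \;=\; F_{i-1,j-1}(w)\;+\;\bigl(w(j)-w(i)-1\bigr)\,F_{i-1,j}(w),
$$
and I will prove $F_{h(j),j}(w)=0$ by induction on $j$, using Lemma~\ref{lemma:Hess fixed points}'s combinatorial characterization $w^{-1}(w(i)-1)\le h(i)$. The base case $j=1$ is immediate from the closed form $F_{i,1}(w)=(w(1)-1)\prod_{k=2}^{i}(w(1)-w(k)-1)$, which vanishes at $i=h(1)$ precisely because $w^{-1}(w(1)-1)\le h(1)$. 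For the inductive step I will identify, through the recursion, the cancellations between the two summands that are forced by the inequalities $w^{-1}(w(k)-1)\le h(k)$ for $k\le j$; this is the heart of the matter and the main technical obstacle of the whole argument.

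\textbf{Surjectivity and Hilbert series.}
Surjectivity of $\varphi$ follows from Proposition~\ref{proposition:generators equivariant cohomology}(1) applied with $T=\S1$ (so the basis of $H^2(B\S1)$ is the single element $t$), provided the ordinary classes $\bxi_i:=\SChNil_i\bmod (t)$ generate $H^*(\Hess(\mathsf{N},h))$; this in turn follows from the Borel presentation (Theorem~\ref{theorem:flag cofomology}) together with the surjectivity $H^*(\fln)\twoheadrightarrow H^*(\Hess(\mathsf{N},h))$, which is a consequence of the injectivity of $H_*(\Hess(\mathsf{N},h))\to H_*(\fln)$ provided by Tymoczko's affine paving. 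A straightforward induction on the recursion yields $\deg f_{i,j}=2(i-j+1)$, and hence $\deg f_{h(j),j}=2(h(j)-j+1)$. To compute the Hilbert series of $\Q[x_1,\ldots,x_n,t]/\Ih$, I will show that the sequence $f_{h(1),1},\ldots,f_{h(n),n},t$ is a regular sequence in $\Q[x_1,\ldots,x_n,t]$ by applying Lemma~\ref{lemma: solution regular criterion}: after setting $t=0$, the triangular structure of the recursion (with $f_{j,j}|_{t=0}=x_1+\cdots+x_j$) allows an inductive argument on $j$ showing that the only common complex solution of $f_{h(j),j}|_{t=0}=0$ for $j\in[n]$ is the origin. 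Lemma~\ref{lemma: hilbert regular criterion} then gives
$$
\Hilb\!\left(\Q[x_1,\ldots,x_n,t]/\Ih,\,q\right)
\;=\;\frac{1}{(1-q^2)^{n+1}}\prod_{j=1}^{n}\bigl(1-q^{2(h(j)-j+1)}\bigr)
\;=\;\frac{1}{1-q^2}\prod_{j=1}^{n}[h(j)-j+1]_{q^2},
$$
where $[m]_{q^2}:=1+q^2+\cdots+q^{2(m-1)}$. By~\eqref{eq:equivariant cohomology of Hess(N,h) free} this matches $\Hilb(H^*(B\S1),q)\cdot\Poin(\Hess(\mathsf{N},h),q)$ via Tymoczko's product formula for the Poincar\'e polynomial (which specializes to~\eqref{eq:4.5} in the Peterson case). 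Combined with surjectivity of $\varphi$, this forces $\varphi$ to be an isomorphism, completing the proof.

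\textbf{Main obstacle.} The deepest point is the fixed-point vanishing of $F_{h(j),j}(w)$, where the interaction of the triangular recursion with the nonlinear constraints $w^{-1}(w(i)-1)\le h(i)$ demands an inductive hypothesis delicate enough to capture the needed cancellations without overreaching; a closed-form expansion of $F_{i,j}(w)$ as a signed sum over lattice paths in the $(i,j)$-grid is the most promising route. The regular-sequence verification at $t=0$ is a secondary challenge but should yield to a parallel induction.
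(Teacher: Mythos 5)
Your overall skeleton is right: verify the relations $f_{h(j),j}$ by restriction to $\S1$-fixed points, establish a surjection from a quotient of a polynomial ring, compute Hilbert series via the regular-sequence criterion, and conclude. Your reduction of the well-definedness step to the vanishing of the integers $F_{h(j),j}(w)$ subject to the inequalities $w^{-1}(w(i)-1)\le h(i)$ is a correct and honest reformulation of the (genuinely hard) combinatorial core, and your base case $j=1$ checks out. Your Hilbert series computation, and the proposed use of Lemma~\ref{lemma: solution regular criterion} at $t=0$, are also in line with what actually works.

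There is, however, a genuine gap in your surjectivity argument. You claim that $H^*(\fln)\twoheadrightarrow H^*(\Hess(\mathsf{N},h))$ --- equivalently, that $H_*(\Hess(\mathsf{N},h))\to H_*(\fln)$ is injective --- is ``provided by Tymoczko's affine paving.'' It is not. An affine paving gives evenness of cohomology and freeness of $H^*_{\S1}$ as a $\Q[t]$-module, but it does \emph{not} guarantee that the pushforwards of the cell-closure classes remain linearly independent in $H_*(\fln)$; the cells of the paving are proper pieces of Schubert cells of smaller dimension than $\ell(w)$, so there is no a priori triangularity to exploit. For the Peterson variety this injectivity is a separate theorem of Insko--Tymoczko (cited in the paper as such), and for a general regular nilpotent Hessenberg variety the surjectivity of the restriction map is itself part of the statement of Theorem~\ref{theorem:reg nilp Hess cohomology}: in \cite{AHHM2019} it is a \emph{conclusion}, not an input. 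Using it as an input makes your argument circular, and without surjectivity the Hilbert series equality alone does not force $\varphih$ to be an isomorphism.

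The paper avoids this obstruction by proving \emph{injectivity} of $\varphih$ rather than surjectivity. One notes the ideal containment $\Ih \supseteq I_{(n,\ldots,n)}$, so there is a surjection $\Q[x,t]/I_{(n,\ldots,n)}\twoheadrightarrow\Q[x,t]/\Ih$. Since $\varphi_{(n,\ldots,n)}$ is an isomorphism (Theorem~\ref{theorem:presentation flag typeA}), one localizes everything at $R:=\Q[t]\setminus\{0\}$; by the localization theorem (Theorem~\ref{theorem:localization_theorem}) the fixed-point restriction maps become isomorphisms, and the map from $\bigoplus_{w\in S_n}R^{-1}\Q[t]$ to $\bigoplus_{w\in\Hess(\mathsf{N},h)^{\S1}}R^{-1}\Q[t]$ is a coordinate projection and so is automatically surjective. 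A diagram chase then shows $R^{-1}\varphih$ is surjective, and Hilbert series forces $R^{-1}\varphih$ to be an isomorphism. Because $t$ is a non-zero-divisor on both sides (which follows from the regular-sequence fact you do establish, together with \eqref{eq:equivariant cohomology of Hess(N,h) free}), the natural maps into the localizations are injective, hence $\varphih$ itself is injective, and the Hilbert series equality then upgrades this to an isomorphism. Surjectivity of $H^*_T(\fln)\to H^*_{\S1}(\Hess(\mathsf{N},h))$ drops out as a corollary. If you want to retain your surjectivity-first structure you would need an independent proof of the homology injectivity, which your sources do not supply.
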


The above result concerns equivariant cohomology.  As usual, we can also 
give the analogous statement for ordinary cohomology. 
For any pair $i, j$ with $1 \leq j \leq i \leq n$, let $\cf_{i,j}$ be the polynomial $f_{i,j}(x_1,\ldots,x_n,t=0)$ in $\Q[x_1,\ldots,x_n]$. 
In other words, $\cf_{i,j}$ is defined as the following recursive formula:
\begin{align*}
\cf_{j,j}&=\sum_{k=1}^j x_k \quad \textup{ for } j \in [n], \\
\cf_{i,j}&=\cf_{i-1,j-1}+\big(x_j-x_i\big)\cf_{i-1,j} \quad \textup{ for } n\ge i>j\ge 1
\end{align*}
with the convention $\cf_{*,0}:=0$ for any $*$.

\begin{Theorem}\label{theorem: main reg nilpotent} (\cite[Theorem~A]{AHHM2019})
  Let $n$ be a positive integer and $h: [n] \to
  [n]$ a Hessenberg function. Let $\mathsf{N}$
  denote a regular nilpotent matrix in $\mathfrak{gl}_n(\C)$ and let
 $\Hess(\mathsf{N},h) \subset  \fln$ be
  the associated regular nilpotent Hessenberg variety. Then the
  restriction map
\[
H^*(\fln) \to H^*(\Hess(\mathsf{N},h))
\]
is surjective, and there is an isomorphism of graded $\Q$-algebras
\begin{equation}\label{eq:intro Theorem A} 
H^*(\Hess(\mathsf{N},h)) \cong \Q[x_1, \ldots, x_n]/\check \Ih
\end{equation}
where $\check \Ih$ is the ideal of $\Q[x_1, \ldots, x_n]$ defined by
\begin{equation}\label{definition ideal check Ih} 
\check \Ih := (\cf_{h(j),j} \mid j \in [n] ). 
\end{equation}
\end{Theorem}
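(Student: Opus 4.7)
The plan is to deduce Theorem~\ref{theorem: main reg nilpotent} directly from its equivariant counterpart, Theorem~\ref{theorem:reg nilp Hess cohomology}, by specializing $t=0$. Since $\Hess(\mathsf{N},h)$ is paved by complex affine cells in Lie type~A, its odd-degree ordinary cohomology vanishes, so the background result~\eqref{eq:from equivariant to ordinary} gives the identification
\[
H^*(\Hess(\mathsf{N},h))\;\cong\;H^*_{\mathsf{S}}(\Hess(\mathsf{N},h))\,\big/\,(H^{>0}(B\mathsf{S}))\;=\;H^*_{\mathsf{S}}(\Hess(\mathsf{N},h))\,\big/\,(t).
\]
Under the isomorphism from Theorem~\ref{theorem:reg nilp Hess cohomology}, the generator $t\in H^2(B\mathsf{S})$ corresponds to the indeterminate $t$ in the polynomial presentation, so passing to the quotient by $(t)$ on the equivariant side yields
\[
H^*(\Hess(\mathsf{N},h))\;\cong\;\Q[x_1,\ldots,x_n,t]\,\big/\,\bigl(I_h+(t)\bigr)\;\cong\;\Q[x_1,\ldots,x_n]\,\big/\,\bar{I}_h,
\]
where $\bar{I}_h$ denotes the image of $I_h$ under the evaluation homomorphism $t\mapsto 0$.

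Next, I would verify the equality $\bar{I}_h=\check{I}_h$. By Definition~\ref{definition:fij}, the $f_{i,j}$ are defined by a recursion whose base case $f_{j,j}=\sum_{k=1}^j(x_k-kt)$ and inductive step $f_{i,j}=f_{i-1,j-1}+(x_j-x_i-t)f_{i-1,j}$ both specialize, at $t=0$, to exactly the recursive definition of $\check{f}_{i,j}$ given in the paragraph preceding Theorem~\ref{theorem: main reg nilpotent}. Hence each generator $f_{h(j),j}$ of $I_h$ maps to $\check{f}_{h(j),j}$ under $t\mapsto 0$, so $\bar{I}_h=(\check{f}_{h(j),j}\mid j\in[n])=\check{I}_h$. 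Under the composite isomorphism, $x_i$ is sent to the image of $\SChNil_i$ in $H^*_{\mathsf{S}}(\Hess(\mathsf{N},h))/(t)\cong H^*(\Hess(\mathsf{N},h))$, which is exactly the ordinary first Chern class of the dual of the tautological quotient line bundle, matching the statement of~\eqref{eq:intro Theorem A}.

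For the surjectivity of the restriction map $H^*(\fln)\to H^*(\Hess(\mathsf{N},h))$, I would use the commutative square
\[
\begin{CD}
H^*_T(\fln) @>>> H^*(\fln)\\
@VVV @VVV\\
H^*_{\mathsf{S}}(\Hess(\mathsf{N},h)) @>>> H^*(\Hess(\mathsf{N},h))
\end{CD}
\]
in which the horizontal arrows are the forgetful maps to ordinary cohomology. Both horizontal maps are surjective since the relevant odd cohomology vanishes (cf.~the discussion preceding Corollary~\ref{coro:restriction surjective in type A}), and the left vertical map is surjective by Theorem~\ref{theorem:reg nilp Hess cohomology}. Chasing the diagram, the bottom arrow is therefore surjective.

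There is essentially no obstacle here beyond bookkeeping, since the genuinely difficult content has already been packaged into Theorem~\ref{theorem:reg nilp Hess cohomology}; the only small care needed is to match the recursive specializations $f_{i,j}|_{t=0}=\check{f}_{i,j}$ and to confirm that $x_i$ lands on the correct ordinary Chern class under the composite isomorphism. If one wished to avoid invoking the equivariant theorem as a black box, an alternative route would re-run the argument of Theorem~\ref{theorem:reg nilp Hess cohomology} in ordinary cohomology: establish $\check{f}_{h(j),j}=0$ in $H^*(\Hess(\mathsf{N},h))$ by restricting to fixed points (again the main work), obtain a surjection $\Q[x_1,\ldots,x_n]/\check{I}_h\twoheadrightarrow H^*(\Hess(\mathsf{N},h))$, and match Hilbert series using the regular-sequence technology of Section~\ref{subsec: background on Hilbert series and regular sequences}. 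However, the reduction from the equivariant statement is strictly cleaner and is the route I would present.
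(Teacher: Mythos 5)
Your proposal is correct and is essentially the paper's intended route: the ordinary statement is obtained by specializing the equivariant presentation at $t=0$, exactly as Corollary~\ref{coro:3.1} is deduced from Theorem~\ref{theo:3.1} in the Peterson case. The key bookkeeping — that each $f_{i,j}|_{t=0}=\check{f}_{i,j}$ via the matched recursions, that $(\Q[x_1,\ldots,x_n,t]/I_h)/(t)\cong\Q[x_1,\ldots,x_n]/\check{I}_h$, and that $x_i$ lands on the ordinary Chern class under the quotient — is handled correctly, and the surjectivity diagram chase mirrors the argument given for Corollary~\ref{coro:restriction surjective in type A}.
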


\begin{Remark} 
It can be shown that Borel's famous presentation of the cohomology of the flag variety (with rational coefficients) can be recovered from Theorem~\ref{theorem: main reg nilpotent}, see \cite{AHHM2019}. 
\end{Remark}

\subsection{Sketch of proof of Theorem~\ref{theorem:reg nilp Hess cohomology}}\label{subsec: sketch proof for coh reg nilp}

In this section we briefly sketch the proof of Theorem~\ref{theorem:reg nilp Hess cohomology}. The structure of the argument is the same as for the previous proofs in this manuscript, namely:
\begin{enumerate} 
\item We construct a ``candidate'' list of relations $\{f_{h(j),j} \, \mid \, j \in [n]\}$ for $H^*_{\S1}(\Hess(\mathsf{N},h))$, 
\item we show using $\S1$-equivariant localization to fixed points that each candidate relation does map to $0$ in $H^*_{\S1}(\Hess(\mathsf{N},h))$, 
\item we compute the Hilbert series of the domain and codomain of the ring map $\varphi_h: \Q[x_1,\cdots, x_n, t] \big/ \langle f_{h(j),j} \, \mid \, j \in [n] \rangle \to H^*_{\S1}(\Hess(\mathsf{N},h))$ and show that the two Hilbert series are equal, 
\item we show that $\varphi_h$ is injective; by (3) above, this suffices to prove that $\varphi_h$ is an isomorphism. 
\end{enumerate}

The point of Section~\ref{subsec: derivation fij} was to motivate and explain step (1) of the process above. In fact, the lengthy discussion which prepared us for the definition of the $f_{i,j}$ suggest that it is not an entirely trivial matter to prove (2), and indeed, this part of the argument is the longest and most technical part of \cite{AHHM2019}. It being a case-by-case and painstaking analysis, which does not lend itself to an intuitive big-picture discussion, we only give the briefest of sketches of part (2) below. Details may be found in \cite{AHHM2019}.  Step (3) of the argument is essentially the same sort of Hilbert series argument seen multiple times in this manuscript, so we omit further discussion of (3). In contrast, step (4) in the process above requires a fundamentally new idea not yet seen in this paper, so we give an expository account of it below.   We now proceed with the explanations as advertised. 

We begin with step (2). For this, we first recall that the set of Hessenberg functions $H_n$ has a natural partial order. 
\begin{Definition} \label{def:partial order} 
  Let $h', h \in H_n$. Then we say $h' \subset h$ if $h'(j)
  \leq h(j)$ for all $j \in [n]$. 
\end{Definition} 
The relation $h' \subset h$ is evidently a partial order on $H_n$.  Note that from the definition of $\Hess(\mathsf{N},h)$ it is immediate that 
$
h' \subset h$ implies $\Hess(\mathsf{N},h') \subset \Hess(\mathsf{N},h)
$
which explains our choice of notation. Visually, it also corresponds to containment of the Hessenberg box diagrams. 

With the partial order in hand, we can briefly sketch the idea for the proof of part (2). Firstly, to prove (2), by $\S1$-equivariant localization to the fixed points, it suffices to show that for all $j \in [n]$ and for any $w \in \Hess(\mathsf{N},h)^{\S1}$ that we have $f_{h(j),j}(w)=0$. Secondly, we complete the partial order on $H_n$ to a total order and induct, where the base case is the Peterson Hessenberg function $h=(2,3,4,\cdots, n,n)$, which we know holds due to Theorem~\ref{theo:3.1}. Thirdly, it turns out that for any $w \in \Hess(\mathsf{N},h)^{\S1}$ there exists a unique minimal (with respect to the partial order above) Hessenberg function, denoted $h_w$, for which $w \in \Hess(\mathsf{N},h)^{\S1}$. By the inductive defiinition of the $f_{i,j}$, it follows that it suffices to check the claim for $w$ with $h=h_w$. The remainder of the proof is a careful case-by-case analysis \cite{AHHM2019}.

We now give some further explanation of step (4), as promised. As already mentioned, this step involves a genuinely new idea, so our exposition will be more leisurely.  
Firstly, we observe that our inductive definition of the polynomials $f_{i,j}$ implies that, for any Hessenberg function $h$, it follows that the ideal $I_{(n,n,\ldots,n)}$ corresponding to $h=(n,n,\ldots,n)$ is contained in the ideal $I_h$ corresponding to $h$. This means that we always have a surjective map of rings $\Q[x_1\ldots,x_n,t]/I_{(n,n,\ldots,n)} \to \Q[x_1,\ldots,x_n,t]/I_h$. 
On the other hand, recall that if $h=(n,n,\ldots,n)$, the associated regular nilpotent Hessenberg variety is the full flag variety $\Flags(\mathbb{C}^n)$, and in this special case we already know that the map $\varphi := \varphi_{(n,n,\ldots,n)}$ is surjective since the Chern classes $\tau_1^S, \ldots, \tau_n^S, t$ are known to generate the equivariant cohomology ring of $\Flags(\mathbb{C}^n)$ (Propositions~\ref{proposition:generators equivariant cohomology}-(1) and \ref{proposition:flag generators non-equivariant}). Since the Hilbert series of both sides are identical, we then know that $\varphi$ is an isomorphism. 
Secondly, we
consider localizations of the rings in question with respect to the multiplicative subset $R:=\Q[t]\backslash\{0\}$.

The following commutative diagram of localized rings (i.e., localized at the multiplicative subset $R=\Q[t] \backslash \{0\}$) is crucial. 
\[
\begin{CD}
R^{-1}\big(\Q[x_1,\dots,x_n,t]/I_{(n,n,\ldots,n)}\big) @>R^{-1}\varphi >\cong> R^{-1}H^*_\mathsf{S}(\Flags(\C^n)) @>>\cong> R^{-1}H^*_\mathsf{S}(\Flags(\C^n)^\mathsf{S}) \\
@VV\text{surj}V @VVV @VV\text{surj}V \\
 R^{-1}\big(\Q[x_1,\dots,x_n,t]/\Ih\big) @>R^{-1}\varphih >> R^{-1}H^*_\mathsf{S}(\Hess(\mathsf{N},h)) @>>\cong> R^{-1}H^*_\mathsf{S}(\Hess(\mathsf{N},h)^\mathsf{S}) 
\end{CD}
\]
Now, the key point is that by the localization theorem (Theorem~\ref{theorem:localization_theorem}), the horizontal arrows in the right-hand square are isomorphisms. Moreover, since we already noted that $\varphi$ is an isomorphism, so is $R^{-1}\varphi$. Finally, the rightmost and leftmost vertical arrows are surjective since the original (i.e. non-localized) ring maps are surjective. Putting this together, this implies that $R^{-1}\varphi_h$ is also surjective, and a comparison of Hilbert series shows that $R^{-1}\varphi_h$ is an isomorphism \cite{AHHM2019}. 

The above (sketch of) argument shows that the localized version $$R^{-1}\varphi_h: R^{-1}\big(\Q[x_1,\dots,x_n,t]/\Ih\big) \to R^{-1}H^*_\mathsf{S}(\Hess(\mathsf{N},h))$$ of $\varphih$ is an isomorphism. In order to show that $\varphih$ itself is injective, which would complete the proof, we consider the commutative diagram
\[
 \begin{CD}
 \Q[x_1,\dots,x_n,t]/\Ih @>\varphih>> H^*_\mathsf{S}(\Hess(\mathsf{N},h)) \\
 @VV\text{inj}V @VV\text{inj}V \\
 R^{-1}\Q[x_1,\dots,x_n,t]/\Ih @>R^{-1}\varphih >\cong > R^{-1}H^*_\mathsf{S}(\Hess(\mathsf{N},h)). 
 \end{CD}
 \]
 Here it follows from general principles that the vertical arrows are injections. 
 Indeed, $f_{h(1),1}, f_{h(2),2}, \cdots, f_{h(n),n}, t$ form a regular sequence in $\Q[x_1,\dots,x_n,t]$, so $t$ is a non-zero-divisor in $\Q[x_1,\dots,x_n,t]/\Ih$ \cite{AHHM2019}. This implies that the left vertical arrow is injective.
The injectivity for the right vertical arrow follows from \eqref{eq:equivariant cohomology of Hess(N,h) free} and knowing that $H^*_{\S1}(\Hess(\mathsf{N},h))$ is a free $H^*_{\S1}(\pt)$-module. 
 From this it follows that $\varphi_h$ is an injection, as desired. 
 This concludes our sketch of step (4) and hence of Theorem~\ref{theorem:reg nilp Hess cohomology}.

\section{Recent developments}\label{section.further.developments}

Hessenberg varieties turn out to be intimately linked with many other areas of research, thus
contributing to an ever-widening community of mathematicians interested in the topic.  In what follows, we briefly recount some of the most recent developments in the study of cohomology rings of Hessenberg varieties, giving 
some context, motivation, and commentary on potential future directions, among others. We 
emphasize that this list is not intended to be exhaustive. 

Before proceeding, it should also be noted that one of the major motivations for the study of these cohomology rings in the last decade is through the connection between these rings and the famous Stanley-Stembridge conjecture in combinatorics and the theory of symmetric functions. This is a rich topic in its own right.  
The related references are \cite{BrosnanChow2018}, \cite{Guay-Paquet}, and \cite{SW}.
Here, we therefore restrict our comments to recent developments in this area which are not as directly related to the Stanley-Stembridge conjecture. 
We also note that the study of the cohomology rings of regular nilpotent Hessenberg varieties is deeply related to the theory of hyperplane arrangements. 
We refer the reader to \cite{AHMMS} and \cite{EnoHorNagTsu-cohomologypresentation}.

\subsection{The cohomology rings of regular
Hessenberg varieties}

We begin with the example of the Hessenberg space 
$H_1=\mathfrak{b} \bigoplus \oplus_{i=1}^n \mathfrak{g}_{-\alpha_i}$. (In Lie type A, this 
choice of Hessenberg space corresponds to the Hessenberg function $h=(2,3,4,\cdots, n,n)$.) 
With respect to this choice, the corresponding regular nilpotent Hessenberg variety $\Hess(\mathsf{N},H_1)$
and the regular semisimple Hessenberg variety $\Hess(\mathsf{S},H_1)$ correspond to the 
Peterson variety and the ``toric variety associated to the weight polytope'' (also called the permutohedral variety in type A) respectively 
\cite{DeMariProcesiShayman1992}.  
Since the Weyl group $W$ acts naturally on the weight polytope, there is also a natural corresponding action 
of $W$ on the cohomology ring of the toric variety $\Hess(\mathsf{S},H_1)$. 
Klyachko showed in \cite{Klyachko-Orbits, Klyachko-ToricFlag} 
that the $W$-invariant subring of $H^*(\Hess(\mathsf{S},H_1))$ has a presentation by generators and relations, 
and this presentation turned out to be the same as that given for the cohomology ring of the Peterson variety 
$\Hess(\mathsf{N},H_1)$ of Theorem~\ref{mainthm Lie terminology}.  This unexpected correspondence gave rise to the natural question of whether or not 
this phenomenon works more generally, for other (or all?) Hessenberg spaces, where we equip the cohomology 
rings of general regular semisimple Hessenberg varieties -- for any Hessenberg space/function -- with the
``dot'' $W$-action defined by Tymoczko in \cite{Tymoczko08}.

In the case of Lie type A,
in \cite{AHHM2019} the authors obtain an isomorphism $H^*(\Hess(\mathsf{N},h)) \stackrel{\cong}{\rightarrow} H^*(\Hess(\mathsf{S},h))^{S_n}$ in such a way that the following diagram is commutative: 
\[
  \xymatrix{
  & H^*(\fln) \ar@{->>}[dl] \ar@{->>}[dr]& \\
  H^*(\mbox{Hess}(\mathsf{N},h)) \ar[rr]^{\cong}& & H^*(\mbox{Hess}(\mathsf{S},h))^{S_n}
  }
\]
and the construction of the bottom horizontal isomorphism requires the detailed knowledge of $H^*(\Hess(\mathsf{N},h))$ coming from Theorem~\ref{theorem: main reg nilpotent}.  Shortly thereafter, the result above was generalized using techniques from hyperplane arrangements to cover arbitrary Lie types \cite{AHMMS}. Specifically, the authors of \cite{AHMMS} show that the cohomology of the regular nilpotent Hessenberg variety $H^*(\Hess(\mathsf{N},H))$ is isomorphic as a ring to the $W$-invariant subring $H^*(\Hess(\mathsf{S},H))^W$ where $W$ is the relevant Weyl group.

The above relationship between the cohomology rings of regular nilpotent and regular semisimple Hessenberg varieties, suggests natural generalizations, and these topics are relevant to the Stanley-Stembridge conjecture. For simplicity, we restrict the following discussion to the case of Lie type A. 
Let $\mathsf{X}$ be a \textbf{regular} element of $\gl_n(\C)$ -- i.e., in the Jordan canonical form of $\mathsf{X}$, the eigenvalues corresponding to distinct Jordan blocks are all \emph{distinct}.   
Suppose that the Jordan canonical form of $\mathsf{X}$ has Jordan blocks of size $\lambda_1, \lambda_2, \cdots, \lambda_m$ (where we may assume $\lambda_1 \geq \lambda_2 \geq \cdots \geq \lambda_m$). Then we say $\mathsf{X}$ is \textbf{regular of type $\lambda = (\lambda_1, \lambda_2, \cdots, \lambda_m)$.} Note that $\lambda$ is a partition of $n$, i.e. $\lambda_1+\lambda_2+\cdots+\lambda_m=n$.  In what follows we denote a regular operator of type $\lambda$ by $\mathsf{R}_\lambda$  and we call its associated 
Hessenberg variety $\Hess(\mathsf{R}_\lambda, h)$ a \textbf{regular Hessenberg variety}. It is not hard to see that the regular nilpotent Hessenberg variety is a regular Hessenberg variety with $\lambda=(n)$, whereas a regular semisimple Hessenberg variety is an instance with $\lambda=(1,1,\cdots,1)$. 

Now, given a partition $\lambda = (\lambda_1, \lambda_2, \cdots, \lambda_m)$ of $n$, let $S_\lambda$ denote the subgroup of $S_n$ given by 
$S_{\lambda_1} \times S_{\lambda_2} \times \cdots \times S_{\lambda_m}$.  In \cite{BrosnanChow2018}, Brosnan and Chow showed that the $S_\lambda$-invariant subspace $H^*(\Hess(\mathsf{S},h))^{S_\lambda}$ of the $S_n$-representation $H^*(\Hess(\mathsf{S},h))$
is additively isomorphic (i.e. as vector spaces) to the cohomology ring of $\Hess(\mathsf{R}_\lambda, h)$, i.e., 
\begin{equation} \label{eq:Section9_1}
H^*(\Hess(\mathsf{R}_\lambda,h)) \cong H^*(\Hess(\mathsf{S},h))^{S_\lambda}
\end{equation}
and this was a key step in their proof of the Shareshian-Wachs conjecture, which established the connection between Hessenberg varieties and the Stanley-Stembridge conjecture. 
More recently, Balibanu and Crooks show that the identification~\eqref{eq:Section9_1} is in fact a ring isomorphism for all Lie types \cite{BalCro} (see also \cite{VilXue}). Moreover, they show that the cohomology ring $H^*(\Hess(\mathsf{R}_\lambda, h))$ is a Poincar\'e duality algebra. As far as we are aware, it remains an open question whether there exist bases of geometrically defined classes (e.g. corresponding to subvarieties of $\Hess(\mathsf{R}_\lambda,h)$) with respect to which the Poincar\'e duality can be explicitly and positively realized, and such that their multiplicative structure constants have interesting combinatorial interpretations.

\subsection{Hessenberg varieties and the cohomology rings of toric varieties}

Let $\lambda=(\lambda_1, \cdots, \lambda_m)$ be a partition of $n$. 
As another application of the isomorphism~\eqref{eq:Section9_1} in a different direction, it was recently observed \cite{HorMasShaSon} that in the case of $h=h_1=(2,3,4,\cdots, n,n)$, the cohomology ring $H^*(\Hess(\mathsf{R}_\lambda,h_1))$ is isomorphic to the cohomology ring of a certain toric variety. To see this, first recall that $\Hess(\mathsf{R}_{(1,1,\cdots,1)},h_1)$ is the toric variety associated to the permutohedron, which we denote $P_n$, in $\R^n$. One way to realize $P_n$ is by taking 
\begin{align*}
P_n=P_n(a_1, \ldots, a_n):=\mbox{ConvexHull}\{ (a_{w(1)}, \ldots, a_{w(n)}) \in \R^n \mid w \in S_n \}
\end{align*}
where the $a_1,\ldots,a_n \in \mathbb{R}_{>0}$ are pairwise distinct. Now, given a partition $\lambda$ of $n$, we may define a polytope $P_\lambda$ following \cite{HorMasShaSon}: 
$$
P_\lambda:= \{(x_1,\ldots,x_n) \in P_n \mid x_k \leq x_{k+1} \ {\rm for} \ k \ {\rm with} \ s_k \in S_\lambda \}.
$$
The idea is that the polytope $P_\lambda$ is a realization of the orbit space $P_n/S_\lambda$.  Now let $X(P_\lambda)$ 
denote the toric variety associated to $P_\lambda$.  The arguments of 
\cite{HorMasShaSon} show that the cohomology ring $H^*(X(P_\lambda))$ is isomorphic, as a ring, to the 
$S_\lambda$-invariant subring of $H^*(X_n = \Hess(\mathsf{R}_{(1,1,\cdots,1)}, h_1))$. On the other 
hand, from~\eqref{eq:Section9_1} we know that $H^*(\Hess(\mathsf{R}_\lambda, h_1)) \cong H^*(X_n)^{S_\lambda}$. 
Putting this together we obtain the ring isomorphisms 
\begin{equation*} 
H^*(\Hess(\mathsf{R}_\lambda,h_1)) \cong H^*(X_n)^{S_\lambda} \cong H^*(X(P_\lambda)).
\end{equation*}
Thus we get a (new) connection between the theory of Hessenberg varieties and that of toric orbifolds. 
In fact, the above result holds also in the classical Lie types \cite{HorMasShaSon}.

\subsection{Poincar\'e duals of Hessenberg varieties}

As is suggested by the ring isomorphisms~\eqref{eq:Section9_1}, it seems natural to ask whether there is some underlying 
geometric relationship between the $\Hess(\mathsf{R}_\lambda,h)$ for different partitions $\lambda$.   One way to investigate this question is to consider their Poincar\'e duals in $H^*(\fln)$.  In the special case $\lambda=(n)$ and $\lambda=(1,1,\cdots,1)$ and $h$ a Hessenberg function with $h(i)\geq i+1$ for $i<n$, it was shown in  \cite{ADGH} that these agree, i.e., 
\[
[\Hess(\mathsf{N},h)]=[\Hess(\mathsf{S},h)] \ {\rm in } \ H^*(\fln). 
\]
More generally, under the same hypothesis that $h(i) \geq i+1$, the work of Abe, Fujita, and Zeng shows that the Poincar\'e dual $[\Hess(\mathsf{R}_\lambda, h)] \in H^*(\fln)$ is independent of $\lambda$ \cite{AbeFujitaZeng2020}. 
Although we explain the case of Lie type $A$, this independence of the Poincar\'e dual is in fact stated in \cite[Corollary~3.9]{AbeFujitaZeng2020} for all Lie types. 

Another natural question arises if we now vary the Hessenberg function $h$ instead of the partition $\lambda$.  It was recently shown by Enokizono, Horiguchi, Nagaoka, and Tsuchiya \cite{EnoHorNagTsu-additivebasis} that certain Poincar\'e duals $[\Hess(\mathsf{N},h')] \in H^*(\fln)$, as we vary the $h'$, are linearly independent in $H^*(\fln)$. We give more details on this result in Section~\ref{subsec:additive.basis}.

\subsection{Hessenberg Schubert calculus} 

We saw in Theorem~\ref{theorem:pvA-basis} that there exists a computationally convenient $H^*_{\S1}(\pt)$-module basis of the $\S1$-equivariant cohomology $H^*_{\S1}(\Y)$ of the Peterson variety $\Y$ consisting of the Peterson Schubert classes $p_{v_{\mathcal{A}}}$. Moreover, in Theorem~\ref{theorem:Monk_typeA} and Theorem~\ref{theorem_Giambelli_typeA} we saw a Monk formula and a Giambelli formula for these Peterson Schubert classes, and we saw in Section~\ref{subsec.drellich} that Drellich generalized the original work of \cite{BH} and \cite{HarTym-Monk} to all Lie types. There has been much more recent developments surrounding these issues, as we now briefly explain.

Recall that the Peterson Schubert classes $p_{v_{\mathcal{A}}}$ satisfy multiplicative relations 
\[
p_{v_{\mathcal{A}'}} \cdot p_{v_{\mathcal{A}}} = \sum_{\mathcal{B} \subset [n-1]} c^{\mathcal{B}}_{\mathcal{A}', \mathcal{A}} p_{v_{\mathcal{B}}}
\]
where the structure constants $c^{\mathcal{B}}_{\mathcal{A}', \mathcal{A}}$ lie in $H^*_S(\pt)$. Taking Theorem~\ref{theorem:Monk_typeA} further, Goldin and Gorbutt gave manifestly positive formulas for all $c^{\mathcal{B}}_{\mathcal{A}', \mathcal{A}}$ in \cite{GoldinGorbutt2020}.  From this positivity result, it is natural to ask whether there is some geometric interpretation for these classes $p_{v_{\mathcal{A}}}$. Abe, Horiguchi, Kuwata, and Zeng, give a geometric interpretation in the context of ordinary cohomology \cite{AbeHorKuwZen2021} and they also give a different formula for the structure constants by introducing a combinatorial object called left-right diagrams and defining a combinatorial game using these diagrams. In addition, recent work of Goldin, Mihalcea, and Singh gives a geometric interpretation to the equivariant Peterson Schubert classes and the Graham-positivity of their products, in arbitrary Lie type \cite{GolMihSin2021}.  Finally, follow-up work of Goldin and Singh \cite{GoldinSingh2021} derives, again in arbitrary Lie type, new explicit formulas for Monk and Chevalley rules in $H^*_{\S1}(Pet)$ using modified degree-$2$ classes that are different from those used by Drellich.

\subsection{Mixed Eulerian numbers}

The permutohedral variety (in type A) $$X_n = \Hess(\mathsf{S}, (2,3,4,\cdots,n,n))$$ is irreducible and has complex dimension $n-1$. Viewed as a subvariety of $\fln$, its associated cohomology class $[X_n]$ in $H^*(\fln)$ can be expressed as a linear combination of the Schubert classes $\{\sigma_{w_0w} \, \mid \, w \in S_n, \ell(w)=n-1\}$ as follows: 
\begin{align*}
[X_n] = \sum_{w \in S_n \atop \ell(w)=n-1} a_w \sigma_{w_0w}, \ \ \ a_w \in \Z
\end{align*}
In the above equation, the coefficient $a_w \in \Z$ is the intersection number $\displaystyle\int_{\fln}[X_n] \cdot \sigma_w$, so in particular, it is non-negative \cite[Exercise 12 in Section 10.6]{Fulton-Young}.  The question was posed in \cite{HarHorMasPar} whether the $a_w$ are in fact all \emph{positive}, not just non-negative.  Recent work of Nadeau and Tewari answered this question in the affirmative, and for arbitrary Lie type \cite{NadeauTewari}. Their method of proof deserves particular mention, since it incorporates new ideas into the study of Hessenberg varieties.  Specifically, Nadeau and Tewari's strategy is to express the coefficients $a_w$ in terms of the \textbf{mixed Eulerian numbers} introduced by Postnikov in \cite{Postnikov}.

In related work, the paper \cite{BergetSpinkTseng} of Berget, Spink, and Tseng uses the perspective of \textbf{matroids} to give a simple computation of mixed Eulerian numbers in the case of Lie type A. This suggests the natural problem of giving analogous simple computations of these numbers in arbitrary Lie type. 
A solution to this problem was given in \cite{Horiguchi-mixedEuler} by observing that the mixed Eulerian numbers in arbitrary Lie types are intimately related to the structure constants of Peterson Schubert calculus.

\subsection{Additive bases for $H^*(\Hess(\mathsf{N},h))$}\label{subsec:additive.basis}

Theorem~\ref{theorem: main reg nilpotent} gives a presentation of $H^*(\Hess(\mathsf{N},h))$, but an additional interesting question 
which arises in this context is whether there exist natural, or computationally convenient, \emph{additive bases} for $H^*(\Hess(\mathsf{N},h))$ 
which interact well with, or have natural descriptions in terms of, the known presentation $H^*(\Hess(\mathsf{N},h)) \cong R/I_h$. 
Some results in this direction are obtained in the recent papers \cite{EnoHorNagTsu-additivebasis} and \cite{HarHorMurPreTym}, as follows.

\begin{itemize}

\item We begin by describing the results of \cite{EnoHorNagTsu-additivebasis}. For $1 \leq i < j \leq n$, let $\alpha_{i,j} := \tau_i - \tau_j$ where the $\tau_i$ are the (images in $H^*(\Hess(\mathsf{N},h))$ of the) ordinary Chern classes of~\eqref{def of ch in flag}. For each $i \in [n]$, fix a choice of permutation $w^{(i)}$ of $\{i+1, i+2, \cdots,h(i)\}$. Then for each such choice of permutations $w^{(i)}$ there exists a corresponding additive basis of $H^*(\Hess(\mathsf{N},h))$ as follows: 

\begin{equation}\label{eq: monomial basis 1} 
\left\{ \prod_{i=1}^{n} \alpha_{i,w^{(i)}(h(i))} \cdot \alpha_{i,w^{(i)}(h(i)-1)} \cdots \alpha_{i,w^{(i)}(h(i)-m_i+1)} \ \middle| \ 0 \leq m_i \leq h(i)-i \ {\rm for} \ i \in [n] \right\}
\end{equation}
where we take the convention that if $m_i=0$ then $$\alpha_{i,w^{(i)}(h(i))} \cdot \alpha_{i,w^{(i)}(h(i)-1)} \cdots \alpha_{i,w^{(i)}(h(i)-m_i+1)}=1.$$  Moreover, in the special case when $w^{(i)}$ is chosen to be the identity permutation for each $i$, this additive basis has a geometric interpretation.  Let $h'$ be a Hessenberg function which is less than $h$ with respect to the partial order on $H_n$ of Definition~\ref{def:partial order}. Setting $m_i := h(i) - h'(i)$ for $1 \leq i \leq n$, it turns out that the product expression~\eqref{eq: monomial basis 1} associated to this choice of $m_i$ represents (up to a scalar multiple) the Poincar\'e dual $[\Hess(\mathsf{N},h')] \in H^*(\Hess(\mathsf{N},h))$ of $\Hess(\mathsf{N},h')$ viewed as a subvariety of $\Hess(\mathsf{N},h)$. In particular, it follows from this observation that the set of such Poincar\'e duals $\{[\Hess(\mathsf{N},h')] \, \mid \, h' \subset h\}$ in $H^*(\Hess(\mathsf{N},h))$ is a linearly independent set.  In other words, the additive basis~\eqref{eq: monomial basis 1} can be viewed as an extension of this linearly independent set $\{[\Hess(\mathsf{N},h')] \, \mid \, h' \subset h\}$ to a basis of $H^*(\Hess(\mathsf{N},h))$.   Results for some other Lie types (not just Lie type A) are also obtained in \cite{EnoHorNagTsu-additivebasis}.

\item We now describe the results of \cite{HarHorMurPreTym}.   Here, the starting point is the well-known fact from Proposition~\ref{proposition:flag generators non-equivariant} that the monomials 
\[
\{\tau_1^{m_1}\tau_2^{m_2}\cdots \tau_n^{m_n} \mid 0 \leq m_i \leq n-i \ {\rm for} \ i \in [n] \}
\]
form an additive basis of the cohomology ring of the flag variety $\fln$.
One of the main results of \cite{HarHorMurPreTym} is a natural generalization of this fact to the case of regular nilpotent Hessenberg varieties.  Specifically, the result states that the set of monomials 
\begin{equation*} 
\left\{ \tau_1^{m_1}\tau_2^{m_2}\cdots \tau_n^{m_n} \mid 0 \leq m_i \leq h(i)-i \ {\rm for} \ i \in [n] \right\}
\end{equation*}
forms an additive (monomial) basis for $H^*(\Hess(\mathsf{N},h))$.

\end{itemize} 

It should be noted that both of the manuscripts \cite{EnoHorNagTsu-additivebasis} and \cite{HarHorMurPreTym} use, in a fundamental way, the explicit presentation of $H^*(\Hess(\mathsf{N},h))$ of Theorem~\ref{theorem: main reg nilpotent}.

\subsection{Hessenberg Schubert classes and Hessenberg Schubert polynomials}

We already discussed some topics related to Schubert calculus for the Peterson variety in Sections~\ref{section: peterson in type A}. It is natural to also ask if we can extend to the setting of general regular nilpotent Hessenberg varieties $\Hess(\mathsf{N},h)$. For instance, one approach is to ask:  can we determine a subset of the Schubert classes which, when projected to $H^*(\Hess(\mathsf{N},h))$ under the surjective restriction homomorphism $H^*(\fln) \to H^*(\Hess(\mathsf{N},h))$, form a basis of $H^*(\Hess(\mathsf{N},h))$? In the discussion in \cite{HarTym-Monk}, where this question was first posed, it was suggested that a potential candidate for such a subset is 
\begin{align} \label{eq:Harada-Tymoczko_conjecture}
\{p_w \in H^*(\Hess(\mathsf{N},h)) \mid w(i) \leq h(i) \ {\rm for} \ i \in [n] \}
\end{align}
where we denote by $p_w$ the image of the Schubert class $\sigma_w$ under the restriction map $H^*(\fln) \to H^*(\Hess(\mathsf{N},h))$. 
However, as far as we are aware, this question is still open. Nevertheless, recent developments do give some computational methods for attacking the problem, as we now explain. 

Specifically, we can obtain explicit linear relations that are satisfied by the set of all images $\{p_w \, \mid \, w \in S_n\}$ of the Schubert classes, as follows. Let $h$ be a fixed Hessenberg function with $h \neq (1,2,3,\ldots,n)$. Viewing $h$ as a collection of boxes, we fix a choice of a corner box, i.e., a box corresponding to $(h(j),j)$ where $h(j-1)<h(j)$.  Suppose the corner box is at position $(i,j)$, and let $h'$ denote the Hessenberg function obtained from $h$ by removing the $(i,j)$ corner box. From Theorem~\ref{theorem: main reg nilpotent} it is not hard to see that the kernel of the surjective ring homomorphism $H^*(\Hess(\mathsf{N},h)) \rightarrow H^*(\Hess(\mathsf{N},h'))$ is the principal ideal generated by the (equivalence class of the) element $\check{f}_{i-1,j}(\tau) :=\check{f}_{i-1,j}(\tau_1, \cdots, \tau_n)$.  Now, the results of \cite{Horiguchi-Schubert} show that the polynomial $\check{f}_{i-1,j}$ can be expressed as an alternating sum of Schubert polynomials as: 
\begin{align} \label{eq:f_ijSchubert}
\check{f}_{i-1,j} = \sum_{k=1}^{i-j} (-1)^{k-1} \mathfrak{S}_{w_k^{(i,j)}}
\end{align}
where 
the permutation $w_k^{(i,j)}$ (for $1  \leq k \leq  i-j$)  is defined as 
\[
w_k^{(i,j)} := (s_{i-k}s_{i-k-1} \cdots s_j )(s_{i-k+1}s_{i-k+2} \cdots s_{i-1}). 
\]
It is interesting to note that the set $\{w_k^{(i,j)} \mid 1 \leq k \leq i-j \}$ coincides with the set of 
minimal length permutations $w$ in $S_n$ such that 
\begin{align*}
\Hess(\mathsf{N},h) \cap X_w^{\circ} \neq \emptyset \ {\rm and} \ \Hess(\mathsf{N},h') \cap X_w^{\circ} = \emptyset. 
\end{align*}

Putting these observations together with~\eqref{eq:f_ijSchubert}, we obtain the following linear relation among the $p_w$'s in $H^*(\Hess(\mathsf{N},h'))$:
\begin{align*} 
\sum_{k=1}^{i-j} (-1)^{k-1} p_{w_k^{(i,j)}}=0. 
\end{align*}
We may now naturally wonder what other linear relations hold among the $p_w$'s.   In relation to this question, we note that one of the results of \cite{HarHorMurPreTym} shows that the following is a monomial basis for the kernel of $H^*(\Hess(\mathsf{N},h)) \rightarrow H^*(\Hess(\mathsf{N},h'))$: 
\begin{align} \label{eq:Schubert_classes_in_Hess(N,h)}
\left\{ \tau_1^{m_1}\cdots \tau_{j-1}^{m_{j-1}}\cdot \check{f}_{i-1,j}(\tau) \cdot \tau_{j+1}^{m_{j+1}}\cdots \tau_n^{m_n} \mid 0 \leq m_s \leq h(s)-s \ {\rm for} \ 1 \leq s \leq n, s \neq j \right\}. 
\end{align}
We also know the Monk formula in $H^*(\fln)$ (\cite{Monk}, cf. also \cite[p.180-181]{Fulton-Young}) which states 
\begin{equation} \label{Monk_equivalent} 
\tau_r \cdot \sigma_{w}=\sum_{w'} \sigma_{w'}-\sum_{w''} \sigma_{w''}
\end{equation}
where the first sum is over those $w'$ obtained from $w$ by interchanging the values of $w$ in positions $r$ and $q$ for those $r<q$ with $w(r)<w(q)$, and $w(i)$ is not in the interval $(w(r),w(q))$ for any $i$ in the interval $(r,q)$, and the second sum is over those $w''$ obtained from $w$ by interchanging the values of $w$ in positions $r$ and $p$ for those $p<r$ with $w(p)<w(r)$, and $w(i)$ is not in the interval $(w(p),w(r))$ for any $i$ in the interval $(p,r)$.
Taking this equation down to $H^*(\Hess(\mathsf{N},h))$ via the surjective restriction map $H^*(\fln) \to H^*(\Hess(\mathsf{N},h))$ we immediately obtain the relation 
\begin{equation} \label{Monk_equivalent_Hess(N,h)} 
\tau_r \cdot p_{w}=\sum_{w'} p_{w'}-\sum_{w''} p_{w''}
\end{equation}
which we may think of as a ``Monk relation in $\Hess(\mathsf{N},h)$''. 

Now, using~\eqref{eq:f_ijSchubert}, \eqref{eq:Schubert_classes_in_Hess(N,h)}, \eqref{Monk_equivalent_Hess(N,h)}, we may obtain a complete set of relations satisfied by the $p_w$, i.e. the images of the Schubert classes. 
Stated slightly more precisely, what we mean is the following. Starting first with the cohomology ring $H^*(\fln)$, which corresponds to $h=(n,n,\cdots,n)$, we may obtain an arbitrary Hessenberg function $h$ by successively removing boxes from $h=(n,n,\cdots,n)$.  At each step of removing one (corner) box, we may use the above relations to obtain a new set of relations which must be satisfied by the image Schubert classes $p_w$.   Using this method, it is possible -- for example -- to confirm that the set of classes proposed in~\eqref{eq:Harada-Tymoczko_conjecture} do indeed form a basis of $H^*(\Hess(\mathsf{N},h))$ in the cases for which $n=4$.



\end{document}